\newtheorem{theorem}{Theorem}[section]
\newtheorem{lemma}{Lemma}[section]
\newtheorem{definition}{Definition}[section]
\newtheorem{proposition}{Proposition}[section]
\newtheorem{remark}{Remark}[section]
\newtheorem{corollary}{Corollary}[section]
\def\geq{\geqslant}\def\leq{\leqslant}
\begin{document}
\title{\bf Hardy Spaces Associated with Some Anisotropic Mixed-Norm Herz Spaces and Their Applications}
\author{Yichun Zhao, Jiang Zhou \thanks{Corresponding author. The research was supported by Natural Science Foundation of China (Grant No. 12061069).}  \\[.5cm]}
\date{}
\maketitle
{\bf Abstract:}{ In this paper, we introduce anisotropic mixed-norm Herz spaces $\dot K_{\vec{q}, \vec{a}}^{\alpha, p}(\mathbb R^n)$ and $K_{\vec{q}, \vec{a}}^{\alpha, p}(\mathbb R^n)$ and investigate some basic properties of those spaces. Furthermore, establishing the Rubio de Francia extrapolation theory, which resolves the boundedness problems of Calder\'on-Zygmund operators and fractional integral operator and their commutators, on the space $\dot K_{\vec{q}, \vec{a}}^{\alpha, p}(\mathbb R^n)$ and the space $K_{\vec{q}, \vec{a}}^{\alpha, p}(\mathbb R^n)$. Especially, the Littlewood-Paley characterizations of anisotropic mixed-norm Herz spaces also are gained. As the generalization of anisotropic mixed-norm Herz spaces, we introduce anisotropic mixed-norm Herz-Hardy spaces $H\dot K_{\vec{q}, \vec{a}}^{\alpha, p}(\mathbb R^n)$ and $HK_{\vec{q}, \vec{a}}^{\alpha, p}(\mathbb R^n)$, on which atomic decomposition and molecular decomposition are obtained. Moreover, we gain the boundedness of classical Calder\'on-Zygmund operators.
}\par 
{\bf Key Words: }Anisotropic, mixed-norm, Herz space, Herz-Hardy spaces, Hardy-Littlewood maximal operator.

{\bf Mathematics Subject Classification(2010):}  42B35; 42B25; 42B20.

\baselineskip 15pt

\section{Introduction}
The study of Herz spaces originated from the work of Beurling \cite{beurling1964construction}, which analyze some convolution algebras in 1964. Herz \cite{herz1968lipschitz} later systematically studied Herz spaces $K_q \left(\mathbb R^n\right)$ to understand the Fourier series and Fourier transform. After that, Bernstein and Sawyer \cite{baernstein1985embedding} generalized Herz spaces $A_q(\mathbb R^n)$ and explored some applications associated with the classic Hardy spaces. Indeed, another equivalence norm of Beurling algebra was constructed by Feichtinger \cite{feichtinger1984elementary} in 1987 to study the Wiener third Tauberian theorem on $\mathbb R^n$. Since then, Herz space has been crucial in harmonic analysis and partial differential equations, involving the properties of Herz-type spaces, the regularity of solutions to elliptic equations, and the global stability for fractional Navier-Stokes equations\cite{lu2008herz,scapellato2019regularity,chen2018global}. Herz-Hardy spaces as the appropriate substitute for Herz space, in 1989, Chen and Lau \cite{chen1989some} introduced the Hardy space associated with the Beurling algebras on the real line. Later, Grac\'ia-Cuerva and Herrero \cite{garcia1994theory} established various maximal functions, atomic and Littlewood-Paley function characterizations of these Herz-Hardy spaces. At the same time, Lu and Yang \cite{lu1995local} studied a series of problems on Herz-type Hardy spaces, they obtained atomic decomposition, molecular decomposition, and Littlewood-Paley characterization on Herz-Hardy spaces with more wide index range. These results have attracted researchers' attention to the topic of Herz-type spaces, we can see \cite{lu2008herz,zhao2018anisotropic,drihem2013embeddings,2008New,zhao2022characterizations}.
\par 
 Mixed-norm Lebesgue spaces $L^{\vec{p}}\left(\mathbb R^n\right)$, as natural extensions of classical Lebesgue spaces $L^{{p}}\left(\mathbb R^n\right)$, have attracted widespread attention. The theory of mixed-norm function spaces can be traced back to the work of Benedek and Panzone \cite{benedek1961space}, in which one proved that the space $L^{\vec{p}}(\mathbb{R}^n)$ also possesses some basic properties similar to the space $L^{{p}}\left(\mathbb R^n\right)$, such as completeness, H\"{o}lder's inequality, Minkowski's inequality, and so on. These properties provide the possibility to solve a series of subsequence problems. On the other hand, since the study of partial differential equations (for example the heat equation and the wave equation) always involves both space and time variables, mixed-norm spaces possess better structures than classical spaces in the time-space estimates for PDEs. For these reasons, many researchers renewed their interest in mixed-norm Lebesgue spaces and extended them to other mixed-norm function spaces.  For instance, the real variable characterizations and the atom characterization of mixed-norm Hardy spaces were established in \cite{cleanthous2017anisotropic,huang2019atomic}. In 2021, mixed-norm Herz spaces are defined by Wei \cite{wei2021characterization,wei2020Herz}, who also gained boundedness of some classical operators via Rubio de Francia extrapolation and obtained a characterization of mixed-norm $C\dot MO^{\vec{q}} (\mathbb R^n)$ via commutators of Hardy operators. In 2022, Zhao et al.\cite{zhao2022characterizations} introduced the mixed-norm Herz-Hardy spaces and established a series of real variable characterizations.
\par 
Additionally, the outstanding work on parabolic Hardy spaces done by Calder${\rm \acute o}$n and Torchinsky \cite{calderon1977atomic} led us to consider extending the underlying spaces from Euclidean spaces to some more general spaces. In particular, anisotropic Hardy spaces $H_A^p (\mathbb R^n)$ with an expansive matrix $A$ on $\mathbb R^n$, which can be seen as a generalization of parabolic Hardy spaces, introduced by Bownik \cite{bownik2003anisotropic}. Later on, more progress on anisotropic function spaces can refer the reader to \cite{bownik2005atomic,hochmuth2002wavelet,zhao2018anisotropic}. In these results, Zhao and Zhou \cite{zhao2018anisotropic} created anisotropic Herz-type Hardy spaces with variable exponent, given atomic and molecular decomposition characterizations of these spaces. Recently, Cleanthous et al. \cite{cleanthous2017anisotropic} established the anisotropic mixed-norm Hardy space $H_{\vec{a}}^{\vec{p}}(\mathbb R^n)$ with $\vec{a} \in [1, \infty)^n$ and $\vec{p}\in (0,\infty)^n$, in which, authors also established various maximal operators characterization. In 2019, Huang et al. \cite{huang2019atomic} obtain Littlewood-Paley and atomic characterizations of anisotropic mixed-norm Hardy spaces $H_{\vec{a}}^{\vec{p}}(\mathbb R^n)$. 
\par 
Recall that mixed-norm Herz spaces were first introduced by Wei \cite{wei2021characterization}, who investigated the boundedness of classical operators (Calder\'on-Zygmund, fractional integral operator, parametric Marcinkiewicz integrals and so on) via Rubio de Francia extrapolation theory. This extrapolation theory usually depends on the weight conditions and boundedness of Hardy-Littlewood maximal operators. As a generalization of mixed-norm Herz space on classical Euclidean norm spaces, we will introduce anisotropic mixed-norm Herz spaces, moreover, will establish and character anisotropic mixed-norm Herz-Hardy spaces. This article is organized as follows.
\par 
In section 2, we first introduce anisotropic mixed-norm Herz spaces and prove some of their basic properties. Then, the decomposition of anisotropic mixed-norm Herz space is gained. Moreover, we show the boundedness of Hardy-Littlewood maximal operators, which give conditions for Rubio de Francia extrapolation on anisotropic mixed-Herz spaces. At the end of this section, we establish the extrapolation theory on anisotropic mixed-norm Herz spaces, and, the boundedness of various classical operators can be obtained via the extrapolation on anisotropic mixed-norm Herz spaces.
\par 
In section 3, we define anisotropic mixed-norm Herz-Hardy spaces and establish the atomic and molecular decomposition of anisotropic Herz-Hardy spaces. As an application of atomic decomposition, we prove the boundedness of classical Calder\'on-Zygmund operators.
\par 
Throughout this paper, we use the following notations. The letter $\vec{q}$ will denote $n$-tuples of the numbers in $(0,\infty]$ $(n \geq 1)$, $\vec{q}=\left(q_1, q_2, \dots, q_n\right)$. By definition, the inequality $0<\vec{q}<\infty$ means that $0<q_i<\infty$ for all $i$. For $\vec{q}=\left(q_1, q_2, \dots, q_n\right)$, write\\
$$\frac{1}{\vec{q}}=\left(\frac{1}{q_1}, \frac{1}{q_2},\dots, \frac{1}{q_n}\right),\quad \vec{q}^{\prime}=\left(q_1^{\prime},q_2^{\prime},\dots, q_n^{\prime}\right),$$
where $q_i^{\prime}=q_i/(q_i-1)$ is conjugate exponent of $q_i$ . $|B|$ denotes the measure of the ball $B$, $\chi_{_E}$ is the characteristic function of a set $E$. $A\sim B$ means that $A\lesssim B$ and $B\lesssim A$, $\lfloor a \rfloor$ denotes take the integer number for $a$. 
  
\section{Anisotropic Mixed-norm Herz Spaces}
\subsection{Definitions and Fundamental Properties}
The goals of this section are to recall some definitions of anisotropic homogeneous quasi-norms, state some of their key conclusions that will be applied to this article, define anisotropic mixed-norm Herz spaces, and investigate some of their fundamental characteristics.
\par 
We begin with recalling the definition of anisotropic homogeneous quasi-norms in \cite{fabes1966singular} as follows. 
\begin{definition}{\label{anisotropic df}}
	Let $b:=\left(b_{1}, \dots, b_{n}\right)$, $x:=\left(x_{1}, \dots, x_{n}\right) \in \mathbb{R}^{n}$ and $t \in[0, \infty)$, $t^{b} x:=\left(t^{b_{1}} x_{1}, \dots, t^{b_{n}} x_{n}\right)$, fixed a vector $\vec{a}:=\left(a_{1}, \dots, a_{n}\right) \in[1, \infty)^{n}$. The anisotropic homogeneous quasi-norm $|\cdot| \vec{a}$, associated with $\vec{a}$, is a non-negative measurable function on $\mathbb{R}^{n}$ defined by setting $|\vec{0}_{n}|_{\vec{a}}:=0$ and, for any $x \in \mathbb{R}^{n} \backslash\{\vec{0}_{n}\},|x|_{\vec{a}}:=t_{0}$, where $t_{0}$ is the unique positive number such that $\left|t_{0}^{-\vec{a}} x\right|=1$, namely,
$$
\frac{x_{1}^{2}}{t_{0}^{2 a_{1}}}+\cdots+\frac{x_{n}^{2}}{t_{0}^{2 a_{n}}}=1 .
$$
\end{definition}
We also need the following notion of the anisotropic bracket and the homogeneous dimension from \cite{stein1978problems}, which plays an important role in the study of anisotropic function spaces.
\begin{definition}
	Let $\vec{a}:=\left(a_{1}, \ldots, a_{n}\right) \in[1, \infty)^{n}$. The anisotropic bracket, associated with $\vec{a}$, is defined by setting, for any $x \in \mathbb{R}^{n}$,
$$
\langle x\rangle_{\vec{a}}:=|(1, x)|_{(1, \vec{a})} .
$$
\end{definition}
In most situations, we consider issues usually on Euclidean spaces. The next lemma compares the anisotropic homogeneous quasi-norm with the Euclidean norm, and it is readily demonstrated by using Definition \ref{anisotropic df}, here, the details are omitted.
\begin{lemma}
	Let $\vec{a} \in[1, \infty)^{n}$ and $x \in \mathbb{R}^{n}$. Then\\
\rm{(i)} ~$|x|_{\vec{a}}>1$ if and only if $|x|>1$;\\
\rm{(ii)} $|x|_{\vec{a}}<1$ if and only if $|x|<1$.
\end{lemma}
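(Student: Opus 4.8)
The plan is to prove both equivalences by directly exploiting the defining equation of $|x|_{\vec a}$ together with the fact that each $a_i\ge 1$. Write $t_0:=|x|_{\vec a}$ for $x\ne\vec 0_n$, so that $\sum_{i=1}^n x_i^2/t_0^{2a_i}=1$. I would first establish (i). Suppose $|x|>1$, i.e. $\sum_i x_i^2>1$. If we had $t_0\le 1$, then since $a_i\ge 1$ we get $t_0^{2a_i}\le t_0^{2}\le 1$, hence $x_i^2/t_0^{2a_i}\ge x_i^2$ for every $i$, and summing gives $1=\sum_i x_i^2/t_0^{2a_i}\ge\sum_i x_i^2=|x|^2>1$, a contradiction; therefore $t_0=|x|_{\vec a}>1$. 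Conversely, suppose $|x|_{\vec a}=t_0>1$. Then $t_0^{2a_i}\ge t_0^{2}>1$ for each $i$, so $x_i^2/t_0^{2a_i}\le x_i^2/t_0^{2}$, and summing yields $1=\sum_i x_i^2/t_0^{2a_i}\le|x|^2/t_0^{2}$, i.e. $|x|^2\ge t_0^2>1$, so $|x|>1$. This proves (i).

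For (ii) I would argue symmetrically. If $|x|<1$, then $x\ne\vec 0_n$ (the case $x=\vec 0_n$ is excluded since $|\vec 0_n|_{\vec a}=0<1$ trivially, but one should note it separately or restrict to $x\ne\vec 0_n$ as in the statement's spirit), and assuming $t_0\ge 1$ gives $t_0^{2a_i}\ge 1$, hence $x_i^2/t_0^{2a_i}\le x_i^2$ and $1=\sum_i x_i^2/t_0^{2a_i}\le|x|^2<1$, a contradiction; so $t_0<1$. Conversely, if $t_0<1$ then $t_0^{2a_i}\le t_0^2<1$, so $x_i^2/t_0^{2a_i}\ge x_i^2/t_0^2$, giving $1\ge|x|^2/t_0^2$, i.e. $|x|^2\le t_0^2<1$, so $|x|<1$. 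Alternatively, (ii) follows from (i) by contraposition once one checks the boundary case $|x|=1\iff|x|_{\vec a}=1$ (which is immediate: $|x|=1$ means $x$ lies on the unit Euclidean sphere, and then $t_0=1$ solves the defining equation, and conversely).

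The only genuinely delicate point — and the one I would flag as the main obstacle, though it is minor — is handling the monotonicity direction of $t\mapsto t^{2a_i}$ correctly depending on whether $t_0\lessgtr 1$, and making sure the two cases $t_0<1$ and $t_0>1$ are treated with the inequality going the right way; the hypothesis $a_i\ge 1$ is used precisely here and nowhere else. One also wants to remark that $t_0$ exists and is unique — this is already built into Definition \ref{anisotropic df}, so I would simply cite it. Since the whole argument is two lines of elementary estimates, this is exactly the kind of lemma whose proof "the details are omitted" is appropriate for, as the authors indicate; if a proof is desired at all, the scheme above is complete.
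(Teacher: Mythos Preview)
Your argument is correct and complete. The paper itself omits the proof entirely, remarking only that the lemma ``is readily demonstrated by using Definition \ref{anisotropic df}''; your direct manipulation of the defining equation $\sum_i x_i^2/t_0^{2a_i}=1$ together with the monotonicity of $t\mapsto t^{2a_i}$ (using $a_i\ge 1$) is exactly the kind of elementary verification the authors had in mind, so there is nothing to compare.
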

Now let us recall some basic properties of $|\cdot|_{\vec{a}}$, see \cite{cleanthous2017anisotropic} for more details. For any $\vec{a}:=\left(a_{1}, \ldots, a_{n}\right) \in[1, \infty)^{n}$, homogeneous dimension $v$ and $a_+$, $a_{-}$ are defined as follow, 
\begin{equation}
	v:=|\vec{a}|=a_1+a_2+\dots+a_n, \quad a_{-}:=\min \left\{a_{1},\dots, a_{n}\right\}, \quad a_{+}:=\max \left\{a_{1}, \dots, a_{n}\right\}.\label{v}
\end{equation}
\begin{lemma}
Let $\vec{a}:=\left(a_{1}, \dots, a_{n}\right) \in[1, \infty)^{n}, t \in[0, \infty)$. Then, for any $x, y \in \mathbb{R}^{n}$,
\begin{enumerate}\label{norm proper}
\item[\rm{(i)}] $\left|t^{\vec{a}} x\right|_{\vec{a}}=t|x|_{\vec{a}}$;	
\item[\rm{(ii)}] $|x+y|_{\vec{a}} \leq|x|_{\vec{a}}+|y|_{\vec{a}}$;
\item[\rm{(iii)}] $\max \{\left|x_{1}\right|^{1 / a_{1}}, \dots,\left|x_{n}\right|^{1 / a_{n}}\} \leq|x|_{\vec{a}} \leq \sum_{i=1}^{n}\left|x_{i}\right|^{1 / a_{i}}$;
\item[\rm{(iv)}]  when $|x| \geq 1,|x|^{1 / a_{+}} \leq|x|_{\vec{a}} \leq|x|^{1 / a_{-}}$;
\item[\rm{(v)}] when $|x|<1,|x|^{1 / a_{-}} \leq|x|_{\vec{a}} \leq|x|^{1 / a_{+}}$;
\item[\rm{(vi)}] $\left(\frac{1}{2}\right)^{a_{-}}\left(1+|x|_{\vec{a}}\right)^{a_{-}} \leq 1+|x| \leq 2\left(1+|x|_{\vec{a}}\right)^{a_{+}}$;
\item[\rm{(vii)}] for any measurable function $f$ on $\mathbb{R}^{n}$,
$$
\int_{\mathbb{R}^{n}} f(x) d x=\int_{0}^{\infty} \int_{S^{n-1}} f\left(\rho^{\vec{a}} \xi\right) \rho^{v-1} d \sigma(\rho) d \rho,
$$
where $S^{n-1}$ denotes the $n-1$ dimension unit sphere of $\mathbb{R}^{n}$ and $\sigma(\rho)$ the spherical measure.
\end{enumerate}
\end{lemma}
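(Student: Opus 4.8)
The plan is to obtain all seven items directly from Definition~\ref{anisotropic df} together with the comparison lemma stated above, establishing (i) and (ii) first since (iii)--(vi) reduce to them. For (i), recall that for $x\neq\vec{0}_n$ the value $|x|_{\vec a}=t_0$ is characterized by $\sum_{i=1}^{n}x_i^{2}/t_0^{2a_i}=1$. Replacing $x$ by $t^{\vec a}x$ and the candidate value by $t|x|_{\vec a}$ leaves the defining identity unchanged, since $(t^{a_i}x_i)^{2}/(t|x|_{\vec a})^{2a_i}=x_i^{2}/|x|_{\vec a}^{2a_i}$; by uniqueness of the positive root this forces $|t^{\vec a}x|_{\vec a}=t|x|_{\vec a}$, the cases $t=0$ or $x=\vec{0}_n$ being trivial.

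For (ii) I would exploit the strict monotonicity of $g_z(t):=\sum_{i=1}^{n}z_i^{2}/t^{2a_i}$ on $(0,\infty)$ for $z\neq\vec{0}_n$ (it decreases continuously from $+\infty$ to $0$, so $|z|_{\vec a}$ is the unique root of $g_z=1$). Discarding the trivial cases $x=\vec{0}_n$, $y=\vec{0}_n$, $x+y=\vec{0}_n$, put $s:=|x|_{\vec a}>0$, $u:=|y|_{\vec a}>0$, $\lambda:=s/(s+u)$, $\mu:=u/(s+u)$; it suffices to show $g_{x+y}(s+u)\le1$. Combining $|x_i+y_i|\le|x_i|+|y_i|$, the bounds $\lambda^{a_i}\le\lambda$ and $\mu^{a_i}\le\mu$ (valid since $\lambda,\mu\in(0,1)$ and $a_i\ge1$), and convexity of $t\mapsto t^{2}$ with weights $\lambda,\mu$, one gets, for each $i$,
$$
\frac{(x_i+y_i)^{2}}{(s+u)^{2a_i}}\le\Bigl(\lambda\,\frac{|x_i|}{s^{a_i}}+\mu\,\frac{|y_i|}{u^{a_i}}\Bigr)^{2}\le\lambda\,\frac{x_i^{2}}{s^{2a_i}}+\mu\,\frac{y_i^{2}}{u^{2a_i}};
$$
summing over $i$ and using $g_x(s)=g_y(u)=1$ yields $g_{x+y}(s+u)\le\lambda+\mu=1=g_{x+y}(|x+y|_{\vec a})$, whence $|x+y|_{\vec a}\le s+u$ by monotonicity. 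I expect this convexity/homogeneity estimate to be the only genuinely nontrivial point of the whole lemma.

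Items (iii)--(vi) then follow quickly. The lower bound in (iii) is immediate from $x_i^{2}/|x|_{\vec a}^{2a_i}\le\sum_j x_j^{2}/|x|_{\vec a}^{2a_j}=1$; for the upper bound write $x=\sum_i x_i e_i$, apply (ii), and use (i) together with $|e_i|_{\vec a}=1$ (which holds by the comparison lemma, since $|e_i|=1$) to get $|x_i e_i|_{\vec a}=|x_i|^{1/a_i}$. For (iv) and (v), set $t_0:=|x|_{\vec a}$; by the comparison lemma, $|x|\ge1$ iff $t_0\ge1$ and $|x|<1$ iff $t_0<1$. When $t_0\ge1$ one has $t_0^{2a_+}\ge t_0^{2a_i}\ge t_0^{2a_-}$ for every $i$, so dividing $x_i^{2}$ by these and summing gives $|x|^{2}/t_0^{2a_+}\le1\le|x|^{2}/t_0^{2a_-}$, i.e.\ $|x|^{1/a_+}\le t_0\le|x|^{1/a_-}$; when $t_0<1$ all the inequalities between powers of $t_0$ reverse and the same computation gives (v). Finally (vi) comes from (iv) and (v) by splitting on $|x|\ge1$ versus $|x|<1$ and using elementary bounds such as $1+r^{a_+}\le2(1+r)^{a_+}$ and $(1+r)^{a_-}\le 2^{a_-}\max\{1,r\}^{a_-}$ for $r\ge0$.

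For (vii) I would change variables through the bijection $\Phi:(0,\infty)\times S^{n-1}\to\mathbb{R}^{n}\setminus\{\vec{0}_n\}$, $\Phi(\rho,\xi):=\rho^{\vec a}\xi$, whose inverse is $x\mapsto(|x|_{\vec a},\,|x|_{\vec a}^{-\vec a}x)$; this is well defined and smooth, since $|x|_{\vec a}^{-\vec a}x\in S^{n-1}$ by Definition~\ref{anisotropic df}, and conversely $|\rho^{\vec a}\xi|_{\vec a}=\rho$ by (i) because $|\xi|_{\vec a}=1$ for $\xi\in S^{n-1}$. Parametrizing $S^{n-1}$ locally by $\theta\mapsto\xi(\theta)$, the $i$-th row of the Jacobian matrix of $\Phi$ is $\rho^{a_i-1}\bigl(a_i\xi_i,\rho\,\partial_1\xi_i,\dots,\rho\,\partial_{n-1}\xi_i\bigr)$; pulling $\rho^{a_i-1}$ out of row $i$ (total factor $\rho^{\sum_i(a_i-1)}=\rho^{\,v-n}$) and $\rho$ out of each of the $n-1$ angular columns shows $|\det D\Phi|$ equals $\rho^{\,v-1}$ times a positive function of $\xi$ alone, which is precisely the density of the spherical measure $d\sigma$; an application of Fubini's theorem then gives the stated identity. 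This last step is a standard change of variables and is also recorded in \cite{stein1978problems, cleanthous2017anisotropic}, so I would simply cite it there.
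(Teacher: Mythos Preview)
The paper does not prove this lemma at all: immediately before the statement it writes ``let us recall some basic properties of $|\cdot|_{\vec a}$, see \cite{cleanthous2017anisotropic} for more details,'' and gives no argument. Your proposal therefore goes well beyond the paper, supplying a complete self-contained proof of (i)--(vi) where the paper only cites. Your arguments for (i)--(vi) are correct; in particular the convexity estimate you give for (ii) is a clean way to obtain the triangle inequality, and (iii)--(vi) are derived from (i), (ii) and the comparison lemma exactly as one would expect.

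One small caveat on (vii). Your Jacobian computation is right up to the point where you factor out $\rho^{v-1}$, but the remaining determinant is not the standard spherical area element: with first column $(a_i\xi_i)_i$ and the other columns tangent to $S^{n-1}$, the determinant equals $\bigl(\sum_{i=1}^n a_i\xi_i^{2}\bigr)$ times the usual surface element (decompose $(a_i\xi_i)_i$ into its normal component along $\xi$ and a tangential part, which drops out). In the isotropic case this extra factor is $1$, but for general $\vec a$ it is not. So either the measure $d\sigma$ in the statement is understood to absorb this factor $\sum_i a_i\xi_i^{2}$, or the identity as written is missing it; your sentence ``which is precisely the density of the spherical measure $d\sigma$'' glosses over this. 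Since both you and the paper ultimately defer to \cite{stein1978problems,cleanthous2017anisotropic} for (vii), this does not affect the comparison, but if you keep your own derivation you should record the factor $\sum_i a_i\xi_i^{2}$ explicitly.
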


\begin{remark}
\rm{
 By Lemma \ref{norm proper} (i), we easily know that the anisotropic quasi-homogeneous norm $|\cdot|_{\vec{a}}$ is a norm if and only if $\vec{a}=(a_1,a_2, \dots, a_n)=(1,1, \dots, 1)$, and in this case, the homogeneous quasi-norm $|\cdot|_{\vec{a}}$ becomes the Euclidean norm $|\cdot|$.}
\end{remark}
For any $\vec{a} \in[1, \infty)^{n}, r \in(0, \infty)$ and $x \in \mathbb{R}^{n}$. The anisotropic ball with radius $r$ and center $x$ is defined as $B _{\vec{a}}(x, r)=\left\{y \in \mathbb{R}^{n}:|y-x|_{\vec{a}}<r\right\}$. Then $B_{\vec{a}}(x, r)=x+r^{\vec{a}} B_{\vec{a}}(\vec{0}_{n}, 1)$ and $\left|B_{\vec{a}}(x, r)\right|=v_{n} r^{v}$, where $v_{n}:=|B_{\vec{a}}(\vec{0}_{n}, 1)|$. Moreover, from Lemma \ref{norm proper} (ii), we deduce that $B_{0}:=B_{\vec{a}}(\vec{0}_{n}, 1)=B(\vec{0}_{n}, 1)$, where $B(\vec{0}_{n}, 1)$ denotes the unit ball of $\mathbb{R}^{n}$, namely, $B(\vec{0}_{n}, 1):=\left\{y \in \mathbb{R}^{n}:|y|<1\right\}$. We will always consider $\mathfrak{B}$ to be the set of all anisotropic balls in the following.$$
\mathfrak{B}:=\left\{B_{\vec{a}}(x, r): x \in \mathbb{R}^{n}, r \in(0, \infty)\right\} .
$$
For any $B \in \mathfrak{B}$ centered at $x \in \mathbb{R}^{n}$ with radius $r \in(0, \infty)$ and $\delta \in(0, \infty)$, let
$$
B^{(\delta)}:=B_{\vec{a}}^{(\delta)}(x, r):=B_{\vec{a}}(x, \delta r) .
$$
In addition, for any $x \in \mathbb{R}^{n}$ and $r \in(0, \infty)$, the anisotropic cube $Q_{\vec{a}}(x, r)$ is defined by setting $Q_{\vec{a}}(x, r):=x+r^{\vec{a}}(-1,1)^{n}$, whose Lebesgue measure $\left|Q_{\vec{a}}(x, r)\right|$ equals $2^{n} r^{v}$. Denote by $\mathfrak{Q}$ the set of all anisotropic cubes, namely,
$$
\mathbb{Q}:=\left\{Q_{\vec{a}}(x, r): x \in \mathbb{R}^{n}, r \in(0, \infty)\right\} .
$$
Now let us recall the definition of mixed-norm Lebesgue spaces from \cite{benedek1961space}.
\begin{definition}(Mixed Lebesgue spaces)(\cite{benedek1961space})
	Let $\vec{p}=\left(p_1, p_2,\dots, p_n\right)\in (0,\infty]^n.$ Then the mixed-norm Lebesgue space $L^{\vec{p}}(\mathbb{R}^n)$ is defined to be the set of all measurable functions $f$ such that
	$$\left\|f\right \|_{L^{\vec{p}}{(\mathbb{R}^n)}}:= \left(\int_{\mathbb{R}}\dots\left(\int_{\mathbb{R}}\left(\int_{\mathbb{R}}\mid f(x_1, x_2,\dots, x_n)\mid^{p_1}dx_1\right)^{\frac{p_2}{p_1}}dx_2\right)^{\frac{p_3}{p_2}}\dots dx_n\right)^{\frac{1}{p_n}}< \infty ,$$
	 If $p_j=\infty,$ then we have to make appropriate modifications.
\end{definition}
\begin{remark} \label{mixedpro}
	\rm{\begin{enumerate}
		\item [(i)] When $\vec{p}=(p, \dots, p)\in (0,\infty]^n$, the mixed-norm Lebesgue space $L^{\vec{p}}(\mathbb{R}^n)$ is classical Lebesgue space $L^p(\mathbb{R}^n)$.
		\item [(ii)] $(L^{\vec{p}}(\mathbb{R}^n), \|\cdot\|_{L^{\vec{p}}(\mathbb{R}^n)})$ is a quasi-Banach space for any $\vec{p}\in (0,\infty]^n$, and $(L^{\vec{p}}(\mathbb{R}^n), \|\cdot\|_{L^{\vec{p}}(\mathbb{R}^n)})$ is a Banach space for any $\vec{p}\in (1,\infty]^n$,see [\cite{benedek1961space}, p.304, Theorem 1 ].
		\item [(iii)] (H\"older's inequality on mixed-norm Lebesgue spaces) \quad  Let $0<\vec{p}\leq \infty$, $f\in L^{\vec{p}}(\mathbb R^n), g\in L^{\vec{p}^\prime}(\mathbb R^n)$, and for any $p_i~(i\in \{1,2, \dots, n\})$,${1}/{p_i}+{1}/{p_i^{\prime}}=1$, then
	\begin{equation*}
		\int_{\mathbb{R}^n}\left|f(x)g(x)\right|dx \leq \|f\|_{L^{\vec{p}}(\mathbb{R}^n)}\|g\|_{L^{\vec{p}^{\prime}}(\mathbb{R}^n)},
	\end{equation*}
where $\vec{p}^{\prime}$ denotes the conjugate index of $\vec{p}$.
     \item [(iv)]  Let $ \vec{p} \in(0, \infty]^{n}$. Then, for any  $s \in(0, \infty)$ and  $f \in L^{\vec{p}}\left(\mathbb{R}^{n}\right)$\\ 
$$\left\||f|^{s}\right\|_{L^{\vec{p}\left(\mathbb{R}^{n}\right)}}=\|f\|_{L^{s \vec{p}\left(\mathbb{R}^{n}\right)}}^{s} .$$
		\end{enumerate}	}
\end{remark}
Let us recall some necessary notations before defining the anisotropic mixed-norm Herz spaces. Let $B_{k,\vec{a}}:=B^{(2^k)}_{\vec{a}}(\vec{0}_{n},1)=B_{\vec{a}}(\vec{0}_{n},2^k)=\{y\in \mathbb{R}^n:~|y|_{\vec{a}}\leq 2^k\}$, and $A_{k,\vec{a}}=B_{k,\vec{a}}\backslash B_{k-1, \vec{a}}$ for $k\in \mathbb{Z}$. Denote $\chi_{_k}=\chi_{_{A_{k,\vec{a}}}}$ for any $k\in \mathbb{Z}$, and $\widetilde{\chi}_{_k}=\chi_{_k}$ for any $k\in \mathbb{N}$, $\widetilde{\chi}_{_0}=\chi_{_{B_0}}$. 

\begin{definition}
Let $\vec{a}:=\left(a_{1}, \dots, a_{n}\right) \in[1, \infty)^{n}$, $\alpha \in \mathbb{R}$, $0<p\leq \infty$, $0<\vec{q}\leq \infty$. The homogeneous anisotropic mixed-norm Herz spaces $\dot{K}_{\vec{q},\vec{a}}^{\alpha, p}(\mathbb{R}^n)$ associated with $\vec{a}$ is defined by
$$\dot{K}_{\vec{q},\vec{a}}^{\alpha, p}(\mathbb{R}^n):=\left\{f\in L_{{\rm loc}}^{\vec{q}}(\mathbb{R}^n \backslash\{\boldsymbol{0}\}): \|f\|_{\dot{K}_{\vec{q},\vec{a}}^{\alpha, p}(\mathbb{R}^n)}=\left(\sum_{k\in \mathbb{Z}} \left|B_{k,\vec{a}}\right|^{\alpha p} \|f\chi_k\|_{L^{\vec{q}}}^p\right)^{{1}/{p}}< \infty\right\}.$$
\end{definition}

\begin{definition}
Let $\vec{a}:=\left(a_{1}, \dots, a_{n}\right) \in[1, \infty)^{n}$, $\alpha \in \mathbb{R}$, $0<p\leq \infty$, $0<\vec{q}\leq \infty$. The non-homogeneous anisotropic mixed-norm Herz spaces $\dot{K}_{\vec{q},\vec{a}}^{\alpha, p}(\mathbb{R}^n)$ associated with $\vec{a}$ is defined by
$$K_{\vec{q},\vec{a}}^{\alpha, p}(\mathbb{R}^n):=\left\{f\in L_{{\rm loc}}^{\vec{q}}: \|f\|_{K_{\vec{q},\vec{a}}^{\alpha, p}(\mathbb{R}^n)}=\left(\sum_{k=0}^{\infty}\left|B_{k,\vec{a}}\right|^{\alpha p}\|f\widetilde{\chi}_k\|_{L^{\vec{q}}}^p\right)^{{1}/{p}}< \infty\right\}.$$
\end{definition}

\begin{remark}
\rm{
\begin{enumerate}
	\item [(1)] We assume that $\vec{a}=(1,1, \dots, 1)$, then anisotropic mixed-norm Herz spaces $\dot{K}_{\vec{q},\vec{a}}^{\alpha, p}(\mathbb{R}^n)(\text{or}~{K}_{\vec{q},\vec{a}}^{\alpha, p}(\mathbb{R}^n))$ associated with $\vec{a}$ are the typical isotropic mixed-norm Herz spaces $\dot{K}_{\vec{q}}^{\alpha, p}(\mathbb{R}^n)(\text{or}~K_{\vec{q}}^{\alpha, p}(\mathbb{R}^n))$, which are defined by Wei in \cite{wei2021characterization}.
\end{enumerate} }
\end{remark}

\begin{proposition} \label{quasiBanach}
The homogeneous anisotropic mixed-norm Herz space $\dot{K}_{\vec{q}, \vec{a}}^{\alpha, p}(\mathbb{R}^n)$ and the non-homogeneous anisotropic mixed-norm Herz space $K_{\vec{q},\vec{a}}^{\alpha, p}(\mathbb{R}^n)$ are quasi-Banach spaces. 
\end{proposition}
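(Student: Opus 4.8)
The plan is to establish in turn the two defining properties of a quasi-Banach space: that $\|\cdot\|_{\dot K_{\vec q,\vec a}^{\alpha,p}(\mathbb R^n)}$ (and likewise $\|\cdot\|_{K_{\vec q,\vec a}^{\alpha,p}(\mathbb R^n)}$) is a quasi-norm, and that the space is complete with respect to it. For the quasi-norm axioms, homogeneity is inherited termwise from $\|\cdot\|_{L^{\vec q}}$, and positive definiteness follows because the annuli $\{A_{k,\vec a}\}_{k\in\mathbb Z}$ partition $\mathbb R^n\setminus\{\boldsymbol{0}\}$ up to a null set: if $\|f\|_{\dot K_{\vec q,\vec a}^{\alpha,p}(\mathbb R^n)}=0$ then $\|f\chi_k\|_{L^{\vec q}}=0$ for every $k$, hence $f=0$ almost everywhere. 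For the quasi-triangle inequality I would combine the quasi-triangle inequality of $L^{\vec q}(\mathbb R^n)$—valid with some constant $C_{\vec q}\in[1,\infty)$ since $(L^{\vec q}(\mathbb R^n),\|\cdot\|_{L^{\vec q}})$ is a quasi-Banach space by Remark \ref{mixedpro}(ii)—with the elementary bound $(s+t)^p\le\max\{1,2^{p-1}\}(s^p+t^p)$ for $s,t\ge0$ applied to the outer $\ell^p$ sum, so that $\|f+g\|_{\dot K_{\vec q,\vec a}^{\alpha,p}(\mathbb R^n)}\le C_{\vec q,p}\big(\|f\|_{\dot K_{\vec q,\vec a}^{\alpha,p}(\mathbb R^n)}+\|g\|_{\dot K_{\vec q,\vec a}^{\alpha,p}(\mathbb R^n)}\big)$, with sums replaced by suprema when $p=\infty$. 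The non-homogeneous case is handled identically with $k$ running over $\{0,1,2,\dots\}$ and $\widetilde\chi_k$ in place of $\chi_k$.

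For completeness, let $\{f_j\}_{j\in\mathbb N}$ be Cauchy in $\dot K_{\vec q,\vec a}^{\alpha,p}(\mathbb R^n)$. Fixing $k\in\mathbb Z$, the inequality $|B_{k,\vec a}|^{\alpha}\,\|(f_j-f_i)\chi_k\|_{L^{\vec q}}\le\|f_j-f_i\|_{\dot K_{\vec q,\vec a}^{\alpha,p}(\mathbb R^n)}$ shows that $\{f_j\chi_k\}_j$ is Cauchy in $L^{\vec q}(\mathbb R^n)$; by completeness of $L^{\vec q}(\mathbb R^n)$ there is $g_k\in L^{\vec q}(\mathbb R^n)$, supported up to a null set in $A_{k,\vec a}$, with $f_j\chi_k\to g_k$ in $L^{\vec q}(\mathbb R^n)$. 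Since the $A_{k,\vec a}$ are pairwise disjoint and cover $\mathbb R^n\setminus\{\boldsymbol{0}\}$ up to a null set, I would set $f:=\sum_{k\in\mathbb Z}g_k$ pointwise almost everywhere, so that $f\chi_k=g_k$ for each $k$; since any compact subset of $\mathbb R^n\setminus\{\boldsymbol{0}\}$ meets only finitely many annuli, on it $f$ coincides with a finite sum of functions in $L^{\vec q}(\mathbb R^n)$, whence $f\in L_{{\rm loc}}^{\vec q}(\mathbb R^n\setminus\{\boldsymbol{0}\})$.

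It then remains to prove $f_j\to f$ in $\dot K_{\vec q,\vec a}^{\alpha,p}(\mathbb R^n)$, which also yields $f\in\dot K_{\vec q,\vec a}^{\alpha,p}(\mathbb R^n)$ via $f=f_{j_0}+(f-f_{j_0})$ and the quasi-triangle inequality just established. Given $\varepsilon>0$, pick $N$ with $\|f_j-f_i\|_{\dot K_{\vec q,\vec a}^{\alpha,p}(\mathbb R^n)}<\varepsilon$ for all $i,j\ge N$. For every finite $F\subset\mathbb Z$ and $j\ge N$ one has $\big(\sum_{k\in F}|B_{k,\vec a}|^{\alpha p}\|(f_j-f_i)\chi_k\|_{L^{\vec q}}^p\big)^{1/p}<\varepsilon$ for all $i\ge N$; letting $i\to\infty$ in this finite sum and using that $(f_j-f_i)\chi_k\to(f_j-f)\chi_k$ in $L^{\vec q}(\mathbb R^n)$ together with the continuity of the $L^{\vec q}$ quasi-norm (for instance after replacing it by an equivalent $r$-norm with $r:=\min\{1,p,q_1,\dots,q_n\}$) gives $\big(\sum_{k\in F}|B_{k,\vec a}|^{\alpha p}\|(f_j-f)\chi_k\|_{L^{\vec q}}^p\big)^{1/p}\le\varepsilon$. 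Taking the supremum over all finite $F\subset\mathbb Z$ yields $\|f_j-f\|_{\dot K_{\vec q,\vec a}^{\alpha,p}(\mathbb R^n)}\le\varepsilon$ for $j\ge N$; the case $p=\infty$ is identical with sums replaced by suprema, and the non-homogeneous statement follows from the same argument run over $k\ge0$ (noting $f\widetilde\chi_0=g_0$ is supported near the origin, so $f\in L_{{\rm loc}}^{\vec q}(\mathbb R^n)$). The main obstacle I anticipate is not conceptual but bookkeeping: correctly assembling the candidate limit $f$ from the pieces $g_k$ as a genuine element of the relevant local space, and justifying the passage to the limit through the truncated $\ell^p$ sums (including the minor continuity point for the $L^{\vec q}$ quasi-norm); the rest is a routine transfer of the quasi-Banach structures of $L^{\vec q}(\mathbb R^n)$ and $\ell^p$.
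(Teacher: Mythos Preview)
Your proposal is correct. The quasi-norm verification matches the paper's argument essentially line for line. For completeness, however, you take a genuinely different route: the paper passes to a rapidly Cauchy subsequence with $\|f_{j+1}-f_j\|_{\dot K_{\vec q,\vec a}^{\alpha,p}}\le 2^{-j}$, defines $f=f_1+\sum_{j\ge1}(f_{j+1}-f_j)$ as a telescoping series, and bounds $\|f\|$ and $\|f-f_J\|$ directly by geometric sums. Your argument instead works ``coordinate-wise'': you use completeness of $L^{\vec q}$ on each annulus to obtain limits $g_k$, patch them into a candidate $f$, and then pass to the limit through finite truncations of the $\ell^p$ sum. Your approach is slightly longer but more transparent on two points the paper glosses over---why the limit $f$ exists as an element of $L^{\vec q}_{\rm loc}(\mathbb R^n\setminus\{\boldsymbol 0\})$, and why one may iterate the quasi-triangle inequality over an infinite sum in a quasi-Banach space---whereas the paper's telescoping device is the quicker textbook shortcut once those details are taken for granted.
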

\begin{proof}
 The proof of the non-homogeneous mixed-norm Herz spaces $\dot{K}_{\vec{q}, \vec{a}}^{\alpha, p}(\mathbb{R}^n)$ is similar to the proof of the homogeneous spaces $K_{\vec{q}, \vec{a}}^{\alpha, p}(\mathbb{R}^n)$.\\
  We first assert that  $\|\cdot\|_{\dot{K}_{\vec{q}, \vec{a}}^{\alpha, p}(\mathbb{R}^n)}$ is quasi-norm.
 \par 
 The positivity and the homogeneity are clear. Therefore, just the quasi-triangle inequality needs to be verified.
  \begin{align*}
  \|f+g\|_{\dot{K}_{\vec{q},\vec{a}}^{\alpha, p}(\mathbb{R}^n)} 
  &=\left(\sum_{k\in \mathbb{Z}}\left|B_{k, \vec{a}}\right|^{\alpha p}\|f\chi_k+g\chi_k\|_{L^{\vec{q}}(\mathbb{R}^n)}^p\right)^{{1}/{p}}	\\
 &\leq\left(\sum_{k\in \mathbb{Z}}\left|B_{k, \vec{a}}\right|^{\alpha p}\left(\max(1,2^{\sum\limits_
 {i=1}^n {(1-q_i)}/{qi}})\left(\|f\chi_k\|_{L^{\vec{q}}(\mathbb{R}^n)}+\|g\chi_k\|_{L^{\vec{q}}(\mathbb{R}^n)}\right)\right)^p\right)^{{1}/{p}}	\\ 
 &=\max(1,2^{\sum\limits_
 {i=1}^n {(1-q_i)}/{qi}})\left(\sum_{k\in \mathbb{Z}}\left|B_{k, \vec{a}}\right|^{\alpha p}\left(\left(\|f\chi_k\|_{L^{\vec{q}}(\mathbb{R}^n)}+\|g\chi_k\|_{L^{\vec{q}}(\mathbb{R}^n)}\right)\right)^p\right)^{{1}/{p}}	\\ 
 &\leq \max(1,2^{\sum\limits_{i=1}^n {(1-q_i)}/{qi}})\max(1,2^{{(1-p)}/{p}})\left(\|f\|_{\dot{K}_{\vec{q},\vec{a}}^{\alpha, p}(\mathbb{R}^n)}+\|g\|_{\dot{K}_{\vec{q},\vec{a}}^{\alpha, p}(\mathbb{R}^n)}  \right).
   \end{align*}
   Obviously, when $1\leq \vec{q},p \leq \infty$, $\|\cdot\|_{\dot{K}_{\vec{q},\vec{a}}^{\alpha, p}(\mathbb{R}^n)}$ is a norm.  
\par 
The completeness of spaces $\dot{K}_{\vec{q},\vec{a}}^{\alpha, p}(\mathbb{R}^n)$ is then demonstrated. Choosing a Cauchy sequence $\{f_j\}_{j=1}^{\infty} \in \dot{K}_{\vec{q},\vec{a}}^{\alpha, p}(\mathbb{R}^n)$ satisfies that
   $$\|f_{j+1}-f_{j}\|_{\dot{K}_{\vec{q},\vec{a}}^{\alpha, p}(\mathbb{R}^n)} \leq 2^{-j},$$
   Let $f(x)=f_1(x)+\sum\limits_{j=1}^{\infty}\left(f_{j+1}(x)-f_{j}(x)\right)=\lim\limits_{j\rightarrow \infty}f_j(x)$, then $f\in \dot{K}_{\vec{q},\vec{a}}^{\alpha, p}(\mathbb{R}^n)$, it is deduced by following\\
    \begin{align*}
    \|f\|_{\dot{K}_{\vec{q},\vec{a}}^{\alpha, p}(\mathbb{R}^n)}
    &=	\left\|f_1+\sum_{j=1}^{\infty}\left(f_{j+1}-f_{j}\right)\right\|_{\dot{K}_{\vec{q},\vec{a}}^{\alpha, p}(\mathbb{R}^n)}\\
    &\leq \|f_1\|_{\dot{K}_{\vec{q},\vec{a}}^{\alpha, p}(\mathbb{R}^n)}+\left\|\sum_{j=1}^{\infty}\left(f_{j+1}-f_{j}\right)\right\|_{\dot{K}_{\vec{q},\vec{a}}^{\alpha, p}(\mathbb{R}^n)}\\
    &\leq C\left(\|f_1\|_{\dot{K}_{\vec{q},\vec{a}}^{\alpha, p}(\mathbb{R}^n)}+\sum_{j=1}^{\infty}\|f_{j+1}-f_{j}\|_{\dot{K}_{\vec{q},\vec{a}}^{\alpha, p}(\mathbb{R}^n)}\right)\\
    &\leq C\left(\|f_1\|_{\dot{K}_{\vec{q},\vec{a}}^{\alpha, p}(\mathbb{R}^n)}+\sum_{j=1}^{\infty}2^{-j}\right) \leq C.
    \end{align*}
   Furthermore, $\{f_j\}_{j=1}^{\infty}$ converge to $f\in \dot{K}_{\vec{q},\vec{a}}^{\alpha, p}((\mathbb{R}^n)$,
   \begin{align*}
   	\|f-f_J\|_{\dot{K}_{\vec{q},\vec{a}}^{\alpha, p}(\mathbb{R}^n)}
   	&=\left\|f_1+\sum_{j=1}^{\infty}\left(f_{j+1}-f_{j}\right)-f_J\right\|_{\dot{K}_{\vec{q},\vec{a}}^{\alpha, p}(\mathbb{R}^n)}\\
   	&\leq \left\|\sum_{j=1}^{\infty}\left(f_{j+1}-f_{j}\right)-f_J-\sum_{j=1}^{J-1}\left(f_{j+1}-f_{j}\right)-f_J\right\|_{\dot{K}_{\vec{q},\vec{a}}^{\alpha, p}(\mathbb{R}^n)}\\
   	&\leq \left\|\sum_{j=J+1}^{\infty}\left(f_{j+1}-f_{j}\right)\right\|_{\dot{K}_{\vec{q},\vec{a}}^{\alpha, p}(\mathbb{R}^n)}\\
   	&\leq 2^{1-J}.
   \end{align*}
   Then,
   $$\lim\limits_{J\rightarrow \infty}\|f-f_J\|_{\dot{K}_{\vec{q},\vec{a}}^{\alpha, p}}=0.$$
   So, $\dot{K}_{\vec{q},\vec{a}}^{\alpha, p}(\mathbb{R}^n)$ and $K_{\vec{q},\vec{a}}^{\alpha, p}(\mathbb{R}^n)$ are quasi-Banach spaces.
\end{proof}

\begin{proposition}\label{include}
Let $\vec{a}\in (1, \infty]^n$, $\alpha \in \mathbb{R}$, $0<p\leq \infty$, $0<\vec{q}\leq \infty$, where $\vec{q}=(q_1,q_2,\dots,q_n)$. The following inclusions are valid.
\begin{enumerate}
	\item [\rm{(i)}] If $p_1\leq p_2$, then $\dot{K}_{\vec{q},\vec{a}}^{\alpha, p_1}(\mathbb{R}^n)\subset \dot{K}_{\vec{q},\vec{a}}^{\alpha, p_2}(\mathbb{R}^n)$ and $K_{\vec{q},\vec{a}}^{\alpha, p_1}(\mathbb{R}^n)\subset K_{\vec{q},\vec{a}}^{\alpha, p_2}(\mathbb{R}^n)$.
	\item [\rm{(ii)}] If $\alpha_2\leq \alpha_1$, then $K_{\vec{q},\vec{a}}^{\alpha_1, p}(\mathbb{R}^n)\subset K_{\vec{q},\vec{a}}^{\alpha_2, p}(\mathbb{R}^n)$.
	\item [\rm{(iii)}] If $\vec{q}_{_1}\leq \vec{q}_{_2}$, then $\dot{K}_{\vec{q}_{_2},\vec{a}}^{\alpha+\sum\limits_{i=1}^n({1}/{q_{1i}}-{1}/{q_{2i}}), p}(\mathbb{R}^n)\subset \dot{K}_{\vec{q}_{_1},\vec{a}}^{\alpha, p}(\mathbb{R}^n)$ and $K_{\vec{q}_{_2},\vec{a}}^{\alpha+\sum\limits_{i=1}^n({1}/{q_{1i}}-{1}/{q_{2i}}), p}(\mathbb{R}^n)\subset K_{\vec{q}_{_1},\vec{a}}^{\alpha, p}(\mathbb{R}^n)$.
\end{enumerate}
\end{proposition}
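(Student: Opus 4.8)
The plan is to reduce each of the three inclusions to an elementary estimate on the nonnegative scalar sequence attached to $f$. In the homogeneous case write $b_k:=|B_{k,\vec a}|^{\alpha}\,\|f\chi_k\|_{L^{\vec q}}$ for $k\in\mathbb Z$, so that $\|f\|_{\dot K_{\vec q,\vec a}^{\alpha,p}}=\big(\sum_{k\in\mathbb Z}b_k^{\,p}\big)^{1/p}=\|(b_k)_{k\in\mathbb Z}\|_{\ell^p}$; in the non-homogeneous case replace $\chi_k$ by $\widetilde\chi_k$ and restrict the sum to $k\ge 0$. With this reformulation, (i) becomes a statement purely about $\ell^p$-spaces, (ii) becomes a statement about the weights $|B_{k,\vec a}|^{\alpha p}$ on the range $k\ge 0$, and (iii) becomes a statement comparing $\|f\chi_k\|_{L^{\vec q_1}}$ with $\|f\chi_k\|_{L^{\vec q_2}}$ on a single annulus $A_{k,\vec a}$, followed by bookkeeping of the weights.

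First I would handle (i) via the monotonicity of $\ell^p$-norms: if $0<p_1\le p_2\le\infty$ and $(c_k)$ is nonnegative, then $c_k\le\|(c_k)\|_{\ell^{p_1}}$ for every $k$, hence $c_k^{\,p_2}\le c_k^{\,p_1}\|(c_k)\|_{\ell^{p_1}}^{\,p_2-p_1}$, and summing gives $\|(c_k)\|_{\ell^{p_2}}\le\|(c_k)\|_{\ell^{p_1}}$; taking $c_k=b_k$ (respectively the non-homogeneous sequence) yields both inclusions in (i), with embedding constant $1$. For (ii) I would exploit that the non-homogeneous norm involves only $k\ge 0$, where $|B_{k,\vec a}|=v_n2^{kv}\ge v_n=|B_0|>0$; since $\alpha_2-\alpha_1\le 0$ and $p>0$, the function $t\mapsto t^{(\alpha_2-\alpha_1)p}$ is nonincreasing on $[v_n,\infty)$, so $|B_{k,\vec a}|^{\alpha_2p}=|B_{k,\vec a}|^{(\alpha_2-\alpha_1)p}|B_{k,\vec a}|^{\alpha_1p}\le v_n^{(\alpha_2-\alpha_1)p}|B_{k,\vec a}|^{\alpha_1p}$ for all $k\ge 0$; multiplying by $\|f\widetilde\chi_k\|_{L^{\vec q}}^p$ and summing gives $\|f\|_{K_{\vec q,\vec a}^{\alpha_2,p}}\le v_n^{\alpha_2-\alpha_1}\|f\|_{K_{\vec q,\vec a}^{\alpha_1,p}}$. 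I would point out that the restriction to the non-homogeneous space is essential here, since for the homogeneous space the ratio $|B_{k,\vec a}|^{(\alpha_2-\alpha_1)p}$ is unbounded as $k\to-\infty$.

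For (iii), fix $f$ in the larger space and set $1/\vec r:=1/\vec q_1-1/\vec q_2\in[0,\infty)^n$, which is legitimate because $\vec q_1\le\vec q_2$, so that $1/q_{1i}=1/q_{2i}+1/r_i$ for each $i$. By the generalized mixed-norm H\"older inequality (obtained by iterating the one-variable case; cf. Remark \ref{mixedpro}(iii)), for every $k$,
$$\|f\chi_k\|_{L^{\vec q_1}}=\|(f\chi_k)\,\chi_k\|_{L^{\vec q_1}}\le\|f\chi_k\|_{L^{\vec q_2}}\,\|\chi_k\|_{L^{\vec r}},$$
and similarly with $\widetilde\chi_k$ in place of $\chi_k$. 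Assuming the uniform estimate
$$\|\chi_k\|_{L^{\vec r}}\le\|\chi_{B_{k,\vec a}}\|_{L^{\vec r}}\lesssim|B_{k,\vec a}|^{\sum_{i=1}^n(1/q_{1i}-1/q_{2i})}\qquad(k\in\mathbb Z),$$
I would multiply through by $|B_{k,\vec a}|^{\alpha}$, raise to the $p$-th power, sum over $k$ (over $k\ge 0$ in the non-homogeneous case), and recognize the right-hand side as $\|f\|_{\dot K_{\vec q_2,\vec a}^{\beta,p}}$ (respectively $\|f\|_{K_{\vec q_2,\vec a}^{\beta,p}}$) with $\beta:=\alpha+\sum_{i=1}^n(1/q_{1i}-1/q_{2i})$, which is precisely the stated inclusion.

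The displayed uniform estimate is the heart of the argument, and the only point where the anisotropy genuinely enters. To prove it I would use $\chi_k=\chi_{A_{k,\vec a}}\le\chi_{B_{k,\vec a}}$ together with the dilation identity $B_{k,\vec a}=(2^k)^{\vec a}B_0$; by Lemma \ref{norm proper}(iii) the unit ball $B_0$ is sandwiched between two coordinate boxes, so $B_{k,\vec a}$ is comparable --- with constants depending only on $n$ and $\vec a$ --- to $\prod_{i=1}^n(-2^{ka_i},2^{ka_i})$, on which the mixed $L^{\vec r}$-norm of the indicator is the explicit product $\prod_{i=1}^n(2^{ka_i+1})^{1/r_i}$. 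The remaining task is to compare this product with the corresponding power of $|B_{k,\vec a}|=v_n2^{kv}$ uniformly across all scales $k\in\mathbb Z$, and it is this comparison --- not the soft functional-analytic steps (i) and (ii) --- that I expect to be the main obstacle, since it is exactly here that the interaction between the mixed exponent $\vec q$, the anisotropy vector $\vec a$, and the scale $k$ of the annulus has to be kept under control.
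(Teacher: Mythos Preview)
Your arguments for (i) and (ii) are correct and coincide with the paper's: (i) is exactly the $\ell^{p_1}\hookrightarrow\ell^{p_2}$ embedding (the paper phrases it via the inequality $(\sum|a_k|)^r\le\sum|a_k|^r$ for $0<r\le1$, which is the same thing), and (ii) is the same weight comparison on $k\ge0$.

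For (iii) your strategy is also the paper's strategy: H\"older on the annulus followed by a size estimate for $\|\chi_k\|_{L^{\vec r}}$. You have correctly isolated the only nontrivial point, namely the displayed ``uniform estimate'' $\|\chi_{B_{k,\vec a}}\|_{L^{\vec r}}\lesssim|B_{k,\vec a}|^{\sum_i 1/r_i}$, and you are right to be suspicious of it. In fact this estimate \emph{fails} in general for the homogeneous space. By the anisotropic dilation $A_{k,\vec a}=(2^k)^{\vec a}A_{0,\vec a}$ and a change of variables one has the \emph{exact} scaling
\[
\|\chi_k\|_{L^{\vec r}}=\|\chi_{A_{0,\vec a}}\|_{L^{\vec r}}\cdot 2^{\,k\sum_{i=1}^n a_i/r_i},
\]
whereas $|B_{k,\vec a}|^{\sum_i 1/r_i}\sim 2^{\,kv\sum_i 1/r_i}$. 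These two powers of $2^k$ are comparable uniformly over $k\in\mathbb Z$ if and only if $\sum_i a_i/r_i=v\sum_i 1/r_i$, which is false for generic $\vec a$ and $\vec r$ (take $n=2$, $\vec a=(1,2)$, $r_1$ finite, $r_2=\infty$). So the comparison you flag as the main obstacle is not a technicality to be overcome but a genuine obstruction: with the exponent written in the statement, the homogeneous inclusion cannot be obtained by this route. What your H\"older argument \emph{does} prove, cleanly and uniformly in $k$, is
\[
|B_{k,\vec a}|^{\alpha}\|f\chi_k\|_{L^{\vec q_1}}\ \lesssim\ |B_{k,\vec a}|^{\,\alpha+\frac{1}{v}\sum_{i=1}^n a_i(1/q_{1i}-1/q_{2i})}\|f\chi_k\|_{L^{\vec q_2}},
\]
i.e.\ the inclusion with shift $\frac{1}{v}\sum_i a_i(1/q_{1i}-1/q_{2i})$ in place of $\sum_i(1/q_{1i}-1/q_{2i})$. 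The paper's proof reaches the stated exponent only by passing through the product region $\{2^{k-1}\le|x_i|^{1/a_i}<2^k\}$, but that region does \emph{not} contain $A_{k,\vec a}$ (points of $A_{k,\vec a}$ near a coordinate axis have some $x_i$ arbitrarily small), so that step is not a valid upper bound; the subsequent bookkeeping then conflates $\sum_i a_i/r_i$ with $v\sum_i 1/r_i$. For the non-homogeneous space your estimate \emph{does} go through as written, since for $k\ge0$ one has $2^{k\sum_i a_i/r_i}\le 2^{kv\sum_i 1/r_i}$ (because $a_i\le v$), which gives the inclusion with the stated exponent, albeit not the sharp one.
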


\begin{proof}
	(i)\quad First recall an inequality
\begin{equation}
	\left(\sum\limits_{k=1}^{\infty}|a_k|\right)^r\leq \sum\limits_{k=1}^{\infty}|a_k|^r\quad \quad \quad \quad (0<r\leq 1).\label{inequlity}
\end{equation}
Through this inequality, one can directly get the following result
\begin{align*}
\|f\|_{\dot{K}_{\vec{q},\vec{a}}^{\alpha, p_2}(\mathbb{R}^n)}&=\left(\sum\limits_{k\in \mathbb{Z}}\left(\left|B_{k,\vec{a}}\right|^{\alpha}\|f\chi_k\|_{L^{\vec{q}}(\mathbb{R}^n)}\right)^{p_2}\right)^{{1}/{p_2}}\\
&\leq \left(\sum\limits_{k\in \mathbb{Z}}\left(\left|B_{k,\vec{a}}\right|^{\alpha}\|f\chi_k\|_{L^{\vec{q}}(\mathbb{R}^n)}\right)^{p_1}\right)^{{1}/{p_1}} \leq \|f\|_{\dot{K}_{\vec{q},\vec{a}}^{\alpha, p_1}(\mathbb{R}^n)}.
\end{align*}
(ii) From the definition of anisotropic mixed-norm Herz spaces, we can easily get,
$$\|f\|_{K_{\vec{q},\vec{a}}^{\alpha_2, p}(\mathbb{R}^n)}\leq \left(\sum\limits_{k=0}^{\infty}\left|B_{k, \vec{a}}\right|^{\alpha_1 p}\|f\widetilde{\chi}_k\|_{L^{\vec{q}}(\mathbb{R}^n)}^p\right)^{{1}/{p}}	
\leq C \|f\|_{K_{\vec{q},\vec{a}}^{\alpha_{1}, p}(\mathbb{R}^n)}.$$
(iii)
\begin{align*}
\|f\|_{\dot{K}_{\vec{q}_1,\vec{a}}^{\alpha, p}(\mathbb{R}^n)}&= \left(\sum_{k\in \mathbb{Z}}\left|B_{k,\vec{a}}\right|^{\alpha p}\left(\int_{\mathbb{R}}\dots \left(\int_{\mathbb{R}}\left(\int_{\mathbb{R}}|f\chi_k(x)|^{q_{11}}dx_1\right)^{{q_{12}}/{q_{11}}}dx_2\right)^{{q_{13}}/{q_{12}}}\dots dx_n\right)^{{p}/{q_{1n}}}\right)^{{1}/{p}}	\\
&\leq  \left(\sum_{k\in \mathbb{Z}}\left|B_{k,\vec{a}}\right|^{\alpha p}\left(\int_{2^{k-1}\leq|x_n|_{(a_n)}<2^k}\dots \left(\int_{2^{k-1}\leq|x_1|_{(a_1)}<2^k}|f(x)|^{q_{11}}dx_1\right)^{{q_{12}}/{q_{11}}}\dots dx_n\right)^{{p}/{q_{1n}}}\right)^{\frac{1}{p}}	\\
&\leq \left(\sum_{k\in \mathbb{Z}}\left|B_{k,\vec{a}}\right|^{(\alpha+\sum\limits_{i=1}^{n}({1}/{q_{1i}}-{1}/{q_{2i}}) p}\left(\int_{\mathbb{R}}\dots \left(\int_{\mathbb{R}}|f\chi_k(x)|^{q_{21}}dx_1\right)^{{q_{22}}/{q_{21}}}\dots dx_n\right)^{{p}/{q_{2n}}}\right)^{{1}/{p}}	\\
&\leq \|f\|_{\dot{K}_{\vec{q}_2,\vec{a}}^{\alpha+\sum\limits_{i=1}^n({1}/{q_{1i}}-{1}/{q_{2i}}), p}(\mathbb{R}^n)}.
\end{align*}
Using the similar method, we can get the result (1) and (3) on spaces ${K}_{\vec{q},\vec{a}}^{\alpha, p}(\mathbb{R}^n)$.
\end{proof}

Concerning boundedness issues on anisotropic mixed Herz spaces, the H\"older's inequality and dual property as following are crucial. 
\begin{proposition}(H\"older's inequality on anisotropic mixed-norm Herz spaces)\label{Holder}
	Let $0<p_i\leq \infty$, $1\leq \vec{q}\leq \infty$, $\alpha, \alpha_i\in \mathbb{R}~(i=1,2)$, and $\alpha=\alpha_1+\alpha_2$, ${1}/{p}={1}/{p_1}+{1}/{p_2}$, ${1}/{\vec{q}}={1}/{\vec{q}_{_1}}+{1}/{\vec{q}_{_2}}$, then
	\begin{equation*}
		\|fg\|_{\dot{K}_{\vec{q},\vec{a}}^{\alpha, p}(\mathbb{R}^n)}\leq \|f\|_{\dot{K}_{\vec{q}_{_1},\vec{a}}^{\alpha_1, p_1}(\mathbb{R}^n)}\|g\|_{\dot{K}_{\vec{q}_{_2},\vec{a}}^{\alpha_2, p_2}(\mathbb{R}^n)}.
	\end{equation*}
\end{proposition}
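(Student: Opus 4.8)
The plan is to regard the homogeneous anisotropic mixed-norm Herz quasi-norm as an iterated quasi-norm: an \emph{inner} mixed-norm Lebesgue layer $\|\,\cdot\,\chi_k\|_{L^{\vec{q}}}$ on each annulus $A_{k,\vec{a}}$, followed by an \emph{outer} weighted $\ell^p$-summation over $k\in\mathbb{Z}$, and to apply an appropriate H\"older inequality at each of the two layers. First I would unwind the definition and, using $\chi_k^2=\chi_k$ and $\alpha=\alpha_1+\alpha_2$, write
\[
\|fg\|_{\dot{K}_{\vec{q},\vec{a}}^{\alpha, p}(\mathbb{R}^n)}
=\left(\sum_{k\in\mathbb{Z}}\left(|B_{k,\vec{a}}|^{\alpha_1}\,|B_{k,\vec{a}}|^{\alpha_2}\,\|(f\chi_k)(g\chi_k)\|_{L^{\vec{q}}}\right)^p\right)^{1/p}.
\]
The inner layer is controlled by the generalized H\"older inequality on mixed-norm Lebesgue spaces: since $1/\vec{q}=1/\vec{q}_{_1}+1/\vec{q}_{_2}$ holds componentwise (and each $q_{ij}\ge 1$ is forced by $1\le\vec{q}$), one gets $\|(f\chi_k)(g\chi_k)\|_{L^{\vec{q}}}\le\|f\chi_k\|_{L^{\vec{q}_{_1}}}\|g\chi_k\|_{L^{\vec{q}_{_2}}}$ for every $k\in\mathbb{Z}$.

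After inserting this pointwise-in-$k$ estimate and splitting the weight as $|B_{k,\vec{a}}|^{\alpha}=|B_{k,\vec{a}}|^{\alpha_1}|B_{k,\vec{a}}|^{\alpha_2}$, the task reduces to bounding $\bigl(\sum_{k}(A_kB_k)^{p}\bigr)^{1/p}$, where $A_k:=|B_{k,\vec{a}}|^{\alpha_1}\|f\chi_k\|_{L^{\vec{q}_{_1}}}$ and $B_k:=|B_{k,\vec{a}}|^{\alpha_2}\|g\chi_k\|_{L^{\vec{q}_{_2}}}$. For the outer layer I would invoke the generalized H\"older inequality for the sequence space $\ell^p$: raising to the power $p$ converts it into the classical H\"older inequality for series with the conjugate exponents $p_1/p$ and $p_2/p$, both of which are $\ge 1$ because $1/p=1/p_1+1/p_2\ge 1/p_i$ forces $p\le p_i$ (the cases $p_i=\infty$ requiring only the obvious modification). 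This gives $\bigl(\sum_{k}(A_kB_k)^{p}\bigr)^{1/p}\le\bigl(\sum_{k}A_k^{p_1}\bigr)^{1/p_1}\bigl(\sum_{k}B_k^{p_2}\bigr)^{1/p_2}$, and the right-hand side is, by the very definitions of the spaces, $\|f\|_{\dot{K}_{\vec{q}_{_1},\vec{a}}^{\alpha_1,p_1}(\mathbb{R}^n)}\,\|g\|_{\dot{K}_{\vec{q}_{_2},\vec{a}}^{\alpha_2,p_2}(\mathbb{R}^n)}$, which is the assertion.

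The only step here that is more than routine bookkeeping is the generalized H\"older inequality on $L^{\vec{q}}(\mathbb{R}^n)$ used in the first layer, since the excerpt records only the conjugate-exponent version in Remark \ref{mixedpro}(iii); the main obstacle is thus to promote that inequality to the case $1/\vec{q}=1/\vec{q}_{_1}+1/\vec{q}_{_2}$. I would establish it by induction on the number of coordinates $n$, peeling off one integration at a time: for fixed $(x_2,\dots,x_n)$ apply the one-variable generalized H\"older inequality in the $x_1$-integral (legitimate because $1/q_1=1/q_{11}+1/q_{21}$), then bound the resulting product of two functions of $(x_2,\dots,x_n)$ by the inductive hypothesis applied to the mixed norm $L^{(q_2,\dots,q_n)}$ in the remaining variables, whose reciprocal index vector splits in exactly the same fashion; identifying the two factors as $\|f\chi_k\|_{L^{\vec{q}_{_1}}}$ and $\|g\chi_k\|_{L^{\vec{q}_{_2}}}$ then closes the induction. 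Throughout one keeps the standard endpoint conventions when some $q_i$ or $p_i$ equals $\infty$. Finally, the identical two-layer argument applies verbatim to the non-homogeneous spaces $K_{\vec{q},\vec{a}}^{\alpha,p}(\mathbb{R}^n)$, upon replacing $\sum_{k\in\mathbb{Z}}$ by $\sum_{k=0}^{\infty}$ and $\chi_k$ by $\widetilde{\chi}_k$.
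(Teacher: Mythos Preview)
Your proposal is correct and follows exactly the approach the paper indicates: the paper's proof consists of the single sentence ``Proposition \ref{Holder} is merely an application of H\"older's inequality on mixed Lebesgue spaces,'' and your two-layer argument (generalized H\"older on $L^{\vec q}$ for each annulus, then H\"older on $\ell^p$ over $k$) is precisely the routine computation underlying that sentence. Your added inductive justification of the generalized mixed-norm H\"older inequality simply supplies a detail the paper leaves implicit.
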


\begin{proposition}(dual spaces of anisotropic mixed-norm Herz spaces)\label{dual} Let $\alpha \in \mathbb{R}, 0<p<\infty, 1 \leq \vec{q}<\infty$, and $1/p + 1/ p^{\prime}=1$, where $p^{\prime}=\infty$ if $0<p \leq 1$. Then
$$
\left(\dot{K}_{\vec{q}'\vec{a}}^{\alpha, p}\left(\mathbb{R}^{n}\right)\right)^{*}=\dot{K}_{\vec{q}^{\prime},\vec{a}}^{-\alpha, p^{\prime}}\left(\mathbb{R}^{n}\right),~\left(\left(\dot{K}_{\vec{q},\vec{a}}^{-\alpha, \infty}\right)_{0}\left(\mathbb{R}^{n}\right)\right)^{*}=\dot{K}_{\vec{q}^{\prime},\vec{a}}^{\alpha, 1}\left(\mathbb{R}^{n}\right)
$$
and
$$
\left(K_{\vec{q},\vec{a}}^{\alpha, p}\left(\mathbb{R}^{n}\right)\right)^{*}=K_{\vec{q}^{\prime},\vec{a}}^{-\alpha, p^{\prime}}\left(\mathbb{R}^{n}\right),~\left(\left(K_{\vec{q},\vec{a}}^{-\alpha, \infty}\right)_{0}\left(\mathbb{R}^{n}\right)\right)^{*}=K_{\vec{q}^{\prime},\vec{a}}^{\alpha, 1}\left(\mathbb{R}^{n}\right),
$$
where $\left(\dot{K}_{\vec{q},\vec{a}}^{-\alpha, \infty}\right)_{0}\left(\mathbb{R}^{n}\right)$ denote $f \in \dot{K}_{\vec{q},\vec{a}}^{\alpha, \infty}\left(\mathbb{R}^{n}\right)$ and $2^{k \alpha}\left\|f \chi_{k}\right\|_{L^{\vec{q}}\left(\mathbb{R}^{n}\right)} \rightarrow 0$ as $|k| \rightarrow \infty$, similar, $\left(K_{\vec{q},\vec{a}}^{-\alpha, \infty}\right)_{0}\left(\mathbb{R}^{n}\right)$ denote $f \in K_{\vec{q},\vec{a}}^{\alpha, \infty}\left(\mathbb{R}^{n}\right)$ and $2^{k \alpha}\left\|f \chi_{k}\right\|_{L^{\vec{q}}\left(\mathbb{R}^{n}\right)} \rightarrow 0$ as $k \rightarrow \infty$.
\end{proposition}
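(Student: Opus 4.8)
The plan is to reduce this duality to two classical ones: the Benedek--Panzone duality $(L^{\vec q})^{*}=L^{\vec q'}$ for $1\le\vec q<\infty$ (\cite{benedek1961space}) applied on each dyadic annulus $A_{k,\vec a}$, and the sequence-space duality $(\ell^{p})^{*}=\ell^{p'}$ (respectively $(c_0)^{*}=\ell^{1}$ when $p=\infty$) used to sum across the annuli. I would treat $X:=\dot K^{\alpha,p}_{\vec q,\vec a}(\mathbb R^{n})$ first, exploiting the key structural identity $\|h\|_{X}=|B_{k,\vec a}|^{\alpha}\|h\|_{L^{\vec q}(\mathbb R^{n})}$ valid for every $h$ with $\operatorname{supp}h\subset A_{k,\vec a}$; the non-homogeneous spaces $K^{\alpha,p}_{\vec q,\vec a}$ and the ``little'' spaces $(\cdot)_{0}$ would then follow by exactly the same scheme (with $\mathbb Z$ replaced by $\mathbb N$, and $\widetilde\chi_{0}=\chi_{B_{0}}$ at the base level).

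\emph{Easy inclusion.} Given $g\in\dot K^{-\alpha,p'}_{\vec q',\vec a}$, I would set $L_{g}(f):=\int_{\mathbb R^{n}}fg$ and, decomposing over annuli and applying H\"older's inequality on $L^{\vec q}$ (Remark \ref{mixedpro}(iii)) on each annulus followed by H\"older for sequences, obtain
\[
\int_{\mathbb R^{n}}|fg|\le\sum_{k}\|f\chi_{k}\|_{L^{\vec q}}\|g\chi_{k}\|_{L^{\vec q'}}=\sum_{k}\bigl(|B_{k,\vec a}|^{\alpha}\|f\chi_{k}\|_{L^{\vec q}}\bigr)\bigl(|B_{k,\vec a}|^{-\alpha}\|g\chi_{k}\|_{L^{\vec q'}}\bigr)\le\|f\|_{X}\,\|g\|_{\dot K^{-\alpha,p'}_{\vec q',\vec a}},
\]
so $L_{g}\in X^{*}$ with $\|L_{g}\|_{X^{*}}\le\|g\|_{\dot K^{-\alpha,p'}_{\vec q',\vec a}}$; this is essentially Proposition \ref{Holder} with one factor in the trivial Herz space.

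\emph{Hard inclusion.} Given $L\in X^{*}$, for each fixed $k$ the restriction $h\mapsto L(h)$ to functions supported in $A_{k,\vec a}$ is, by the structural identity, a bounded functional on $L^{\vec q}(A_{k,\vec a})$, so Benedek--Panzone yields $g_{k}\in L^{\vec q'}(A_{k,\vec a})$ with $L(h)=\int hg_{k}$ for all such $h$; I would then set $g:=\sum_{k}g_{k}$ (disjoint supports). To control $\|g\|_{\dot K^{-\alpha,p'}_{\vec q',\vec a}}$ I would, for a finite index set $F$, choose near-extremal $h_{k}$ ($\operatorname{supp}h_{k}\subset A_{k,\vec a}$, $\|h_{k}\|_{L^{\vec q}}\le1$, $\int h_{k}g_{k}\ge(1-\varepsilon)\|g_{k}\|_{L^{\vec q'}}$) and a nonnegative $\ell^{p}$-unit sequence $(a_{k})_{k\in F}$, then test $L$ against $f:=\sum_{k\in F}a_{k}|B_{k,\vec a}|^{-\alpha}h_{k}$, which has $\|f\|_{X}\le\|(a_{k})\|_{\ell^{p}}\le1$; the bound $|L(f)|\le\|L\|_{X^{*}}$ then gives $\sum_{k\in F}a_{k}\bigl(|B_{k,\vec a}|^{-\alpha}\|g\chi_{k}\|_{L^{\vec q'}}\bigr)\le(1-\varepsilon)^{-1}\|L\|_{X^{*}}$, and taking the supremum over $(a_{k})$ and $F$ and letting $\varepsilon\to0$ yields $\|g\|_{\dot K^{-\alpha,p'}_{\vec q',\vec a}}\le\|L\|_{X^{*}}$ (for $p=1$ one simply uses $(a_{k})=\delta_{k,k_{0}}$). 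Finally, since functions supported in finitely many annuli are dense in $X$ when $p<\infty$ --- and, by its very definition, in $(\dot K^{-\alpha,\infty}_{\vec q,\vec a})_{0}$ --- the functionals $L$ and $L_{g}$ agree, and combined with the easy inclusion this gives the stated isometric identifications; the pair $\bigl((\dot K^{-\alpha,\infty}_{\vec q,\vec a})_{0}\bigr)^{*}=\dot K^{\alpha,1}_{\vec q',\vec a}$ is obtained by running the same argument with $(c_{0})^{*}=\ell^{1}$ in place of $(\ell^{p})^{*}=\ell^{p'}$.

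\emph{Main obstacle.} The two H\"older estimates are routine; the delicate point will be the assembly step --- showing that the single-annulus representers $g_{k}$ patch into one function in the correct global Herz space with the sharp norm. This is exactly where density of finitely-annular-supported functions is needed (forcing $p<\infty$, or passage to $(\cdot)_{0}$ when $p=\infty$), and where one must balance a simultaneous choice of near-extremal test functions on the annuli, renormalized by the weights $|B_{k,\vec a}|^{\pm\alpha}$, against an $\ell^{p}$-extremal sequence in order to recover the dual norm.
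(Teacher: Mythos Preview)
Your proposal is correct and follows the standard route that the paper itself defers to (it omits the proof, citing Wei \cite{wei2021characterization}, Theorem 3.1, and ideas of Hern\'andez--Yang): decompose over the dyadic annuli $A_{k,\vec a}$, invoke Benedek--Panzone duality $(L^{\vec q})^{*}=L^{\vec q'}$ on each, and glue via $(\ell^{p})^{*}=\ell^{p'}$ (respectively $(c_{0})^{*}=\ell^{1}$). Your treatment of the assembly step --- testing against finitely-supported near-extremal $f=\sum a_{k}|B_{k,\vec a}|^{-\alpha}h_{k}$ and invoking density of finitely-annular functions for $p<\infty$ --- is exactly what is needed to flesh out the omitted details, and your observation that for $0<p\le1$ one uses $(a_{k})=\delta_{k,k_{0}}$ together with the embedding $\ell^{p}\hookrightarrow\ell^{1}$ (cf.\ inequality~(\ref{inequlity})) handles the quasi-Banach range correctly.
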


\begin{remark}
\rm{
We may readily draw the following conclusion in view of Proposition \rm{\ref{Holder}},  Proposition \rm{\ref{dual}} and closed-graph theorem, it is a norm representation of anisotropic mixed-norm Herz spaces.}\\
Let $\alpha \in \mathbb{R}$, $1 \leq p$, $\vec{q}<\infty$, and $1 / p+1 / p^{\prime}=1$, $1 / \vec{q}+1 / \vec{q}^{\prime}=1$. Then $f \in \dot{K}_{\vec{q},\vec{a}}^{\alpha, p}\left(\mathbb{R}^{n}\right)$ if and only if
$$
\left|\int_{\mathbb{R}^{n}} f(x) g(x) d x\right|<\infty,
$$
for every $g \in \dot{K}_{\vec{q}^{\prime},\vec{a}}^{-\alpha, p^{\prime}}\left(\mathbb{R}^{n}\right)$, and in this case we have
$$
\|f\|_{\dot{K}_{\vec{q},\vec{a}}^{\alpha, p}\left(\mathbb{R}^{n}\right)}=\sup \left\{\left|\int_{\mathbb{R}^{n}} f(x) g(x) d x\right|:\|g\|_{\dot{K}_{\vec{q}^{\prime},\vec{a}}^{-\alpha, p^{\prime}}\left(\mathbb{R}^{n}\right)} \leq 1\right\}.
$$
There exists a similar result for the non-homogeneous space $K_{\vec{q},\vec{a}}^{\alpha, p}\left(\mathbb{R}^{n}\right)$.
\end{remark}
\begin{proof}
Proposition \rm{\ref{Holder}} is merely an application of H\"older's inequality on mixed Lebesgue spaces. In order to prove Proposition \rm{\ref{dual}}, Hernandez and Yang give us some ideas. In fact, the methods are similar to the Theorem 3.1 of \cite{wei2021characterization},  The details are omitted here.
\end{proof}

To prove the main theorems in this paper, we need the following Lemma to be true.
\begin{lemma}
	Let $\vec{a}=(a_1, a_2, \dots, a_n)\in [1, \infty)^n$, $\vec{p}=(p_1, p_2, \dots, p_n)\in (0, \infty)^n$, $r\in ((0, \infty)$, $x\in \mathbb{R}^n$ and $B_{\vec{a}}(x, r) \in \mathfrak{B}$. Then, 
	$$\|\chi_{B_{\vec{a}(x,r)}}\|_{L^{\vec{p}}\left(\mathbb{R}^{n}\right)} \leq r^{\sum\limits_{i=1}^n a_i/p_i}.$$ 
\end{lemma}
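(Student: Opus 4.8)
The plan is to strip off the center and the radius using the symmetries of the mixed-norm, thereby reducing the whole statement to a single normalization inequality for the unit ball, which carries all of the content. First I would invoke the translation invariance of $\|\cdot\|_{L^{\vec{p}}(\mathbb{R}^n)}$---each of the iterated one-dimensional integrals in the definition runs over all of $\mathbb{R}$, so $\|\chi_{B_{\vec{a}}(x,r)}\|_{L^{\vec{p}}}=\|\chi_{B_{\vec{a}}(\vec{0}_n,r)}\|_{L^{\vec{p}}}$---to reduce to the case $x=\vec{0}_n$. Then I would use the anisotropic dilation structure $B_{\vec{a}}(\vec{0}_n,r)=r^{\vec{a}}B_0$, equivalently $\chi_{B_{\vec{a}}(\vec{0}_n,r)}(y)=\chi_{B_0}(r^{-\vec{a}}y)$. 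Carrying out, inside the nested integrals defining $\|\cdot\|_{L^{\vec{p}}}$, the one-variable substitutions $y_i=r^{a_i}z_i$ one layer at a time, each integration contributes the Jacobian factor $r^{a_i}$, and once the successive exponents $p_{i+1}/p_i$ are tracked through the stack these powers assemble into the single factor $r^{\sum_{i=1}^n a_i/p_i}$. This produces the exact homogeneity identity
$$\|\chi_{B_{\vec{a}}(\vec{0}_n,r)}\|_{L^{\vec{p}}(\mathbb{R}^n)}=r^{\sum_{i=1}^n a_i/p_i}\,\|\chi_{B_0}\|_{L^{\vec{p}}(\mathbb{R}^n)},$$
so the assertion becomes equivalent to the normalization bound $\|\chi_{B_0}\|_{L^{\vec{p}}(\mathbb{R}^n)}\le 1$.

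To establish this I would use the fact, recorded just before the definition of the Herz spaces, that $B_0=B_{\vec{a}}(\vec{0}_n,1)=B(\vec{0}_n,1)$ is precisely the Euclidean unit ball, and evaluate its mixed-norm through the Fubini structure. For fixed $(y_2,\dots,y_n)$ the innermost slice $\{y_1:\,y_1^2<1-\sum_{i\ge 2}y_i^2\}$ is an interval whose $L^{p_1}$-contribution is elementary; peeling off $y_1,\dots,y_n$ one variable at a time, each layer reduces to a one-dimensional integral of a power of $(1-\sum_{i}y_i^2)_+^{1/2}$, which can be evaluated by a Beta-function. Collecting these factors expresses $\|\chi_{B_0}\|_{L^{\vec{p}}(\mathbb{R}^n)}$ as an explicit dimensionless constant depending only on $\vec{p}$, and the plan is to verify that this product of Beta-function factors does not exceed $1$.

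The hard part is precisely this last estimate. The naive route---bounding $B_0\subseteq\prod_{i=1}^n(-1,1)$ by Lemma \ref{norm proper}(iii) and applying the product formula $\|\chi_{\prod I_i}\|_{L^{\vec{p}}}=\prod_i|I_i|^{1/p_i}$ for a rectangle---is too lossy, since it replaces the ellipsoid by its bounding box; the sharp control must instead be extracted from the genuine ellipsoidal geometry of $B_0$, i.e.\ from the decay of the slice half-lengths $(1-\sum_{i}y_i^2)_+^{1/2}$ away from the center, which is exactly what feeds the Beta-function factors above. Thus the entire weight of the lemma sits in the careful evaluation and estimation of the nested slice integrals for $B_0$, whereas the translation and dilation reductions that pull out $r^{\sum_{i=1}^n a_i/p_i}$ are purely formal.
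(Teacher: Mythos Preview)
Your translation and anisotropic-dilation reduction is correct and in fact sharper than what the paper does: it yields the exact identity
\[
\|\chi_{B_{\vec a}(x,r)}\|_{L^{\vec p}(\mathbb{R}^n)}=r^{\sum_{i=1}^n a_i/p_i}\,\|\chi_{B_0}\|_{L^{\vec p}(\mathbb{R}^n)},
\]
so the lemma with constant $1$ is \emph{equivalent} to the normalization $\|\chi_{B_0}\|_{L^{\vec p}}\le 1$. The gap is that this normalization is simply false, so your Beta-function program cannot succeed. Already for $n=1$ and any finite $p_1$ one has $\|\chi_{(-1,1)}\|_{L^{p_1}}=2^{1/p_1}>1$; for $n=2$ and $\vec p=(1,1)$ one gets $\pi>1$. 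No amount of ``sharp ellipsoidal geometry'' will produce $\|\chi_{B_0}\|_{L^{\vec p}}\le 1$, because it is not true.

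What the paper actually does is precisely the bounding-box argument you rejected as ``too lossy'': using Lemma~\ref{norm proper}(iii), the ball $B_{\vec a}(x,r)$ is enlarged to the rectangle $x+\prod_{i=1}^n(-r^{a_i},r^{a_i})$, and the mixed norm of a rectangle factors as $\prod_i(2r^{a_i})^{1/p_i}=2^{\sum_i 1/p_i}\,r^{\sum_i a_i/p_i}$. The paper then writes ``$\le r^{\sum_i a_i/p_i}$'', silently dropping the factor $2^{\sum_i 1/p_i}$; the statement should really read $\|\chi_{B_{\vec a}(x,r)}\|_{L^{\vec p}}\le C_{\vec p}\,r^{\sum_i a_i/p_i}$, and that is all that is ever used downstream. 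So the ``lossiness'' you detected is not a defect of the method but exactly the harmless constant the paper suppressed. Your dilation identity is a clean way to isolate the $r$-dependence, but for the remaining constant you should revert to the box inclusion rather than attempt to prove the (false) bound $\|\chi_{B_0}\|_{L^{\vec p}}\le 1$.
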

\begin{proof} According to the definition of mixed-norm Lebesgue spaces and simple calculation, we get,
\begin{align*}
\|\chi_{B_{\vec{a}(x,r)}}\|_{L^{\vec{p}}\left(\mathbb{R}^{n}\right)}
		&=\left(\int_{\mathbb{R}}\dots \left(\int_{\mathbb{R}}\left(\int_{\mathbb{R}}|\chi_{B_{\vec{a}(x,r)}}(x_1,x_2, \dots, x_n)|^{q_{1}}dx_1\right)^{{q_{2}}/{q_{1}}}dx_2\right)^{{q_{3}}/{q_{2}}}\dots dx_n\right)^{{1}/{q_{n}}}	\\
		&\leq \left(\int_{|x_n|<r^{a_n}}\dots \left(\int_{|x_2|<r^{a
		_2}}\left(\int_{|x_1|<r^{a_1}} 1~~ dx_1\right)^{{q_{2}}/{q_{1}}}dx_2\right)^{{q_{3}}/{q_{2}}}\dots dx_n\right)^{{1}/{q_{n}}}	\\
		&\leq r^{\sum\limits_{i=1}^n a_i/p_i}.	
\end{align*}
This finishes the proof.
\end{proof}
\subsection{Block Decomposition for Anisotropic Mixed-Norm Herz Spaces}
In this section, we want to obtain the block decomposition of anisotropic mixed-norm Herz spaces, because the block decompositions of various function spaces are useful for estimating the boundedness of operators. We first provide a necessary definition before starting the demonstration of the block decompositions theorem. The block conditions listed below are more appropriate for the mixed-norm than the traditional conditions.\begin{definition}
	Let $0<\alpha<\infty$, $1\leq \vec{q}<\infty$,
	\begin{enumerate}
		\item [\rm{(1)}] A function $a(x)$ is called to be a central $(\alpha, \vec{q})$-block if
		\begin{itemize}
			\item [\rm{(i)}] ${\rm supp}~a \subset B_{k,\vec{a}}\in \mathfrak{B} \quad k\in \mathbb{Z}, $
			\item [\rm{(ii)}] $\|a\|_{L^{\vec{q}}}\leq C |B_{k,\vec{a}}|^{-\alpha}.$
		\end{itemize}
		\item [\rm{(2)}] A function $a(x)$ is called to be a central $(\alpha, \vec{q})$-block of restrict type if it satisfies ${\rm(ii)}$
		\begin{itemize}
			\item [\rm{(iii)}] ${\rm supp} a \subset B_k \in \mathfrak{B} \quad \mbox {for some}\quad r>1.$
		\end{itemize}
	\end{enumerate}
\end{definition}

\begin{theorem}\label{4.1}
	Let $0<\alpha<\infty$, $0<p<\infty$ and $1\leq \vec{q}<\infty$. Then,
	\begin{enumerate}
		\item [\rm(1)] $f\in \dot{K}_{\vec{q},\vec{a}}^{\alpha, p}(\mathbb{R}^n)$ if and only if
	$f$ can be represented by
	$$f(x)=\sum_{k\in \mathbb{Z}}\lambda_k b_k(x),$$
	where each $b_k$ is a central $(\alpha, \vec{q})$-block with support contained in $B_k$ and $\sum_k|\lambda_k|^p<\infty.$
	\item [\rm(2)] $f\in {K}_{\vec{q},\vec{a}}^{\alpha, p}(\mathbb{R}^n)$ if and only if $f$ can be represented by
	$$f(x)=\sum_{k\in \mathbb{Z}}\lambda_k b_k(x),$$
	where each $b_k$ is a central $(\alpha, \vec{q})$-block of restrict with support contained in $B_k$ and $\sum_k|\lambda_k|^p<\infty.$
	\end{enumerate}
\end{theorem}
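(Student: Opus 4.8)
The plan is to prove both assertions in parallel by establishing, in each case, the norm equivalence
$$\|f\|_{\dot K_{\vec q,\vec a}^{\alpha,p}(\mathbb R^n)}\sim\inf\Big(\sum_{k}|\lambda_k|^p\Big)^{1/p},$$
where the infimum is taken over all admissible block representations of $f$; the ``only if'' direction will supply the bound $\inf(\sum_k|\lambda_k|^p)^{1/p}\leq\|f\|_{\dot K_{\vec q,\vec a}^{\alpha,p}}$ and the ``if'' direction the reverse. Throughout I will use the identity $|B_{k,\vec a}|=v_n 2^{kv}$ and Minkowski's inequality in $L^{\vec q}$ (valid since $1\leq\vec q<\infty$).

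For necessity in the homogeneous case I would take the canonical decomposition $f=\sum_{k\in\mathbb Z}f\chi_k$. Setting $\lambda_k:=|B_{k,\vec a}|^{\alpha}\|f\chi_k\|_{L^{\vec q}}$ and $b_k:=\lambda_k^{-1}f\chi_k$ (with $b_k:=0$ when $\lambda_k=0$), one checks at once that $\operatorname{supp}b_k\subset A_{k,\vec a}\subset B_{k,\vec a}$ and $\|b_k\|_{L^{\vec q}}=|B_{k,\vec a}|^{-\alpha}$, so each $b_k$ is a central $(\alpha,\vec q)$-block, while $\sum_k|\lambda_k|^p=\|f\|_{\dot K_{\vec q,\vec a}^{\alpha,p}}^p<\infty$; the partial sums of $\sum_k\lambda_kb_k$ converge to $f$ pointwise a.e.\ and, by the smallness of the tails, also in $\dot K_{\vec q,\vec a}^{\alpha,p}(\mathbb R^n)$. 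For the non-homogeneous space one uses instead $f=\sum_{k\geq0}f\widetilde\chi_k$; the terms with $k\geq1$ are blocks as before, and the $k=0$ term is supported in $B_0=B_{\vec a}(\vec 0_n,1)$, hence is a central $(\alpha,\vec q)$-block of restricted type.

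For sufficiency, suppose $f=\sum_k\lambda_k b_k$ with $\operatorname{supp}b_k\subset B_{k,\vec a}$, $\|b_k\|_{L^{\vec q}}\leq C|B_{k,\vec a}|^{-\alpha}$, and $\sum_k|\lambda_k|^p<\infty$. Since $B_{k,\vec a}\cap A_{j,\vec a}=\emptyset$ whenever $k<j$, only indices $k\geq j$ contribute to $f\chi_j$, so
$$\|f\chi_j\|_{L^{\vec q}}\leq\sum_{k\geq j}|\lambda_k|\,\|b_k\|_{L^{\vec q}}\leq C\sum_{k\geq j}|\lambda_k|\,|B_{k,\vec a}|^{-\alpha},$$
and hence $\|f\|_{\dot K_{\vec q,\vec a}^{\alpha,p}}^p\leq C\sum_j 2^{jv\alpha p}\big(\sum_{k\geq j}|\lambda_k|2^{-kv\alpha}\big)^p$. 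When $0<p\leq1$ I would apply inequality \eqref{inequlity}, interchange the order of summation, and evaluate the geometric series $\sum_{j\leq k}2^{jv\alpha p}\sim2^{kv\alpha p}$ to get $\leq C\sum_k|\lambda_k|^p$. When $1<p<\infty$ I would insert a factor $2^{-kv\alpha\varepsilon}2^{kv\alpha\varepsilon}$ with $\varepsilon\in(0,1)$ small, apply Hölder's inequality in the $k$-variable, evaluate $\sum_{k\geq j}2^{-kv\alpha\varepsilon p'}\sim2^{-jv\alpha\varepsilon p'}$, and then interchange summation and use $\sum_{j\leq k}2^{jv\alpha(1-\varepsilon)p}\sim2^{kv\alpha(1-\varepsilon)p}$, again reaching $\leq C\sum_k|\lambda_k|^p$. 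Taking the infimum over representations completes the equivalence; the non-homogeneous statement (2) follows verbatim, summing only over $k\geq0$ and using that restricted-type blocks still satisfy $\operatorname{supp}b_k\subset B_{k,\vec a}$.

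The main obstacle is the bookkeeping in the $1<p<\infty$ sub-case: the auxiliary exponent $\varepsilon$ must be chosen so that \emph{both} geometric series ($\sum_{k\geq j}2^{-kv\alpha\varepsilon p'}$ and $\sum_{j\leq k}2^{jv\alpha(1-\varepsilon)p}$) converge, which is precisely where the hypothesis $\alpha>0$ enters, and the interchange of the double sum must be justified (Tonelli, all terms nonnegative). Everything else is routine once Minkowski's inequality on $L^{\vec q}$ and $|B_{k,\vec a}|=v_n2^{kv}$ are in hand.
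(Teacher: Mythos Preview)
Your proposal is correct and follows essentially the same route as the paper. For the ``only if'' direction you and the paper both use the canonical annular decomposition $f=\sum_k f\chi_k$ with $\lambda_k=|B_{k,\vec a}|^{\alpha}\|f\chi_k\|_{L^{\vec q}}$; for the ``if'' direction you both split into $0<p\le1$ (subadditivity plus geometric series) and $1<p<\infty$ (H\"older plus two geometric series). The only cosmetic difference is that the paper fixes the H\"older split at $\|b_j\|_{L^{\vec q}}^{1/2}\cdot\|b_j\|_{L^{\vec q}}^{1/2}$, which is your argument with $\varepsilon=1/2$, whereas you carry a free parameter $\varepsilon\in(0,1)$; either choice works since $\alpha>0$.
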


\begin{proof} The non-homogeneous case can use the same way, we only prove the homogeneous case.\\
	Sufficiency, assume that $f\in \dot{K}_{\vec{q},\vec{a}}^{\alpha, p}(\mathbb{R}^n)$, then
	\begin{align*}
		f(x)&=\sum_{k\in \mathbb{Z}}f(x)\chi_k(x)\\
		&=\sum_{k\in \mathbb{Z}}|B_{k,\vec{a}}|^{\alpha}\|f\chi_k\|_{L^{\vec{q}}(\mathbb{R}^n)}\frac{f(x)\chi_k(x)}{|B_{k,\vec{a}}|^{\alpha}\|f\chi_k\|_{L^{\vec{q}}(\mathbb{R}^n)}}\\
		&=\sum_{k\in \mathbb{Z}}\lambda_k(x)b_k(x).
	\end{align*}
	It is easy to check that
	$$\|b_k\|_{L^{\vec{q}}(\mathbb{R}^n)}=\left\|\frac{f(x)\chi_k(x)}{|B_{k,\vec{a}}|^{\alpha}\|f\chi_k\|_{L^{\vec{q}}(\mathbb{R}^n)}}\right\|_{L^{\vec{q}}(\mathbb{R}^n)}=|B_{k,\vec{a}}|^{-\alpha},$$
	and
	\begin{align*}
		\sum_{k\in \mathbb{Z}}|\lambda_k|^p=\sum_{k\in \mathbb{Z}}|B_{k,\vec{a}}|^{\alpha p}\|f\chi_k\|_{L^{\vec{q}}(\mathbb{R}^n)}^p=\|f\|_{\dot{K}_{\vec{q},\vec{a}}^{\alpha, p}(\mathbb{R}^n)}^p<\infty.
	\end{align*}
	Necessity, we consider the following two cases. When $0<p\leq 1,$
	\begin{align*}
		\|f\|_{\dot{K}_{\vec{q},\vec{a}}^{\alpha, p}(\mathbb{R}^n)}^p&=\sum_{k\in \mathbb{Z}}|B_{k,\vec{a}}|^{\alpha p}\|f\chi_k\|_{L^{\vec{q}}(\mathbb{R}^n)}^p\\
		&=\sum_{k\in \mathbb{Z}}|B_{k,\vec{a}}|^{\alpha p}\left\|\sum_{j\in \mathbb{Z}}\lambda_jb_j\chi_k\right\|_{L^{\vec{q}}(\mathbb{R}^n)}^p\\
		&\leq \sum_{k\in \mathbb{Z}}|B_{k,\vec{a}}|^{\alpha p}\left\|\sum_{j\geq k}\lambda_jb_j\right\|_{L^{\vec{q}}(\mathbb{R}^n)}^p\\
		&\leq \sum_{k\in \mathbb{Z}}|B_{k,\vec{a}}|^{\alpha p}\left(\sum_{j\geq k}|\lambda_j|\left\|b_j\right\|_{L^{\vec{q}}(\mathbb{R}^n)}\right)^p\\
		&\leq \sum_{k\in \mathbb{Z}}|B_{k,\vec{a}}|^{\alpha p}\sum_{j\geq k}|\lambda_j|^p\left\|b_j\right\|_{L^{\vec{q}}(\mathbb{R}^n)}^p\\
		&\leq C \sum_{j\in \mathbb{Z}}|\lambda_k|^p\sum_{k\leq j}\frac{|B_{k,\vec{a}}|^{\alpha p}}{|B_{j,\vec{a}}|^{\alpha p}}\\
		&\leq C \sum_{j\in \mathbb{Z}}|\lambda_j|^p<\infty.
	\end{align*}
	When $1<p< \infty,$
	\begin{align*}
		\|f\|_{\dot{K}_{\vec{q},\vec{a}}^{\alpha, p}(\mathbb{R}^n)}^p&\leq \sum_{k\in \mathbb{Z}}|B_{k,\vec{a}}|^{\alpha p}\left(\sum_{j\geq k}|\lambda_j|\left\|b_j\right\|_{L^{\vec{q}}(\mathbb{R}^n)}^{1/2}	\left\|b_j\right\|_{L^{\vec{q}}(\mathbb{R}^n)}^{1/2}\right)^p\\
		&\leq \sum_{k\in \mathbb{Z}}|B_{k,\vec{a}}|^{\alpha p}\left(\sum_{j\geq k}|\lambda_j|^p\left\|b_j\right\|_{L^{\vec{q}}(\mathbb{R}^n)}^{p/2}\right) \left(\sum_{j\geq k}	\left\|b_j\right\|_{L^{\vec{q}}(\mathbb{R}^n)}^{p^{\prime}/2}\right)^{p/p^{\prime}}\\
		&\leq \sum_{k\in \mathbb{Z}}|B_{k,\vec{a}}|^{\alpha p}\left(\sum_{j\geq k}|\lambda_j|^p|B_{k,\vec{a}}|^{-\alpha p/2}\right)\left(\sum_{j\geq k}	|B_{k,\vec{a}}|^{-\alpha p^{\prime}/2}\right)^{p/p^{\prime}}\\
		&\leq C\sum_{j\in \mathbb{Z}}|\lambda_j|^p\sum_{k\leq j}\left(\frac{|B_{k,\vec{a}}|}{|B_{j,\vec{a}}|}\right)^{{\alpha p}/2}
		\leq C\sum_{j\in \mathbb{Z}}|\lambda_j|^p<\infty.
	\end{align*}
	This proof is completed.
\end{proof}

\subsection{Extrapolation to Anisotropic mixed-norm Herz spaces}
One of the deepest applications in the study of weight norm inequalities in harmonic analysis is Rubio de Francia extrapolation, which is important in resolving the boundedness issue for operators. The weight norm inequalities of operators on some indexes and an iteration algorithm, which is generated by a sub-linear operator with boundedness, are the key components of extrapolation theory. To simplify, we select the Hardy-Littlewood maximal operators as the elements of the iteration algorithm. Next, we review some related results.
\par 
The Hardy-Littlewood maximal operator $M(f)$ of $f \in L_{\rm loc}^1\left(\mathbb{R}^n\right)$  is defined by
$$M(f)(x)=\sup_{\mathfrak{B}\ni B\ni x}\frac{1}{|B|}\int_B|f(x)|dx.$$

\begin{definition}
	For $1<p<\infty$, a locally integrable function $\omega: \mathbb{R}^{n} \rightarrow[0, \infty)$ is said to be an $A_{p}$ weight if
$$
[\omega]_{A_{p}}=\sup _{B}\left(\frac{1}{|B|} \int_{B} \omega(x) d x\right)\left({1}/{|B|} \int_{B} \omega(x)^{-{p^{\prime}}/{p}} d x\right)^{{p}/{p^{\prime}}}<\infty
$$
where $p^{\prime}={p}/{(p-1)}$. A locally integrable function $\omega: \mathbb{R}^{n} \rightarrow[0, \infty)$ is said to be an $A_{1}$ weight if
$$
\frac{1}{|B|} \int_{B} \omega(y) d y \leq C \omega(x), \quad \text { a.e. } x \in B
$$
for some constants $C>0$. The infimum of all such $C$ is denoted by $[\omega]_{A_{1}}$.
\end{definition} 

\begin{lemma} \cite{huang2019atomic} \label{boundedness HL}
	Let $\vec{p}\in (1, \infty]^n$, then there exists a positive constant $C$, depending on $\vec{p}$, such that, for any $f\in L^{\vec{p}}\left(\mathbb{R}^n\right)$,
	$$\|M(f)\|_{L^{\vec{p}} \left(\mathbb{R}^n\right)}\leq C\|f\|_{L^{\vec{p}} \left(\mathbb{R}^n\right)}.$$
\end{lemma}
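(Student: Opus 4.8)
The plan is to dominate the anisotropic--ball maximal operator pointwise by an iterated one--dimensional maximal operator and then to push the classical one--dimensional Hardy--Littlewood inequality through the mixed--norm nesting, by induction on $n$. First I would reduce to averages over rectangles. Since $B_{\vec a}(\vec 0_n,1)$ equals the Euclidean unit ball, $B_{\vec a}(x,r)=\{y:\sum_{i=1}^{n}(y_i-x_i)^2/r^{2a_i}<1\}$ is an ellipsoid contained in the anisotropic cube $Q_{\vec a}(x,r)=\prod_{i=1}^{n}(x_i-r^{a_i},x_i+r^{a_i})$ with $|Q_{\vec a}(x,r)|=(2^n/v_n)|B_{\vec a}(x,r)|$; moreover, by Lemma \ref{norm proper}(ii), any $B\in\mathfrak B$ containing $x$ lies in the ball about $x$ of twice the radius, which has measure $2^{v}|B|$. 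These observations give $Mf(x)\lesssim\sup_{r>0}|Q_{\vec a}(x,r)|^{-1}\int_{Q_{\vec a}(x,r)}|f|$, and rewriting the average over the product $Q_{\vec a}(x,r)$ as an iterated average and estimating each one--dimensional average by the Hardy--Littlewood maximal operator $M_i$ acting in the $i$-th variable yields the pointwise bound $Mf\le C\,\mathcal M f$, where $\mathcal M:=M_1\circ M_2\circ\cdots\circ M_n$ with $M_1$ applied last. Thus it is enough to show $\|\mathcal M f\|_{L^{\vec p}}\lesssim\|f\|_{L^{\vec p}}$.

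Next I would peel off the $M_i$ one at a time. As the integration in $x_1$ sits innermost in $\|\cdot\|_{L^{\vec p}}$ and $M_1$ acts only on $x_1$, the scalar one--dimensional maximal inequality (valid since $p_1>1$, the case $p_1=\infty$ being trivial) gives $\|M_1g(\cdot,x_2,\dots,x_n)\|_{L^{p_1}_{x_1}}\le C\|g(\cdot,x_2,\dots,x_n)\|_{L^{p_1}_{x_1}}$ for a.e.\ $(x_2,\dots,x_n)$, and applying the remaining monotone iterated norms gives $\|M_1g\|_{L^{\vec p}}\le C\|g\|_{L^{\vec p}}$ for $g=M_2\cdots M_n|f|$. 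Since $M_2,\dots,M_n$ leave $x_1$ untouched, for a.e.\ $x_1$ the slice $(M_2\cdots M_n|f|)(x_1,\cdot)$ is the iterated maximal function in the variables $x_2,\dots,x_n$ of $|f|(x_1,\cdot)$, so an induction on the dimension reduces the whole estimate to the case in which a one--dimensional maximal operator acts on the \emph{outermost} integration variable of a mixed--norm space all of whose exponents lie in $(1,\infty]$.

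The hard part will be exactly this last case: one cannot commute $M_j$ past the inner $L^{p_i}$--norms, so the step requires a vector--valued (Fefferman--Stein type) maximal estimate, namely boundedness of the Hardy--Littlewood maximal operator on $L^{p_j}\bigl(\mathbb R;\,L^{(p_1,\dots,p_{j-1})}(\mathbb R^{j-1})\bigr)$. I would obtain this by iterating the Fefferman--Stein vector--valued maximal inequality, using that $L^{(p_1,\dots,p_{j-1})}$ is iteratively built from $L^{p_i}$--spaces with $p_i>1$ (a component equal to $\infty$ being handled directly, since a supremum commutes with the $L^{\infty}$--norm); alternatively one simply invokes the statement from \cite{huang2019atomic}. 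Combining the $n$ peeling steps then produces $\|Mf\|_{L^{\vec p}}\le C\|f\|_{L^{\vec p}}$ with $C$ depending only on $\vec p$ and on $\vec a$ (through $v$ and $v_n$).
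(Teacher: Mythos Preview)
The paper does not supply a proof of this lemma; it is quoted directly from \cite{huang2019atomic}. Your outline is the standard route and is essentially sound: the pointwise domination of the anisotropic--ball maximal function by the iterated one--dimensional maximal operator $\mathcal M=M_1\circ\cdots\circ M_n$ via comparison with anisotropic cubes is correct, and you correctly isolate the Fefferman--Stein vector--valued inequality as the crux.

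One organizational remark: the induction as you describe it is slightly tangled. After peeling off $M_1$ (which is indeed fine because $L^{p_1}_{x_1}$ is innermost), you observe that $(M_2\cdots M_n f)(x_1,\cdot)$ is, for each fixed $x_1$, the $(n-1)$--dimensional iterated maximal function of $f(x_1,\cdot)$. But the $(n-1)$--dimensional induction hypothesis would bound this in the $L^{(p_2,\dots,p_n)}$--norm with $x_1$ \emph{frozen outermost}, whereas in $\|\cdot\|_{L^{\vec p}}$ the $L^{p_1}_{x_1}$--norm sits \emph{innermost}; so the reduction to ``one maximal operator on the outermost variable'' does not drop out of that step as written. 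The cleaner organization is to prove directly that each single $M_j$ is bounded on $L^{\vec p}(\mathbb R^n)$: freeze $x_{j+1},\dots,x_n$ (the outer variables commute with $M_j$ trivially), and what remains is precisely the vector--valued estimate you name, boundedness of the one--dimensional maximal operator on $L^{p_j}\bigl(\mathbb R;\,L^{(p_1,\dots,p_{j-1})}(\mathbb R^{j-1})\bigr)$, obtained by iterating Fefferman--Stein. Once each $M_j$ is bounded, so is their composition. This is only a reshuffling of your ingredients; the substance of your proposal is correct.
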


\begin{lemma}\label{boundedness HK}
	Let $0<p<\infty, 1<\vec{q}<\infty, -\frac{1}{v}\sum_{i=1}^n \frac{a_i}{q_i}< \alpha <1-\frac{-1}{v}\sum_{i=1}^n \frac{a_i}{q_i}$, then Hardy-Littlewood maximal operators $M$ are bounded on anisotropic mixed-norm Herz spaces $\dot{K}_{\vec{q}, \vec{a}}^{\alpha, p}\left(\mathbb{R}^n\right)$(or anisotropic mixed-norm Herz spaces ${K}_{\vec{q}, \vec{a}}^{\alpha, p}\left(\mathbb{R}^n\right))$.
\end{lemma}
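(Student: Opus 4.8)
The plan is to run the classical annular (almost-orthogonality) argument for Herz spaces, adapted to the anisotropic mixed-norm setting. Writing $f=\sum_{j\in\mathbb{Z}}f\chi_j$ and using the sublinearity of $M$, one has, for each $k\in\mathbb{Z}$,
$$
\|M(f)\chi_k\|_{L^{\vec{q}}}\leq\sum_{j\in\mathbb{Z}}\|M(f\chi_j)\chi_k\|_{L^{\vec{q}}}=\Bigg(\sum_{j\leq k-2}+\sum_{|j-k|\leq1}+\sum_{j\geq k+2}\Bigg)\|M(f\chi_j)\chi_k\|_{L^{\vec{q}}}.
$$
For the diagonal part $|j-k|\leq1$ I would simply invoke Lemma \ref{boundedness HL} (legitimate since $\vec{q}\in(1,\infty)^n\subset(1,\infty]^n$) to get $\|M(f\chi_j)\chi_k\|_{L^{\vec{q}}}\leq\|M(f\chi_j)\|_{L^{\vec{q}}}\lesssim\|f\chi_j\|_{L^{\vec{q}}}$.

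Next I would establish the off-diagonal size bounds. If $x\in A_{k,\vec{a}}$, $j\leq k-2$, and $B\in\mathfrak{B}$ is any anisotropic ball with $x\in B$ and $B\cap A_{j,\vec{a}}\neq\emptyset$, then the quasi-triangle inequality (Lemma \ref{norm proper}(ii)) forces the radius of $B$ to be $\gtrsim2^{k}$, hence $|B|\gtrsim2^{kv}$; consequently, by Hölder's inequality on mixed-norm Lebesgue spaces (Remark \ref{mixedpro}(iii)) together with the bound $\|\chi_{B_{\vec{a}}(\vec{0}_n,2^j)}\|_{L^{\vec{q}^{\prime}}}\lesssim2^{j(v-\sum_{i=1}^n a_i/q_i)}$ coming from the lemma on $L^{\vec{p}}$-norms of characteristic functions of anisotropic balls,
$$
M(f\chi_j)(x)\lesssim2^{-kv}\int_{A_{j,\vec{a}}}|f(y)|\,dy\lesssim2^{-kv}2^{j(v-\sum_{i=1}^n a_i/q_i)}\|f\chi_j\|_{L^{\vec{q}}}.
$$
Multiplying by $\chi_k$, taking $L^{\vec{q}}$-norms and using $\|\chi_{A_{k,\vec{a}}}\|_{L^{\vec{q}}}\lesssim2^{k\sum_{i=1}^n a_i/q_i}$ gives $\|M(f\chi_j)\chi_k\|_{L^{\vec{q}}}\lesssim2^{(j-k)\delta_1}\|f\chi_j\|_{L^{\vec{q}}}$, where $\delta_1:=v-\sum_{i=1}^n a_i/q_i>0$. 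By a symmetric argument (now the ball must have radius $\gtrsim2^{j}$), for $j\geq k+2$ one obtains $\|M(f\chi_j)\chi_k\|_{L^{\vec{q}}}\lesssim2^{(k-j)\delta_2}\|f\chi_j\|_{L^{\vec{q}}}$ with $\delta_2:=\sum_{i=1}^n a_i/q_i>0$.

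Combining the three contributions and writing $g_j:=|B_{j,\vec{a}}|^{\alpha}\|f\chi_j\|_{L^{\vec{q}}}$ (so that $\|f\|_{\dot{K}_{\vec{q},\vec{a}}^{\alpha,p}}^p=\sum_j g_j^p$ and $|B_{j,\vec{a}}|^{\alpha}\sim2^{j\alpha v}$), one gets $|B_{k,\vec{a}}|^{\alpha}\|M(f)\chi_k\|_{L^{\vec{q}}}\lesssim\sum_{j\in\mathbb{Z}}d_{k,j}g_j$, where $d_{k,j}\lesssim2^{-(k-j)(\delta_1-\alpha v)}$ when $j\leq k-2$, $d_{k,j}\lesssim1$ when $|j-k|\leq1$, and $d_{k,j}\lesssim2^{-(j-k)(\delta_2+\alpha v)}$ when $j\geq k+2$. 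The hypothesis $-\frac1v\sum_{i=1}^n a_i/q_i<\alpha<1-\frac1v\sum_{i=1}^n a_i/q_i$ is precisely the statement that $\delta_1-\alpha v>0$ and $\delta_2+\alpha v>0$, so $d_{k,j}\lesssim2^{-\sigma|k-j|}$ for some $\sigma>0$. Hence $\|M(f)\|_{\dot{K}_{\vec{q},\vec{a}}^{\alpha,p}}^p\lesssim\sum_k\big(\sum_j d_{k,j}g_j\big)^p$; for $1\leq p<\infty$ this is $\lesssim\sum_j g_j^p$ by Young's inequality for convolution on $\mathbb{Z}$ (the kernel $m\mapsto2^{-\sigma|m|}$ lies in $\ell^1$), and for $0<p<1$ it is $\lesssim\sum_j g_j^p$ by the elementary inequality \eqref{inequlity} together with $\sum_k2^{-\sigma p|k-j|}<\infty$. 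This gives $\|M(f)\|_{\dot{K}_{\vec{q},\vec{a}}^{\alpha,p}}\lesssim\|f\|_{\dot{K}_{\vec{q},\vec{a}}^{\alpha,p}}$. The non-homogeneous space $K_{\vec{q},\vec{a}}^{\alpha,p}$ is treated in exactly the same way, with all indices restricted to $\mathbb{N}$ and the term involving $\widetilde{\chi}_0=\chi_{B_0}$ absorbed into the diagonal part via Lemma \ref{boundedness HL}.

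The only delicate point is the bookkeeping in the last step: one must check that the two endpoint conditions on $\alpha$ correspond exactly to the convergence of the two geometric tails, and, in the range $p>1$, use Young's inequality (equivalently, Schur's test for the almost-diagonal matrix $(d_{k,j})$) rather than the naive subadditivity of $t\mapsto t^p$. Everything else — the two size estimates and the reduction to Lemma \ref{boundedness HL} on the diagonal — is routine.
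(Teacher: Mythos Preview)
Your proposal is correct and follows essentially the same strategy as the paper's proof: the same annular decomposition into the three ranges $j\leq k-2$, $|j-k|\leq1$, $j\geq k+2$; the same use of Lemma~\ref{boundedness HL} on the diagonal; and the same pointwise off-diagonal estimates (via the forced lower bound on the radius of any admissible ball, H\"older's inequality, and the $L^{\vec{q}}$- and $L^{\vec{q}^{\prime}}$-norms of the annular characteristic functions) leading to the decay factors $2^{(j-k)(v-\sum_i a_i/q_i)}$ and $2^{(k-j)\sum_i a_i/q_i}$. The only cosmetic difference is in the final summation step: the paper splits explicitly into the cases $0<p\leq1$ (using \eqref{inequlity}) and $1<p<\infty$ (using H\"older on the inner sum, splitting each factor in half), whereas you package the $p\geq1$ case as Young's inequality for $\ell^1*\ell^p$ on $\mathbb{Z}$; these are equivalent formulations of the same estimate.
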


\begin{remark}\rm{
	In the Lemma \ref{boundedness HK}, we apply the direct estimation method to solve the boundedness problem for the Hardy-Littlewood maximal operator. In truth, we can also resolve the problem using the atomic decomposition approach, however, this approach will cause the issue of $\alpha$ incomplete indexes, i.e. only gain $0< \alpha <1-\frac{-1}{v}\sum_{i=1}^n \frac{a_i}{q_i}$.}
\end{remark}
\begin{proof}
We only prove the homogeneous case, and the non-homogeneous case is similar. 
\begin{align*}
\|M(f)\|_{\dot{K}_{\vec{q}, \vec{a}}^{\alpha, p}}&=\left(\sum_{k\in \mathbb{Z}}|B_{k,\vec{a}}|^{\alpha p}\left\|M(f)\chi_k\right\|_{L^{\vec{q}}\left(\mathbb{R}^n\right)}^p\right)^{1/p}\\
	&\leq \left(\sum_{k\in \mathbb{Z}}|B_{k,\vec{a}}|^{\alpha p}\left\|\sum_{l=-\infty}^{\infty}\left|M(f\chi_l)\right|\chi_k\right\|_{L^{\vec{q}}\left(\mathbb{R}^n\right)}^p\right)^{1/p}\\
	&\leq C \left(\sum_{k\in \mathbb{Z}}|B_{k,\vec{a}}|^{\alpha p}\left\|\sum_{l=-\infty}^{k-2}\left|M(f\chi_l)\right|\chi_k\right\|_{L^{\vec{q}}\left(\mathbb{R}^n\right)}^p\right)^{1/p}\\
	&\quad +C \left(\sum_{k\in \mathbb{Z}}|B_{k,\vec{a}}|^{\alpha p}\left\|\sum_{l=k-1}^{k+1}\left|M(f\chi_l)\right|\chi_k\right\|_{L^{\vec{q}}\left(\mathbb{R}^n\right)}^p\right)^{1/p}\\
&\quad + C \left(\sum_{k\in \mathbb{Z}}|B_{k,\vec{a}}|^{\alpha p}\left\|\sum_{l=k+2}^{\infty}\left|M(f\chi_l)\right|\chi_k\right\|_{L^{\vec{q}}\left(\mathbb{R}^n\right)}^p\right)^{1/p}\\
&\leq C \left(I_1+I_2+I_3\right).
\end{align*}
Applying the Lemma \ref{boundedness HL}, which is the boundedness of Hardy-Littlewood maximal operators on mixed-norm Lebesgue spaces, we get
\begin{align*}
	I_2&\leq \left(\sum_{k\in \mathbb{Z}}|B_{k,\vec{a}}|^{\alpha p}\left\|\sum_{l=k-1}^{k+1}\left|M(f\chi_l)\right|\right\|_{L^{\vec{q}}\left(\mathbb{R}^n\right)}^p\right)^{1/p}\\
	&\leq C \left(\sum_{k\in \mathbb{Z}}|B_{k,\vec{a}}|^{\alpha p}\sum_{l=k-1}^{k+1}\left\|f\chi_l \right\|_{L^{\vec{q}}\left(\mathbb{R}^n\right)}^p\right)^{1/p}\\
	&\leq C \left(\sum_{k\in \mathbb{Z}}|B_{k,\vec{a}}|^{\alpha p}\left\|f\chi_k \right\|_{L^{\vec{q}}\left(\mathbb{R}^n\right)}^p\right)^{1/p}\\
	&\leq C\left\|f\right\|_{K_{\vec{q}, \vec{a}}^{\alpha, p}\left(\mathbb{R}^n\right)}.
\end{align*}
For $I_1$, $\alpha <1-\frac{1}{v}\sum_{i=1}^n \frac{a_i}{q_i}$, when $0<p\leq 1$,
\begin{align*}
{I_1}&\leq C\left({\sum_{k{\in}{\mathbb{Z}}}}|B_{k,\vec{a}}|^{\alpha p}\left\|{\sum_{l={-\infty }}^{k-2}}\left|M(f{\chi_l})\right|{\chi_k}\right\|^p_{L^{\vec{q}}\left(\mathbb{R}^n\right)}\right)^{{1}/{p}}\\
&\leq C \left(\sum_{k\in \mathbb{Z}}|B_{k,\vec{a}}|^{\alpha p} \left\|\sum_{l=-\infty}^{k-2}|B_{k,\vec{a}}|^{-1} \left\| f\chi_l\right\|_{L^1\left(\mathbb{R}^n\right)} \chi_k\right\|_{L^{\vec{q}}\left(\mathbb{R}^n\right)}^p\right)^{1/p}\\
&\leq C \left(\sum_{k\in \mathbb{Z}}|B_{k,\vec{a}}|^{\alpha p} \sum_{l=-\infty}^{k-2}|B_{k,\vec{a}}|^{-p} \left\| f\chi_l\right\|_{L^1\left(\mathbb{R}^n\right)}^p 2^{kp\sum\limits_{i=1}^na_i/q_i}\right)^{1/p}\\
&\leq C \left(\sum_{k\in \mathbb{Z}}|B_{k,\vec{a}}|^{\alpha p} \sum_{l=-\infty}^{k-2}|B_{k,\vec{a}}|^{-p} \left\| f\chi_l\right\|_{L^{\vec{q}}\left(\mathbb{R}^n\right)}^p \|\chi_l\|_{L^{\vec{q}^{\prime}}\left(\mathbb{R}^n\right)}^p 2^{kp\sum\limits_{i=1}^na_i/q_i}\right)^{1/p}\\
&\leq C \left (\sum_{k\in \mathbb{Z}} |B_{k,\vec{a}}|^{\alpha p} \sum_{l=-\infty}^{k-2} \left\| f\chi_l\right\|_{L^{\vec{q}}\left(\mathbb{R}^n\right)}^p 2^{(k-l)p(\sum\limits_{i=1}^n a_i/q_i-v)} \right)\\
&\leq C \left(\sum_{l\in \mathbb{Z}}|B_{l,\vec{a}}|^{\alpha p} \left\|f\chi_l\right\|_{L^{\vec{q}}\left(\mathbb{R}^n\right)}^p\right)^{1/p}=C\left\|f\right\|_{\dot{K}_{\vec{q},\vec{a}}^{\alpha, p}\left(\mathbb{R}^n\right)}.
\end{align*}
When $1<p<\infty$, then
\begin{align*}
{I_1}&\leq C \left(\sum_{k\in \mathbb{Z}}|B_{k,\vec{a}}|^{\alpha p} \left\|\sum_{l=-\infty}^{k-2}|B_{k,\vec{a}}|^{-1} \left\| f\chi_l\right\|_{L^1\left(\mathbb{R}^n\right)} \chi_k\right\|_{L^{\vec{q}}\left(\mathbb{R}^n\right)}^p\right)^{1/p}\\
&\leq C \left( \sum_{k\in \mathbb{Z}}|B_{k,\vec{a}}|^{\alpha p} \left(\sum_{l=-\infty}^{k-2} |B_{k,\vec{a}}|^{-1} \left\|f\chi_l \right\|_{L^1(\mathbb{R}^n)} 2^{k\sum\limits_{i=1}^n a_i/q_i}\right)^p\right)^{1/p}\\
&\leq C \left( \sum_{k\in \mathbb{Z}}|B_{k,\vec{a}}|^{\alpha p} \left(\sum_{l=-\infty}^{k-2} |B_{k,\vec{a}}|^{-1} \left\| f\chi_l\right\|_{L^{\vec{q}}\left(\mathbb{R}^n\right)} \|\chi_l\|_{L^{\vec{q}^{\prime}}\left(\mathbb{R}^n\right)} 2^{k\sum\limits_{i=1}^na_i/q_i}\right)^p\right)^{1/p}\\
&\leq C \left (\sum_{k\in \mathbb{Z}} |B_{k,\vec{a}}|^{\alpha p} \left( \sum_{l=-\infty}^{k-2} \left\| f\chi_l\right\|_{L^{\vec{q}}\left(\mathbb{R}^n\right)} 2^{(k-l)(\sum\limits_{i=1}^n a_i/q_i-v)} \right)^p \right)^{1/p}\\
&\leq C \Bigg \{\sum_{k\in \mathbb{Z}} |B_{k,\vec{a}}|^{\alpha p} \left(\sum_{l=-\infty}^{k-2} \left\| f\chi_l\right\|_{L^{\vec{q}}\left(\mathbb{R}^n\right)}^p 2^{[(k-l)p(\sum\limits_{i=1}^n a_i/q_i-v)]/2} \right)\\
&\quad \times \left(\sum_{l=-\infty}^{k-2} 2^{[(k-l)p^{\prime}(\sum\limits_{i=1}^n a_i/q_i-v)]/2}\right)^{p/p^{\prime}}\Bigg\}^{1/p}\\
& \leq C\left(\sum_{l\in \mathbb{Z}}|B_{l, \vec{a}}|^{\alpha p} \left\| f\chi_l\right\|_{L^{\vec{q}}\left(\mathbb{R}^n\right)}^p \left( \sum_{k=l+2}^{\infty} 2^{[(k-l)p(v\alpha-v+\sum\limits_{i=1}^na_i/q_i)]/ 2}\right)\right)^{1/p}\\
&\leq C\left\|f\right\|_{\dot{K}_{\vec{q},\vec{a}}^{\alpha, p}\left(\mathbb{R}^n\right)}.
\end{align*}
For $I_3$, $-\frac{1}{v}\sum_{i=1}^n \frac{a_i}{q_i}< \alpha$, when $0<p\leq 1$, then 
\begin{align*}
{I_3}&\leq C\left({\sum_{k{\in}{\mathbb{Z}}}}|B_{k,\vec{a}}|^{\alpha p}\left\|{\sum_{l={k+2}}^{\infty}}\left|M(f{\chi_l})\right|{\chi_k}\right\|^p_{L_{\vec{q}}\left(\mathbb{R}^n\right)}\right)^{{1}/{p}}\\	
&\leq C \left( \sum_{k\in \mathbb{Z}}|B_{k,\vec{a}}|^{\alpha p} \left(\sum_{l=k+2}^{\infty} |B_{l,\vec{a}}|^{-1} \left\|f\chi_l \right\|_{L^1(\mathbb{R}^n)} 2^{k\sum\limits_{i=1}^n a_i/q_i}\right)^p\right)^{1/p}\\
&\leq C \left( \sum_{k\in \mathbb{Z}}|B_{k,\vec{a}}|^{\alpha p} \left(\sum_{l=k+2}^{\infty} |B_{l,\vec{a}}|^{-1} \left\| f\chi_l\right\|_{L^{\vec{q}}\left(\mathbb{R}^n\right)} \|\chi_l\|_{L^{\vec{q}^{\prime}}\left(\mathbb{R}^n\right)} 2^{k\sum\limits_{i=1}^na_i/q_i}\right)^p\right)^{1/p}\\
&\leq C \left (\sum_{k\in \mathbb{Z}} |B_{k,\vec{a}}|^{\alpha p} \left( \sum_{l=k+2}^{\infty} \left\| f\chi_l\right\|_{L^{\vec{q}}\left(\mathbb{R}^n\right)} 2^{(k-l)(\sum\limits_{i=1}^n a_i/q_i-v)} \right)^p \right)^{1/p}\\
& \leq C\left(\sum_{l\in \mathbb{Z}}|B_{l, \vec{a}}|^{\alpha p} \left\| f\chi_l\right\|_{L^{\vec{q}}\left(\mathbb{R}^n\right)}^p \left( \sum_{k=-\infty}^{l-2} 2^{[(k-l)p(v\alpha+\sum\limits_{i=1}^na_i/q_i)]/ 2}\right)\right)^{1/p}\\
&\leq C \left(\sum_{l\in \mathbb{Z}}|B_{l,\vec{a}}|^{\alpha p} \left\|f\chi_l\right\|_{L^{\vec{q}}\left(\mathbb{R}^n\right)}^p\right)^{1/p}=C\left\|f\right\|_{\dot{K}_{\vec{q},\vec{a}}^{\alpha, p}\left(\mathbb{R}^n\right)}.
\end{align*}
When $1<p<\infty$, one can get
\begin{align*}
{I_3}&\leq C\left({\sum_{k{\in}{\mathbb{Z}}}}|B_{k,\vec{a}}|^{\alpha p}\left\|{\sum_{l={k+2}}^{\infty}}\left|M(f{\chi_l})\right|{\chi_k}\right\|^p_{L^{\vec{q}}\left(\mathbb{R}^n\right)}\right)^{{1}/{p}}\\	
&\leq C \left (\sum_{k\in \mathbb{Z}} |B_{k,\vec{a}}|^{\alpha p} \left( \sum_{l=k+2}^{\infty} \left\| f\chi_l\right\|_{L^{\vec{q}}\left(\mathbb{R}^n\right)} 2^{(k-l)(\sum\limits_{i=1}^n a_i/q_i)} \right)^p \right)^{1/p}\\	
&\leq C \Bigg \{\sum_{k\in \mathbb{Z}} |B_{k,\vec{a}}|^{\alpha p} \left( \sum_{l=k+2}^{\infty} \left\| f\chi_l\right\|_{L^{\vec{q}}\left(\mathbb{R}^n\right)}^p 2^{[(k-l)p(\sum\limits_{i=1}^n a_i/q_i)]/2} \right) \\
&\quad \times \left(\sum_{l=k+2}^{\infty} \left\| f\chi_l\right\|_{L^{\vec{q}}\left(\mathbb{R}^n\right)}^p 2^{[(k-l)p^{\prime}(\sum\limits_{i=1}^n a_i/q_i)]/2} \right)^{p/p^{\prime}}\Bigg\}^{1/p}\\
& \leq C\left(\sum_{l\in \mathbb{Z}}|B_{l, \vec{a}}|^{\alpha p} \left\| f\chi_l\right\|_{L^{\vec{q}}\left(\mathbb{R}^n\right)}^p \left( \sum_{k=-\infty}^{l-2} 2^{[(k-l)p(v\alpha+\sum\limits_{i=1}^na_i/q_i)]/ 2}\right)\right)^{1/p}\\
&\leq C \left(\sum_{l\in \mathbb{Z}}|B_{l,\vec{a}}|^{\alpha p} \left\|f\chi_l\right\|_{L^{\vec{q}}\left(\mathbb{R}^n\right)}^p\right)^{1/p}=C\left\|f\right\|_{\dot{K}_{\vec{q},\vec{a}}^{\alpha, p}\left(\mathbb{R}^n\right)}.
\end{align*}
This implies that Hardy-Littlewood maximal operators $M$ is bounded on $\dot{K}_{\vec{q},\vec{a}}^{\alpha, p}\left(\mathbb{R}^n\right)$, hence finishes the proof of Lemma \ref{boundedness HK}.
\end{proof}
\par 
In order to show the extrapolation theorem on anisotropic mixed-norm Herz spaces, we first introduce an iteration algorithm.
\par 
If $0<p<\infty$, $1<\vec{q}<\infty$, $-\frac{1}{v} \sum_{i=1}^n \frac{a_{i}}{q_{i}}<\alpha<\frac{1}{p_{0}}-\frac{1}{v} \sum_{i=1}^n \frac{a_{i}}{q_{i}}$, then by Lemma \ref{boundedness HK}, we know $M$ is boundedness on $\dot {K}_{(\vec{q}/p_{0})',\vec{a}}^{-p_{0}\alpha,(p/p_{0})^{\prime}}\left(\mathbb{R}^{n}\right)$. Let $B$ is the operators norm of $M$ on $\dot {K}_{(\vec{q}/p_{0})',\vec{a}}^{-p_{0}\alpha,(p/p_{0})^{\prime}}\left(\mathbb{R}^{n}\right)$,
$$B=\|M\|_{\dot {K}_{(\vec{q}/p_{0})',\vec{a}}^{-p_{0}\alpha,(p/p_{0})^{\prime}}\left(\mathbb{R}^{n}\right)\rightarrow \dot {K}_{(\vec{q}/p_{0})',\vec{a}}^{-p_{0}\alpha,(p/p_{0})^{\prime}}\left(\mathbb{R}^{n}\right)}.$$
For any non-negative locally integral function $h$, the iteration algorithm is defined by
$$\mathfrak{R}h:=\sum_{k=0}^{\infty} \frac{M^{k} h}{2^{k} B^{k}},$$ 
where $M^{k}$ is the $k$th iterations of $M$ and we denote by $M^{\circ} h=h$. \\
The operator $\mathfrak{R}$ has the following properties:
\begin{align*}
    &h(x) \leq \mathfrak{R}h(x),  \tag{R1}\\
	&\|\mathfrak{R}h\|_{\dot {K}_{(\vec{q}/p_{0})',\vec{a}}^{-p_{0}\alpha,(p/p_{0})^{\prime}}\left(\mathbb{R}^{n}\right)}\leq 2 \|h\|_{\dot {K}_{(\vec{q}/p_{0})',\vec{a}}^{-p_{0}\alpha,(p/p_{0})^{\prime}}\left(\mathbb{R}^{n}\right)},  \tag{R2}\\
	&[\mathfrak{R} h]_{A_{1}} \leq 2 B. \tag{R3}
\end{align*}
The inequality (R1) can  straight get by definition of $\mathfrak{R}$, and 
\begin{align*}
\left\|\mathfrak{R}h\right\|_{\dot {K}_{(\vec{q}/p_{0})',\vec{a}}^{-p_{0}\alpha,(p/p_{0})^{\prime}}\left(\mathbb{R}^{n}\right)}
&=\left\|\sum^{\infty}_{k=0}\frac{M^{k}h}{2^{k}B^{k}}\right\|_{\dot {K}_{(\vec{q}/p_{0})',\vec{a}}^{-p_{0}\alpha,(p/p_{0})^{\prime}}\left(\mathbb{R}^{n}\right)}\\
&=\sum^{\infty}_{k=0}\frac{1}{2^{k}B^{k}}\left\|M^{k}h\right\|_{\dot {K}_{(\vec{q}/p_{0})',\vec{a}}^{-p_{0}\alpha,(p/p_{0})^{\prime}}\left(\mathbb{R}^{n}\right)}\\
&\leq \sum^{\infty}_{k=0}\frac{B^{k}}{2^{k}B^{k}}\|h\|_{\dot {K}_{(\vec{q}/p_{0})',\vec{a}}^{-p_{0}\alpha,(p/p_{0})^{\prime}}\left(\mathbb{R}^{n}\right)}
\leq 2\|h\|_{\dot {K}_{(\vec{q}/p_{0})',\vec{a}}^{-p_{0}\alpha,(p/p_{0})^{\prime}}\left(\mathbb{R}^{n}\right)},
\end{align*}
this proves the inequality (R2), and 
$$M(\mathfrak{R} h) \leqslant \sum_{k=0}^{\infty} \frac{M^{k+1} h}{2^{k} B^{k}}=2 B \sum_{k=0}^{\infty} \frac{M^{k+1 h}}{2^{k+1} B^{k+1}}=2 B \mathfrak{R} h,$$
this implies the $\mathfrak{R}h$ is a $A_1$ weight.
\par We establish the diagonal extrapolation theorem on anisotropic mixed-norm Herz spaces using the Rubio de Francia extrapolation as follows.
\begin{theorem} \label{extropalation 1}
	Let $0<p_{0}<\infty$, and $\mathcal{F}$ be a family of pains $(f, g)$ of non-negative measurable functions. Suppose that for every
	$$w\in\left \{\mathfrak{R}h: h\in\ \dot {K}_{(\vec{q}/p_{0})',\vec{a}}^{-p_{0}\alpha,(p/p_{0})^{\prime}}\left(\mathbb{R}^{n}\right): \|h\|_{\dot {K}_{(\vec{q}/p_{0})',\vec{a}}^{-p_{0}\alpha,(p/p_{0})^{\prime}}\left(\mathbb{R}^{n}\right)}\leq 1 \right \},$$
	such that
	$$\int_{\mathbb{R}^{n}} f(x)^{p_{0}} w(x) d x \leqslant \int_{\mathbb{R}^{n}} g(x)^{p_{0}} w(x) dx<\infty ~~~(f, g) \in \mathcal F.$$
	Then $p_{0}<p, \vec{q}<\infty$, and $-\frac{1}{v} \sum_{i=1}^{n} \frac{a_{i}}{q_{i}}<\alpha<\frac{1}{p_{0}}-\frac{1}{v} \sum_{i=1}^{n} \frac{a_{i}}{q_{i}}$, for every $(f, g) \in \mathcal F$ with $g \in \dot K_{\vec{q},\vec{a}}^{\alpha,p}\left(\mathbb{R}^{n}\right)$,
$$\|f\|_{\dot K_{\vec{q},\vec{a}}^{\alpha,p}\left(\mathbb{R}^{n}\right)}\leq\|g\|_{\dot K_{\vec{q},\vec{a}}^{\alpha,p}\left(\mathbb{R}^{n}\right)}.$$
\end{theorem}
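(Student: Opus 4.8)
The plan is to follow the classical Rubio de Francia extrapolation scheme, transplanted to the anisotropic mixed-norm Herz setting via the duality/norm-representation established in the Remark after Proposition \ref{dual}. First I would dispose of the trivial case: if $g \notin \dot K_{\vec{q},\vec{a}}^{\alpha,p}(\mathbb{R}^n)$ there is nothing to prove, so assume $\|g\|_{\dot K_{\vec{q},\vec{a}}^{\alpha,p}(\mathbb{R}^n)} < \infty$; moreover, since $p_0 < p$, we have $p/p_0 > 1$, so $(p/p_0)'$ is finite and $\dot{K}_{(\vec{q}/p_0)',\vec{a}}^{-p_0\alpha,(p/p_0)'}(\mathbb{R}^n)$ is a genuine Banach space, which is exactly the dual of $\dot K_{\vec{q}/p_0,\vec{a}}^{\alpha p_0,p/p_0}(\mathbb{R}^n)$ by Proposition \ref{dual} (the index conditions $1 \le \vec{q}/p_0$ and the admissible range of $\alpha p_0$ are guaranteed by the hypotheses on $\vec q$ and $\alpha$). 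The key identity is the norm representation: using Remark after Proposition \ref{dual} together with Remark \ref{mixedpro}(iv),
\[
\|f\|_{\dot K_{\vec{q},\vec{a}}^{\alpha,p}(\mathbb{R}^n)}^{p_0} = \big\||f|^{p_0}\big\|_{\dot K_{\vec{q}/p_0,\vec{a}}^{\alpha p_0,p/p_0}(\mathbb{R}^n)} = \sup\left\{ \int_{\mathbb{R}^n} |f(x)|^{p_0} h(x)\, dx : \|h\|_{\dot{K}_{(\vec{q}/p_0)',\vec{a}}^{-p_0\alpha,(p/p_0)'}(\mathbb{R}^n)} \le 1 \right\}.
\]

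Fixing such a nonnegative $h$ with $\|h\|_{\dot{K}_{(\vec{q}/p_0)',\vec{a}}^{-p_0\alpha,(p/p_0)'}} \le 1$, I would run the Rubio de Francia algorithm: set $w = \mathfrak{R}h$. By (R1), $h \le w$ pointwise; by (R3), $w \in A_1 \subset A_{p_0}$ (so $w$ lies in the class over which the hypothesis of the theorem is assumed — this is where one must check that an $A_1$ weight is admissible, i.e. that $w = \mathfrak{R}h'$ for some unit-norm $h'$, which holds with $h' = h$ itself since $\mathfrak{R}(\mathfrak{R}h)$ is not needed; $w$ is literally in the prescribed family). Then the hypothesis gives $\int f^{p_0} w \le \int g^{p_0} w < \infty$, and combining with $h \le w$ and (R2),
\[
\int_{\mathbb{R}^n} f(x)^{p_0} h(x)\, dx \le \int_{\mathbb{R}^n} f(x)^{p_0} w(x)\, dx \le \int_{\mathbb{R}^n} g(x)^{p_0} w(x)\, dx \le \|g^{p_0}\|_{\dot K_{\vec{q}/p_0,\vec{a}}^{\alpha p_0,p/p_0}} \, \|w\|_{\dot{K}_{(\vec{q}/p_0)',\vec{a}}^{-p_0\alpha,(p/p_0)'}} \le 2\, \|g\|_{\dot K_{\vec{q},\vec{a}}^{\alpha,p}(\mathbb{R}^n)}^{p_0},
\]
using Hölder's inequality on anisotropic mixed-norm Herz spaces (Proposition \ref{Holder}) in the third step and (R2) together with $\|h\|\le 1$ in the last. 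Taking the supremum over all admissible $h$ yields $\|f\|_{\dot K_{\vec{q},\vec{a}}^{\alpha,p}(\mathbb{R}^n)}^{p_0} \le 2\,\|g\|_{\dot K_{\vec{q},\vec{a}}^{\alpha,p}(\mathbb{R}^n)}^{p_0}$, i.e. the claimed inequality up to a harmless constant (one absorbs the $2$ into the $\lesssim$ convention, or rescales the algorithm's denominator).

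The main obstacle, and the step deserving the most care, is verifying that the algorithm $\mathfrak{R}$ is well-defined and that (R2)–(R3) genuinely hold in this anisotropic mixed-norm Herz space — concretely, that $M$ is bounded on $\dot{K}_{(\vec{q}/p_0)',\vec{a}}^{-p_0\alpha,(p/p_0)'}(\mathbb{R}^n)$ with the stated norm $B$. This requires checking that the exponent triple $\big((\vec{q}/p_0)', -p_0\alpha, (p/p_0)'\big)$ falls within the admissible range of Lemma \ref{boundedness HK}; that is, one needs $1 < (\vec{q}/p_0)' < \infty$ and
\[
-\frac{1}{v}\sum_{i=1}^n \frac{a_i}{(q_i/p_0)'} < -p_0\alpha < 1 - \frac{1}{v}\sum_{i=1}^n \frac{a_i}{(q_i/p_0)'},
\]
which, after using $1/(q_i/p_0)' = 1 - p_0/q_i$ and simplifying, is precisely equivalent to the hypothesis $-\frac{1}{v}\sum_i \frac{a_i}{q_i} < \alpha < \frac{1}{p_0} - \frac{1}{v}\sum_i \frac{a_i}{q_i}$ (note $p_0 < p < \infty$ forces $\vec q/p_0$ to have all components $> 1$ once $\vec q > p_0$, which follows from $\alpha$ being in a nonempty interval). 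Once this index bookkeeping is settled, the geometric-series argument already displayed before the theorem statement gives (R2), and the telescoping $M(\mathfrak{R}h) \le 2B\,\mathfrak{R}h$ gives (R3); the rest is the soft duality argument above. A secondary subtlety is ensuring the supremum in the norm representation is actually attained over nonnegative $h$ — but this is standard since $|f|^{p_0} \ge 0$, and if $h$ changes sign one simply replaces it by $|h|$, which does not increase the Herz norm.
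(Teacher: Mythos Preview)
Your proposal is correct and follows essentially the same route as the paper: both arguments express $\|f\|_{\dot K_{\vec{q},\vec{a}}^{\alpha,p}}^{p_0}=\||f|^{p_0}\|_{\dot K_{\vec{q}/p_0,\vec{a}}^{\alpha p_0,p/p_0}}$ via the duality norm representation, then for each admissible $h$ invoke (R1) to pass from $h$ to $w=\mathfrak{R}h$, apply the weighted hypothesis, and close with H\"older's inequality together with (R2). Your write-up is in fact more careful than the paper's in explicitly verifying that the exponent triple $\big((\vec{q}/p_0)', -p_0\alpha, (p/p_0)'\big)$ lies in the admissible range of Lemma~\ref{boundedness HK}, which the paper handles in the discussion preceding the theorem.
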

\begin{proof}
Using the properties of the iteration algorithm ${\rm (R1) (R2)}$,
\begin{align*}
	\int_{\mathbb{R}^n} f(x)^{p_0} h(x) dx &\lesssim \int_{\mathbb{R}^n} f(x)^{p_0} \mathfrak{R}h(x) dx\\
	&\lesssim \int_{\mathbb{R}^n} g(x)^{p_0} \mathfrak{R}h(x) dx\\
	&\lesssim  \left\|g^{p_0}\right\|_{\dot K_{\vec{q}/p_0,\vec{a}}^{\alpha p_0,p/p_0}\left(\mathbb{R}^{n}\right)} \left\|\mathfrak{R}h\right\|_{\dot {K}_{(\vec{q}/p_{0})',\vec{a}}^{-p_{0}\alpha,(p/p_{0})^{\prime}}\left(\mathbb{R}^{n}\right)}\\
\end{align*}
By Remark \ref{mixedpro}~(iv),we can get
\begin{align*}
\|f\|_{\dot K_{\vec{q},\vec{a}}^{\alpha,p}\left(\mathbb{R}^{n}\right)}^{p_0}&=\left\|f^{p_0}\right\|_{\dot K_{\vec{q}/p_0,\vec{a}}^{\alpha p_0,p/p_0}\left(\mathbb{R}^{n}\right)}\\
& \lesssim \sup \left\{ \int_{\mathbb{R}^n} f(x)^{p_0} h(x) dx :  \|h\|_{\dot {K}_{(\vec{q}/p_{0})',\vec{a}}^{-p_{0}\alpha,(p/p_{0})^{\prime}}\left(\mathbb{R}^{n}\right)}\leq 1 \right\}\\
&\lesssim \|g\|_{\dot K_{\vec{q},\vec{a}}^{\alpha,p}\left(\mathbb{R}^{n}\right)}^{p_0}.
\end{align*}
This implies that $\|f\|_{\dot K_{\vec{q},\vec{a}}^{\alpha,p}\left(\mathbb{R}^{n}\right)}\lesssim\|g\|_{\dot K_{\vec{q},\vec{a}}^{\alpha,p}\left(\mathbb{R}^{n}\right)}$ is right.
\end{proof}

\begin{theorem}\label{extropalation 2}
	Let $0<p_{0}<\infty$, and $\mathcal{F}$ be a family of pains $(f, g)$ of non-negative measurable functions. Suppose that for every
	$$w\in\left \{\mathfrak{R}h: h\in\  {K}_{(\vec{q}/p_{0})',\vec{a}}^{-p_{0}\alpha,(p/p_{0})^{\prime}}\left(\mathbb{R}^{n}\right): \|h\|_{ {K}_{(\vec{q}/p_{0})',\vec{a}}^{-p_{0}\alpha,(p/p_{0})^{\prime}}\left(\mathbb{R}^{n}\right)}\leq 1 \right \},$$
	such that
	$$\int_{\mathbb{R}^{n}} f(x)^{p_{0}} w(x) d x \leqslant \int_{\mathbb{R}^{n}} g(x)^{p_{0}} w(x) dx<\infty ~~~(f, g) \in \mathcal F.$$
	Then $p_{0}<p, \vec{q}<\infty$ and $-\frac{1}{v}\sum_{i=1}^n \frac{a_i}{q_i}<\alpha<\frac{1}{p_{0}}-\frac{1}{v} \sum_{i=1}^n \frac{a_{i}}{q_{i}}$, for every $(f, g) \in \mathcal F$ with $g \in K_{\vec{q},\vec{a}}^{\alpha,p}\left(\mathbb{R}^{n}\right)$, such that
$$\|f\|_{K_{\vec{q},\vec{a}}^{\alpha,p}\left(\mathbb{R}^{n}\right)}\leq\|g\|_{ K_{\vec{q},\vec{a}}^{\alpha,p}\left(\mathbb{R}^{n}\right)}.$$
\end{theorem}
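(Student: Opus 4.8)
The plan is to run the Rubio de Francia machinery exactly as in the proof of Theorem \ref{extropalation 1}, replacing the homogeneous Herz norm $\|\cdot\|_{\dot K}$ by the non-homogeneous one $\|\cdot\|_{K}$ throughout, and checking that every tool used in that proof has a non-homogeneous counterpart. The three ingredients I would line up first are: (a) the non-homogeneous part of Lemma \ref{boundedness HK}, which gives that $M$ is bounded on $K_{(\vec q/p_0)',\vec a}^{-p_0\alpha,(p/p_0)'}(\mathbb R^n)$ precisely when $-\tfrac1v\sum_i a_i/q_i<\alpha<\tfrac{1}{p_0}-\tfrac1v\sum_i a_i/q_i$ and $p_0<p$; (b) the non-homogeneous Hölder inequality on anisotropic mixed-norm Herz spaces (the version of Proposition \ref{Holder} for $K_{\vec q,\vec a}^{\alpha,p}$, proved the same way); and (c) the norm representation stated in the Remark following Proposition \ref{dual}, for the space $K_{\vec q,\vec a}^{\alpha,p}$, together with the scaling identity $\|f\|_{K_{\vec q,\vec a}^{\alpha,p}}^{p_0}=\bigl\|f^{p_0}\bigr\|_{K_{\vec q/p_0,\vec a}^{\alpha p_0,p/p_0}}$ (immediate from the definition and Remark \ref{mixedpro}(iv)).

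With these in hand I would build the iteration operator on the non-homogeneous side: let $B=\|M\|_{K_{(\vec q/p_0)',\vec a}^{-p_0\alpha,(p/p_0)'}\to K_{(\vec q/p_0)',\vec a}^{-p_0\alpha,(p/p_0)'}}$ (finite by (a)), set $\mathfrak R h=\sum_{k=0}^{\infty} M^k h/(2^k B^k)$, and verify the three properties exactly as before: $h\le \mathfrak R h$ (termwise, term $k=0$); $\|\mathfrak R h\|_{K_{(\vec q/p_0)',\vec a}^{-p_0\alpha,(p/p_0)'}}\le 2\|h\|_{K_{(\vec q/p_0)',\vec a}^{-p_0\alpha,(p/p_0)'}}$ (triangle inequality on the $K$-norm together with $\|M^k h\|\le B^k\|h\|$ — here one needs $p/p_0\ge 1$ for the honest triangle inequality, and for $p/p_0<1$ one instead works with $(K_{\vec q,\vec a}^{-p_0\alpha,\infty})_0$ as in Proposition \ref{dual}); and $[\mathfrak R h]_{A_1}\le 2B$ (since $M(\mathfrak R h)\le 2B\,\mathfrak R h$ by reindexing).

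The core estimate then reads: for any non-negative $h$ with $\|h\|_{K_{(\vec q/p_0)',\vec a}^{-p_0\alpha,(p/p_0)'}}\le 1$,
\begin{align*}
\int_{\mathbb R^n} f(x)^{p_0}h(x)\,dx
&\le \int_{\mathbb R^n} f(x)^{p_0}\,\mathfrak R h(x)\,dx
 \le \int_{\mathbb R^n} g(x)^{p_0}\,\mathfrak R h(x)\,dx\\
&\lesssim \bigl\|g^{p_0}\bigr\|_{K_{\vec q/p_0,\vec a}^{\alpha p_0,p/p_0}}\,\|\mathfrak R h\|_{K_{(\vec q/p_0)',\vec a}^{-p_0\alpha,(p/p_0)'}}
 \lesssim \|g\|_{K_{\vec q,\vec a}^{\alpha,p}}^{p_0},
\end{align*}
where the first inequality is (R1), the second is the hypothesis on $\mathcal F$ (applied to $w=\mathfrak R h$, which lies in the admissible weight class by (R2)), the third is the non-homogeneous Hölder inequality, and the last uses (R2) plus the scaling identity. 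Taking the supremum over such $h$ and invoking the norm representation gives $\|f\|_{K_{\vec q,\vec a}^{\alpha,p}}^{p_0}=\|f^{p_0}\|_{K_{\vec q/p_0,\vec a}^{\alpha p_0,p/p_0}}\lesssim\|g\|_{K_{\vec q,\vec a}^{\alpha,p}}^{p_0}$, which is the claim. (The finiteness $\int g^{p_0}w<\infty$ in the hypothesis is used to ensure the middle quantities are finite, so the chain is not vacuous.)

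The step I expect to be the main obstacle is making the norm representation and the property (R2) rigorous in the endpoint regime $p/p_0\le 1$, where $(p/p_0)'=\infty$: there the dual of $K_{\vec q/p_0,\vec a}^{\alpha p_0,p/p_0}$ is $(K_{(\vec q/p_0)',\vec a}^{-p_0\alpha,\infty})_0$ rather than the full $K^{-p_0\alpha,\infty}$ space (cf. Proposition \ref{dual}), so one must check that $\mathfrak R h$ still belongs to (and has controlled norm in) the right space, and that the boundedness of $M$ from Lemma \ref{boundedness HK} is available on that $\infty$-exponent space under the stated range of $\alpha$. Everything else — the Hölder inequality, the scaling identity, the $A_1$ estimate — transfers verbatim from the homogeneous argument; since the authors have already noted that the non-homogeneous case of each preceding result is proved "in the same way," I would simply cite those and reproduce the display above with $K$ in place of $\dot K$.
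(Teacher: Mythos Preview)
Your proposal is correct and is exactly the approach the paper takes: the paper's proof of this theorem consists solely of the sentence ``The proof is similar to the Theorem \ref{extropalation 1}, here we omit the details,'' and your write-up supplies precisely those details. One small remark: your worry about the endpoint regime $p/p_0\le 1$ is unnecessary here, since the hypothesis $p_0<p$ forces $p/p_0>1$ and hence $(p/p_0)'<\infty$, so the standard duality and the triangle inequality in (R2) apply without modification.
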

\begin{proof}
	The proof is similar to the Theorem \ref{extropalation 1}, here we omit the details. 
\end{proof}

\begin{corollary} \label{ex app 1}
	Let $0< p_0<\infty$, $p_0< p,\vec{q}<\infty$ and $-\frac{1}{v}\sum_{i=1}^n \frac{a_i}{q_i}<\alpha<\frac{1}{p_{0}}-\frac{1}{v}\sum_{i=1}^n \frac{a_i}{q_i}$, suppose that every\\
	$$w\in\left \{\mathfrak{R}h: h\in\ \dot {K}_{(\vec{q}/p_{0})',\vec{a}}^{-p_{0}\alpha,(p/p_{0})^{\prime}}\left(\mathbb{R}^{n}\right): \|h\|_{\dot {K}_{(\vec{q}/p_{0})',\vec{a}}^{-p_{0}\alpha,(p/p_{0})^{\prime}}\left(\mathbb{R}^{n}\right)}\leq 1 \right \},$$
	the operator $T$:~$L_w^{p_0}(\mathbb{R}^n)\rightarrow L_w^{p_0}(\mathbb{R}^n)$, satisfies\\
	$$\int_{\mathbb{R}^n}\left|Tf(x)\right|^{p_0} w(x)dx \lesssim \int_{\mathbb{R}^n}\left|f(x)\right|^{p_0} w(x)dx.$$
	Then, for every $f\in \dot{K}_{\vec{q}, \vec{a}}^{\alpha, p}\left( \mathbb{R}^n\right)$, such that 
	$$\|Tf\|_{\dot K_{\vec{q},\vec{a}}^{\alpha,p}\left(\mathbb{R}^{n}\right)}\leq\|f\|_{\dot K_{\vec{q},\vec{a}}^{\alpha,p}\left(\mathbb{R}^{n}\right)}.$$
\end{corollary}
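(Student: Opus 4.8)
The plan is to read the corollary off from Theorem \ref{extropalation 1} by feeding it the right extrapolation family. Absorb the implicit constant in the hypothesis $\int_{\mathbb{R}^n}|Tf(x)|^{p_0}w(x)\,dx\lesssim\int_{\mathbb{R}^n}|f(x)|^{p_0}w(x)\,dx$ into a fixed $c>0$ and set
$$
\mathcal{F}:=\Big\{\big(|Tf|,\;c|f|\big):\ f\in\dot{K}_{\vec{q},\vec{a}}^{\alpha,p}(\mathbb{R}^n)\Big\},
$$
a family of pairs of non-negative measurable functions. For this $\mathcal{F}$, the assumed weighted bound on $T$ supplies, for every weight $w$ of the form $\mathfrak{R}h$ with $\|h\|_{\dot{K}_{(\vec{q}/p_0)',\vec{a}}^{-p_0\alpha,(p/p_0)'}(\mathbb{R}^n)}\le1$, exactly the pointwise-in-$w$ comparison $\int_{\mathbb{R}^n}|Tf|^{p_0}w\le\int_{\mathbb{R}^n}(c|f|)^{p_0}w$ demanded in the hypothesis of Theorem \ref{extropalation 1}.

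The only thing left to check is that the right-hand side $\int_{\mathbb{R}^n}(c|f|)^{p_0}w\,dx$ is finite, and this is precisely the computation already carried out inside the proof of Theorem \ref{extropalation 1}: by property $(\mathrm{R2})$ of the iteration algorithm the weight $w=\mathfrak{R}h$ again belongs to $\dot{K}_{(\vec{q}/p_0)',\vec{a}}^{-p_0\alpha,(p/p_0)'}(\mathbb{R}^n)$ with norm at most $2$, so H\"older's inequality on anisotropic mixed-norm Herz spaces (Proposition \ref{Holder}), combined with the scaling identity $\||f|^{p_0}\|_{\dot{K}_{\vec{q}/p_0,\vec{a}}^{\alpha p_0,p/p_0}(\mathbb{R}^n)}=\|f\|_{\dot{K}_{\vec{q},\vec{a}}^{\alpha,p}(\mathbb{R}^n)}^{p_0}$ (the Herz analogue of Remark \ref{mixedpro}(iv) used in the proof of Theorem \ref{extropalation 1}), gives
$$
\int_{\mathbb{R}^n}|f|^{p_0}w\,dx\;\le\;\big\||f|^{p_0}\big\|_{\dot{K}_{\vec{q}/p_0,\vec{a}}^{\alpha p_0,p/p_0}(\mathbb{R}^n)}\,\|w\|_{\dot{K}_{(\vec{q}/p_0)',\vec{a}}^{-p_0\alpha,(p/p_0)'}(\mathbb{R}^n)}\;\lesssim\;\|f\|_{\dot{K}_{\vec{q},\vec{a}}^{\alpha,p}(\mathbb{R}^n)}^{p_0}<\infty ,
$$
since $f\in\dot{K}_{\vec{q},\vec{a}}^{\alpha,p}(\mathbb{R}^n)$. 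In particular $f\in L_w^{p_0}(\mathbb{R}^n)$, so the weighted hypothesis on $T$ genuinely applies to it, and every pair $(|Tf|,c|f|)\in\mathcal{F}$ with $c|f|\in\dot{K}_{\vec{q},\vec{a}}^{\alpha,p}(\mathbb{R}^n)$ meets all the hypotheses of Theorem \ref{extropalation 1}.

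Invoking Theorem \ref{extropalation 1} for this $\mathcal{F}$ then yields $\|Tf\|_{\dot{K}_{\vec{q},\vec{a}}^{\alpha,p}(\mathbb{R}^n)}\lesssim\|c\,f\|_{\dot{K}_{\vec{q},\vec{a}}^{\alpha,p}(\mathbb{R}^n)}\sim\|f\|_{\dot{K}_{\vec{q},\vec{a}}^{\alpha,p}(\mathbb{R}^n)}$ for every $f\in\dot{K}_{\vec{q},\vec{a}}^{\alpha,p}(\mathbb{R}^n)$, which is the assertion. I do not expect any genuine obstacle here: all the substantive work sits in Theorem \ref{extropalation 1} and, beneath it, in Lemma \ref{boundedness HK} (boundedness of the Hardy-Littlewood maximal operator on $\dot{K}_{\vec{q},\vec{a}}^{\alpha,p}(\mathbb{R}^n)$), which controls the operator norm $B$ and hence the behaviour of $\mathfrak{R}$. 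The one point deserving a line of care is the routine measurability and finiteness bookkeeping — that $Tf$ is well defined on the relevant functions and that the integrals above are finite before Theorem \ref{extropalation 1} is applied — which is handled exactly as in the classical Rubio de Francia scheme, if necessary by first running the argument on truncations $f\chi_{\{|f|\le N\}\cap B_{\vec{a}}(\vec{0}_n,N)}$ and passing to the limit via Fatou's lemma together with the lower semicontinuity of $\|\cdot\|_{\dot{K}_{\vec{q},\vec{a}}^{\alpha,p}(\mathbb{R}^n)}$.
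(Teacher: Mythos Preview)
Your proposal is correct and follows essentially the same approach as the paper: the paper's proof simply states that Corollary \ref{ex app 1} is immediately obtained from Theorem \ref{extropalation 1}, and your argument spells out exactly how, by building the family $\mathcal{F}=\{(|Tf|,c|f|)\}$ and verifying its hypotheses. Your additional care about finiteness and the truncation remark are sound elaborations of what the paper leaves implicit.
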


\begin{corollary} \label{ex app 2}
	Let $0< p_0<\infty$, $p_0< p,\vec{q}<\infty$ and $-\frac{1}{v}\sum_{i=1}^n \frac{a_i}{q_i}<\alpha<\frac{1}{p_{0}}-\frac{1}{v}\sum_{i=1}^n \frac{a_i}{q_i}$, suppose that every\\
	$$w\in\left \{\mathfrak{R}h: h\in\ \dot {K}_{(\vec{q}/p_{0})',\vec{a}}^{-p_{0}\alpha,(p/p_{0})^{\prime}}\left(\mathbb{R}^{n}\right): \|h\|_{\dot {K}_{(\vec{q}/p_{0})',\vec{a}}^{-p_{0}\alpha,(p/p_{0})^{\prime}}\left(\mathbb{R}^{n}\right)}\leq 1 \right \},$$
	the operator $T$:~$L_w^{p_0}(\mathbb{R}^n)\rightarrow L_w^{p_0}(\mathbb{R}^n)$, satisfies\\
	$$\int_{\mathbb{R}^n}\left|Tf(x)\right|^{p_0} w(x)dx \lesssim \int_{\mathbb{R}^n}\left|f(x)\right|^{p_0} w(x)dx.$$
	Then, for every $f\in \dot{K}_{\vec{q}, \vec{a}}^{\alpha, p}\left( \mathbb{R}^n\right)$, such that 
	$$\|Tf\|_{\dot K_{\vec{q},\vec{a}}^{\alpha,p}\left(\mathbb{R}^{n}\right)}\leq\|f\|_{\dot K_{\vec{q},\vec{a}}^{\alpha,p}\left(\mathbb{R}^{n}\right)}.$$
\end{corollary}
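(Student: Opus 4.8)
The plan is to derive this from the diagonal extrapolation theorem, Theorem~\ref{extropalation 1} (and, for the non-homogeneous variant, Theorem~\ref{extropalation 2}), by choosing the right family $\mathcal F$. First I would fix a dense subclass $\mathcal D\subset\dot K_{\vec q,\vec a}^{\alpha,p}(\mathbb R^n)$ on which $Tf$ is a priori well defined and integrable --- for instance bounded functions with compact support bounded away from the origin, or finite linear combinations of central $(\alpha,\vec q)$-blocks as provided by Theorem~\ref{4.1} --- so that for $f\in\mathcal D$ the pair $(|Tf|,\,C|f|)$ is legitimate, where $C$ is the implicit constant in the hypothesis. I would then verify the finiteness requirement of the extrapolation theorem: for every admissible weight $w=\mathfrak R h$ with $\|h\|_{\dot K_{(\vec q/p_0)',\vec a}^{-p_0\alpha,(p/p_0)'}}\le 1$, one has $\int_{\mathbb R^n}(C|f|)^{p_0}w\,dx<\infty$. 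This uses (R2), which keeps $w$ inside $\dot K_{(\vec q/p_0)',\vec a}^{-p_0\alpha,(p/p_0)'}(\mathbb R^n)$, together with H\"older's inequality on anisotropic mixed-norm Herz spaces (Proposition~\ref{Holder}) applied to the factorization $\dot K_{\vec q/p_0,\vec a}^{\alpha p_0,p/p_0}\cdot\dot K_{(\vec q/p_0)',\vec a}^{-p_0\alpha,(p/p_0)'}$ and Remark~\ref{mixedpro}~(iv) to rewrite $\||f|^{p_0}\|$ as $\|f\|^{p_0}$.

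Next I would set $\mathcal F:=\{(|Tf|,\,C|f|):f\in\mathcal D\}$. The assumed bound $\int_{\mathbb R^n}|Tf(x)|^{p_0}w(x)\,dx\lesssim\int_{\mathbb R^n}|f(x)|^{p_0}w(x)\,dx$, after absorbing the constant into $g:=C|f|$, is exactly the hypothesis $\int_{\mathbb R^n}F(x)^{p_0}w(x)\,dx\le\int_{\mathbb R^n}G(x)^{p_0}w(x)\,dx<\infty$ required by Theorem~\ref{extropalation 1} for all $(F,G)\in\mathcal F$ and all $w$ in the stated class. Since the exponents satisfy $p_0<p,\vec q<\infty$ and $-\tfrac1v\sum_{i=1}^n\tfrac{a_i}{q_i}<\alpha<\tfrac1{p_0}-\tfrac1v\sum_{i=1}^n\tfrac{a_i}{q_i}$, Theorem~\ref{extropalation 1} applies and yields $\|Tf\|_{\dot K_{\vec q,\vec a}^{\alpha,p}(\mathbb R^n)}\lesssim\|f\|_{\dot K_{\vec q,\vec a}^{\alpha,p}(\mathbb R^n)}$ for every $f\in\mathcal D$. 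The non-homogeneous statement is obtained verbatim by replacing $\dot K$ with $K$ throughout and invoking Theorem~\ref{extropalation 2} in place of Theorem~\ref{extropalation 1}.

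Finally I would remove the restriction to $\mathcal D$. Given an arbitrary $f\in\dot K_{\vec q,\vec a}^{\alpha,p}(\mathbb R^n)$, choose $\{f_j\}\subset\mathcal D$ with $f_j\to f$ in quasi-norm; then $\{Tf_j\}$ is Cauchy in $\dot K_{\vec q,\vec a}^{\alpha,p}(\mathbb R^n)$ by the uniform estimate just proved, hence converges by the completeness established in Proposition~\ref{quasiBanach}, and one identifies its limit with $Tf$ using the weighted $L^{p_0}_w$-boundedness of $T$ to pass to the limit locally. Passing to the limit in the quasi-norm inequality gives the claim for all $f$. The main obstacle I anticipate is precisely this density/consistency step: one must arrange that $T$ is genuinely defined on a subspace dense in $\dot K_{\vec q,\vec a}^{\alpha,p}(\mathbb R^n)$ and that the Herz-quasi-norm limit agrees with the weighted-Lebesgue limit, so that the extrapolated estimate transfers without ambiguity; the only other delicate point, namely the uniform-in-$w$ finiteness $\int g^{p_0}w<\infty$, is disposed of by the H\"older-type bound of Proposition~\ref{Holder} noted above.
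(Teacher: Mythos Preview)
Your proposal is correct and follows the same route as the paper: the paper's proof simply says that Corollaries~\ref{ex app 1} and~\ref{ex app 2} are immediate from Theorems~\ref{extropalation 1} and~\ref{extropalation 2}. You supply exactly this reduction, choosing $\mathcal F=\{(|Tf|,C|f|)\}$ and invoking the extrapolation theorem, but you are considerably more careful than the paper about the finiteness hypothesis $\int g^{p_0}w<\infty$ and the density/consistency step, both of which the paper leaves implicit.
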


\begin{proof}
	By the conclusion of Theorem \ref{extropalation 1} and Theorem \ref{extropalation 2}, Corollary \ref{ex app 1} and Corollary \ref{ex app 2} can be immediately obtained.
\end{proof}

We also establish the off-diagonal extrapolation theorem on anisotropic mixed Herz spaces. 

\begin{theorem}
Let $0<p_{0}<\infty$, and $\mathcal{F}$ be a family of pairs $(f, g)$ of non-negative measurable functions. Suppose that for every
	$$w\in\left \{\mathfrak{R}h: h\in\ {\dot K_{\bar{\vec{q}}_2^{\prime},\vec{a}}^{-\alpha q_0, \bar{p}_2^{\prime}}\left(\mathbb{R}^{n}\right)}: \|h\|_{\dot K_{\bar{\vec{q}}_2^{\prime},\vec{a}}^{-\alpha q_0, \bar{p}_2^{\prime}}\left(\mathbb{R}^{n}\right)}\leq 1 \right \},$$
	such taht
	$$\left(\int_{\mathbb{R}^{n}} f(x)^{q_{0}} w(x) d x\right)^{\frac{1}{q_{0}}} \lesssim\left(\int_{\mathbb{R}^{n}} g(x)^{p_{0}} w(x)^{p_{0} / q_{0}} d x\right)^{\frac{1}{p_{0}}}<\infty~~~(f, g) \in \mathcal{F} .$$
	Then, $p_{0}<p_1, \vec{q_1}<p_0q_0/(q_0-p_0)$, $p_2, q_2$ satisfy $1/p_1-1/p_2=1/\vec{q}_1-\vec{q}_2=1/p_0-1/q_0$, and $\alpha$ satisfies $-\frac{1}{v}\sum_{i=1}^n \frac{a_i}{q_i}<\alpha<\frac{1}{p_{0}}-\frac{1}{v} \sum_{i=1}^n \frac{a_{i}}{q_{i}}$. Then for every $(f,g) \in \mathcal F$ with $g\in \dot K_{\vec{q}_1,\vec{a}}^{\alpha,p_1}\left(\mathbb{R}^{n}\right)$, such that
	$$\left\| f\right\|_{\dot K_{\vec{q}_2,\vec{a}}^{\alpha,p_2}\left(\mathbb{R}^{n}\right)} \lesssim \left\| g\right\|_{\dot K_{\vec{q}_1,\vec{a}}^{\alpha,p_1}\left(\mathbb{R}^{n}\right)}.$$ 
\end{theorem}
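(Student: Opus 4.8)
Here is the plan.

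The plan is to run the Rubio de Francia scheme of Theorem \ref{extropalation 1}, absorbing the off-diagonal gap by \emph{two} successive uses of the homogeneity identity $\bigl\||F|^{s}\bigr\|_{\dot K_{\vec q,\vec a}^{\alpha,p}}=\|F\|_{\dot K_{s\vec q,\vec a}^{\alpha/s,\,sp}}^{s}$ (the Herz-space counterpart of Remark \ref{mixedpro}(iv), already exploited in the proof of Theorem \ref{extropalation 1}). Write $\bar{\vec q}_2:=\vec q_2/q_0$ and $\bar p_2:=p_2/q_0$, so the space occurring in the hypothesis is $\dot K_{\bar{\vec q}_2^{\prime},\vec a}^{-\alpha q_0,\bar p_2^{\prime}}(\mathbb R^n)$; under the stated range of $\alpha$, Lemma \ref{boundedness HK} gives that $M$ is bounded on this space, so the iteration operator $\mathfrak R$ built on it and the properties (R1)--(R3) are available. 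Before the argument proper I would record the arithmetic facts that make it close: since $q_0>p_0$ and $p_0<p_1,\vec q_1<p_0q_0/(q_0-p_0)$, one gets $q_0<p_2,\vec q_2<\infty$, whence $\bar p_2,\bar{\vec q}_2\in(1,\infty)$ and the norm representation following Proposition \ref{dual} applies to $\dot K_{\bar{\vec q}_2,\vec a}^{\alpha q_0,\bar p_2}(\mathbb R^n)$; moreover the balance relations $1/p_1-1/p_2=1/p_0-1/q_0$ and $1/\vec q_1-1/\vec q_2=1/p_0-1/q_0$ are, respectively, \emph{equivalent} to the identities $(p_0/q_0)(p_1/p_0)^{\prime}=\bar p_2^{\prime}$ and $(p_0/q_0)(\vec q_1/p_0)^{\prime}=\bar{\vec q}_2^{\prime}$.

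First I would dualize the target norm. By the homogeneity identity and the norm representation following Proposition \ref{dual} (where, since $f^{q_0}\geq0$, one may test only against $h\geq0$),
$$\|f\|_{\dot K_{\vec q_2,\vec a}^{\alpha,p_2}(\mathbb R^n)}^{q_0}=\bigl\|f^{q_0}\bigr\|_{\dot K_{\bar{\vec q}_2,\vec a}^{\alpha q_0,\bar p_2}(\mathbb R^n)}=\sup\left\{\int_{\mathbb R^n}f(x)^{q_0}h(x)\,dx:\ 0\leq h,\ \|h\|_{\dot K_{\bar{\vec q}_2^{\prime},\vec a}^{-\alpha q_0,\bar p_2^{\prime}}(\mathbb R^n)}\leq1\right\}.$$
For a fixed admissible $h$, set $w:=\mathfrak R h$; then $w$ belongs to the weight class of the hypothesis and $h\leq w$ by (R1), so the weighted off-diagonal assumption gives
$$\int_{\mathbb R^n}f(x)^{q_0}h(x)\,dx\leq\int_{\mathbb R^n}f(x)^{q_0}w(x)\,dx\lesssim\left(\int_{\mathbb R^n}g(x)^{p_0}w(x)^{p_0/q_0}\,dx\right)^{q_0/p_0}.$$

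On the source side I would read $\int_{\mathbb R^n}g^{p_0}w^{p_0/q_0}=\bigl\|g^{p_0}w^{p_0/q_0}\bigr\|_{\dot K_{\vec 1,\vec a}^{0,1}}$ (note $L^1(\mathbb R^n)=\dot K_{\vec 1,\vec a}^{0,1}(\mathbb R^n)$) and apply Hölder's inequality on Herz spaces (Proposition \ref{Holder}) with the exponent-triples $(\alpha p_0,\,p_1/p_0,\,\vec q_1/p_0)$ and $(-\alpha p_0,\,(p_1/p_0)^{\prime},\,(\vec q_1/p_0)^{\prime})$, followed by the homogeneity identity, to obtain
$$\int_{\mathbb R^n}g^{p_0}w^{p_0/q_0}\leq\bigl\|g^{p_0}\bigr\|_{\dot K_{\vec q_1/p_0,\vec a}^{\alpha p_0,\,p_1/p_0}}\,\bigl\|w^{p_0/q_0}\bigr\|_{\dot K_{(\vec q_1/p_0)^{\prime},\vec a}^{-\alpha p_0,\,(p_1/p_0)^{\prime}}}=\|g\|_{\dot K_{\vec q_1,\vec a}^{\alpha,p_1}}^{p_0}\,\bigl\|w^{p_0/q_0}\bigr\|_{\dot K_{(\vec q_1/p_0)^{\prime},\vec a}^{-\alpha p_0,\,(p_1/p_0)^{\prime}}}.$$
One further use of the homogeneity identity on the last factor, followed by the two arithmetic identities recorded above, yields
$$\bigl\|w^{p_0/q_0}\bigr\|_{\dot K_{(\vec q_1/p_0)^{\prime},\vec a}^{-\alpha p_0,\,(p_1/p_0)^{\prime}}}=\|w\|_{\dot K_{(p_0/q_0)(\vec q_1/p_0)^{\prime},\vec a}^{-\alpha q_0,\,(p_0/q_0)(p_1/p_0)^{\prime}}}^{p_0/q_0}=\|\mathfrak R h\|_{\dot K_{\bar{\vec q}_2^{\prime},\vec a}^{-\alpha q_0,\bar p_2^{\prime}}}^{p_0/q_0}\leq\Bigl(2\|h\|_{\dot K_{\bar{\vec q}_2^{\prime},\vec a}^{-\alpha q_0,\bar p_2^{\prime}}}\Bigr)^{p_0/q_0}\leq 2^{p_0/q_0},$$
the last two steps by (R2) and $\|h\|\leq1$. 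Chaining the displays gives $\int_{\mathbb R^n}f^{q_0}h\lesssim\|g\|_{\dot K_{\vec q_1,\vec a}^{\alpha,p_1}}^{q_0}$ uniformly in $h$; taking the supremum over $h$ and a $q_0$-th root yields $\|f\|_{\dot K_{\vec q_2,\vec a}^{\alpha,p_2}(\mathbb R^n)}\lesssim\|g\|_{\dot K_{\vec q_1,\vec a}^{\alpha,p_1}(\mathbb R^n)}$.

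The step I expect to be the main obstacle is the index bookkeeping inside the last display: one must verify that after the two rescalings the weight $w$ is measured in \emph{precisely} the Herz space $\dot K_{\bar{\vec q}_2^{\prime},\vec a}^{-\alpha q_0,\bar p_2^{\prime}}$ on which $\mathfrak R$ was constructed (so that (R2) applies), which is exactly where the off-diagonal balance $1/p_1-1/p_2=1/\vec q_1-1/\vec q_2=1/p_0-1/q_0$ gets consumed; one must also check that the stated hypotheses keep every auxiliary exponent --- $\bar{\vec q}_2$, $\bar p_2$, $\vec q_1/p_0$, $p_1/p_0$ and their conjugates, together with the value of $\alpha$ --- inside the ranges required by Proposition \ref{dual}, Proposition \ref{Holder} and Lemma \ref{boundedness HK}. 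Everything else is a faithful transcription of the diagonal extrapolation already established.
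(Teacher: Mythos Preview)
Your proposal is correct and follows essentially the same approach as the paper: dualize $\|f\|_{\dot K_{\vec q_2,\vec a}^{\alpha,p_2}}^{q_0}$ via the norm representation, majorize $h$ by $\mathfrak R h$ and invoke the weighted hypothesis, apply the Herz-space H\"older inequality to the $g$-integral, and then use the homogeneity identity together with the arithmetic $(p_0/q_0)(\vec q_1/p_0)'=\bar{\vec q}_2'$, $(p_0/q_0)(p_1/p_0)'=\bar p_2'$ to land the weight factor back in $\dot K_{\bar{\vec q}_2',\vec a}^{-\alpha q_0,\bar p_2'}$ where (R2) applies. If anything, your bookkeeping is cleaner than the paper's, which contains a few index typos (e.g.\ it writes $\bar p_2=p_2/p_0$ where $p_2/q_0$ is meant, and $\|f^{q_0}\|_{\dot K_{\vec q_2/q_0,\vec a}^{\alpha,p_2/p_0}}$ where the indices should be $\alpha q_0$ and $p_2/q_0$).
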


\begin{proof}
To simple calculation, one can know ${p_{0}}/{q_{0}}\cdot \left({\vec{q}_{1}}/{p_{0}}\right)^{\prime}=\left({\vec{q}_{2}}/{q_{0}}\right)^{\prime}>1$ and ${p_{0}}/{q_{0}}\cdot \left({p_{1}}/{p_{0}}\right)^{\prime}=\left({p_{2}}/{q_{0}}\right)^{\prime}>1$. Writing $\bar{p}_2=p_2/p_0$ and $\bar{\vec{q}}_2=\vec{q}_2/q_0$, Lemma \ref{boundedness HK} tell us that Hardy-Littlewood maximal operators $M$ is bounded on $\dot K_{\bar{\vec{q}}_2^{\prime},\vec{a}}^{-\alpha q_0, \bar{p}_2^{\prime}}\left(\mathbb{R}^{n}\right)$. Let $B$ is the operator norm of $M$, i.e.
$$B=\|M\|_{\dot K_{\bar{\vec{q}}_2^{\prime},\vec{a}}^{-\alpha q_0, \bar{p}_2^{\prime}}\left(\mathbb{R}^{n}\right)\rightarrow \dot K_{\bar{\vec{q}}_2^{\prime},\vec{a}}^{-\alpha q_0, \bar{p}_2^{\prime}}\left(\mathbb{R}^{n}\right)}.$$
Using the same way, for any measurable function $h$, we define
$$\mathfrak{R}h:=\sum_{k=0}^{\infty} \frac{M^{k} h}{2^{k} B^{k}},$$ 
where $M^{k}$ is the $k$th iterations of $M$ and we denote by $M^{\circ} h=h$. \\
Similar to (R1), (R2), and (R3), we have
\begin{align*}
    &h(x) \leq \mathfrak{R}h(x),  \tag{R4}\\
	&\|\mathfrak{R}h\|_{\dot K_{\bar{\vec{q}}_2^{\prime},\vec{a}}^{-\alpha q_0, \bar{p}_2^{\prime}}\left(\mathbb{R}^{n}\right)}\leq 2 \|h\|_{\dot K_{\bar{\vec{q}}_2^{\prime},\vec{a}}^{-\alpha q_0, \bar{p}_2^{\prime}}\left(\mathbb{R}^{n}\right)}',  \tag{R5}\\
	&[\mathfrak{R} h]_{A_{1}} \leq 2 B. \tag{R6}
\end{align*}
For any measurable function $h$, and any measurable function pairs $(f,g) \in \mathcal{F}$, by inequality (R4) and (R5), we can get 
\begin{align*}
	\int_{\mathbb{R}^n} f(x)^{q_0} h(x) dx &\lesssim \int_{\mathbb{R}^n} f(x)^{q_0} \mathfrak{R}h(x) dx\\
	&\lesssim \left(\int_{\mathbb{R}^n} g(x)^{p_0} \left(\mathfrak{R}h(x)\right)^{p_0/q_0} dx \right)^{q_0/p_0}\\
	&\lesssim  \left\|g^{p_0}\right\|_{\dot K_{\vec{q}_1/p_0,\vec{a}}^{\alpha p_0,p_1/p_0}\left(\mathbb{R}^{n}\right)}^{q_0/p_0} \left\| \left(\mathfrak{R}h(x)\right)^{p_0/q_0}\right\|_{\dot {K}_{(\vec{q}_1/p_{0})',\vec{a}}^{-p_{0}\alpha,(p_1/p_{0})^{\prime}}\left(\mathbb{R}^{n}\right)}^{q_0/p_0}\\
	&\lesssim  \left\|g^{p_0}\right\|_{\dot K_{\vec{q}_1/p_0,\vec{a}}^{\alpha p_0,p_1/p_0}\left(\mathbb{R}^{n}\right)}^{q_0/p_0} \left\| \mathfrak{R}h(x)\right\|_{\dot K_{\bar{\vec{q}}_2^{\prime},\vec{a}}^{-\alpha q_0, \bar{p}_2^{\prime}}\left(\mathbb{R}^{n}\right)}\\
	&\lesssim \left\| g\right\|_{\dot K_{\vec{q}_1,\vec{a}}^{\alpha,p_1}\left(\mathbb{R}^{n}\right)}^{q_0} \|h\|_{\dot K_{\bar{\vec{q}}_2^{\prime},\vec{a}}^{-\alpha q_0, \bar{p}_2^{\prime}}\left(\mathbb{R}^{n}\right)}\\
	& \lesssim \left\| g\right\|_{\dot K_{\vec{q}_1,\vec{a}}^{\alpha,p_1}\left(\mathbb{R}^{n}\right)}^{q_0}.
\end{align*}
Furthermore,
\begin{align*}
	\left\| f\right\|_{\dot K_{\vec{q}_2,\vec{a}}^{\alpha,p_2}\left(\mathbb{R}^{n}\right)}^{q_0} &=\left\| f^{q_0} \right\|_{\dot K_{\vec{q}_2/q_0,\vec{a}}^{\alpha,p_2/p_0}\left(\mathbb{R}^{n}\right)}\\
	&\lesssim \sup\left\{ \int_{\mathbb{R}^n}|f(x)|^{q_0} h(x) dx : \|h\|_{\dot {K}_{(\vec{q}_2/p_{0})',\vec{a}}^{-p_{0}\alpha,(p_2/p_{0})^{\prime}}\left(\mathbb{R}^{n}\right)}\leq 1 \right\}\\
	&\lesssim\| g \|_{{\dot K_{\vec{q}_{1}, \vec{a}}^{\alpha, p_{1}}}\left(\mathbb{R}^{n}\right)}^{q_0}.
\end{align*}
This finishes the proof.
\end{proof}

\begin{theorem}
	Let $0<p_{0}<\infty$, and $\mathcal{F}$ be a family of pairs $(f, g)$ of non-negative measurable functions. Suppose that for every
	$$w\in\left \{\mathfrak{R}h: h\in\ {\dot K_{\bar{\vec{q}}_2^{\prime},\vec{a}}^{-\alpha q_0, \bar{p}_2^{\prime}}\left(\mathbb{R}^{n}\right)}: \|h\|_{\dot K_{\bar{\vec{q}}_2^{\prime},\vec{a}}^{-\alpha q_0, \bar{p}_2^{\prime}}\left(\mathbb{R}^{n}\right)}\leq 1 \right \},$$
	such that 
	$$\left(\int_{\mathbb{R}^{n}} f(x)^{q_{0}} w(x) d x\right)^{\frac{1}{q_{0}}} \lesssim\left(\int_{\mathbb{R}^{n}} g(x)^{p_{0}} w(x)^{p_{0} / q_{0}} d x\right)^{\frac{1}{p_{0}}}<\infty~~~(f, g) \in \mathcal{F} .$$
	Then, $p_{0}<p_1, \vec{q_1}<p_0q_0/(q_0-p_0)$, $p_2, q_2$ satisfy $1/p_1-1/p_2=1/\vec{q}_1-\vec{q}_2=1/p_0-1/q_0$, and $\alpha$ satisfies $-\frac{1}{v}\sum_{i=1}^n \frac{a_i}{q_i}<\alpha<\frac{1}{p_{0}}-\frac{1}{v} \sum_{i=1}^n \frac{a_{i}}{q_{i}}$. Then for every $(f,g) \in \mathcal F$ with $g\in  K_{\vec{q}_1,\vec{a}}^{\alpha,p_1}\left(\mathbb{R}^{n}\right)$, such that
	$$\left\| f\right\|_{ K_{\vec{q}_2,\vec{a}}^{\alpha,p_2}\left(\mathbb{R}^{n}\right)} \lesssim \left\| g\right\|_{ K_{\vec{q}_1,\vec{a}}^{\alpha,p_1}\left(\mathbb{R}^{n}\right)}.$$ 
\end{theorem}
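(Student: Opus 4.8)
The plan is to run the argument of the preceding (homogeneous) off-diagonal extrapolation theorem essentially verbatim, replacing every homogeneous space $\dot{K}$ by its non-homogeneous counterpart $K$. All the structural tools that proof relies on survive the passage to the non-homogeneous setting: Lemma~\ref{boundedness HK} asserts boundedness of the Hardy--Littlewood maximal operator $M$ on $K_{\vec{q},\vec{a}}^{\alpha,p}(\mathbb{R}^n)$ under the same index restriction, Proposition~\ref{Holder} (H\"older's inequality) and Proposition~\ref{dual} (duality, hence the norm representation recorded in the remark after it) both have non-homogeneous formulations, and Remark~\ref{mixedpro}(iv) is indifferent to the choice of space. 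The only genuine adjustment is that the Herz sums now run over $k\ge 0$ instead of $k\in\mathbb{Z}$, which is already accounted for inside Lemma~\ref{boundedness HK}.

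First I would fix the index bookkeeping exactly as in the homogeneous case. Set $\bar{p}_2:=p_2/p_0$ and $\bar{\vec{q}}_2:=\vec{q}_2/q_0$; a direct computation with the relations $1/p_1-1/p_2=1/\vec{q}_1-1/\vec{q}_2=1/p_0-1/q_0$ gives $(p_0/q_0)(\vec{q}_1/p_0)'=(\vec{q}_2/q_0)'=\bar{\vec{q}}_2'>1$ and $(p_0/q_0)(p_1/p_0)'=(p_2/q_0)'=\bar{p}_2'>1$, and that $-\alpha q_0$ falls in the admissible range for the space $K_{\bar{\vec{q}}_2',\vec{a}}^{-\alpha q_0,\bar{p}_2'}(\mathbb{R}^n)$. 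Hence, by Lemma~\ref{boundedness HK} (non-homogeneous case), $M$ is bounded on $K_{\bar{\vec{q}}_2',\vec{a}}^{-\alpha q_0,\bar{p}_2'}(\mathbb{R}^n)$ with norm $B$, and the Rubio de Francia iteration $\mathfrak{R}h:=\sum_{k\ge 0}M^k h/(2^kB^k)$ satisfies the exact analogues of (R4)--(R6) on this space. With this in hand I would chain, for nonnegative $h$ with $\|h\|_{K_{\bar{\vec{q}}_2',\vec{a}}^{-\alpha q_0,\bar{p}_2'}}\le 1$ and $(f,g)\in\mathcal{F}$,
\begin{align*}
\int_{\mathbb{R}^n} f^{q_0}h\,dx
&\lesssim \int_{\mathbb{R}^n} f^{q_0}\,\mathfrak{R}h\,dx
\lesssim \Big(\int_{\mathbb{R}^n} g^{p_0}(\mathfrak{R}h)^{p_0/q_0}\,dx\Big)^{q_0/p_0}\\
&\lesssim \big\|g^{p_0}\big\|_{K_{\vec{q}_1/p_0,\vec{a}}^{\alpha p_0,p_1/p_0}}^{q_0/p_0}\,\big\|(\mathfrak{R}h)^{p_0/q_0}\big\|_{K_{(\vec{q}_1/p_0)',\vec{a}}^{-p_0\alpha,(p_1/p_0)'}}^{q_0/p_0}
\lesssim \|g\|_{K_{\vec{q}_1,\vec{a}}^{\alpha,p_1}}^{q_0},
\end{align*}
where the first line uses (R4) and the hypothesis, the third line uses H\"older on non-homogeneous anisotropic mixed-norm Herz spaces, and the last step uses Remark~\ref{mixedpro}(iv) to rewrite $\|(\mathfrak{R}h)^{p_0/q_0}\|_{K_{(\vec{q}_1/p_0)',\vec{a}}^{-p_0\alpha,(p_1/p_0)'}}$ as a power of $\|\mathfrak{R}h\|_{K_{\bar{\vec{q}}_2',\vec{a}}^{-\alpha q_0,\bar{p}_2'}}$ together with the analogue of (R5). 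Finally, by the non-homogeneous norm representation (the remark following Proposition~\ref{dual}) applied to $K_{\vec{q}_2/q_0,\vec{a}}^{\alpha,p_2/p_0}(\mathbb{R}^n)$ and Remark~\ref{mixedpro}(iv), the quantity $\|f\|_{K_{\vec{q}_2,\vec{a}}^{\alpha,p_2}}^{q_0}=\|f^{q_0}\|_{K_{\vec{q}_2/q_0,\vec{a}}^{\alpha,p_2/p_0}}$ is comparable to the supremum of $\int f^{q_0}h\,dx$ over such $h$, which is $\lesssim\|g\|_{K_{\vec{q}_1,\vec{a}}^{\alpha,p_1}}^{q_0}$; taking $q_0$-th roots yields the claim.

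The part requiring genuine care rather than mechanical copying is the index arithmetic of the previous paragraph: one must check that all four spaces that appear ($K_{\bar{\vec{q}}_2',\vec{a}}^{-\alpha q_0,\bar{p}_2'}$, $K_{\vec{q}_1/p_0,\vec{a}}^{\alpha p_0,p_1/p_0}$, its dual, and $K_{\vec{q}_2/q_0,\vec{a}}^{\alpha,p_2/p_0}$) have parameters in the ranges demanded by Lemma~\ref{boundedness HK} and Proposition~\ref{dual}, and that the duality pairing for $K_{\vec{q}_1/p_0,\vec{a}}^{\alpha p_0,p_1/p_0}$ aligns with $K_{\bar{\vec{q}}_2',\vec{a}}^{-\alpha q_0,\bar{p}_2'}$ after raising to the power $p_0/q_0$. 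This is precisely the computation already carried out in the homogeneous off-diagonal case, so no new analytic difficulty arises; it is bookkeeping that must be transcribed faithfully, with the non-homogeneous summation $\sum_{k\ge 0}$ in place of $\sum_{k\in\mathbb{Z}}$ throughout.
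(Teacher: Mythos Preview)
Your proposal is correct and matches the paper's approach: the paper does not give a separate proof for this non-homogeneous off-diagonal statement, stating it immediately after the homogeneous version and implicitly relying on the same argument with $\dot K$ replaced by $K$ throughout. You have faithfully transcribed that argument, invoking the non-homogeneous cases of Lemma~\ref{boundedness HK}, Proposition~\ref{Holder}, Proposition~\ref{dual}, and the Rubio de Francia iteration exactly as in the homogeneous proof.
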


The off-diagonal extrapolation theorems also have the following two corollaries about operators with weighted boundedness.
\begin{corollary} \label{frac 1}
	Let $0<p_{0}<\infty$, $p_{0}<p_1, \vec{q_1}<p_0q_0/(q_0-p_0)$, $p_2, q_2$ satisfy $1/p_1-1/p_2=1/\vec{q}_1-\vec{q}_2=1/p_0-1/q_0$, and $\alpha$ satisfies $-\frac{1}{v}\sum_{i=1}^n \frac{a_i}{q_i}<\alpha<\frac{1}{p_{0}}-\frac{1}{v} \sum_{i=1}^n \frac{a_{i}}{q_{i}}$. Suppose that
	$$w\in\left \{\mathfrak{R}h: h\in\ {\dot K_{\bar{\vec{q}}_2^{\prime},\vec{a}}^{-\alpha q_0, \bar{p}_2^{\prime}}\left(\mathbb{R}^{n}\right)}: \|h\|_{\dot K_{\bar{\vec{q}}_2^{\prime},\vec{a}}^{-\alpha q_0, \bar{p}_2^{\prime}}\left(\mathbb{R}^{n}\right)}\leq 1 \right \},$$
	the operator $T:~L^{p_0}_w\left(\mathbb{R}^{n}\right)\rightarrow L^{q_0}_w$, satisfies
	$$\left(\int_{\mathbb{R}^n}\left|Tf(x)\right|^{q_0}w(x)dx\right)^{1/q_0}\lesssim \left(\int_{\mathbb{R}^n} \left|f(x) \right|^{p_0} w(x) dx \right)^{1/p_0}. $$
	Then, for every $f\in \dot{K}_{\vec{q}_1, \vec{a}}^{\alpha, p_1}\left( \mathbb{R}^n\right)$, such that 
	$$\left\| Tf\right\|_{ K_{\vec{q}_2,\vec{a}}^{\alpha,p_2}\left(\mathbb{R}^{n}\right)} \lesssim \left\| f\right\|_{ K_{\vec{q}_1,\vec{a}}^{\alpha,p_1}\left(\mathbb{R}^{n}\right)}.$$ 
\end{corollary}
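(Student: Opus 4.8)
The plan is to deduce this corollary directly from the off-diagonal extrapolation theorem established just above (in its homogeneous form), in exactly the way Corollaries~\ref{ex app 1} and~\ref{ex app 2} follow from the diagonal extrapolation theorems. First I would introduce the family of pairs
$$\mathcal{F}:=\left\{\,(|Tf|,\,|f|):\ f\in \dot K_{\vec q_1,\vec a}^{\alpha,p_1}(\mathbb{R}^n)\ \text{with $Tf$ well defined}\,\right\},$$
and observe that the weight class appearing in the hypothesis of the corollary, namely $\{\mathfrak{R}h:\ h\in \dot K_{\bar{\vec q}_2',\vec a}^{-\alpha q_0,\bar p_2'}(\mathbb{R}^n),\ \|h\|\le 1\}$, is literally the weight class that appears in the hypothesis of that theorem; moreover (R4) and (R6) guarantee that each such $w=\mathfrak{R}h$ is an admissible $A_1$ weight dominating $h$. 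Hence the whole task reduces to checking that $\mathcal{F}$ satisfies the pair hypothesis of the off-diagonal extrapolation theorem.

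For a fixed weight $w=\mathfrak{R}h$ in this class, the inequality demanded by the theorem,
$$\left(\int_{\mathbb{R}^n}|Tf(x)|^{q_0}w(x)\,dx\right)^{1/q_0}\ \lesssim\ \left(\int_{\mathbb{R}^n}|f(x)|^{p_0}w(x)^{p_0/q_0}\,dx\right)^{1/p_0}<\infty ,$$
is precisely the assumed weighted $L^{p_0}\to L^{q_0}$ boundedness of $T$ (up to the normalization of the weight, the power $p_0/q_0$ being the one that makes the two extrapolation formulations agree). The only sub-point requiring an argument is the finiteness of the right-hand side: after the substitutions $\bar{\vec q}_1=\vec q_1/p_0$, $\bar p_1=p_1/p_0$ and the conjugate-exponent identities $\tfrac{p_0}{q_0}(\bar{\vec q}_1)'=(\bar{\vec q}_2)'$ and $\tfrac{p_0}{q_0}(\bar p_1)'=(\bar p_2)'$ already verified in the proof of the off-diagonal theorem, one writes the integral as $\int_{\mathbb{R}^n}|f|^{p_0}w^{p_0/q_0}$ and applies H\"older's inequality on anisotropic mixed Herz spaces (Proposition~\ref{Holder}) together with $f\in \dot K_{\vec q_1,\vec a}^{\alpha,p_1}(\mathbb{R}^n)$ and $w=\mathfrak{R}h\in \dot K_{\bar{\vec q}_2',\vec a}^{-\alpha q_0,\bar p_2'}(\mathbb{R}^n)$ (by (R5)), obtaining $|f|^{p_0}w^{p_0/q_0}\in L^1(\mathbb{R}^n)$.

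Applying the off-diagonal extrapolation theorem to $\mathcal{F}$ with $g=|f|\in \dot K_{\vec q_1,\vec a}^{\alpha,p_1}(\mathbb{R}^n)$ then gives, using that the Herz norms depend only on $|Tf|$ and $|f|$,
$$\|Tf\|_{\dot K_{\vec q_2,\vec a}^{\alpha,p_2}(\mathbb{R}^n)}\ \lesssim\ \|f\|_{\dot K_{\vec q_1,\vec a}^{\alpha,p_1}(\mathbb{R}^n)},$$
which is the assertion (the $K$-symbols on the right-hand side of the stated conclusion being a typographical slip for the homogeneous $\dot K$-spaces; if one wishes to assume $T$ defined only on a dense subclass, say $L^\infty$ functions with compact support, a routine density step inside $\dot K_{\vec q_1,\vec a}^{\alpha,p_1}(\mathbb{R}^n)$ closes the gap). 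I do not anticipate a genuine obstacle here: the entire content lies in the bookkeeping that matches the weight space attached to $\mathfrak{R}$ with the correct conjugate indices $\bar{\vec q}_2'$ and $\bar p_2'$ of the target space via the balance relations $1/p_1-1/p_2=1/\vec q_1-1/\vec q_2=1/p_0-1/q_0$, and this has already been carried out in the preceding theorem, so the corollary is essentially immediate.
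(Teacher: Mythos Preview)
Your proposal is correct and takes essentially the same approach as the paper: the corollary is stated immediately after the off-diagonal extrapolation theorem with no separate proof, so the intended argument is precisely to apply that theorem to the family $\mathcal{F}=\{(|Tf|,|f|)\}$, exactly as you describe. Your additional bookkeeping (the finiteness check via Proposition~\ref{Holder} and the observation that the weight power $p_0/q_0$ and the $\dot K$ versus $K$ symbols in the statement are typographical slips) goes beyond what the paper writes explicitly but is in the same spirit.
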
 

\begin{corollary} \label{frac 2}
	Let $0<p_{0}<\infty$, $p_{0}<p_1, \vec{q_1}<p_0q_0/(q_0-p_0)$, $p_2, q_2$ satisfy $1/p_1-1/p_2=1/\vec{q}_1-\vec{q}_2=1/p_0-1/q_0$, and $\alpha$ satisfies $-\frac{1}{v}\sum_{i=1}^n \frac{a_i}{q_i}<\alpha<\frac{1}{p_{0}}-\frac{1}{v} \sum_{i=1}^n \frac{a_{i}}{q_{i}}$. Suppose that
	$$w\in\left \{\mathfrak{R}h: h\in\ {\dot K_{\bar{\vec{q}}_2^{\prime},\vec{a}}^{-\alpha q_0, \bar{p}_2^{\prime}}\left(\mathbb{R}^{n}\right)}: \|h\|_{\dot K_{\bar{\vec{q}}_2^{\prime},\vec{a}}^{-\alpha q_0, \bar{p}_2^{\prime}}\left(\mathbb{R}^{n}\right)}\leq 1 \right \},$$
	the operator $T:~L^{p_0}_w\left(\mathbb{R}^{n}\right)\rightarrow L^{q_0}_w$, satisfies
	$$\left(\int_{\mathbb{R}^n}\left|Tf(x)\right|^{q_0}w(x)dx\right)^{1/q_0}\lesssim \left(\int_{\mathbb{R}^n} \left|f(x) \right|^{p_0} w(x) dx \right)^{1/p_0}. $$
	Then, for every $f\in \dot{K}_{\vec{q}_1, \vec{a}}^{\alpha, p_1}\left( \mathbb{R}^n\right)$, such that 
	$$\left\| Tf\right\|_{ K_{\vec{q}_2,\vec{a}}^{\alpha,p_2}\left(\mathbb{R}^{n}\right)} \lesssim \left\| f\right\|_{ K_{\vec{q}_1,\vec{a}}^{\alpha,p_1}\left(\mathbb{R}^{n}\right)}.$$ 
\end{corollary}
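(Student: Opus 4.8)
The plan is to obtain Corollary \ref{frac 2} as a direct specialization of the (non-homogeneous) off-diagonal extrapolation theorem proved just above, exactly as Corollary \ref{frac 1} follows from its homogeneous counterpart, by feeding into that theorem the family of pairs generated by the operator $T$. Concretely, with the exponents $p_0,p_1,p_2,q_0,\vec q_1,\vec q_2$ and $\alpha$ fixed as in the statement, I would take
$$\mathcal F:=\left\{\bigl(|Tf|,\,|f|\bigr):\ f\in K_{\vec q_1,\vec a}^{\alpha,p_1}(\mathbb R^n)\ \text{with}\ Tf\ \text{defined}\right\},$$
so that both entries of each pair are non-negative measurable functions and $\mathcal F$ is an admissible family. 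The conclusion of the extrapolation theorem applied to this $\mathcal F$ is precisely $\|Tf\|_{K_{\vec q_2,\vec a}^{\alpha,p_2}(\mathbb R^n)}\lesssim\|f\|_{K_{\vec q_1,\vec a}^{\alpha,p_1}(\mathbb R^n)}$, so the whole task reduces to checking that $\mathcal F$ satisfies the hypothesis of that theorem. (The homogeneous version with $\dot K$ is obtained the same way from the homogeneous off-diagonal extrapolation theorem.)

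To verify the hypothesis, fix $h$ in the relevant Herz space with $\|h\|\le 1$ and set $w=\mathfrak{R}h$. By property (R6), $w\in A_1$; since $\vec q_1<p_0q_0/(q_0-p_0)$ forces $p_0<q_0$, the power $w^{p_0/q_0}$ is again an $A_1$ weight (by concavity of $t\mapsto t^{p_0/q_0}$ and Jensen, $[w^{p_0/q_0}]_{A_1}\le[w]_{A_1}^{p_0/q_0}$). Feeding such a weight into the assumed weighted $(p_0,q_0)$-boundedness of $T$ yields
$$\left(\int_{\mathbb R^n}|Tf(x)|^{q_0}w(x)\,dx\right)^{1/q_0}\lesssim\left(\int_{\mathbb R^n}|f(x)|^{p_0}w(x)^{p_0/q_0}\,dx\right)^{1/p_0},$$
which is exactly the weighted inequality demanded of the pair $(|Tf|,|f|)$. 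The one point here that is not purely mechanical is matching the power of the weight on each side between the statement of the $T$-estimate and the normalization used in the extrapolation theorem (the $w$ versus $w^{p_0/q_0}$ bookkeeping, equivalently writing $w=\nu^{q_0}$); beyond that, this step is a direct quotation of the hypothesis on $T$.

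It then remains to secure the finiteness required in the hypothesis, i.e. that the right-hand side above is finite whenever $f\in K_{\vec q_1,\vec a}^{\alpha,p_1}(\mathbb R^n)$. Using H\"older's inequality on anisotropic mixed-norm Herz spaces (Proposition \ref{Holder}), Remark \ref{mixedpro}~(iv) and property (R5),
$$\int_{\mathbb R^n}|f|^{p_0}(\mathfrak{R}h)^{p_0/q_0}\,dx\lesssim\left\||f|^{p_0}\right\|_{K_{\vec q_1/p_0,\vec a}^{\alpha p_0,p_1/p_0}}\left\|(\mathfrak{R}h)^{p_0/q_0}\right\|_{K_{(\vec q_1/p_0)',\vec a}^{-\alpha p_0,(p_1/p_0)'}}\lesssim\|f\|_{K_{\vec q_1,\vec a}^{\alpha,p_1}}^{p_0}<\infty.$$
With both parts of the hypothesis checked, the off-diagonal extrapolation theorem applies and gives the claimed estimate, first for $f$ in a dense subclass on which $Tf$ makes sense and then for all $f\in K_{\vec q_1,\vec a}^{\alpha,p_1}(\mathbb R^n)$ by the usual density/Fatou argument. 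I do not expect any deep obstacle: the substantive work was already spent in establishing the off-diagonal extrapolation theorem and, underneath it, Lemma \ref{boundedness HK}; the only things to watch are the weight-exponent matching in the second paragraph and the well-definedness of $Tf$ ensuring the pairs genuinely lie in $\mathcal F$.
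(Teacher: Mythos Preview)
Your proposal is correct and follows the same route as the paper: Corollary~\ref{frac 2} (like Corollary~\ref{frac 1}) is obtained by applying the off-diagonal extrapolation theorem to the family $\mathcal F=\{(|Tf|,|f|)\}$; the paper does not even write out a separate proof for these two corollaries, treating them as immediate consequences in the same spirit as the one-line proof given for Corollaries~\ref{ex app 1} and~\ref{ex app 2}. Your additional care about the $w$ versus $w^{p_0/q_0}$ normalization and the finiteness check is more detail than the paper supplies, but it is the right thing to verify and does not change the approach.
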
 

The extrapolation theorem will be used to address various operator boundedness issues, such as singular integral operators and their commutators, fractional maximal operators, fractional integral operators and its commutator, and so on. As long as the requirements of our theorems are met for these operators, our theorems can really be applied to the boundedness of numerous additional integral operators and their commutators. 
\par 
Commutators are well-known to be significant operators in harmonic analysis. Note that the commutator, which is generated by a locally integrable function $b$ and an integral operator $T$, has the definition $[b, T]:=b T-T b$. In 1976, Coifman, Rochberg and Weiss \cite{coifman1976factorization} proved that the commutators of 
The Calder$\acute{\rm o}$n-Zygmund singular integral operators are basic integral operators in harmonic analysis. The classical Calder$\acute{\rm o}$n-Zygmund operator $T$ is a initially $L^2(\mathbb{R}^{n})$ bounded operator with the associated standard kernel $K$, that is to say, functions $K(x, y)$ defined on $\mathbb{R}^{n} \times \mathbb{R}^{n} \backslash\left\{(x, x): x \in \mathbb{R}^{n} \right\}$ satisfying
\begin{enumerate}
	\item [\rm(i)] the size condition:
$$
|K(x, y)| \leq \frac{A}{|x-y|^{n}},
$$
for some constant $A>0$;
    \item [\rm(ii)]  the regularity conditions: for some $\delta>0$, 
$$
\left|K(x, y)-K\left(x^{\prime}, y\right)\right| \leq \frac{A\left|x-x^{\prime}\right|^{ \delta}}{\left(|x-y|+\left|x^{\prime}-y\right|\right)^{n+\delta}}
$$
holds whenever $\left|x-x^{\prime}\right| \leq \frac{1}{2} \max \left(|x-y|,\left|x^{\prime}-y\right|\right)$ and
$$
\left|K(x, y)-K\left(x, y^{\prime}\right)\right| \leq \frac{A\left|y-y^{\prime}\right|^{ \delta}}{\left(|x-y|+\left|x-y^{\prime}\right|\right)^{n+\delta}}
$$
holds whenever $\left|y-y^{\prime}\right| \leq \frac{1}{2} \max \left(|x-y|,\left|x-y^{\prime}\right|\right)$.
\end{enumerate}
Then $T$ can be represented as
$$
T f(x)=\int_{\mathbb{R}^{n}} K(x, y) f(y) d y, \quad x \notin \operatorname{supp} f,
$$
which is obvious a $L^{\vec{p}}\left(\mathbb{R}^{n}\right)$-bounded operator.\\
Now we recall the definition of BMO. A locally integrable function $f$ belongs to BMO if and only if
$$
\|f\|_{\mathrm{BMO}}:=\sup _{B} \frac{\left\|\left(f-f_{B}\right) \chi_{B}\right\|_{L^{1}\left(\mathbb{R}^{n}\right)}}{\left\|\chi_{B}\right\|_{L^{1}\left(\mathbb{R}^{n}\right)}}<\infty,
$$
where $f_{B}=\frac{1}{|B|} \int_{B} f(x) d x$.
The weighted estimates for the operator $T$ and its commutator $[b,T]$ can see \cite{grafakos2008classical}.
\begin{lemma}[\cite{grafakos2008classical}] \label{CZ boundedness}
	Let the Calder\'on-Zygmund singular integral operator $K$ be defined as above. Then for all $1<p<\infty$ and $w \in A_{p}$, we have
$$
\|T f\|_{L_{w}^{p}\left(\mathbb{R}^{n}\right)} \lesssim\|f\|_{L_{w}^{p}\left(\mathbb{R}^{n}\right)},
$$
and for $b \in \mathrm{BMO}$, also have
$$
\left\|[b, K] f \right\|_{L_{w}^{p}\left(\mathbb{R}^{n}\right)} \lesssim\|f\|_{L_{w}^{p}\left(\mathbb{R}^{n}\right)} .
$$
\end{lemma}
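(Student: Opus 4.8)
This lemma is part of the classical weighted theory of Calder\'on--Zygmund operators and their commutators, and the authors rely on \cite{grafakos2008classical}; to reproduce it one would combine the Fefferman--Stein sharp maximal operator $M^{\#}$ with the good-$\lambda$ method. For $T$ itself I would proceed in three steps. \emph{Step 1.} Establish the pointwise estimate $M^{\#}_{\delta}(Tf)(x)\lesssim Mf(x)$ for every $\delta\in(0,1)$, where $M^{\#}_{\delta}g:=\big[M^{\#}(|g|^{\delta})\big]^{1/\delta}$ and $M$ is the Hardy--Littlewood maximal operator; one fixes a ball $B\ni x$, splits $f=f\chi_{B^{(2)}}+f\chi_{(B^{(2)})^{c}}$, bounds the near part using the weak-type $(1,1)$ estimate for $T$ together with Kolmogorov's inequality, and the far part by subtracting the constant $T(f\chi_{(B^{(2)})^{c}})(x_{B})$ and invoking the size and regularity conditions on the kernel $K$. \emph{Step 2.} Apply the Fefferman--Stein inequality: for $w\in A_{\infty}$ and $0<p<\infty$,
$$\int_{\mathbb{R}^{n}}\big(M_{\delta}g\big)^{p}\,w\,dx\lesssim\int_{\mathbb{R}^{n}}\big(M^{\#}_{\delta}g\big)^{p}\,w\,dx,$$
valid whenever the left side is finite; for $g=Tf$ this finiteness is first arranged by a truncation argument. \emph{Step 3.} Chain these with Muckenhoupt's theorem that $M$ is bounded on $L^{p}_{w}(\mathbb{R}^{n})$ when $w\in A_{p}$, obtaining
$$\|Tf\|_{L^{p}_{w}(\mathbb{R}^{n})}\le\|M_{\delta}(Tf)\|_{L^{p}_{w}(\mathbb{R}^{n})}\lesssim\|M^{\#}_{\delta}(Tf)\|_{L^{p}_{w}(\mathbb{R}^{n})}\lesssim\|Mf\|_{L^{p}_{w}(\mathbb{R}^{n})}\lesssim\|f\|_{L^{p}_{w}(\mathbb{R}^{n})}.$$

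For the commutator $[b,K]$ with $b\in\mathrm{BMO}$ I would run the same scheme, feeding in the sharper pointwise bound: for $0<\delta<\varepsilon<1$,
$$M^{\#}_{\delta}([b,T]f)(x)\lesssim\|b\|_{\mathrm{BMO}}\big(M_{\varepsilon}(Tf)(x)+M^{2}f(x)\big),$$
which follows from the identity $[b,T]f=(b-b_{B})Tf-T\big((b-b_{B})f\big)$, the John--Nirenberg inequality controlling the oscillation of $b$ over $B$ and over its dilates, and the Step 1 estimate for $T$. Here $M^{2}=M\circ M$ is bounded on $L^{p}_{w}$ because $M$ is, and $M_{\varepsilon}$ with $0<\varepsilon<1$ satisfies $\|M_{\varepsilon}g\|_{L^{p}_{w}}\lesssim\|g\|_{L^{p}_{w}}$ since $A_{p}\subset A_{p/\varepsilon}$; hence $\|M_{\varepsilon}(Tf)\|_{L^{p}_{w}}\lesssim\|Tf\|_{L^{p}_{w}}\lesssim\|f\|_{L^{p}_{w}}$ by the first part, and Step 2 applied to $[b,T]f$ yields
$$\|[b,T]f\|_{L^{p}_{w}(\mathbb{R}^{n})}\lesssim\|b\|_{\mathrm{BMO}}\,\|f\|_{L^{p}_{w}(\mathbb{R}^{n})}.$$

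The hard part will be the two sharp-maximal pointwise estimates, and above all the commutator one, where the nonlinear dependence on $b$ is precisely what forces the larger operators $M_{\varepsilon}$ and $M^{2}$ (equivalently $M_{r}$ for some $r>1$, via the openness of the $A_{p}$ condition) to enter; a secondary technical nuisance is securing the a~priori finiteness required to apply the Fefferman--Stein inequality to $Tf$ and to $[b,T]f$. A more modern alternative would replace these three steps by sparse domination, $|Tf|\lesssim\sum_{Q\in\mathcal{S}}\langle|f|\rangle_{Q}\chi_{Q}$ and $|[b,T]f|\lesssim\|b\|_{\mathrm{BMO}}\sum_{Q\in\mathcal{S}}\big(\langle|f|\rangle_{Q}|b-b_{Q}|+\langle|b-b_{Q}|\,|f|\rangle_{Q}\big)\chi_{Q}$, after which the weighted bounds reduce to the elementary $L^{p}_{w}$ estimate for sparse forms; but the route outlined above is the one carried out in \cite{grafakos2008classical}.
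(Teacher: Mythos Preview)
Your proposal is correct, but note that the paper does not actually supply a proof of this lemma: it is stated with a citation to \cite{grafakos2008classical} and used as a black box, so there is no ``paper's own proof'' to compare against. What you have written is a faithful outline of the standard Fefferman--Stein sharp-maximal argument that one finds in that reference (and your remark about the sparse-domination alternative is also accurate, though more recent than the cited text).
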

By Theorem \ref{ex app 1}, Theorem \ref{ex app 2} and Lemma \ref{CZ boundedness} , we can get the following result.
\begin{theorem}
	Let the Calder\'on-Zygmund singular integral operator $K$ be defined as above. Then for $1< p,\vec{q}<\infty$ and $-\frac{1}{v}\sum_{i=1}^n \frac{a_i}{q_i}<\alpha<1-\frac{1}{v} \sum_{i=1}^n \frac{a_{i}}{q_{i}}$, and $f\in \dot K_{\vec{q}, \vec{a}}^{\alpha, p}$, then
	$$
\|T f\|_{\dot{K}_{\vec{q}, \vec{a}}^{\alpha, p}\left(\mathbb{R}^{n}\right)} \lesssim\|f\|_{\dot{K}_{\vec{q}, \vec{a}}^{\alpha, p}\left(\mathbb{R}^{n}\right)},
$$
and for $b \in \mathrm{BMO}$, also have
$$
\|[b, K] f\|_{\dot{K}_{\vec{q}, \vec{a}}^{\alpha, p}\left(\mathbb{R}^{n}\right)} \lesssim\|f\|_{\dot{K}_{\vec{q}, \vec{a}}^{\alpha, p}\left(\mathbb{R}^{n}\right)}.
$$
\end{theorem}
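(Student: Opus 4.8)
The plan is to deduce both assertions from the diagonal extrapolation results (Corollaries \ref{ex app 1} and \ref{ex app 2}) combined with the classical weighted estimates recorded in Lemma \ref{CZ boundedness}. The key observation is that the weighted inequalities for $T$ and for $[b,K]$ hold for \emph{every} Muckenhoupt weight in $A_{p_0}$, whereas the family of weights that appears in the hypothesis of the extrapolation corollaries, namely $\{\mathfrak{R}h:\|h\|_{\dot K_{(\vec q/p_0)',\vec a}^{-p_0\alpha,(p/p_0)'}}\le 1\}$, consists of $A_1$ weights by property (R3), hence lies in $A_{p_0}$ for any $p_0>1$. So matching up the two families is automatic once a suitable $p_0$ is chosen.

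First I would fix the auxiliary exponent $p_0$. Since by hypothesis $\alpha<1-\frac1v\sum_{i=1}^n a_i/q_i$, we have $\alpha+\frac1v\sum_{i=1}^n a_i/q_i<1$, so $\bigl(\alpha+\frac1v\sum_{i=1}^n a_i/q_i\bigr)^{-1}>1$ (interpreted as $+\infty$ when $\alpha+\frac1v\sum a_i/q_i\le 0$). Because $p>1$ and $\vec q>1$, one may therefore choose
$$
1<p_0<\min\Bigl\{p,\,q_1,\dots,q_n,\,\bigl(\alpha+\tfrac1v\textstyle\sum_{i=1}^n a_i/q_i\bigr)^{-1}\Bigr\},
$$
which simultaneously yields $p_0<p$, $p_0<\vec q$ and $-\frac1v\sum_{i=1}^n\frac{a_i}{q_i}<\alpha<\frac1{p_0}-\frac1v\sum_{i=1}^n\frac{a_i}{q_i}$, so that the index restrictions required by Corollary \ref{ex app 1} (respectively Corollary \ref{ex app 2} for the non-homogeneous space) are met; in particular Lemma \ref{boundedness HK} applies and the iteration operator $\mathfrak R$ is well defined.

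Next I would check the weighted hypothesis. Let $w=\mathfrak{R}h$ with $\|h\|_{\dot K_{(\vec q/p_0)',\vec a}^{-p_0\alpha,(p/p_0)'}}\le 1$. By (R3) we have $[w]_{A_1}\le 2B<\infty$, hence $w\in A_1\subset A_{p_0}$ (recall $A_1\subset A_r$ for every $r>1$). Applying Lemma \ref{CZ boundedness} with exponent $p_0$ and weight $w$ gives
$$
\int_{\mathbb R^n}|Tf(x)|^{p_0}w(x)\,dx\lesssim\int_{\mathbb R^n}|f(x)|^{p_0}w(x)\,dx,
$$
and the analogous inequality with $Tf$ replaced by $[b,K]f$ whenever $b\in\mathrm{BMO}$. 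Thus the operator hypothesis of Corollary \ref{ex app 1} is verified for $T$ and for $[b,K]$, and the corollary (together with Corollary \ref{ex app 2} in the non-homogeneous case) immediately produces
$$
\|Tf\|_{\dot K_{\vec q,\vec a}^{\alpha,p}(\mathbb R^n)}\lesssim\|f\|_{\dot K_{\vec q,\vec a}^{\alpha,p}(\mathbb R^n)},\qquad \|[b,K]f\|_{\dot K_{\vec q,\vec a}^{\alpha,p}(\mathbb R^n)}\lesssim\|f\|_{\dot K_{\vec q,\vec a}^{\alpha,p}(\mathbb R^n)}.
$$

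The argument is largely bookkeeping; the only point requiring genuine care is the choice of $p_0$, i.e. confirming that the admissible window for $p_0$ is non-empty. This is precisely where the restriction $\alpha<1-\frac1v\sum a_i/q_i$ in the statement — stronger than the bound $\alpha<\frac1{p_0}-\frac1v\sum a_i/q_i$ in the extrapolation theorem, which depends on the as-yet-unchosen $p_0$ — gets used, and where one must also invoke the inclusion $A_1\subset A_{p_0}$ in order to be entitled to apply Lemma \ref{CZ boundedness} to the weights $\mathfrak R h$ generated by the Rubio de Francia iteration.
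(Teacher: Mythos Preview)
Your proposal is correct and follows essentially the same approach as the paper: apply the diagonal extrapolation corollaries (Corollaries~\ref{ex app 1} and~\ref{ex app 2}) together with the weighted bounds of Lemma~\ref{CZ boundedness}, after choosing $p_0>1$ close enough to $1$ so that the constraint $\alpha<\frac{1}{p_0}-\frac{1}{v}\sum_i a_i/q_i$ is implied by the hypothesis $\alpha<1-\frac{1}{v}\sum_i a_i/q_i$. Your write-up is in fact more explicit than the paper's, spelling out both the precise window for $p_0$ and the use of the inclusion $A_1\subset A_{p_0}$ via property~(R3); the paper leaves these points implicit.
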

\begin{proof}
	By Theorem \ref{ex app 1}, Theorem \ref{ex app 2} and Lemma \ref{CZ boundedness} , we can easily obtain the above result. In fact, we only show that $p_0 > 1$ which is sufficiently close to 1 such that $-\frac{1}{v}\sum_{i=1}^n \frac{a_i}{q_i}<\alpha<\frac{1}{p_{0}}-\frac{1}{v} \sum_{i=1}^n \frac{a_{i}}{q_{i}}$ close to $-\frac{1}{v}\sum_{i=1}^n \frac{a_i}{q_i}<\alpha<1-\frac{1}{v} \sum_{i=1}^n \frac{a_{i}}{q_{i}}$.
\end{proof}
\par 
Let $f \in L_{\mathrm{loc}}\left(\mathbb{R}^{n}\right)$, then the fractional maximal operator $M_{\alpha}$ and the fractional integral operator $I_{\alpha}$ are defined by
$$
M_{\alpha} f(x):=\sup _{r>0} \frac{1}{|B(x, r)|^{n-\alpha}} \int_{B}|f(y)| d y,~~ 0 \leq \alpha<n,
$$
and
$$
I_{\alpha} f(x):=\int_{\mathbb{R}^{n}} \frac{f(y)}{|x-y|^{n-\alpha}} d y, ~~0<\alpha<n
$$
respectively. If $\alpha=0$, then $M_{0} \equiv M$ is the Hardy-Littlewood maximal operator.\\
In order to state the results in this section, we need the $A_{p, q}$ weight.
\begin{definition}
	Suppose $0<\alpha<n, 1<p<n / \alpha$ and $1 / p-1 / q=\alpha / n$, we say a locally integrable function $w: \mathbb{R}^{n} \rightarrow(0, \infty)$ belongs to $A_{p, q}$ if and only if
$$
\sup _{B} \frac{1}{|B|} \int_{B} w(z) d z\left(\frac{1}{|B|} \int_{B} w(z)^{-p^{\prime} / q} d z\right)^{q / p^{\prime}}<\infty.
$$
\end{definition}

Note that this is equivalent to $w \in A_{r}$, where $r=1+q / p^{\prime}$, so, in particular if $w \in A_{1}$, then $w \in A_{p, q}$.

The weighted boundedness of the fractional maximal operator $M_{\alpha}$, the fractional integral operator $I_{\alpha}$ and its commutator $\left[b, I_{\alpha}\right]$ was obtained in \cite{muckenhoupt}.

\begin{lemma} \label{frac weighted}
	Let $0<\alpha<n$ and $1<p<q<\infty$ with $1 / p-1 / q=\alpha / n$. If $w \in A_{p, q}$ and $b \in \mathrm{BMO}$, then
$$
\begin{aligned}
&\left(\int_{\mathbb{R}^{n}} M_{\alpha} f(x)^{q} w(x) d x\right)^{1 / q} \lesssim\left(\int_{\mathbb{R}^{n}}|f(x)|^{p} w(x)^{p / q} d x\right)^{1 / p} ,\\
&\left(\int_{\mathbb{R}^{n}}\left|I_{\alpha} f(x)\right|^{q} w(x) d x\right)^{1 / q} \lesssim\left(\int_{\mathbb{R}^{n}}|f(x)|^{p} w(x)^{p / q} d x\right)^{1 / p} ,\\
&\left(\int_{\mathbb{R}^{n}}\left|\left[b, I_{\alpha}\right] f(x)\right|^{q} w(x) d x\right)^{1 / q} \lesssim\left(\int_{\mathbb{R}^{n}}|f(x)|^{p} w(x)^{p / q} d x\right)^{1 / p}.
\end{aligned}
$$
\end{lemma}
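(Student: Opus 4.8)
The plan is to reprove these as the classical Muckenhoupt--Wheeden-type weighted estimates, in the chain $M_\alpha\Rightarrow I_\alpha\Rightarrow[b,I_\alpha]$, each step resting on the previous one. The one structural fact I would use throughout is the observation recorded just after the definition of $A_{p,q}$: $w\in A_{p,q}$ is the same as $w\in A_r$ with $r=1+q/p'$, so $w\in A_\infty$, $w$ satisfies a reverse Hölder inequality, and consequently $w$ is \emph{open}, i.e.\ $w\in A_{p_0,q_0}$ for all exponent pairs $(p_0,q_0)$ sufficiently close to $(p,q)$ with $1/p_0-1/q_0=\alpha/n$.

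\textbf{Step 1 ($M_\alpha$).} First I would establish the weak-type estimate $\|M_\alpha f\|_{L^{q,\infty}(w)}\lesssim\|\,|f|\,\|_{L^p(w^{p/q})}$. After passing to the dyadic fractional maximal operator (which dominates $M_\alpha$ up to finitely many shifted grids), fix $\lambda>0$, let $\{Q_j\}$ be the disjoint maximal dyadic cubes with $|Q_j|^{\alpha/n-1}\int_{Q_j}|f|>\lambda$, and estimate $w(Q_j)$ by writing $|f|=(|f|\,w^{1/q})\,w^{-1/q}$, applying Hölder with exponents $p,p'$ on $Q_j$, and then the $A_r$ condition to replace $w(Q_j)\bigl(\int_{Q_j}w^{-p'/q}\bigr)^{q/p'}$ by $[w]_{A_r}|Q_j|^{1+q/p'}$. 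All powers of $|Q_j|$ cancel \emph{exactly} because $1/p-1/q=\alpha/n$, leaving $w(Q_j)\lesssim\lambda^{-q}\bigl(\int_{Q_j}|f|^p w^{p/q}\bigr)^{q/p}$, and summing over $j$ (using $q/p\ge1$ and disjointness) gives the weak bound. The strong-type estimate then follows by off-diagonal Marcinkiewicz interpolation between two weak endpoints $(p_i,q_i)$, $i=0,1$, straddling $(p,q)$; these are available because $w\in A_{p_i,q_i}$ by openness.

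\textbf{Step 2 ($I_\alpha$).} With Step 1 available, I would invoke the Coifman good-$\lambda$ inequality: for $w\in A_\infty$ there are $C,\delta>0$ with
$$w\bigl(\{\,|I_\alpha f|>2\lambda,\ M_\alpha f\le\gamma\lambda\,\}\bigr)\le C\gamma^{\delta}\,w\bigl(\{\,|I_\alpha f|>\lambda\,\}\bigr)$$
for all $\lambda>0$ and small $\gamma>0$. Multiplying by $q\lambda^{q-1}$, integrating in $\lambda$, and choosing $\gamma$ small to absorb the resulting term $C\gamma^{\delta}\|I_\alpha f\|_{L^q(w)}^q$ yields $\|I_\alpha f\|_{L^q(w)}\lesssim\|M_\alpha f\|_{L^q(w)}\lesssim\|\,|f|\,\|_{L^p(w^{p/q})}$. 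The absorption requires $\|I_\alpha f\|_{L^q(w)}<\infty$ a priori, which I would secure by first running the argument with $f$ bounded of compact support and the kernel $|x-y|^{\alpha-n}$ truncated to $\varepsilon<|x-y|<R$, and then letting $\varepsilon\to0$, $R\to\infty$ by Fatou.

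\textbf{Step 3 ($[b,I_\alpha]$) and the main obstacle.} For the commutator I would use the Coifman--Rochberg--Weiss complex-variable device: with $F(z):=e^{zb}I_\alpha(e^{-zb}f)$ one has $[b,I_\alpha]f=F'(0)=\frac{1}{2\pi i}\oint_{|z|=\varepsilon}z^{-2}F(z)\,dz$, so by Minkowski's inequality $\|[b,I_\alpha]f\|_{L^q(w)}\lesssim\varepsilon^{-2}\sup_{|z|=\varepsilon}\|e^{zb}I_\alpha(e^{-zb}f)\|_{L^q(w)}$. Since $\|e^{zb}g\|_{L^q(w)}=\|g\|_{L^q(w_z)}$ with $w_z:=w\,e^{q\,\mathrm{Re}(zb)}$, applying Step 2 with the weight $w_z$ and noting that $\|\,|e^{-zb}f|\,\|_{L^p(w_z^{p/q})}=\|\,|f|\,\|_{L^p(w^{p/q})}$ (the exponentials cancel), the bound $\|[b,I_\alpha]f\|_{L^q(w)}\lesssim\|b\|_{\mathrm{BMO}}\|\,|f|\,\|_{L^p(w^{p/q})}$ drops out. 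The crux is precisely here: one must show that $w_z$ stays in $A_{p,q}$ with a constant bounded \emph{uniformly} for $|z|=\varepsilon$, which is where the quantitative John--Nirenberg inequality for $b$ together with the reverse-Hölder/openness of $A_{p,q}$ enter, forcing $\varepsilon$ to depend on $\|b\|_{\mathrm{BMO}}$ and $[w]_{A_{p,q}}$; and again one must justify a priori finiteness of $\|[b,I_\alpha]f\|_{L^q(w)}$ by truncation before legitimately differentiating in $z$. (Alternatively, Step 3 can be run through the Fefferman--Stein sharp maximal function, bounding $M^{\#}\bigl([b,I_\alpha]f\bigr)$ by $\|b\|_{\mathrm{BMO}}$ times a suitable auxiliary maximal function and using $\|g\|_{L^q(w)}\lesssim\|M^{\#}g\|_{L^q(w)}$ for $w\in A_\infty$ — with the same a priori-finiteness caveat.) Steps 1 and 2, by contrast, are routine once the exponent identity $1/p-1/q=\alpha/n$ is exploited to kill the powers of $|Q|$ and the $A_\infty$ good-$\lambda$ inequality is invoked.
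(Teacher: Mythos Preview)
The paper does not prove this lemma at all: it is stated as a known result with a reference to the literature (Muckenhoupt--Wheeden and subsequent work), and is then used as a black box to feed into the off-diagonal extrapolation theorem. So there is no in-paper proof to compare against.

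Your sketch is a faithful reconstruction of the classical route and is essentially correct. Step~1 is exactly the Muckenhoupt--Wheeden argument (weak-type via the $A_r$ condition with $r=1+q/p'$, then strong-type via openness and off-diagonal interpolation); Step~2 is the Coifman--Fefferman good-$\lambda$ comparison $\|I_\alpha f\|_{L^q(w)}\lesssim\|M_\alpha f\|_{L^q(w)}$ for $w\in A_\infty$; and Step~3, the Coifman--Rochberg--Weiss conjugation trick, does work in this off-diagonal setting precisely because of the cancellation you spotted, $\|e^{-zb}f\|_{L^p(w_z^{p/q})}=\|f\|_{L^p(w^{p/q})}$. Your identification of the genuine technical point---that $w_z=w\,e^{q\,\mathrm{Re}(zb)}$ remains in $A_{p,q}$ uniformly for $|z|$ small, via John--Nirenberg plus reverse H\"older openness---is accurate, as is the need for an a~priori truncation. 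The only historical remark is that the commutator estimate is usually attributed to Segovia--Torrea (via the sharp-function route you mention as an alternative) rather than to Muckenhoupt--Wheeden, but mathematically your argument stands.
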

We give the fact that the class $A_{p, q}$ with slightly different from the typical definition, with $w$ being replaced by $w^q$. Despite being unusual, our formulation is more appropriate for our needs.\\
By Theorem \ref{frac 1} and Lemma \ref{frac weighted}, we can get the following result.
\begin{theorem}
	Let $0<\alpha<n, 1<p<n / \alpha, 1 / p_{1}-1 / p_{2}=1 / \vec{q}_1-1 / \vec{q}_2=\alpha / n, b \in$ BMO and $\hat{\alpha}$ satisfies
$$
-\frac{1}{v} \sum_{i=1}^{n} \frac{a_{i}}{q_{2i}}<\hat{\alpha}<\frac{1}{p_{0}}-\frac{1}{v} \sum_{i=1}^n \frac{a_{i}}{q_{2i}}-\alpha .
$$
Then $M_{\alpha}, I_{\alpha}$ and $\left[b, I_{\alpha}\right]$ can be extended to be bounded operators from $\dot{K}_{\vec{q}_{1},\vec{a}}^{\hat{\alpha}, p_{1}}\left(\mathbb{R}^{n}\right)$ to $\dot{K}_{\vec{q}_{2},\vec{a}}^{\hat{\alpha}, p_{2}}\left(\mathbb{R}^{n}\right)$.
\end{theorem}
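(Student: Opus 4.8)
The plan is to obtain the three boundedness assertions together from the off-diagonal extrapolation Corollary \ref{frac 1}, with the weighted estimates of Lemma \ref{frac weighted} supplying its hypothesis. First I would fix the auxiliary exponents: set $p_0:=p$ and let $q_0$ be determined by $\frac{1}{p_0}-\frac{1}{q_0}=\frac{\alpha}{n}$, so that $1<p_0<q_0<\infty$ since $1<p<n/\alpha$. The relation $\frac{1}{p_1}-\frac{1}{p_2}=\frac{1}{\vec{q}_1}-\frac{1}{\vec{q}_2}=\frac{\alpha}{n}$ from the hypotheses then reads $\frac{1}{p_1}-\frac{1}{p_2}=\frac{1}{\vec{q}_1}-\frac{1}{\vec{q}_2}=\frac{1}{p_0}-\frac{1}{q_0}$, which is precisely the index compatibility required by Corollary \ref{frac 1}. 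Throughout, let $T$ denote, in turn, $M_\alpha$, $I_\alpha$ and $[b,I_\alpha]$.

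Next I would check the weighted hypothesis of Corollary \ref{frac 1}. Let $h$ lie in the unit ball of $\dot{K}_{\bar{\vec{q}}_2^{\prime},\vec{a}}^{-\hat\alpha q_0,\bar{p}_2^{\prime}}(\mathbb{R}^{n})$, with $\bar{p}_2=p_2/p_0$ and $\bar{\vec{q}}_2=\vec{q}_2/q_0$ as in the proof of the off-diagonal extrapolation theorem, and set $w:=\mathfrak{R}h$. By property (R6), $[w]_{A_1}\leq 2B<\infty$, so $w\in A_1$; and since $A_{p,q}$ coincides with $A_r$ for $r=1+q_0/p_0^{\prime}$ and $A_1\subset A_r$, we get $w\in A_{p,q}$ in the normalisation adopted here (in which $w$ is replaced by $w^{q_0}$). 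Lemma \ref{frac weighted}, applied to this weight and to $b\in\mathrm{BMO}$, then yields
$$
\left(\int_{\mathbb{R}^{n}}|Tf(x)|^{q_0}w(x)\,dx\right)^{1/q_0}\lesssim\left(\int_{\mathbb{R}^{n}}|f(x)|^{p_0}w(x)^{p_0/q_0}\,dx\right)^{1/p_0}
$$
for each of the three operators $T$, which is exactly the off-diagonal weighted bound assumed in Corollary \ref{frac 1}.

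It then remains to match the numerical side conditions and conclude. Reading off $p_1,\vec{q}_1$ from $\frac{1}{p_1}-\frac{1}{p_2}=\frac{1}{\vec{q}_1}-\frac{1}{\vec{q}_2}=\frac{1}{p_0}-\frac{1}{q_0}$ gives the ordering $p_0<p_1,\vec{q}_1<p_0q_0/(q_0-p_0)$ demanded by Corollary \ref{frac 1}, and the interval $-\frac{1}{v}\sum_{i=1}^{n}\frac{a_i}{q_{2i}}<\hat\alpha<\frac{1}{p_0}-\frac{1}{v}\sum_{i=1}^{n}\frac{a_i}{q_{2i}}-\alpha$ imposed in the statement is contained in the admissibility interval for the Herz index required by Corollary \ref{frac 1}; indeed, since $n\geq 1$ one has $\frac{1}{p_0}-\alpha\leq\frac{1}{q_0}$, so the stated upper bound on $\hat\alpha$ does not exceed the threshold needed (via Lemma \ref{boundedness HK}) for $M$ to be bounded on the auxiliary space $\dot{K}_{\bar{\vec{q}}_2^{\prime},\vec{a}}^{-\hat\alpha q_0,\bar{p}_2^{\prime}}(\mathbb{R}^{n})$ on which $\mathfrak{R}$ is built. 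Corollary \ref{frac 1} now delivers
$$
\|Tf\|_{\dot{K}_{\vec{q}_2,\vec{a}}^{\hat\alpha,p_2}(\mathbb{R}^{n})}\lesssim\|f\|_{\dot{K}_{\vec{q}_1,\vec{a}}^{\hat\alpha,p_1}(\mathbb{R}^{n})},\qquad T\in\{M_\alpha,I_\alpha,[b,I_\alpha]\};
$$
for $I_\alpha$ and $[b,I_\alpha]$ a routine density argument (simple functions are dense in $\dot{K}_{\vec{q}_1,\vec{a}}^{\hat\alpha,p_1}(\mathbb{R}^{n})$ because $1<p_1,\vec{q}_1<\infty$) promotes this a priori inequality to a bounded extension on the whole space.

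The step I expect to be the main obstacle is not any single inequality but the bookkeeping of the last two paragraphs: one has to keep rigorously separate the fractional order $\alpha$, the Herz index $\hat\alpha$, and the auxiliary pair $p_0=p$, $q_0$, and --- the genuinely delicate point --- reconcile the weight class $A_{p,q}$ in which Lemma \ref{frac weighted} operates with the family $\{\mathfrak{R}h\}$ of weights produced by the Rubio de Francia iteration $\mathfrak{R}$ attached to the Hardy-Littlewood maximal operator, verifying that the $\hat\alpha$-range forces the boundedness of $M$ (Lemma \ref{boundedness HK}) on the auxiliary Herz space underlying $\mathfrak{R}$. Once these identifications are made, the conclusion is immediate from Corollary \ref{frac 1}.
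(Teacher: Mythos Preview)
Your proposal is correct and follows exactly the route the paper indicates: the paper's entire argument for this theorem is the one-line remark ``By Theorem \ref{frac 1} and Lemma \ref{frac weighted}, we can get the following result,'' and you have carried out precisely that combination, supplying the bookkeeping (the identification $p_0=p$, $1/p_0-1/q_0=\alpha/n$, the inclusion $\mathfrak{R}h\in A_1\subset A_{p_0,q_0}$, and the verification that the stated $\hat\alpha$-interval sits inside the admissibility range coming from Lemma \ref{boundedness HK}) that the paper leaves implicit.
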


Recall the definition of Littlewood-Paley operators. Suppose $\psi$ is integrable on $\mathbb{R}^{n}$ and
\begin{enumerate}
	\item [{\rm (i)}] $\int_{\mathbb{R}^{n}} \psi(x) d x=0$,
	\item [{\rm (ii)}] $|\psi(x)| \leq C(1+|x|)^{-(n+\alpha)}$, for some $\alpha>0$,
	\item [{\rm (iii)}] $\int_{\mathbb{R}^{n}}|\psi(x+y)-\psi(x)| d x \leq C \left|y\right|^{\gamma}$, for all $y \in \mathbb{R}^{n}$, for some $\gamma>0$.
\end{enumerate}
Let $\psi_{t}(x)=t^{-n} \psi(x / t)$ with $t>0$ and $x \in \mathbb{R}^{n}$. For $f$ in $L^{2}\left(\mathbb{R}^{n}\right)$ with compact support, the Littlewood-Paley $g$-function of $f$ is defined by
$$
g_{\psi}(f)(x)=\left\{\int_{0}^{\infty}\left|f * \psi_{t}(x)\right|^{2} \frac{d t}{t}\right\}^{1 / 2} ;
$$
the Lusin area function of $f$ is defined by
$$
S_{\psi, a}(f)(x)=\left(\frac{1}{a^{n}\left|B_{0}\right|} \int_{\Gamma_{a}(x)}\left|f * \psi_{t}(x)\right|^{2} t^{-n} d y \frac{d t}{t}\right)^{1 / 2},
$$
where $\left|B_{0}\right|$ is the Lebesgue measure of the unit ball $B_{0}$ of $\mathbb{R}^{n}$, and $\Gamma_{a}(x)=\{(y, t) \in$ $\left.\mathbb{R}_{+}^{n+1}:|x-y|<a t\right\}$; and the Littlewood-Paley $g_{\lambda}^{*}$-function of $f$ is defined by
$$
g_{\psi, \lambda}^{*}(f)(x)=\left\{\int_{0}^{\infty} \int_{\mathbb{R}^{n}} \frac{\left|f * \psi_{t}(y)\right|^{2}}{\left(1+\frac{|x-y|}{t}\right)^{2 \lambda}} t^{-n} d y \frac{d t}{t}\right)^{1 / 2}.
$$
It is well known that, in classical situation where $\psi$ is related to the gradient of the Poisson kernel, all those functions have $L^p(\mathbb{R}^n)$ norms equivalent to $L^p(\mathbb{R}^n)$ norm of function $f$, that is, $\|g(f)\|_{L^p\left(\mathbb{R}^n\right)}\lesssim \left\|f\right\|_{L^p\left(\mathbb{R}^n\right)}$ for   $1<p<\infty$\cite{grafakos2008classical}. This equivalence actually holds for any general Littlewood-Paley function defined in terms of such $\psi.$ Some researchers also show that those Littlewood-Paley function can also characterize Herz spaces ${\dot{K}_{q}^{\alpha, p}}(\mathbb{R}^n)$ and ${K_{q}^{\alpha, p}}(\mathbb{R}^n)$ when $f\in {\dot{K}_{q}^{\alpha, p}}(\mathbb{R}^n)$ or $ f\in {K_{q}^{\alpha, p}}(\mathbb{R}^n)$ for $1\leq p\leq \infty$\cite{li1996boundedness}. In 2010, Ward prove the Littlewood-Paley characterization of mixed Lebesgue spaces\cite{ward2010new}. 
We will show that this equivalence actually hold also for the spaces ${\dot{K}_{\vec{q},\vec{a}}^{\alpha, p}}\left(\mathbb{R}^n\right)$ and ${K_{\vec{q},\vec{a}}^{\alpha, p}}\left(\mathbb{R}^n\right)$ with $1\leq p \leq \infty$,  $1<\vec{q}<\infty$ and $-\frac{1}{v}\sum_{i=1}^n \frac{a_i}{q_i}<\alpha<1-\frac{1}{v}\sum_{i=1}^n \frac{a_i}{q_i},$ within the ranges of any of $p$ and $\vec{q}$, the corresponding mixed norm Herz spaces are Banach spaces.

It is known that for any general Littlewood-Paley function associated with $\psi$, the inequality hold for all $a, \lambda>0$\cite{torchinsky123real}
$$S_{\psi, a}(f)(x)\leq Cg_{\psi, \lambda}^{*}\left(f\right)(x),\quad \quad x\in \mathbb{R}^n.$$
The next inequality also true\cite{wilson1988note}, for any compactly supported  $\psi$ satisfying (i), (ii) and (iii), there is another compactly supported radial function $\varrho$ of this type, such that
$$g_{\psi}(f)(x)\leq CS_{\varrho, 2}(f)(x), \quad \quad~~~~~ x\in \mathbb{R}^n.$$

\begin{theorem}
	Let $1\leq p\leq \infty,$ $1<\vec{q}<\infty,$ $-\frac{1}{v}\sum_{i=1}^n \frac{a_i}{q_i}< \alpha <1-\frac{1}{v}\sum_{i=1}^n \frac{a_i}{q_i}$, $\lambda>3n/2$ and let $\psi$ satisfy $(i),(ii),(iii).$ Then following statements are equivalent.
	\begin{align*}
		\left\|f\right\|_{\dot K_{\vec{q},\vec{a}}^{\alpha,p}(\mathbb R^n)}\lesssim \left\|g_{\psi}(f)\right\|_{\dot K_{\vec{q},\vec{a}}^{\alpha,p}(\mathbb R^n)} \lesssim \left\|S_{\psi, a}(f)\right\|_{\dot K_{\vec{q},\vec{a}}^{\alpha,p}(\mathbb R^n)} \lesssim \left\|g_{\psi,\lambda}^{*}(f)\right\|_{\dot K_{\vec{q},\vec{a}}^{\alpha,p}(\mathbb R^n)} \lesssim \left\|f\right\|_{\dot K_{\vec{q},\vec{a}}^{\alpha,p}(\mathbb R^n)}.
	\end{align*}
	There is a similar result for $K_{\vec{q}}^{\alpha, p}\left(\mathbb{R}^n\right)$.
\end{theorem}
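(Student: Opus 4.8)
The plan is to establish the four inequalities in the displayed chain one at a time; since the chain is cyclic, proving all four simultaneously shows that the quantities $\|f\|_{\dot K_{\vec q,\vec a}^{\alpha,p}(\mathbb R^{n})}$, $\|g_{\psi}(f)\|_{\dot K_{\vec q,\vec a}^{\alpha,p}(\mathbb R^{n})}$, $\|S_{\psi,a}(f)\|_{\dot K_{\vec q,\vec a}^{\alpha,p}(\mathbb R^{n})}$ and $\|g^{*}_{\psi,\lambda}(f)\|_{\dot K_{\vec q,\vec a}^{\alpha,p}(\mathbb R^{n})}$ are pairwise comparable. One of the four is purely pointwise; the other three are transferred from classical \emph{weighted} $L^{p_{0}}$ estimates through the extrapolation Theorem~\ref{extropalation 1}. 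To set this up, fix $p_{0}\in(0,1)$ sufficiently close to $1$. Since $1/p_{0}>1$ and $q_{i}>1$ for every $i$, such a choice automatically gives $p_{0}<p$, $p_{0}<q_{i}$ for all $i$, and $\alpha<1-\frac{1}{v}\sum_{i=1}^{n}\frac{a_{i}}{q_{i}}<\frac{1}{p_{0}}-\frac{1}{v}\sum_{i=1}^{n}\frac{a_{i}}{q_{i}}$; moreover, for $p_{0}$ close enough to $1$ the classical weighted bound of $g^{*}_{\psi,\lambda}$ recalled below holds on $L^{p_{0}}_{w}$ with the exponent $\lambda>3n/2$ of the hypothesis. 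By the construction preceding Theorem~\ref{extropalation 1} (which relies on Lemma~\ref{boundedness HK}), every weight $w=\mathfrak Rh$ with $h$ in the unit ball of $\dot K_{(\vec q/p_{0})^{\prime},\vec a}^{-p_{0}\alpha,(p/p_{0})^{\prime}}(\mathbb R^{n})$ satisfies $w\in A_{1}$, so it suffices to verify the weighted hypotheses of Theorem~\ref{extropalation 1} for every $w\in A_{1}$.

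The first inequality requires no extrapolation: from the pointwise bound $S_{\psi,a}(f)(x)\le C\,g^{*}_{\psi,\lambda}(f)(x)$ (see \cite{torchinsky123real}) and the monotonicity of $\|\cdot\|_{\dot K_{\vec q,\vec a}^{\alpha,p}(\mathbb R^{n})}$ with respect to $|f|$, we obtain $\|S_{\psi,a}(f)\|_{\dot K_{\vec q,\vec a}^{\alpha,p}(\mathbb R^{n})}\lesssim\|g^{*}_{\psi,\lambda}(f)\|_{\dot K_{\vec q,\vec a}^{\alpha,p}(\mathbb R^{n})}$. For the other three I shall use three weighted estimates, each valid for every $w\in A_{1}$ with the fixed $p_{0}$: \emph{(a)} $\|g^{*}_{\psi,\lambda}(f)\|_{L^{p_{0}}_{w}}\lesssim\|f\|_{L^{p_{0}}_{w}}$, the weighted $L^{p_{0}}$ bound of the $g^{*}_{\lambda}$-function, where $\lambda>3n/2$ and the choice of $p_{0}$ enter; \emph{(b)} $\|f\|_{L^{p_{0}}_{w}}\lesssim\|g_{\psi}(f)\|_{L^{p_{0}}_{w}}$, the weighted reconstruction inequality; and \emph{(c)} $\|g_{\psi}(f)\|_{L^{p_{0}}_{w}}\lesssim\|S_{\psi,a}(f)\|_{L^{p_{0}}_{w}}$, obtained by combining Wilson's pointwise inequality $g_{\psi}(f)(x)\le C\,S_{\varrho,2}(f)(x)$ (see \cite{wilson1988note}) with the weighted equivalence, for $A_{1}$ (indeed $A_{\infty}$) weights, of area functions built from different admissible kernels and apertures, see \cite{grafakos2008classical,torchinsky123real}. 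All three are standard weighted Littlewood--Paley facts; restricting first to $f\in L^{2}(\mathbb R^{n})$ with compact support guarantees the finiteness needed in the hypothesis of Theorem~\ref{extropalation 1}, and the general case follows by density.

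Now apply Theorem~\ref{extropalation 1} successively to the families $\mathcal F=\{(g^{*}_{\psi,\lambda}(f),|f|)\}$, $\mathcal F=\{(|f|,g_{\psi}(f))\}$ and $\mathcal F=\{(g_{\psi}(f),S_{\psi,a}(f))\}$, feeding in \emph{(a)}, \emph{(b)}, \emph{(c)} respectively as the weighted hypothesis. Because the quantitative restrictions $p_{0}<p,\vec q<\infty$ and $-\frac{1}{v}\sum_{i=1}^{n}\frac{a_{i}}{q_{i}}<\alpha<\frac{1}{p_{0}}-\frac{1}{v}\sum_{i=1}^{n}\frac{a_{i}}{q_{i}}$ have been arranged, the theorem yields $\|g^{*}_{\psi,\lambda}(f)\|_{\dot K_{\vec q,\vec a}^{\alpha,p}(\mathbb R^{n})}\lesssim\|f\|_{\dot K_{\vec q,\vec a}^{\alpha,p}(\mathbb R^{n})}$, then $\|f\|_{\dot K_{\vec q,\vec a}^{\alpha,p}(\mathbb R^{n})}\lesssim\|g_{\psi}(f)\|_{\dot K_{\vec q,\vec a}^{\alpha,p}(\mathbb R^{n})}$, then $\|g_{\psi}(f)\|_{\dot K_{\vec q,\vec a}^{\alpha,p}(\mathbb R^{n})}\lesssim\|S_{\psi,a}(f)\|_{\dot K_{\vec q,\vec a}^{\alpha,p}(\mathbb R^{n})}$. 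Chaining these with the pointwise inequality of the previous paragraph closes the loop and gives the asserted equivalence. The endpoint $p=\infty$ is covered by the same argument (any finite $p_{0}$ is admissible), and the non-homogeneous spaces $K_{\vec q,\vec a}^{\alpha,p}(\mathbb R^{n})$ are treated identically with Theorem~\ref{extropalation 2} in place of Theorem~\ref{extropalation 1}.

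The main obstacle is the weighted reconstruction estimate \emph{(b)} (and, to a lesser extent, the kernel/aperture comparison hidden in \emph{(c)}): unlike the size-based bound \emph{(a)}, it does not come for free and requires a Calder\'on reproducing formula together with an almost-orthogonality argument valid for $w\in A_{1}$. An alternative route that sidesteps the reconstruction extrapolation, at least for $1\le p<\infty$, is to argue by duality: by Proposition~\ref{dual}, $(\dot K_{\vec q,\vec a}^{\alpha,p}(\mathbb R^{n}))^{*}=\dot K_{\vec q^{\prime},\vec a}^{-\alpha,p^{\prime}}(\mathbb R^{n})$; pairing $f$ with $g$ via the Calder\'on identity $\int_{\mathbb R^{n}}fg\,dx=c\int_{0}^{\infty}\!\int_{\mathbb R^{n}}(f*\psi_{t})\,\overline{(g*\widetilde\psi_{t})}\,dx\,\frac{dt}{t}$, applying the H\"older inequality of Proposition~\ref{Holder} to the $L^{1}$-pairing of the two $g$-functions, and invoking the boundedness of $g_{\widetilde\psi}$ on the dual space $\dot K_{\vec q^{\prime},\vec a}^{-\alpha,p^{\prime}}(\mathbb R^{n})$, one gets $\|f\|_{\dot K_{\vec q,\vec a}^{\alpha,p}(\mathbb R^{n})}\lesssim\|g_{\psi}(f)\|_{\dot K_{\vec q,\vec a}^{\alpha,p}(\mathbb R^{n})}$; note that the admissible range for the pair $(-\alpha,\vec q^{\prime})$ is exactly the one for $(\alpha,\vec q)$ since $\frac{1}{v}\sum_{i=1}^{n}\frac{a_{i}}{q_{i}^{\prime}}=1-\frac{1}{v}\sum_{i=1}^{n}\frac{a_{i}}{q_{i}}$, so no extra condition on $\alpha$ is incurred. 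Finally, checking that \emph{(a)} genuinely holds with the stated threshold $\lambda>3n/2$ uniformly over $A_{1}$ weights and over the admissible $p_{0}$ is the remaining technical point to confirm.
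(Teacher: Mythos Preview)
Your route differs from the paper's in how the upper bound $\|g^{*}_{\psi,\lambda}(f)\|_{\dot K_{\vec q,\vec a}^{\alpha,p}}\lesssim\|f\|_{\dot K_{\vec q,\vec a}^{\alpha,p}}$ is obtained. The paper does \emph{not} use extrapolation here: it argues by a direct annular decomposition, exactly parallel to the proof of Lemma~\ref{boundedness HK} for the Hardy--Littlewood maximal operator (split $f=\sum_{l}f\chi_{l}$, use the size and smoothness of $\psi$ for the far pieces, and the $L^{\vec q}$-boundedness of the square function for the near pieces). With the chain $\|g_{\psi}(f)\|\lesssim\|S_{\psi,a}(f)\|\lesssim\|g^{*}_{\psi,\lambda}(f)\|\lesssim\|f\|$ then coming from the pointwise inequalities already quoted, the paper closes the loop by the duality argument you describe as your \emph{alternative}: pair $f_{1}$ with a test function $f_{2}$ through the Calder\'on reproducing formula, dominate by $\int g_{\psi}(f_{1})\,g_{\psi}(f_{2})$, apply Proposition~\ref{Holder}, and invoke the just-established boundedness of $g_{\psi}$ on the dual space $\dot K_{\vec q',\vec a}^{-\alpha,p'}$. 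So your ``alternative'' for the reconstruction inequality is precisely the paper's argument; what differs is your primary plan to extrapolate every inequality.

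The extrapolation route is conceptually neat, but your choice $p_{0}\in(0,1)$---forced by the endpoint $p=1$, since Theorem~\ref{extropalation 1} needs $p_{0}<p$---is where it becomes genuinely problematic. The weighted inequalities \emph{(a)}, \emph{(b)}, \emph{(c)} at an exponent $p_{0}<1$ with $w\in A_{1}$ are \emph{not} part of the standard $A_{p}$ Littlewood--Paley toolbox, which lives in the range $p_{0}>1$; below $1$ one is effectively in weighted Hardy-space territory, and neither the $g^{*}_{\lambda}$ bound nor the reconstruction estimate is routine there. You flag \emph{(a)} as a point ``to confirm'', but in fact all three are non-standard at $p_{0}<1$. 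For $p>1$ you could repair this by taking $p_{0}\in(1,\min\{p,q_{1},\dots,q_{n}\})$, where \emph{(a)} is classical (Muckenhoupt--Wheeden) and the extrapolation runs cleanly; this still leaves $p=1$ uncovered. The paper's direct annular estimate avoids this issue entirely and is uniform in $1\le p\le\infty$. In short: keep your duality step for $\|f\|\lesssim\|g_{\psi}(f)\|$ (it matches the paper), but for the upper bound either restrict extrapolation to $p>1$ with $p_{0}>1$, or replace it by the direct argument modelled on Lemma~\ref{boundedness HK}.
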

\begin{proof}
		In this proof, we mainly prove that
				\begin{equation}
			\left\|g_{\psi,\lambda}^{*}(f)\right\|_{\dot{K}_{\vec{q},\vec{a}}^{\alpha, p}\left(\mathbb{R}^n\right)}\leq C \left\|f\right\|_{\dot{K}_{\vec{q},\vec{a}}^{\alpha, p}\left(\mathbb{R}^n\right)} \label{1},
		\end{equation}
		and
		\begin{equation}
			\left\|g_{\psi,\lambda}^{*}(f)\right\|_{\dot{K}_{\vec{q},\vec{a}}^{\alpha, p}\left(\mathbb{R}^n\right)}\leq C \left\|f\right\|_{\dot{K}_{\vec{q},\vec{a}}^{\alpha, p}\left(\mathbb{R}^n\right)} \label{2}.
		\end{equation}
Once this is done, we obtain
\begin{equation}
			\left\|g_{\psi}(f)\right\|_{\dot{K}_{\vec{q},\vec{a}}^{\alpha, p}\left(\mathbb{R}^n\right)}\leq C\left\|S_{\psi,a}(f)\right\|_{\dot{K}_{\vec{q},\vec{a}}^{\alpha, p}\left(\mathbb{R}^n\right)}
			\leq C\left\|g_{\psi,\lambda}^{*}(f)\right\|_{\dot{K}_{\vec{q},\vec{a}}^{\alpha, p}\left(\mathbb{R}^n\right)}\leq C \left\|f\right\|_{\dot{K}_{\vec{q},\vec{a}}^{\alpha, p}\left(\mathbb{R}^n\right)} \label{3}.
		\end{equation}
	In fact, inequalities (\ref{1}) and (\ref{2}) can easily obtain via similar estimation of Theorem \ref{boundedness HL}. \\
	On the other hand, through standard arguments, using the H\"older's inequality and (\ref{3}),  we have
	\begin{align*}\left|\int_{\mathbb{R}^{n}} f_{1}(x) \overline{f_{2}(x)} d x\right| 
	& \leqslant C \int_{\mathbb{R}^{n}} \int_{0}^{\infty}\left|\psi_{t} * f_{1}(x)\right| \left|\psi_{t} * f_{2}(x)\right| \frac{d t}{t} d x \\ & \leqslant C \int_{\mathbb{R}^{n}} g_{\psi}\left(f_{1}\right)(x)  g_{\psi}\left(f_{2}\right)(x) d x \\ 
	& \leqslant C\left\|g_{\psi}\left(f_{1}\right)\right\|_{H\dot K_{\vec{q}, \vec{a}}^{\alpha,p}\left(\mathbb{R}^n\right)} \left\|g_{\psi}\left(f_{2}\right)\right\|_{H\dot K_{\vec{q}^{\prime}, \vec{a}}^{-\alpha, p^{\prime}}\left(\mathbb{R}^n\right)} \\ 
	& \leqslant C\left\|g_{\psi}\left(f_{1}\right)\right\|_{H\dot K_{\vec{q}, \vec{a}}^{\alpha,p}\left(\mathbb{R}^n\right)}. 
	\end{align*}
	Taking the $\sup_{\left\|f_2 \right\|_{H\dot K_{\vec{q}^{\prime}, \vec{a}}^{-\alpha, p^{\prime}}\left(\mathbb{R}^n\right)} \leq 1}$, then 
	\begin{align*}
		\left\|f\right\|_{H\dot K_{\vec{q}, \vec{a}}^{\alpha,p}\left(\mathbb{R}^n\right)} \leq C \left\|g_{\psi}(f)\right\|_{H\dot K_{\vec{q}, \vec{a}}^{\alpha,p}\left(\mathbb{R}^n\right)}.
	\end{align*}
	Combine with the above two parts, this proof is completed.
\end{proof}

\section{Anisotropic Mixed-Norm Herz-Hardy spaces}
In this section, we mainly study the anisotropic mixed-norm Herz-Hardy spaces, which is the appropriate substitute for anisotropic mixed-norm Herz spaces on some index range. Under this framework, we establish atomic decomposition and molecular decomposition. As an application, we resolve the bounded map issue of classical Calder\'on-Zygmund operators from $H\dot K_{\vec{q}, \vec{a}}^{\alpha, p}(\mathbb R^n)$(or $HK_{\vec{q}, \vec{a}}^{\alpha, p}(\mathbb R^n)$) to $\dot K_{\vec{q}, \vec{a}}^{\alpha, p}(\mathbb R^n)$ (or $K_{\vec{q}, \vec{a}}^{\alpha, p}(\mathbb R^n)$ ).
\par 
Now we give the definition of anisotropic mixed-norm Herz-Hardy spaces $H \dot{K}_{\vec{q},\vec{a}}^{\alpha, p}\left(\mathbb{R}^{n}\right)$ and $H K_{\vec{q}, \vec{a}}^{\alpha, p}\left(\mathbb{R}^{n}\right).$ First, we recall several maximal operators.
\par
Denote by $\mathcal S \left(\mathbb{R}^n\right)$ the  set of all Schwarz function ,and $\mathcal S^{\prime} \left(\mathbb{R}^n\right)$ the dual spaces of $\mathcal S \left(\mathbb{R}^n\right)$. For any $N \in \mathbb Z_+$,
$$\mathcal{S}_N \left(\mathbb{R}^{n}\right):=\left\{\varphi \in \mathcal{S} \left(\mathbb{R}^{n}\right):~\|\varphi\|_{\mathcal{S}_N \left(\mathbb{R}^{n}\right)}:= \sup_{x \in \mathbb{R}^n}\left[\langle x\rangle^N_{\vec{a}} \sup_{|\alpha|\leq N} |\partial^{\alpha} \varphi(x)|\right]\leq 1 \right\}.$$
In what follows, for any $\varphi \in \mathcal{S} \left(\mathbb{R}^{n}\right)$ and $t\in (0,\infty)$, let $\varphi_t(\cdot):= t^{-v}\varphi(t^{-\vec{a}} \cdot).$
\par 
\begin{definition}
	Let  $\varphi \in \mathcal S \left(\mathbb{R}^{n}\right)$ and $f \in \mathcal S^{\prime}\left(\mathbb{R}^{n}\right)$. The non-tangential maximal function $M \varphi(f) $, with respect to $\varphi$, is defined by setting, for any $x \in \mathbb{R}^{n}$,
$$M_{\varphi}(f)(x):=\sup _{y \in B_{\vec{a}}(x, t), t \in(0, \infty)}\left|f * \varphi_{t}(y)\right|.$$
The radial maximal function of $f$  with respect to  $\varphi$  is defined as
$$M_{N}^{0} f(x):=\sup _{k \in \mathbb{Z}}\left|f * \varphi_{k}\right|.$$
Moreover. for any given $N \in \mathbb{N}$, the non-tangential grand maximal function  $M_{N}(f)$  of  $f \in \mathcal S^{\prime}\left(\mathbb{R}^{n}\right)$  is defined by setting, for any  $x \in \mathbb{R}^{n}$,
$$M_{N}(f)(x):=\sup _{\varphi \in \mathcal S_{N}\left(\mathbb{R}^{n}\right)} M_{\varphi}(f)(x).$$ 
\end{definition}

\begin{remark} \label{equivalent MN}
	\rm { From [\cite{bownik2003anisotropic}. Proposition 3.10]. We know that radial maximal functions and grand maximal functions are pointwise equivalent i.e. for every  $N \geqslant 0$, there is a constant $C=C(N)$  So that. for all $f \in \mathcal{S}^{\prime}$  and $x \in \mathbb{R}^{n}$.
\begin{align*}
M_{N}^{0} f(x) \leq M_{N} f(x) \leq C M_{N}^{0} f(x). 
\end{align*}}
\end{remark}
\par 
In 2017, Cleanthous et al. introduced anisotropic mixed-norm Hardy spaces as follows.
\begin{definition}
	 Let $\vec{a} \in[1, \infty)^{n}, \vec{p} \in(0, \infty)^{n}$ we denote by $p_-:=\min \left(1, p_{1}, \ldots, p_{n}\right)$ and
\begin{equation}
	N_{\vec{p}}:=\left[v \frac{a_{+}}{a_{-}}\left(1+\frac{2}{p_-}\right)+v+2 a_{+}\right]+1 \label{N number}
\end{equation}
We will say that a distribution $f \in \mathcal{S}^{\prime}$ belongs to the anisotropic mixed-norm Hardy space $H_{\vec{a}}^{\vec{p}}(\mathbb R^n)$ when ${M}_{N_{\vec{p}}} f \in L^{\vec{p}}$. The map $\|f\|_{H_{\vec{a}}^{\vec{p}}}:=\left\|{M}_{N_{\vec{p}}} f\right\|_{\vec{p}}$ is the quasi-norm of $H_{\vec{a}}^{\vec{p}}(\mathbb R^n).$
\end{definition}
\begin{definition}
	Let $\vec{a}\in [1, \infty)^n$, $\alpha \in \mathbb{R}$, $0<p<\infty$, $0<\vec{q} \leqslant \infty $, $N \geqslant N_{\vec{q}} $. The anisotropic mixed-norm homogeneous Herz-Hardy space $H \dot K_{\vec{q}, \vec{a}}^{\alpha, p}(\mathbb{R}^{n})$ is defined by
	$$H \dot K_{\vec{q}, \vec{a}}^{\alpha, p}\left(\mathbb{R}^{n}\right):=\left\{f \in \mathcal {S}^{\prime}\left(\mathbb{R}^{n}\right):~\left\|f\right\|_{H\dot K_{\vec{q}, \vec{a}}^{\alpha, p}\left(\mathbb{R}^{n}\right)}=\left\|M_N f\right\|_{\dot K_{\vec{q}, \vec{a}}^{\alpha, p}\left(\mathbb{R}^{n}\right)}< \infty\right\}.$$
\end{definition}

\begin{definition}
	Let $\vec{a}\in [1, \infty)^n$, $\alpha \in \mathbb{R}$, $0<p<\infty$, $0<\vec{q} \leqslant \infty $, $N \geqslant N_{\vec{q}} $. The anisotropic mixed-norm non-homogeneous Herz-Hardy space $H K_{\vec{q}, \vec{a}}^{\alpha, p}(\mathbb{R}^{n})$ is defined by
	$$H  K_{\vec{q}, \vec{a}}^{\alpha, p}\left(\mathbb{R}^{n}\right):=\left\{f \in \mathcal {S}^{\prime}\left(\mathbb{R}^{n}\right):~\left\|f\right\|_{H K_{\vec{q}, \vec{a}}^{\alpha, p}\left(\mathbb{R}^{n}\right)}=\left\|M_N f\right\|_{K_{\vec{q}, \vec{a}}^{\alpha, p}\left(\mathbb{R}^{n}\right)}< \infty\right\}.$$
\end{definition}

\begin{remark}
	\rm{
	\begin{enumerate}
		\item [(i)] The quasi-norm of anisotropic mixed-norm Herz-Hardy spaces depend on $N$, however, we know that it is independent of the choice of $N$ as long as $N$ is as in (\ref{N number}).
		\item [(ii)] When $\vec{a}=(1,1,\dots, 1)$ and $\vec{p}=(1,1,\dots, 1)$, then anisotropic mixed-norm Herz-Hardy space coincides with the classical isotropic Herz-Hardy space.
		\item [(iii)] Anisotropic mixed-norm Herz-Hardy spaces $H \dot K_{\vec{q}, \vec{a}}^{\alpha, p}\left(\mathbb{R}^{n}\right)$ and $H K_{\vec{q}, \vec{a}}^{\alpha, p}\left(\mathbb{R}^{n}\right)$ are quasi-norm Banach spaces. This result can immediately obtain via Proposition \ref{quasiBanach}.
	\end{enumerate}}
\end{remark}	
\begin{proposition}
Let  $0<p<\infty$, $1<\vec{q}<\infty$ and  $N>N_{\vec{q}}$. $\frac{-1}{v} \sum_{i=1}^n \frac{a_{i}}{q_{i}}<\alpha< 1-\frac{1}{v}\sum_{i=1}^n \frac{a_{i}}{q_{i}}$.
Then     
$$H \dot K_{\vec{q}, \vec{a}}^{\alpha, p}\left(\mathbb{R}^{n}\right)\cap L_{\rm{l o c}}^{\vec{q}}\left(\mathbb{R}^{n} /\{0\}\right)=\dot K_{\vec{q}, \vec{a}}^{\alpha, p}\left( \mathbb{R}^n\right).$$
and
$$H K_{\vec{q}, \vec{a}}^{\alpha, p}\left(\mathbb{R}^{n}\right)\cap L_{\rm{l o c}}^{\vec{q}}\left(\mathbb{R}^{n}\right)=K_{\vec{q}, \vec{a}}^{\alpha, p}\left( \mathbb{R}^n\right).$$
\end{proposition}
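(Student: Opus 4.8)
The plan is to establish the homogeneous identity by proving the two inclusions $\dot K_{\vec q,\vec a}^{\alpha,p}(\mathbb R^n)\subseteq H\dot K_{\vec q,\vec a}^{\alpha,p}(\mathbb R^n)\cap L^{\vec q}_{\rm loc}(\mathbb R^n\setminus\{0\})$ and its reverse, each together with a comparison of the corresponding (quasi-)norms; the non-homogeneous identity then follows by the same argument with $k\ge 0$ and $\widetilde{\chi}_k$ in place of $\chi_k$, and is in fact slightly easier since membership in $L^{\vec q}_{\rm loc}(\mathbb R^n)$ already forces local integrability near the origin.

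For the inclusion $\dot K\hookrightarrow H\dot K$, take $f\in\dot K_{\vec q,\vec a}^{\alpha,p}(\mathbb R^n)$; by definition $f\in L^{\vec q}_{\rm loc}(\mathbb R^n\setminus\{0\})$. First I would check $f\in L^1_{\rm loc}(\mathbb R^n)$, hence $f\in\mathcal S'(\mathbb R^n)$: writing $\int_{B_0}|f|\le\sum_{k\le 0}\int_{A_{k,\vec a}}|f|$, applying H\"older's inequality on mixed Lebesgue spaces (Remark \ref{mixedpro}(iii)) together with the estimate $\|\chi_k\|_{L^{\vec q'}}\le \|\chi_{B_{\vec a}(\vec 0_n,2^k)}\|_{L^{\vec q'}}\lesssim 2^{k\sum_i a_i/q_i'}$ proved above, and bounding each $|B_{k,\vec a}|^\alpha\|f\chi_k\|_{L^{\vec q}}\le\|f\|_{\dot K_{\vec q,\vec a}^{\alpha,p}}$, one reduces to the geometric series $\sum_{k\le 0}2^{kv(1-\frac1v\sum_i a_i/q_i-\alpha)}$, which converges precisely because $\alpha<1-\frac1v\sum_i a_i/q_i$; an analogous computation against the weight $(1+|x|_{\vec a})^{-M}$ for $M$ large controls the tail at infinity. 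Next I would prove the pointwise domination $M_N(f)(x)\lesssim M(f)(x)$: for $\varphi\in\mathcal S_N(\mathbb R^n)$ and $y\in B_{\vec a}(x,t)$, the decay $|\varphi(z)|\le\langle z\rangle_{\vec a}^{-N}\lesssim(1+|z|_{\vec a})^{-N}$ with $N>v$ and the homogeneity $|t^{-\vec a}w|_{\vec a}=t^{-1}|w|_{\vec a}$ (Lemma \ref{norm proper}(i)) give
$$|f*\varphi_t(y)|\lesssim t^{-v}\int_{\mathbb R^n}|f(z)|\Bigl(1+\tfrac{|y-z|_{\vec a}}{t}\Bigr)^{-N}dz\lesssim\sum_{j\ge 0}2^{j(v-N)}\,\frac{1}{|B_{\vec a}(y,2^jt)|}\int_{B_{\vec a}(y,2^jt)}|f|,$$
and since $B_{\vec a}(y,2^jt)\subseteq B_{\vec a}(x,2^{j+1}t)$ by the quasi-triangle inequality (Lemma \ref{norm proper}(ii)) with comparable measures, each average is $\lesssim M(f)(x)$; summing the convergent $j$-series and taking the supremum over $\varphi$, $y$, $t$ yields $M_N(f)(x)\lesssim M(f)(x)$. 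Lemma \ref{boundedness HK}, whose hypotheses are exactly $1<\vec q<\infty$ and $-\frac1v\sum_i a_i/q_i<\alpha<1-\frac1v\sum_i a_i/q_i$, then gives $\|f\|_{H\dot K_{\vec q,\vec a}^{\alpha,p}}=\|M_N f\|_{\dot K_{\vec q,\vec a}^{\alpha,p}}\lesssim\|M(f)\|_{\dot K_{\vec q,\vec a}^{\alpha,p}}\lesssim\|f\|_{\dot K_{\vec q,\vec a}^{\alpha,p}}$, so $f\in H\dot K_{\vec q,\vec a}^{\alpha,p}(\mathbb R^n)$.

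For the reverse inclusion, let $f\in H\dot K_{\vec q,\vec a}^{\alpha,p}(\mathbb R^n)$ with $f\in L^{\vec q}_{\rm loc}(\mathbb R^n\setminus\{0\})$; since $\vec q>1$, $f$ is locally integrable on $\mathbb R^n\setminus\{0\}$. Fix $\phi\in\mathcal S(\mathbb R^n)$ with $\int_{\mathbb R^n}\phi=1$ and a constant $c>0$ with $c\phi\in\mathcal S_N(\mathbb R^n)$. Taking $y=x$ in the supremum defining $M_{\varphi}$, one gets $M_N(f)(x)\ge M_{c\phi}(f)(x)\ge c\sup_{t>0}|f*\phi_t(x)|\ge c|f(x)|$ for a.e.\ $x\in\mathbb R^n\setminus\{0\}$, the last step being the anisotropic Lebesgue differentiation theorem, valid since $(\phi_t)_{t>0}$ is an approximate identity and $f$ is locally integrable away from the origin. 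Hence $\|f\chi_k\|_{L^{\vec q}}\le c^{-1}\|M_N(f)\chi_k\|_{L^{\vec q}}$ for every $k\in\mathbb Z$, and taking the weighted $\ell^p$-norm over $k$ gives $\|f\|_{\dot K_{\vec q,\vec a}^{\alpha,p}}\lesssim\|M_N f\|_{\dot K_{\vec q,\vec a}^{\alpha,p}}=\|f\|_{H\dot K_{\vec q,\vec a}^{\alpha,p}}<\infty$, so $f\in\dot K_{\vec q,\vec a}^{\alpha,p}(\mathbb R^n)$; combining the two inclusions proves the homogeneous identity, and the non-homogeneous one follows by the identical reasoning.

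I expect the most delicate step to be the anisotropic pointwise comparison $M_N(f)\lesssim M(f)$ — one has to make the dilation–translation geometry of the quasi-balls $B_{\vec a}(x,t)$ interact cleanly with the decay of Schwartz functions measured through the bracket $\langle\cdot\rangle_{\vec a}$ — together with the bookkeeping showing that the upper constraint $\alpha<1-\frac1v\sum_i a_i/q_i$ is exactly what places $f\in\dot K_{\vec q,\vec a}^{\alpha,p}$ in $L^1_{\rm loc}(\mathbb R^n)\subseteq\mathcal S'(\mathbb R^n)$; the lower constraint and $1<\vec q<\infty$ are then absorbed entirely into the hypotheses of Lemma \ref{boundedness HK}.
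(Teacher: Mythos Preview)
Your proposal is correct and follows essentially the same route as the paper: both directions rest on the pointwise comparison $|f|\lesssim M_N(f)\lesssim M(f)$ together with Lemma~\ref{boundedness HK}, which is exactly how the paper argues via inequality~(\ref{HL equ}). Your version is more detailed and self-contained---you prove $M_N(f)\lesssim M(f)$ directly by a dyadic-shell decomposition rather than quoting Remark~\ref{equivalent MN}, and you take care to verify $f\in L^1_{\rm loc}(\mathbb R^n)\subseteq\mathcal S'(\mathbb R^n)$ and to invoke Lebesgue differentiation explicitly for the lower bound---but the underlying strategy is identical.
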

\begin{proof}
	Remark \ref{equivalent MN} shows that racial maximal operators are equivalent to grand maximal operators, especially, racial maximal operators have followed the result if function $\varphi \in L^1 \left(\mathbb{R}^n\right)$,
	$$\left|M_N^0(f)(x)\right|\leq C\|\varphi\|_{L^1\left(\mathbb{R}^{n}\right)}Mf(x).$$
	Then, the following inequalities can be gained 
	\begin{equation}
		\left|M_N(f)(x)\right|\lesssim \left|M_N^0(f)(x)\right|\lesssim Mf(x). \label{HL equ}
	\end{equation}
Moreover, combine the Theorem \ref{boundedness HK} with the above inequality, we can obtain 
$$\left\|f\right\|_{H\dot K_{\vec{q}, \vec{a}}^{\alpha, p}\left(\mathbb{R}^{n}\right)}=\left\|M_N (f)\right\|_{\dot K_{\vec{q}, \vec{a}}^{\alpha, p}\left(\mathbb{R}^{n}\right)} \lesssim \left\|M (f)\right\|_{\dot K_{\vec{q}, \vec{a}}^{\alpha, p}\left(\mathbb{R}^{n}\right)} \lesssim \left\| f \right\|_{\dot K_{\vec{q}, \vec{a}}^{\alpha, p}\left(\mathbb{R}^{n}\right)}.  $$
On the other hand,
$$\left\| f \right\|_{\dot K_{\vec{q}, \vec{a}}^{\alpha, p}\left(\mathbb{R}^{n}\right)} \leq \left\| M_N(f) \right\|_{\dot K_{\vec{q}, \vec{a}}^{\alpha, p}\left(\mathbb{R}^{n}\right)}= \left\|f\right\|_{H\dot K_{\vec{q}, \vec{a}}^{\alpha, p}\left(\mathbb{R}^{n}\right)}.$$
The above two inequality can conclude $\dot K_{\vec{q}, \vec{a}}^{\alpha, p}\left(\mathbb{R}^{n}\right) \subset H\dot K_{\vec{q}, \vec{a}}^{\alpha, p}\left(\mathbb{R}^{n}\right)$ and $H\dot K_{\vec{q}, \vec{a}}^{\alpha, p}\left(\mathbb{R}^{n}\right) \subset \dot K_{\vec{q}, \vec{a}}^{\alpha, p}\left(\mathbb{R}^{n}\right)$, respectively, the non-homogeneous case is similar.
\end{proof}

\subsection{Atomic Decomposition on Anisotropic mixed-norm Herz-Hardy spaces}
The atom decomposition of anisotropic mixed-norm Herz-Hardy spaces is one of the most important characterization of $H \dot K_{\vec{q}, \vec{a}}^{\alpha, p}\left(\mathbb{R}^{n}\right)$ and $H K_{\vec{q}, \vec{a}}^{\alpha, p}\left(\mathbb{R}^{n}\right)$. That is to say, a distribution in Herz-Hardy spaces can be represented as a sum of a sequence of atoms, whose properties, such as compact support condition, integrability and vanishing moments, are simple and better.
\begin{definition}
	Let  $1<\vec{q}<\infty$, $1-\frac{1}{v}\sum_{i=1}^n \frac{a_i}{q_i} \leq \alpha<\infty$, and non-negative integer $s \geq \left\lfloor \frac{v}{a_-}\left(\alpha+\frac{1}{v} \sum_{i=1}^n \frac{a_{i}}{q_{i}}-1\right)\right\rfloor.$
	\begin{enumerate}
		\item [\rm{(1)}] A function $a(x)$ on $\mathbb{R}^{n}$ is called a central $(\alpha, \vec{q}, s)$ atom, if it satisfies
		\begin{itemize}
			\item [\rm{(i)}] ${\rm supp}~a \subset B_{k, \vec{a}}, B_{k, \vec{a}} \in \mathfrak{B}, k\in Z,$
			\item [\rm{(ii)}] $ \|a\| _{L^{\vec{q}}} \leq\left|B_{k, \vec{a}}\right|^{-\alpha},$
			\item  [\rm{(iii)}]  $\int_{\mathbb{R}^n} a(x) \cdot x^{\beta} d x=0 ~ for~ all~|\beta| \leq s.$
		\end{itemize}
		\item [\rm{(2)}] A function $a(x)$ on $\mathbb{R}^{n}$ is called a central $(\alpha, \vec{q}, s)$ atom of restricted type, if it satisfies {\rm(ii)} and {\rm(iii)}
		\begin{itemize}
			\item [\rm{(iv)}] ${\rm supp}~a \subset B_{k, \vec{a}}, B_{k, \vec{a}} \in \mathfrak{B}, k\geq 0.$
		\end{itemize}
	\end{enumerate}
\end{definition}

\begin{theorem}\label{atom suffi}
	Let $0<p<\infty$, $1<\vec{q}<\infty$, $1-\frac{1}{v}\sum_{i=1}^n \frac{a_i}{q_i} \leq \alpha<\infty$, and non-negative integer $s \geq \left\lfloor \frac{v}{a_-}\left(\alpha+\frac{1}{v} \sum_{i=1}^n \frac{a_{i}}{q_{i}}-1\right)\right\rfloor.$ If 
	$$f(x)=\sum_{k=-\infty}^{\infty} \lambda_{k} a_{k}(x),~~~~~\text{in the distribution sense,} $$
	where each $a_{k}$ is a central $(\alpha, \vec{q}, s)$-atom with support $B_{k}$, and $\sum_{k=\infty}^{\infty}\left|\lambda_{k}\right|^{p}<\infty $, then $f \in H \dot K_{\vec{q}, \vec{a}}^{\alpha, p}\left(\mathbb{R}^{n}\right)$,
$$\|f\|_{H \dot K_{\vec{q}, \vec{a}}^{\alpha, p}\left(\mathbb{R}^{n}\right)} \leq C\left(\sum_{k=-\infty}^{\infty}\left|\lambda_{k}\right|^{p}\right)^{\frac{1}{p}},$$
where C is a constant independent of $f$.
\end{theorem}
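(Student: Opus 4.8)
The plan is to estimate $\|f\|_{H\dot K_{\vec q,\vec a}^{\alpha,p}(\mathbb R^n)}=\|M_N f\|_{\dot K_{\vec q,\vec a}^{\alpha,p}(\mathbb R^n)}$ directly from the atomic representation. Since $M_N$ is sublinear, $M_N f\le\sum_{k\in\mathbb Z}|\lambda_k|\,M_N(a_k)$ pointwise, and since $1\le\vec q<\infty$ makes $\|\cdot\|_{L^{\vec q}(\mathbb R^n)}$ a norm, the definition of the Herz norm gives
\begin{equation*}
\|M_N f\|_{\dot K_{\vec q,\vec a}^{\alpha,p}(\mathbb R^n)}^{p}\le\sum_{j\in\mathbb Z}\Big(\sum_{k\in\mathbb Z}|\lambda_k|\,|B_{j,\vec a}|^{\alpha}\,\|M_N(a_k)\chi_j\|_{L^{\vec q}(\mathbb R^n)}\Big)^{p}.
\end{equation*}
I would split the inner sum into a \emph{near part} ($k\ge j$) and a \emph{remote part} ($k<j$), and prove in both regimes the decay
\begin{equation*}
|B_{j,\vec a}|^{\alpha}\,\|M_N(a_k)\chi_j\|_{L^{\vec q}(\mathbb R^n)}\lesssim 2^{-|j-k|\,\delta}\qquad\text{for a fixed }\delta>0.
\end{equation*}
For the near part this is easy: the pointwise domination $M_N(a_k)\lesssim M(a_k)$ from \eqref{HL equ} together with the $L^{\vec q}(\mathbb R^n)$-boundedness of the Hardy--Littlewood maximal operator (Lemma \ref{boundedness HL}, valid since $1<\vec q<\infty$) gives $\|M_N(a_k)\chi_j\|_{L^{\vec q}(\mathbb R^n)}\le\|M_N(a_k)\|_{L^{\vec q}(\mathbb R^n)}\lesssim\|a_k\|_{L^{\vec q}(\mathbb R^n)}\le|B_{k,\vec a}|^{-\alpha}$, whence $|B_{j,\vec a}|^{\alpha}\|M_N(a_k)\chi_j\|_{L^{\vec q}(\mathbb R^n)}\lesssim(|B_{j,\vec a}|/|B_{k,\vec a}|)^{\alpha}=2^{(j-k)v\alpha}$, which decays geometrically in $k-j\ge0$ because $\alpha\ge 1-\frac1v\sum_{i=1}^n\frac{a_i}{q_i}>0$.

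The core is the remote part $k<j$, where the $s$ vanishing moments of $a_k$ enter. Since $a_k$ is a central atom, $\operatorname{supp}a_k=B_{k,\vec a}$ is centered at $0$ and every $x\in A_{j,\vec a}$ has $|x|_{\vec a}>2^{j-1}\ge 2^{k}$. Subtracting from $\varphi_t(x-\cdot)$ (for $\varphi\in\mathcal S_N(\mathbb R^n)$) its anisotropic Taylor polynomial of degree $s$ at $0$, using $\int_{\mathbb R^n}a_k(z)z^{\beta}\,dz=0$ for $|\beta|\le s$, and estimating the remainder via the size/decay built into $\mathcal S_N(\mathbb R^n)$ (handling the scales $t\lesssim 2^{k}$ by the Schwartz decay of $\varphi$ and the scales $t\gtrsim 2^{k}$ by the Taylor remainder, the worst one being $t\sim 2^{j}$), one obtains the pointwise bound $M_N(a_k)(x)\lesssim\|a_k\|_{L^1(\mathbb R^n)}\,2^{-jv}\,2^{(k-j)(s+1)a_-}$ on $A_{j,\vec a}$, the exponent $(s+1)a_-$ arising because $\min_{|\beta|=s+1}\sum_i a_i\beta_i=(s+1)a_-$. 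Then H\"older's inequality on $L^{\vec q}(\mathbb R^n)$ (Remark \ref{mixedpro}~(iii)) together with the estimate $\|\chi_{B_{\vec a}(0,r)}\|_{L^{\vec p}(\mathbb R^n)}\le r^{\sum_{i=1}^n a_i/p_i}$ for $\vec p=\vec q$ and $\vec p=\vec q^{\prime}$ yields $\|a_k\|_{L^1(\mathbb R^n)}\le|B_{k,\vec a}|^{-\alpha}\,2^{k(v-\sum_i a_i/q_i)}$ and $\|\chi_j\|_{L^{\vec q}(\mathbb R^n)}\le 2^{j\sum_i a_i/q_i}$; multiplying these and the factor $|B_{j,\vec a}|^{\alpha}\sim 2^{jv\alpha}$, the powers of $2^{j}$ cancel and one is left with
\begin{equation*}
|B_{j,\vec a}|^{\alpha}\,\|M_N(a_k)\chi_j\|_{L^{\vec q}(\mathbb R^n)}\lesssim 2^{(k-j)\left[\,v-v\alpha-\sum_{i=1}^n a_i/q_i+(s+1)a_-\,\right]}.
\end{equation*}
The hypothesis $s\ge\left\lfloor\frac{v}{a_-}\left(\alpha+\frac1v\sum_{i=1}^n\frac{a_i}{q_i}-1\right)\right\rfloor$ is precisely what makes the bracket strictly positive, so this is $\lesssim 2^{-|j-k|\delta}$ with $\delta>0$.

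With the uniform estimate $|B_{j,\vec a}|^{\alpha}\|M_N(a_k)\chi_j\|_{L^{\vec q}(\mathbb R^n)}\lesssim 2^{-|j-k|\delta}$ in hand, the double sum is finished exactly as in Theorem \ref{4.1}: when $0<p\le1$, apply \eqref{inequlity} to bring the $p$-th power inside, interchange the sums over $j$ and $k$, and sum the geometric series in $|j-k|$; when $1<p<\infty$, split $2^{-|j-k|\delta}=2^{-|j-k|\delta/2}\cdot 2^{-|j-k|\delta/2}$ and apply H\"older's inequality in $k$ against the summable weight $2^{-|j-k|\delta/2}$ before interchanging. Either way $\|M_N f\|_{\dot K_{\vec q,\vec a}^{\alpha,p}(\mathbb R^n)}^{p}\lesssim\sum_{k}|\lambda_k|^{p}$, which is the claim, and the non-homogeneous space $HK_{\vec q,\vec a}^{\alpha,p}(\mathbb R^n)$ is handled identically with the sum running only over $k\ge 0$. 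The main obstacle is the remote-part pointwise estimate: because the dilations $t^{\vec a}$ are non-isotropic, the anisotropic Taylor expansion and the comparison between Euclidean derivative orders and the homogeneous quantities $v$ and $a_-$ require care, and one must verify that the gain $2^{(k-j)(s+1)a_-}$ from the $s$ moments exactly overcomes the combined growth from $2^{(j-k)v\alpha}$ and the H\"older loss $2^{j\sum_i a_i/q_i}$ under the stated condition on $s$; once this exponent bookkeeping is in place, everything else reduces to the two-case summation already used for Theorem \ref{4.1}.
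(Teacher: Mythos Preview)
Your proposal is correct and follows essentially the same approach as the paper's own proof: the same near/far splitting, the same use of $M_N\lesssim M$ together with the $L^{\vec q}$-boundedness of $M$ (Lemma~\ref{boundedness HL}) for the near part, the same Taylor-remainder pointwise estimate exploiting the $s$ vanishing moments for the remote part (producing the decay exponent $v+(s+1)a_{-}-v\alpha-\sum_i a_i/q_i$), and the same $0<p\le 1$ / $1<p<\infty$ dichotomy for the final double summation. The only cosmetic difference is that the paper places the split at $k\le j-2$ versus $k\ge j-1$ so that $x\in(2B_{k,\vec a})^c$ is guaranteed in the remote estimate, whereas you split at $k<j$; shifting the buffer by one index is harmless.
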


\begin{proof}
	It suffices to show that 
	$$\|f\|_{H \dot K_{\vec{q}, \vec{a}}^{\alpha, p}\left(\mathbb{R}^{n}\right)} \leq C\left(\sum_{k=-\infty}^{\infty}\left|\lambda_{k}\right|^{p}\right)^{\frac{1}{p}}$$
Based on atom decomposition, we have
\begin{align*}
\|f\|_{H \dot{K}_{\vec{q} , \vec{a}}^{{\alpha , p}}\left(\mathbb{R}^{n}\right)}= \left\|\sum_{k = -\infty}^{\infty} \lambda_{k} a_{k}\right\|_{H \dot{K}_{\vec{q} , \vec{a}}^{{\alpha , p}}\left(\mathbb{R}^{n}\right)} = \left\|M_{N}\left(\sum_{k = -\infty}^{\infty} \lambda_{k} a_{k}\right)\right\|_{\dot{K}_{\vec{q} , \vec{a}}^{{\alpha , p}}\left(\mathbb{R}^{n}\right)}
\end{align*}
Moreover, 
\begin{align*}
\left\|M_{N}\left(\sum_{k \in \mathbb Z} \lambda_{k} a_{k}\right)\right\|_{H \dot{K}_{\vec{q} , \vec{a}}^{{\alpha , p}}\left(\mathbb{R}^{n}\right)}^{p}&=\sum_{j \in \mathbb {Z}}\left|B_{j, \vec{a}}\right|^{\alpha p}\left\|M_{N}\left(\sum_{k \in \mathbb{Z}} \lambda_{k} a_{k}\right)  \chi_{j}\right\|_{L^{\vec{q}}\left(\mathbb{R}^{n}\right)}^{p}\\
&\leqslant \sum_{j \in \mathbb{Z}}\left|B_{j, \vec{a}}\right|^{\alpha p}\left(\sum_{k \in \mathbb{Z}}\left|\lambda_{k}\right|\left\|\left(M_{N} a_{k}\right)  \chi_{j}\right\|_{L^{\vec{q}}\left(\mathbb{R}^{n}\right)}\right)^{p}\\
&\leq \sum_{j \in \mathbb Z}\left|B_{j,\vec{a}}\right|^{\alpha p} \sum_{k{\in \mathbb Z}}\left|\lambda_{k}\right|^{p}\left\|(M_{N} a_{k})  \chi_{j}\right\|_{L^{\vec{q}}\left(\mathbb{R}^{n}\right)}^{p}\\
&=\sum_{k \in \mathbb Z}|\lambda_k|^{p}  \sum_{j \in \mathbb Z}\left|B_{j,\vec{a}}\right|^{\alpha p}\left\|(M_{N} a_{k}) \chi_{j}\right\|_{L^{\vec{q}}\left(\mathbb{R}^{n}\right)}^{p} \\
&=\sum_{k \in \mathbb Z}|\lambda_k|^{p} \left\|M_{N} a_{k}\right\|_{\dot{K}_{\vec{q} ,\vec{a}}^{{\alpha , p}}\left(\mathbb{R}^{n}\right)}.
\end{align*}

Above inequalities tell us only need to prove $\left\|M_{N} a_{k}\right\|_{\dot{K}_{\vec{q}, \vec{a}}^{{\alpha, p}}\left(\mathbb{R}^{n}\right)}\leq C$ for all central $(\alpha , \vec{q}, s)-atom.$ Form Remark \ref{equivalent MN}, suppose that supp $a \subset B_{k_{0}, \vec{a}}$.
\begin{align*}
\left\|M_{N} a\right\|_{ \dot{K}_{\vec{q} , \vec{a}}^{{\alpha , p}}\left(\mathbb{R}^{n}\right)}^{p}
&\lesssim \left\|M_{N}^{0} a\right\|_{ \dot{K}_{\vec{q} , \vec{a}}^{{\alpha , p}}\left(\mathbb{R}^{n}\right)}^{p}\\
&=\sum_{k=-\infty}^{\infty}\left|B_{k, \vec{a}} \right|^{\alpha p}\left\|(M_{N}^{0} a)  \chi_{k}\right\|_{L^{\vec{q}}\left(\mathbb{R}^{n}\right)}^{p}\\
&\leq \sum_{k = -\infty}^{k_{0}+1}\left|B_{k, \vec{a}} \right|^{\alpha p}\left\|(M_{N}^{0} a)  \chi_{k}\right\|_{L^{\vec{q}}\left(\mathbb{R}^{n}\right)}^{p} \\
&\quad +\sum_{k = k_{0}+2}^{\infty}\left|B_{k, \vec{a}} \right|^{\alpha p}\left\|(M_{N}^{0} a)  \chi_{k}\right\|_{L^{\vec{q}}\left(\mathbb{R}^{n}\right)}^{p} \\ &= I_{1}+I_{2} .
\end{align*}
When  $x \in B_{k_{0}+1, \vec{a}}$, it is easy to see that $ B_{k_{0}+1, \vec{a}}=2 B_{{k_{0}}, \vec{a}}$, then
\begin{align*}
I_{1}  &= \sum_{k = -\infty}^{k_{0}+1}\left|B_{k, \vec{a}}\right|^{\alpha p}\left\|(M_{N}^{0} a) \chi_{k}\right\|_{L^{\vec{q}}\left(\mathbb{R}^{n}\right)}^{p} \\
&\leq \sum_{k = -\infty}^{k_{0}+1}\left|B_{k, \vec{a}}\right|^{\alpha p}\left\|M_{N}^{0} a\right\|_{L^{\vec{q}}\left(\mathbb{R}^{n}\right)}^{p} \\
&\lesssim \sum_{k = -\infty}^{k_{0}+1}\left|B_{k, \vec{a}}\right|^{\alpha p}\left\|M a\right\|_{L^{\vec{q}}\left(\mathbb{R}^{n}\right)}^{p}\\
&\lesssim \sum_{k=- \infty}^{k_{0}+1} 2^{\left(k-k_{0}\right) v \alpha p} \leq C.
\end{align*}
On the other hand, for any  $(\alpha, \vec{q},s) -atom$  $a$ , by the vanishing moment condition of $a$ , we conclude that for any  $t \in(0, \infty)$  and  $x \in\left(2 B_{k_{0}, \vec{a}}\right)^{c} $.
\begin{align*}
\left|M_{N}^{0} a(x)\right| &=\left|a * \varphi_{t}(x)\right| \\
&\leq \int_{B_{k_{0},\vec{a}}}\left|a(y)  \varphi_{t}(x-y)\right| dy\\
&= t^{-v} \int_{B_{k_{0},\vec{a}}}|a(y)|\left|\varphi\left(\frac{x-y}{t^{\vec{a}}}\right)-\sum_{|\alpha| \leq s} \frac{\partial^{\alpha} \varphi\left(\frac{x-x_{k_{0}}}{t^{\vec{a}}}\right)}{\alpha !}\left(\frac{x_{k_{0}}-y}{t^{\vec{a}}}\right)^{\alpha}\right| dy \\
&\lesssim t^{-v} \int_{B_{k_{0},\vec{a}}}|a(y)| \left|\sum_{|\alpha| = S+1} \partial^{\alpha} \varphi\left(\frac{\xi}{t^{\vec{a}}}\right)\left(\frac{x_{k_{0}}-y}{t^{\vec{a}}}\right)^{\alpha} \right| dy ,
\end{align*}
where $\varphi$ satisfies $\int_{\mathbb{R}^{n}} \varphi(x) d x \neq 0$. $ x_{k_{0}}, r_{k_{0}}$  denote the center and radius of  $B_{k_{0},\vec{a}} $ and  $\xi=x-x_{k_{0}}+\theta\left(x_{k_{0}}-y\right)$  for some  $\theta \in[0,1]$.
\begin{align*}
|a * \varphi_{t}(x)|&\lesssim t^{-v} \int_{B_{k_{0},\vec{a}}}|a(y)| \left|\sum_{|\alpha| = s+1} \varphi\left(\frac{\xi}{t^{\vec{a}}}\right)\left(\frac{x_{k_{0}}-y}{t^{\vec{a}}}\right)^{\alpha}\right| dy .\\
&\lesssim t^{-v} \int_{B_{k_{0},\vec{a}}}|a(y)| \frac{1}{\left(1+\left|\frac{\xi}{t^{-\vec{a}}}\right|\right)^{k}}\left|\frac{x_{k_{0}}-y}{t^{\vec{a}}}\right|^{s+1} d y \text {. }\\
&\leqslant t^{-v} \int_{B_{k_{0},\vec{a}}}|a(y)| \left(1+\left.\right|^{\frac{\xi}{t^{\vec{a}}}}|_{\vec{a}}\right)^{-k a_-}\left|\frac{x_{k_{0}}-y}{t^{\vec{a}}}\right|^{s+1} dy\\
&\lesssim t^{-v} \int_{B_{k_{0},\vec{a}}}|a(y)| \left(t^{-1}|\xi|_{\vec{a}}\right)^{-k a_{-}}\left|\frac{x_{k_{0}}-y}{t^{ \vec{a}}}\right|^{s+1} d y \\
&\lesssim t^{-v} \int_{B_{k_{0},\vec{a}}}|a(y)| \left(\frac{t}{|\xi|_{\vec{a}}}\right)^{k a_-}\left(\frac{r_{k_{0}}}{t}\right)^{(s+1) a_-}dy \\
&\lesssim t^{-v} \int_{B_{k_{0},\vec{a}}}|a(y)|\left(\frac{t}{\left|x-x_{k_{0}}\right|_{ \vec{a}}}\right)^{k a_-}\left(\frac{r_{k_{0}}}{t}\right)^{(s+1) a_-}d y \\
&\lesssim t^{-v} \int_{B_{k_{0},\vec{a}}}|a(y)|\left(\frac{r_{k_{0}}}{\left|x-x_{k_{0}}\right|_{ \vec{a}}}\right)^{(s+1) a_-}d y \\
&\lesssim t^{-v}\left(\frac{r_{k_{0}}}{\left|x-x_{k_0}\right|_{\vec{a}}}\right)^{(s+1) a_-} \|a\|_{L^{\vec{q}}\left(\mathbb{R}^{n}\right)}\left\|B_{k_{0},\vec{a}}\right\|_{L^{\vec{q}^{\prime}}\left(\mathbb{R}^{n}\right)}\\
&\lesssim t^{-v}\left(\frac{r_{k_{0}}}{\left|x-x_{k_0}\right|_{\vec{a}}}\right)^{(s+1) a_-}\left|B_{k_{0}, \vec{a}}\right|^{-\alpha} r_{k_{0}}^{-\sum \frac{a_{i}}{q_{i}'}}\\
&\lesssim\left(\frac{r_{k_{0}}}{\left|x-x_{k_{0}}\right|_{\vec{a}}} \right)^{v+(s+1) a_-}\left|B_{k_{0}, \vec{a}}\right|^{-\alpha} r_{k_{0}}^{-\sum \frac{a_{i}}{q_{i}}}.
\end{align*}
which implies that for any  $i \in \mathbb{N}$. and  $x \in\left(2 B_{k_{0}}\right)^{c}$,
$$
M_{N}^{0} a(x) \lesssim \left(\frac{r_{k_{0}}}{\left|x-x_{k_{0}}\right|_{ \vec{a}}}\right)^{v+(s+1) a_{-}}\left|B_{k_{0}, \vec{a}}\right|^{-\alpha} r_{k_{0}}^{-\Sigma \frac{a_{i}}{q_{i}}},
$$
then, when $1-\frac{1}{v}\sum_{i=1}^n \frac{a_i}{q_i} \leq \alpha<\infty$, and non-negative integer $s \geq \left\lfloor \frac{v}{a_-}\left(\alpha+\frac{1}{v} \sum_{i=1}^n \frac{a_{i}}{q_{i}}-1\right)\right\rfloor$. 
\begin{align*}
I_{2} &=\sum_{k=k_{0}+2}^{\infty}\left|B_{k, \vec{a}} \right|^{\alpha p}\left\|(M_{N}^{0} a) x_{k}\right\|_{L^{\vec{q}}\left(\mathbb{R}^n\right)}^{p} \\
&=\sum_{k=k_{0}+2}^{\infty}\left|B_{k, \vec{a}}\right|^{\alpha p} \left|B_{k_{0}, \vec{a}}\right|^{-\alpha p}  r_{k_{0}}^{-p \sum\limits_{i=1}^n \frac{a_{i}}{q_{i}}}  2^{\left(k_{0}-k\right) p(v+(s+1) a_-} \left\|x_{k}\right\|_{L^{\vec{q}}\left(\mathbb{R}^n\right)}^{p} \\
&=\sum_{k=k_{0}+2}^{\infty} 2^{-\left(k_{0}-k\right) v \alpha p}  2^{\left(k_{0}-k\right) p\left(-\sum\limits_{i=1}^n \frac{a_{i}}{q_{i}}\right)}  2^{\left(k_{0}-k\right) p(v+(s+1) a_-)}\\
&=\sum_{k=k_{0}+2}^{\infty} 2^{\left(k_{0}-k\right) p\left(v+(s+1) a_--v \alpha-\sum\limits_{i=1}^n \frac{a_{i}}{q_{i}}\right)} \leq C.
\end{align*}

If $1<p<\infty$. then 
\begin{align*}
\left\|M_{N}^{0} f\right\|_{\dot{K}_{\vec{q} ,\vec{a}}^{{\alpha , p}}}^{p} &=\sum_{j=-\infty}^{\infty}\left|B_{j, \vec{a}}\right|^{\alpha p}\left\|(M_{N}^{0} f)  \chi_{j}\right\|_{L^{\vec{q}}\left(\mathbb{R}^n\right)}^{p} \\
&=\sum_{j=-\infty}^{\infty}\left|B_{j, \vec{a}}\right|^{\alpha p}\left\|M_{N}^{0}\left(\sum_{k=-\infty}^{\infty} \lambda_{k} a_{k}\right)  \chi_{j}\right\|_{L^{\vec{q}}\left(\mathbb{R}^n\right)}^{p} \\
&\leq \sum_{j=-\infty}^{\infty}\left|B_{j,\vec{a}}\right|^{\alpha p}\left(\sum_{k=-\infty}^{\infty}\left|\lambda_{k}\right|\left\|(M_{N}^{0} a_{k}) \chi_{j}\right\|_{L^{\vec{q}}\left(\mathbb{R}^n\right)}\right)^{p}\\
&\leq C \sum_{j=-\infty}^{\infty}\left|B_{j, \vec{a}}\right|^{\alpha p}\left(\sum_{k=-\infty}^{j-2}\left|\lambda_{k}\right| \| (M_{N}^{0} a_{k}) \chi_{j}\|_{L^{\vec{q}}\left(\mathbb{R}^n\right)}\right)^{p} \\
&\quad +C \sum_{j=-\infty}^{\infty}\left|B_{j, \vec{a}}\right|^{\alpha p}\left(\sum_{k=j-1}^{\infty}\left|\lambda_{k}\right|\left\|(M_{N}^{0} a_{k} )\chi_{j}\right\|_{L^{\vec{q}}\left(\mathbb{R}^n\right)}\right)^{p} \\
&=II_{1}+II_{2}.
\end{align*}
By H\"older's inequality on mixed-norm Lebesgue spaces, inequality~(\ref{HL equ}) and Lemma \ref{boundedness HL}, we have
\begin{align*}
II_{1} & \lesssim \sum_{j=-\infty}^{\infty}\left|B_{j, \vec{a}}\right|^{\alpha p}\left(\sum_{k=j-1}^{\infty}\left|\lambda_{k}\right|\left\|(M_{N}^{0} a_{k}) \chi_{j}\right\|_{L^{\vec{q}}\left(\mathbb{R}^n\right)}\right)^{p} \\
& \leqslant \sum_{j=-\infty}^{\infty}\left|B_{j, \vec{a}}\right|^{\alpha p}\left(\sum_{k=j-1}^{\infty}\left|\lambda_{k}\right| \left\|M_{N}^{0} a_{k}\right\|_{L^{\vec{q}}\left(\mathbb{R}^n\right)}\right)^{p} \\
& \leqslant \sum_{j=-\infty}^{\infty}\left|B_{j, \vec{a}}\right|^{\alpha p}\left(\sum_{k=j-1}^{\infty}\left|\lambda_{k}\right| \left\|a_{k}\right\|{ }_{L^{\vec{q}}\left(\mathbb{R}^n\right)}\right)^{p} \\
& \leqslant \sum_{j=-\infty}^{\infty}\left|B_{j, \vec{a}}\right|^{\alpha p}\left(\sum_{k=j-1}^{\infty}\left|\lambda_{k}\right| \left|B_{k,\vec{a}}\right|^{-\alpha}\right)^{p}\\
&\lesssim \sum_{j=-\infty}^{\infty}\left|B_{j}, \vec{a}\right|^{\alpha p}\left(\sum_{k=j-1}^{\infty}\left|\lambda_{k}\right|^{p}\left|B_{k, \vec{a}}\right|^{{-\alpha p}/{2}}\right) \left(\sum_{k=-\infty}^{k_0+1}\left|B_{k, \vec{a}}\right|^{{-\alpha p}/{2}}\right)^{{p}/{p^{\prime}}} \\
&\lesssim \sum_{j=-\infty}^{\infty} \sum_{k=j-1}^{\infty}\left|B_{j, \vec{a}}\right|^{{\alpha p}/{2}} \left|B_{k, \vec{a}}\right|^{-{\alpha p}/{2}} \left|\lambda_{k}\right|^{p} \\
&=\sum_{k=-\infty}^{\infty}\left|\lambda_{k}\right|^{p}  \sum_{j=-\infty}^{k+1} 2^{(j-k) v \alpha p} \quad \lesssim \sum_{k=-\infty}^{\infty}\left|\lambda_{k}\right|^{p } < \infty.
\end{align*}
Using the same estimate of $I_2$ for  $M_{N}^{0} a$. When $x \in 2 B_{k} $, $k\geq k_0+2$, then
\begin{align*}
\left\|M_{N}^{0} a \cdot x_{k}\right\|_{L^{\vec{q}}\left(\mathbb{R}^n\right)} \leq 2^{\left(k_{0}-k\right)(v+(s+1) a-)} \left|B_{k_{0}, \vec{a}}\right|^{-\alpha}2^{\left(k-k_{0}\right) \sum\limits_{i=1}^n \frac{a_{i}}{q_{i}}} .
\end{align*}
Combine above inequality and H\"older's inequality, we get
\begin{align*}
\text {II}_{2} & \lesssim \sum_{j=-\infty}^{\infty}\left|B_{j, \vec{a}}\right|^{ \alpha p}\left(\sum_{k=-\infty}^{j-2}\left|\lambda_{k}\right|\left\|(M_{N}^{0} a_{k}) \chi_{j}\right\|_{L^{\vec{q}}\left(\mathbb{R}^n\right)}\right)^{p} \\
&=\sum_{j=-\infty}^{\infty}\left|B_{j, \vec{a}}\right|^{\alpha p}\left(\sum_{k=-\infty}^{j-2}|\lambda_k|  2^{(k-j)\left(v+(s+1) a_{-}-\sum\limits_{i=1}^n \frac{a_{i}}{q_{i}}\right)}  2^{-k v\alpha}\right)^{p}\\
&=\sum_{j=-\infty}^{\infty}\left(\sum_{k=-\infty}^{j-2}\left|\lambda_{k}\right| \cdot 2^{(k-j)\left(v+(s+1) a_{-}-\sum\limits_{i=1}^n \frac{a_{i}}{q_{i}}-v \alpha\right)}\right)^{p }\\
&=\sum_{j=-\infty}^{\infty}\Bigg\{\left(\sum_{k=-\infty}^{j-2}\left|\lambda_{k}\right|  2^{(k-j)p(v+(s+1) a_{-}-\sum\limits_{i=1}^n \frac{a_{i}}{q_{i}}-v \alpha)/{2}}\right)\\
&\quad \times \left(\sum_{k=j-1}^{\infty} 2^{{(k-j) p^{\prime}}(v+(s+1) a_--\sum\limits_{i=1}^n \frac{a_{i}}{q_{i}}-v \alpha)/{2}}\right)^{{p}/{p^{\prime}}}\Bigg\}\\
&\lesssim \sum_{k=-\infty}^{\infty}\left|\lambda_{k}\right|^{p} \sum_{j=k+2}^{\infty} 2^{{(k-j) p}(v+(s+1) a_{-}-\sum\limits_{i=1}^n \frac{q_{i}}{q_{i}}-v_{\alpha})/{2}} \\
&\leq \sum_{k=-\infty}^{\infty}|\lambda|_{k}^{p}<\infty.
\end{align*}
This finishes the proof of Theorem \ref{atom suffi}.
\end{proof}
On the basis of the above Theorem \ref{atom suffi}, which shows that distribution function with certain decomposition is an element of anisotropic mixed-norm Herz-Hardy spaces. We have the following space characterization of anisotropic mixed-norm Herz-Hardy spaces.
\begin{theorem} \label{atom cha}
Let~$0<p<\infty, 1<\vec{q}<\infty$, and $1-\frac{1}{v} \sum_{i=1}^n \frac{a_{i}}{q_{i}} \leq \alpha<\frac{\left(a_{+}-\sum_{i+1}^n \frac{a_{i}}{q_{i}}\right)}{v}+1$, then
$f \in {H \dot{K}_{\vec{q} \cdot \vec{a}}^{{\alpha \cdot p}}}\left(\mathbb{R}^{n}\right)$ if and only if
$$f(x)=\sum_{k=-\infty}^{\infty} \lambda_{k} a_{k}(x)~~~\text{in the distribution sense},$$ 
where  $a_{k}$  is a central  $(\alpha, \vec{q}, 0)$-atom with support  $B_{k,\vec{a}}$, and $\sum_{k=-\infty}^{\infty}\left|\lambda_{k}\right|^{p}<\infty.$  
Moreover, $$\|f\|_{H \dot{K}_{\vec{q} , \vec{a}}^{{\alpha , p}}}\sim \inf \left(\sum_{k=-\infty}^{\infty}\left|\lambda_{k}\right|^{p}\right)^{{1}/{p}},$$
where the infimum is taken over all the above decompositions of $f$.
\end{theorem}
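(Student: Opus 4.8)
The plan is to split the proof into the sufficiency (the ``if'' part, together with the estimate $\|f\|_{H\dot{K}_{\vec{q},\vec{a}}^{\alpha,p}}\lesssim\inf(\sum_k|\lambda_k|^p)^{1/p}$) and the necessity (the ``only if'' part, together with the reverse estimate). The sufficiency is immediate from Theorem~\ref{atom suffi}: a central $(\alpha,\vec{q},0)$-atom is a central $(\alpha,\vec{q},s)$-atom with $s=0$, so the computation there gives $\|f\|_{H\dot{K}_{\vec{q},\vec{a}}^{\alpha,p}(\mathbb{R}^n)}\lesssim(\sum_k|\lambda_k|^p)^{1/p}$ for every admissible decomposition, and passing to the infimum yields one half of the asserted norm equivalence. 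All the work is therefore in the necessity, and I will extract from a single atomic decomposition of $f$ both the existence of the decomposition and the bound $\inf(\sum_k|\lambda_k|^p)^{1/p}\lesssim\|f\|_{H\dot{K}_{\vec{q},\vec{a}}^{\alpha,p}}$.

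For the necessity I would start from $f\in H\dot{K}_{\vec{q},\vec{a}}^{\alpha,p}(\mathbb{R}^n)$, i.e.\ $M_N f\in\dot{K}_{\vec{q},\vec{a}}^{\alpha,p}(\mathbb{R}^n)$, and produce a Calder\'on--Zygmund/Whitney decomposition of $f$ from its grand maximal function, following the anisotropic Hardy-space machinery (cf.~\cite{bownik2003anisotropic,huang2019atomic}). Concretely, for $i\in\mathbb{Z}$ put $\Omega_i:=\{x\in\mathbb{R}^n:M_N f(x)>2^i\}$ (open, nested-decreasing; here the fact that $M_N f\in\dot{K}_{\vec{q},\vec{a}}^{\alpha,p}\subset L^{\vec{q}}_{\mathrm{loc}}(\mathbb{R}^n\setminus\{0\})$ and the range $1-\frac{1}{v}\sum_{i=1}^n\frac{a_i}{q_i}\le\alpha<\frac{a_+-\sum_{i=1}^n a_i/q_i}{v}+1$ are what make the construction behave, with the usual care needed because $\dot{K}_{\vec{q},\vec{a}}^{\alpha,p}\not\subset L^p$). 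Applying a Whitney-type covering adapted to $\mathfrak{B}$ and the associated smooth partition of unity produces $f=\sum_{i\in\mathbb{Z}}\sum_\ell\lambda_{i,\ell}a_{i,\ell}$ in $\mathcal{S}'(\mathbb{R}^n)$, where each $a_{i,\ell}$ is supported in a fixed dilate of a ball $B_{i,\ell}\in\mathfrak{B}$, $\|a_{i,\ell}\|_{L^\infty}\lesssim 2^i$, $\int_{\mathbb{R}^n}a_{i,\ell}(x)x^\beta\,dx=0$ for $|\beta|\le s$ with $s\ge 0$ (admissible since $N\ge N_{\vec{q}}$), and $\sum_{i,\ell}|\lambda_{i,\ell}||a_{i,\ell}(x)|\lesssim M_N f(x)$ a.e.; Remark~\ref{equivalent MN} lets me pass freely between $M_N$ and $M_N^0$.

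Next I would regroup these Hardy-space atoms into annular central atoms. Call $B_{i,\ell}$ \emph{associated to} $k\in\mathbb{Z}$ when its centre lies in $A_{k,\vec{a}}$; split off the summably small contribution of those associated balls whose radius is much larger than $2^k$, estimating them through their vanishing moments and the pointwise decay bound proved inside Theorem~\ref{atom suffi}; and let $g_k$ be the sum of $\lambda_{i,\ell}a_{i,\ell}$ over the remaining balls associated to $k$. Then $f=\sum_k g_k$, each $g_k$ is supported in a single $B_{k+c_0,\vec{a}}$ for a fixed constant $c_0$, $\int_{\mathbb{R}^n}g_k(x)x^\beta\,dx=0$ for $|\beta|\le s$, and combining $\sum_{i,\ell}|\lambda_{i,\ell}||a_{i,\ell}|\lesssim M_N f$ with the $L^{\vec{q}}$-boundedness of the Hardy--Littlewood maximal operator (Lemma~\ref{boundedness HL}) yields $\|g_k\|_{L^{\vec{q}}}\lesssim\sum_{|j|\le c_0}\|(M_N f)\widetilde{\chi}_{k+j}\|_{L^{\vec{q}}}$. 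Setting $\lambda_k:=|B_{k+c_0,\vec{a}}|^{\alpha}\|g_k\|_{L^{\vec{q}}}$ and $a_k:=g_k/\lambda_k$, the $a_k$ are central $(\alpha,\vec{q},0)$-atoms, $f=\sum_k\lambda_k a_k$, and $\sum_k|\lambda_k|^p\lesssim\sum_k|B_{k,\vec{a}}|^{\alpha p}\sum_{|j|\le c_0}\|(M_N f)\widetilde{\chi}_{k+j}\|_{L^{\vec{q}}}^p\lesssim\|M_N f\|_{\dot{K}_{\vec{q},\vec{a}}^{\alpha,p}}^p=\|f\|_{H\dot{K}_{\vec{q},\vec{a}}^{\alpha,p}}^p$, which is the reverse estimate; together with the sufficiency this gives $\|f\|_{H\dot{K}_{\vec{q},\vec{a}}^{\alpha,p}}\sim\inf(\sum_k|\lambda_k|^p)^{1/p}$. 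The non-homogeneous statement for $HK_{\vec{q},\vec{a}}^{\alpha,p}$ follows the same route, with the $k$-sums over $k\ge 0$ and restricted-type atoms, as in~\cite{lu1995local}.

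The step I expect to be the main obstacle is precisely this regrouping and the $L^{\vec{q}}$-control of $g_k$: one must track the anisotropic dilation constants so that all the non-oversized Whitney balls associated to the $k$-th annulus genuinely fit into one $B_{k+c_0,\vec{a}}$; handle the oversized associated balls that stick far outside $A_{k,\vec{a}}$, showing their total contribution is a rapidly decaying, $\ell^p$-summable correction (this is where the moment cancellation and the decay estimate of Theorem~\ref{atom suffi} are essential); pass from the pointwise majorant $M_N f$ to an annular $L^{\vec{q}}$ bound via Lemma~\ref{boundedness HL} with only a fixed-constant cost from the bounded overlap of the annuli; and deal with the fact that the level sets $\Omega_i$ need not have finite measure, so that some care is needed with the convergence of the full double series in $\mathcal{S}'(\mathbb{R}^n)$. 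Once these points are settled, the verification of the size, support and moment conditions of the $a_k$ and the remaining bookkeeping with the homogeneous dimension $v$ are routine.
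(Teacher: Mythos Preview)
Your approach is genuinely different from the paper's. The paper does \emph{not} pass through the Calder\'on--Zygmund level-set decomposition of $M_N f$ at all. Instead it follows the classical Lu--Yang route for Herz--Hardy spaces: mollify $f$ to $f^{(i)}=f*\varphi_{(-i)}$, cut $f^{(i)}$ by a smooth annular partition of unity $\{\Psi_k\}$ with $\operatorname{supp}\Psi_k\subset A_{k-1,\vec a}\cup A_{k,\vec a}\cup A_{k+1,\vec a}$, and correct each piece $f^{(i)}\Psi_k$ to mean zero by subtracting its average times a normalized annular characteristic function. The subtracted constants are then handled by an Abel summation, producing a second sequence of mean-zero, annularly supported pieces. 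The $L^{\vec q}$ bound on each piece comes directly from the pointwise inequality $|f^{(i)}|\le M_N f$ (restricted to three adjacent annuli), with no Whitney covering, no regrouping, and no oversized-ball splitting; this is exactly why the paper only obtains $s=0$ atoms. Finally one extracts weak-$*$ limits of the atoms $a_l^{(i)}$ via Banach--Alaoglu and a diagonal argument, and the upper bound $\alpha<\tfrac{a_+-\sum a_i/q_i}{v}+1$ enters precisely in the dominated-convergence verification that $f=\sum_l\lambda_l a_l$ in $\mathcal S'$.

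By contrast, your level-set/Whitney plan front-loads the hard work into the regrouping of general-center atoms into central ones and the control of oversized Whitney balls---steps you correctly flag as the main obstacles. This route can in principle yield higher-order atoms (arbitrary $s$), which the paper's approach cannot, but at the cost of substantially more machinery: you would need a genuine pointwise majorant $\sum_{i,\ell}|\lambda_{i,\ell}a_{i,\ell}|\lesssim M_N f$ (which is available but nontrivial in the anisotropic setting), a careful treatment of $\Omega_i$ with possibly infinite measure, and a quantitative decay bound for the oversized-ball tail. You also do not say where the upper restriction on $\alpha$ is used in your scheme; in the paper it is indispensable for the $\mathcal S'$-convergence step, and your outline should identify the analogous place. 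In short, the paper's annular-partition method is shorter and tailored to central atoms, while your Hardy-space-first method is more general in spirit but leaves significant technical gaps to close.
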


\begin{proof}
	The proof of sufficiency is included in the proof of the above Theorem, so we need to prove only the necessity. Taking $\varphi\in \mathcal S(\mathbb{R}^n)$ such that $\int \varphi(x)\mathrm{d}x=1$. Set $\varphi_{(i)}(x)=2^{iv}\varphi(2^{i\vec{a}}x),~\varphi_{\tilde{i}}(x)=\varphi(2^{-i\vec{a}}x),~f^{i}(x):=f\ast\varphi_{(-i)}(x)$ for $f\in H\dot{K}^{\alpha,p}_{\vec{q},\vec{a}}(\mathbb{R}^n)$ and  $i\in\mathbb{N}$ from [\cite{huang2019atomic} Lemma 4.12] asserts that $f^{(i)}\rightarrow f$ in $\mathcal S^{'}$ as $i\rightarrow\infty$. In addition, take $\psi\in C_{0}^{\infty}(\mathbb{R}^n)$ such that ${\rm supp}~\psi\subset \tilde{A_{0}}:=A_{-1}\cup A_{0}\cup A_{1}$, $0\leq\psi\leq1$ and $\psi(x)=1$,~if $x\in A_{0}$.~${\rm supp}~\psi_{\tilde{k}}\subset\tilde{A_{k}}=A_{k-1}\cup A_{k}\cup A_{k+1}$.\\
	Set
$$
\Psi_{k}(x)=
\left\{
  \begin{array}{ll}
    \frac{\psi_{\tilde{k}}(x)}{\sum\limits_{j\in \mathbb{Z}}\psi_{\tilde{j}}(x)}, \quad &if~x\neq0, \\
    0, \quad & if~x=0.
  \end{array}
\right.
$$
Then, we have $\Psi_{k}\in C_{0}^{\infty}(\mathbb{R}^n),~{\rm supp}~\Psi_{k}\subset \tilde{A_{k}},~0\leq\Psi_{k}\leq1$, and $\sum_{k}\Psi_{k}(x)=1$,~if $x\neq 0$.~Let $v_{k}(x)=\mid A_{\tilde{k}}\mid^{-1}\chi_{\tilde{A_{k}}}(x)$.
\begin{align*}
f^{(i)}(x)
&=\sum\limits_{k=-\infty}^{\infty}f^{(i)}(x)\Psi_{k}(x)\\
&=\sum\limits_{k=-\infty}^{\infty}\left\{f^{(i)}(x)\Psi_{k}(x)-\left(\int_{\mathbb{R}^{n}}f^{(i)}(y)\Psi_{k}(y)\mathrm{d}y\right)v_{k}(x)\right\}\\
&~~~~+ \sum\limits_{k=-\infty}^{\infty}\left(\int_{\mathbb{R}^{n}}f^{(i)}(y)\Psi_{k}(y)\mathrm{d}y\right)v_{k}(x)\\
&=\sum_{1}^{(i)}+\sum_{2}^{(i)}.
\end{align*}

For $\sum_{1}^{(i)}$, denote $g_{k}^{(i)}(x)={f^{(i)}(x)\Psi_{k}(x)-(\int_{\mathbb{R}^{n}}f^{(i)}(y)\Psi_{k}(y)\mathrm{d}y)v_{k}(x)}$,~by H\"{o}lder's inequality,
$$
\parallel g^{(i)}_{k}\parallel_{L^{\vec{q}}\left(\mathbb{R}^n\right)}\leq C\parallel f^{(i)}\Psi_{k}\parallel_{L^{\vec{q}}\left(\mathbb{R}^n\right)}\leq C^{'}\sum\limits_{j=k-1}^{k+1}\parallel (M_{N}f) \chi_{j}\parallel_{L^{\vec{q}}\left(\mathbb{R}^n\right)}.
$$
Let $a_{1,k}^{(i)}(x)=\frac{g_{k}^{(i)}(x)}{C^{'}|B_{k+1,\vec{a}}|^{\alpha}\sum\limits_{j=k-1}^{k+1}\parallel (M_{N}f)  \chi_{j}\parallel_{L^{\vec{q}}\left(\mathbb{R}^n\right)}}$, by the definition of $a_{1,k}^{(i)}$, then ${\rm supp}~a_{1,k}^{(i)}\subset \tilde{A_{k}}\subset B_{k+1,\vec{a}}$.
Moreover,
\begin{align*}
g_{k}^{(i)}
&=C^{'}|B_{k+1,\vec{a}}|^{\alpha}\sum\limits_{j=k-1}^{k+1}\parallel (M_{N}f)  \chi_{j}\parallel_{L^{\vec{q}}\left(\mathbb{R}^n\right)}\frac{g_{k}^{(i)}(x)}{C^{'}|B_{k+1,\vec{a}}|^{\alpha}\sum\limits_{j=k-1}^{k+1}\parallel (M_{N}f) \chi_{j}\parallel_{L^{\vec{q}}\left(\mathbb{R}^n\right)}}\\
&=\lambda_{1,k}a_{1,k}^{(i)}.
\end{align*}
Then,
$$
\|a_{1,k}^{(i)}\|_{L^{\vec{q}}\left(\mathbb{R}^n\right)}=\frac{\|g_{k}^{(i)}\|_{L^{\vec{q}}\left(\mathbb{R}^n\right)}}{C^{'}|B_{k+1,\vec{a}}|^{\alpha}\sum\limits_{j=k-1}^{k+1}\parallel (M_{N}f)  \chi_{j}\parallel_{L^{\vec{q}}\left(\mathbb{R}^n\right)}}\leq|B_{k+1,\vec{a}}|^{-\alpha}.
$$
And,
\begin{align*}
\int_{\mathbb{R}^{n}}a_{1,k}^{(i)}(x)\mathrm{d}x
&=\int_{\mathbb{R}^{n}}\frac{g_{k}^{(i)}(x)}{C^{'}|B_{k+1,\vec{a}}|^{\alpha}\sum\limits_{j=k-1}^{k+1}\parallel M_{N}f\parallel_{L^{\vec{q}}\left(\mathbb{R}^n\right)}}\mathrm{d}x\\
&=C^{'}|B_{k+1,\vec{a}}|^{-\alpha}\left(\sum\limits_{j=k-1}^{k+1}\parallel M_{N}f\parallel_{L^{\vec{q}}\left(\mathbb{R}^n\right)}\right)^{-1}\\
& \quad \times \left(\int_{\mathbb{R}^{n}}{f^{(i)}(x)\Psi_{k}(x)-\left(\int_{\mathbb{R}^{n}}f^{(i)}(y)\Psi_{k}(y)\mathrm{d}y\right)v_{k}(x)}\mathrm{d}x\right)\\
&=0.
\end{align*}
It is say that, $a_{1,k}^{(i)}$ is a central $(\alpha,q,0)-$atom supported on $B_{k+1,\vec{a}}$.
\begin{align*}
\sum\limits_{k=-\infty}^{\infty}|\lambda_{1,k}|^{p}
&=\sum\limits_{k=-\infty}^{\infty}\left||B_{k+1,\vec{a}}|^{\alpha}\sum\limits_{j=k-1}^{k+1}\parallel (M_{N}f)  \chi_{j}\parallel_{L^{\vec{q}}\left(\mathbb{R}^n\right)}\right|^{p}\\
&\leq C\sum\limits_{k=-\infty}^{\infty}|B_{k+1,\vec{a}}|^{\alpha p}\parallel (M_{N}f)\chi_{k}\parallel^{p}_{L^{\vec{q}}\left(\mathbb{R}^n\right)}\\
&\leq C\sum\limits_{k=-\infty}^{\infty}|B_{k,\vec{a}}|^{\alpha p}\parallel (M_{N}f)\chi_{k}\parallel^{p}_{L^{\vec{q}}\left(\mathbb{R}^n\right)}\\
&=C\parallel f\parallel_{H\dot{K}^{\alpha,p}_{\vec{q},\vec{a}}\left(\mathbb{R}^n\right)}.
\end{align*}
Let us now turn to $\sum_{2}^{(i)}$. Summing by parts, one can get
\begin{align*}
\sum_{2}^{(i)}
&=\sum\limits_{k=-\infty}^{\infty}(\int_{\mathbb{R}^{n}}f^{(i)}(y)\Psi_{k}(y)\mathrm{d}y)v_{k}(x)\\
&=\sum\limits_{k=-\infty}^{\infty}\left(\sum\limits_{j=-\infty}^{\infty}\int_{\mathbb{R}^{n}}f^{(i)}(y)\Psi_{i}(y)\mathrm{d}y \left(v_{k}(x)-v_{k+1}(x)\right)\right)\\
&:=\sum\limits_{k=-\infty}^{\infty}h_{k}^{(i)}(x).
\end{align*}

Let $\Phi(x)=\sum_{j=-\infty}^{-2}\Psi_{j}(x)$. Since ${\rm supp}~ \Psi_{j}\subset \tilde{A_{j}}$, and $\sum_{j=-\infty}^{-2}\chi_{\tilde{A_{j}}}\leq C$, it is easy to see that $\Phi\in C^{\infty}_{0}(B_{-1})$ and hence $\Phi\in \mathcal S(\mathbb{R}^{n})$. Note that
$$
\sum\limits_{j=-\infty}^{k}\Psi_{j}(x)=\Phi\left(2^{(-k-2)\cdot \vec{a}}x\right)=2^{(k+2)v}\Phi_{k+2}(x).
$$
Hence, for any $x\in B_{k+2,\vec{a}}$, using [\cite{bownik2003anisotropic}, Lemma 6-6], we have
\begin{align*}
&~~~~~~\left|\int_{\mathbb{R}^{n}}f^{(i)}(y)\sum\limits_{j=-\infty}^{k}\Psi_{j}(y)\mathrm{d}y\right|\\
&=2^{(k+2)v}\left|\int_{B^{k+2}}f^{(i)}(y)\Phi_{k+2}(y)\mathrm{d}y\right|\\
&=2^{(k+2)v}\left|f^{(i)}\ast\Phi_{k+2}(0)\right|\\
&\leq C2^{(k+2)v}\|\tilde{\Phi}\|_{\mathcal S_{N+2}}M_{N+2}f(x)\\
&\leq C2^{(k+2)v}M_{N}f(x).
\end{align*}
Where $\tilde{\Phi}(y)=\Phi(-y)$ and $C$ is a constant dependent of $N$.\\
On the other hand,
$$
\left|v_{k}(x)-v_{k+1}(x)\right|\leq C2^{(-k-2)v}\sum\limits_{j=k-1}^{k+2}\chi_{j}(x).
$$
Therefore.
$$
\|h_{k}^{(i)}\|_{L^{\vec{q}}\left(\mathbb{R}^n\right)}\leq C^{''}\sum\limits_{j=k-1}^{k+2}\|(M_{N}f) \chi_{j}\|_{L^{\vec{q}}\left(\mathbb{R}^n\right)}.
$$
Let~$a_{2,k}^{(i)}(x)=\frac{h_{k}^{(i)}(x)}{C^{''}|B_{k+2,\vec{a}}|^{\alpha}\sum\limits_{j=k-1}^{k+1}\|(M_{N}f) \chi_{j}\|_{L^{\vec{q}}\left(\mathbb{R}^n\right)}}$, then, $h_{k}^{(i)}(x)=\lambda_{2,k}a_{2,k}^{(i)}(x)$.\\
Since ${\rm supp}~h_{k}^{(i)}\subset B_{k+2,\vec{a}}$, then $a_{2,k}^{(i)}\subset B_{k+2,\vec{a}}$.\\
Furthermore,
$$
\|a_{2,k}^{(i)}\|_{L^{\vec{q}}\left(\mathbb{R}^n\right)}=\frac{\|h_{k}^{(i)}\|_{L^{\vec{q}}\left(\mathbb{R}^n\right)}}{C^{''}|B_{k+2,\vec{a}}|^{\alpha}\sum\limits_{j=k-1}^{k+1}\|(M_{N}f ) \chi_{j}\|_{L^{\vec{q}}\left(\mathbb{R}^n\right)}}
\leq|B_{k+2,\vec{a}}|^{-\alpha}.
$$
And,
\begin{align*}
\sum\limits_{k=-\infty}^{\infty}\left|\lambda_{2,k}\right|^{p}
&=\sum\limits_{k=-\infty}^{\infty}\left|C^{''}|B_{k+2,\vec{a}}|^{\alpha}\sum\limits_{j=k-1}^{k+1}\left\|(M_{N}f ) \chi_{j}\right\|_{L^{\vec{q}}\left(\mathbb{R}^n\right)}\right|^{p}\\
&\leq C\sum\limits_{k=-\infty}^{\infty}|B_{k+2,\vec{a}}|^{\alpha p}\left\|(M_{N}f )\chi_{j}\right\|_{L^{\vec{q}}\left(\mathbb{R}^n\right)}^{p}
\leq C\parallel f\parallel_{H\dot{K}^{\alpha,p}_{\vec{q},\vec{a}}\left(\mathbb{R}^n\right)}.
\end{align*}
Using the Banach-Alaoglu theorem and usual diagonal method, there exists a subsequence $\{i_{v}\}^{\infty}_{v=1}\subset\mathbb{N}$ such that for each $l\in \mathbb{Z}, \{a_{l}^{(i_{v})}\}$ weak-* converges to a central $(\alpha,q,0)$-atom $a_{l}$ supported in $B_{k+2,\vec{a}}$. Hence, in the distribution sense, $a_{l}^{(i_{v})}\rightarrow a_{l}$ as
$v\rightarrow\infty$.
\par 
It remains to show that $f=\sum_{l=-\infty}^{\infty}\lambda_{l}a_{l}$ in the distribution sense.~Taking $\phi \in \mathcal S$.\\
 Noting that ${\rm supp}~ (a_{l}^{(i_{v})})\subset(\tilde{A}_{l}\cup
\tilde{A}_{l+1})\subset\bigcup_{j=l-1}^{l+2}A_{j}$, then
\begin{align*}
<f,\phi>
&=\lim\limits_{v\rightarrow\infty}<f^{i_{v}},\phi>\\
&=\lim\limits_{v\rightarrow\infty}\int_{\mathbb{R}^{n}}\sum\limits_{l=-\infty}^{\infty}\lambda_{l}a_{l}^{(i_{v})}(x)\phi(x)\mathrm{d}x\\
&=\lim\limits_{v\rightarrow\infty}\sum\limits_{k=-\infty}^{\infty}\int_{A_{k}}\sum\limits_{l=-\infty}^{\infty}\lambda_{l}a_{l}^{(i_{v})}(x)\phi(x)\mathrm{d}x\\
&=\lim\limits_{v\rightarrow\infty}\sum\limits_{k=-\infty}^{\infty}\int_{A_{k}}\sum\limits_{l=k-2}^{k+1}\lambda_{l}a_{l}^{(i_{v})}(x)\phi(x)\mathrm{d}x\\
&=\lim\limits_{v\rightarrow\infty}\sum\limits_{k=-\infty}^{\infty}\sum\limits_{l=k-2}^{k+1}\int_{A_{k}}\lambda_{l}a_{l}^{(i_{v})}(x)\phi(x)\mathrm{d}x.
\end{align*}
On the other hand.
\begin{align*}
&~~~~~~\lim\limits_{k_{1},k_{2}\rightarrow\infty}\int_{\mathbb{R}^{n}}\sum\limits_{l=-k_{2}}^{k_{1}}\lambda_{l}a_{l}^{(i_{v})}(x)\phi(x)\mathrm{d}x\\
&=\lim\limits_{k_{1},k_{2}\rightarrow\infty}\sum\limits_{l=-k_{2}}^{k_{1}}\sum\limits_{k=l-1}^{l+2}\int_{A_{k}}\lambda_{l}a_{l}^{(i_{v})}(x)\phi(x)\mathrm{d}x\\
&=\sum\limits_{l=-\infty}^{\infty}\sum\limits_{k=l-1}^{l+2}\int_{A_{k}}\lambda_{l}a_{l}^{(i_{v})}(x)\phi(x)\mathrm{d}x.
\end{align*}
Thus $<f,\phi>=\lim\limits_{v\rightarrow\infty}\lambda_{l}\int_{\mathbb{R}^{n}}a_{l}^{(i_{v})}\phi(x)\mathrm{d}x$.\\
If $l+2\leq0$, when $\alpha< \frac{(a_{+}-\sum_{i=1}^n \frac{a_{i}}{q_{i}})}{v}$, then
\begin{align*}
\left|\int_{\mathbb{R}^{n}}a_{l}^{(i_{v})}(x)\phi(x)\mathrm{d}x\right|
&=\left|\int_{B_{l+2}}a_{l}^{(i_{v})}(x)(\phi(x)-\phi(0))\mathrm{d}x\right|\\
&\leq C\int_{B_{l+2}}\left|a_{l}^{(i_{v})}(x)\right||x|\sup\limits_{z\in B_{l+2}}\sup\limits_{|\alpha|=1}|\partial^{\alpha}\phi(z)|\mathrm{d}x\\
&\leq C\int_{B_{l+2}}\left|a_{l}^{(i_{v})}(x)\right| 2(1+|x|_{\vec{a}})^{a_{+}}\mathrm{d}x\\
&\leq C2^{(l+2)a_{+}}\left\|a_{l}^{(i_{v})}\right\|_{L^{\vec{q}}\left(\mathbb{R}^n\right)}\|\chi_{B_{l+2}}\|_{L^{\vec{q}^{\prime}}\left(\mathbb{R}^n\right)}\\
&\leq C2^{(l+2)a_{+}}|B_{l+2}|^{-\alpha}(2^{(l+2)})^{\sum\limits_{i=1}^{n}\frac{a_{i}}{q_{i}}^{\prime}}\\
&=C2^{\left(l+2)(a_{+}-(\alpha-1)v-\sum\limits_{i=1}^{n}\frac{a_{i}}{q_{i}}\right)}\\
&\leq C2^{l\left(a_{+}-(\alpha-1)v-\sum\limits_{i=1}^{n}\frac{a_{i}}{q_{i}}\right)}.
\end{align*}
If~$l+2>0$, let $k_{0}\in\mathbb{Z}_{+}$, such that  $1<\alpha+\frac{(k_{0}a_{-})}{v}+\frac{1}{v} \sum_{i=1}^{n}\frac{a_{i}}{q_{i}}$, then
\begin{align*}
\left|\int_{\mathbb{R}^{n}}a_{l}^{(i_{v})}(x)\phi(x)\mathrm{d}x\right|
&\leq\int_{\tilde{A_{l}}\cup{\tilde A_{l+1}}}\left|a_{l}^{(i_{v})}(x)\right|(1+|x|)^{-k_{0}}\mathrm{d}x\\
&\lesssim \left\|a_{l}^{(i_{v})}\right\|_{L^{\vec{q}}}(1+|x|_{\vec{a}})^{-k_{0}a_{-}}\|\chi_{B_{l+1}}\|_{L^{\vec{q}^{\prime}}\left(\mathbb{R}^n\right)}\\
&\lesssim 2^{-lv\alpha}2^{-lk_{0}a_{-}}2^{l\left(v-\sum\limits_{i=1}^{n}\frac{a_{i}}{q_{i}}\right)}\\
&=2^{lv\left(-\alpha-\frac{k_0a_-}{v}+1-\frac{1}{v}\sum\limits_{i=1}^{n}\frac{a_i}{q_i}\right)}.
\end{align*}
Noting that $$1-\frac{1}{v}\sum_{i=1}^n\frac{a_i}{q_i}\le\alpha<\frac{(a_+-\sum_{i=1}^n\frac{a_i}{q_i})}{v}+1,$$

if we denote$$\mu_l=\begin{cases}
\left|\lambda_l\right|2^{l\left(a_+-(\alpha-1)v-\sum\limits_{i=1}^n\frac{a_i}{q_i}\right)},~~l+2\le0,\\
\left|\lambda_l\right|2^{l\left(-\alpha-\frac{k_0a_-}{v}+1-\frac{1}{v}\sum\limits_{i=1}^{n}\frac{a_i}{q_i}\right)},~~l+2>0,\\
\end{cases}$$

then$$\sum_{l=-\infty}^\infty \mu_l\le C\left(\sum_{l=-\infty}^\infty\left|\lambda_l\right|^p\right)^{{1}/{p}}
\le C\lVert M_Nf\rVert_{\dot{K}_{\vec{q},\vec{a}}^{\alpha, p}\left(\mathbb{R}^n\right)}<\infty,$$

and$$\left|\lambda_l\right|\cdot\left|\int_{\mathbb{R}^n}a_l^{(i_u)}(x)\phi(x)dx\right|\le C\mu_l.$$

By the dominated convergence theorem of series,$$\langle f,\varphi\rangle=\sum_{l=-\infty}^{\infty}\lambda_l\int_{\mathbb{R}^n}a_l(x)\varphi(x)dx.$$
We get the desired result.
\end{proof}

\subsection{The Molecular Decomposition on Anisotropic Mixed Herz-Hardy Spaces}
In this subsection, we create the molecular decomposition of anisotropic mixed-norm Herz-Hardy. Molecular decomposition, which improves the compact support condition of atoms, can be seen the generation of atom decomposition. To begin with, we first give the notation of molecular.
\begin{definition} \label{molecular}
Let $0<p<\infty$, $1<\vec q<\infty$, $1-\frac{1}{v}\sum_{i=1}^n \frac{a_i}{q_i}\le\alpha<\infty$, and non-negative integer $s\ge \lfloor \frac{v}{a_-}(\alpha+\frac{1}{v}\sum_{i=1}^n \frac{a_i}{q_i}-1)  \rfloor$.
Set $\epsilon>max\{s,\frac{v}{a_-}(\alpha+\sum_{i=1}^n \frac{a_i}{q_i}-1)\}$, $a=(1-\frac{1}{v}\sum_{i=1}^n \frac{a_i}{q_i})-\alpha+\epsilon$,
and $d=(1-\frac{1}{v}\sum_{i=1}^n \frac{a_i}{q_i})+\epsilon$.
A function $M_l\in L^{\vec q}(\mathbb{R}^n)$ with $l\in \mathbb{Z}$ (or $l\in\mathbb{N}$) is called a dyadic central $(\alpha,\vec q;s,\epsilon)_l$-molecule (or dyadic central $(\alpha,\vec q;s,\epsilon)_l$-molecule of restricted type), if it satisfies
\begin{enumerate}
	\item [{\rm(i)}] $\lVert M_l\rVert_{L^{\vec q}\left(\mathbb{R}^n\right)}\le \left|B_{l,\vec a}\right|^{-\alpha};$
	\item [{\rm(ii)}] $R_{\vec q}(M_l):=\lVert M_l\rVert_{L^{\vec q}\left(\mathbb{R}^n\right)}^{a/d}\lVert \left|\cdot\right|_{\vec a}^{vd}M_l(\cdot)\rVert_{L^{\vec q}\left(\mathbb{R}^n\right)}^{1-a/d};$
	\item  [{\rm(iii)}] $\int_{\mathbb{R}^n}M_l(x)x^\beta dx=0$, for any $\beta$ with $\left|\beta\right|\le s$.
\end{enumerate}
\end{definition}

\begin{definition}
Let $\alpha$, $p$, $\vec q$, $s$ and $\epsilon$ be as in Definition \ref{molecular}.
\begin{enumerate}
	\item [{\rm (a)}] A function $M\in L^{\vec q}(\mathbb{R}^n)$ is called a central $(\alpha,\vec q;s,\epsilon)$-molecule, if it satisfies
     \begin{itemize}
     	\item [{\rm (iv)}] $R_{\vec q}(M):=\lVert M\rVert_{L^{\vec q}\left(\mathbb{R}^n\right)}^{a/d}\lVert \left|\cdot\right|_{\vec a}^{vd}M(\cdot)\rVert_{L^{\vec q}\left(\mathbb{R}^n\right)}^{1-a/d}<\infty;$
     	\item [{\rm (v)}] $\int_{\mathbb{R}^n}M(x)\cdot x^\beta dx=0$, for any $\beta$ with $\left|\beta\right|\le s$.
	     \end{itemize}
	 \item [{\rm (b)}] A function $M\in L^{\vec q}(\mathbb{R}^n)$ is called a central $(\alpha,\vec q;s,\epsilon)$-molecule of restricted type, if it satisfies the condition (iv), (v) above and
	 \begin{itemize} 
	 	\item [{\rm (vi)}] $\lVert M\rVert_{L^{\vec q}\left(\mathbb{R}^n\right)}\le1.$
	 \end{itemize}
\end{enumerate}
\end{definition}
The following lemma reveals the relationship between atoms and molecules.
\begin{lemma} \label{molecular rela}
Let $\alpha$, $p$, $\vec q$, $s$ and $\epsilon$ be as in Definition \ref{molecular}. If $M$ is a central $(\alpha,\vec q,s)$-atom (or $(\alpha,\vec q,s)$-atom of restricted type), then $M$ is also a central $(\alpha,\vec q;s,\epsilon)$-molecule (or $(\alpha,\vec q;s,\epsilon)$-molecule of restricted type), such that $R_{\vec q}(M)\le C$ with $C$ independent of $M$.
\end{lemma}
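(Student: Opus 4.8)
The plan is to check, one by one, that each defining property of a central $(\alpha,\vec q;s,\epsilon)$-molecule is inherited from the corresponding property of a central $(\alpha,\vec q,s)$-atom, the only substantial point being the uniform bound on $R_{\vec q}$. So let $a$ be a central $(\alpha,\vec q,s)$-atom with ${\rm supp}\,a\subset B_{k,\vec a}$. Property (i) for a molecule, $\|a\|_{L^{\vec q}(\mathbb R^n)}\leq|B_{k,\vec a}|^{-\alpha}$, is exactly the size condition (ii) of an atom, and property (v), $\int_{\mathbb R^n}a(x)x^\beta\,dx=0$ for $|\beta|\leq s$, is exactly the vanishing-moment condition (iii) of an atom; both transfer with nothing to prove. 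Hence the whole lemma reduces to establishing that $R_{\vec q}(a)\leq C$ with $C$ independent of $a$ and of $k$.

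First I would control the weighted norm in $R_{\vec q}$ by the support condition. Since ${\rm supp}\,a\subset B_{k,\vec a}=B_{\vec a}(\vec 0_n,2^k)$, one has $|x|_{\vec a}\leq 2^k$ on ${\rm supp}\,a$, so
\[
\bigl\||\cdot|_{\vec a}^{vd}\,a(\cdot)\bigr\|_{L^{\vec q}(\mathbb R^n)}\leq 2^{kvd}\,\|a\|_{L^{\vec q}(\mathbb R^n)}\leq 2^{kvd}\,|B_{k,\vec a}|^{-\alpha}.
\]
Combining this with $\|a\|_{L^{\vec q}(\mathbb R^n)}\leq|B_{k,\vec a}|^{-\alpha}$ and using $|B_{k,\vec a}|=v_n2^{kv}$, I obtain
\[
R_{\vec q}(a)=\|a\|_{L^{\vec q}}^{a/d}\bigl\||\cdot|_{\vec a}^{vd}\,a\bigr\|_{L^{\vec q}}^{1-a/d}\leq v_n^{-\alpha}\;2^{\,kv\left[-\alpha\frac{a}{d}+(d-\alpha)\left(1-\frac{a}{d}\right)\right]}.
\]

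The key step is then to observe that the exponent of $2^{kv}$ is $0$, which is precisely why the parameters $a$ and $d$ are chosen as they are in Definition \ref{molecular}. Expanding, $-\alpha\frac{a}{d}+(d-\alpha)\bigl(1-\frac{a}{d}\bigr)=d-\alpha-a$, and since $d=\bigl(1-\frac1v\sum_{i=1}^n\frac{a_i}{q_i}\bigr)+\epsilon$ and $a=\bigl(1-\frac1v\sum_{i=1}^n\frac{a_i}{q_i}\bigr)-\alpha+\epsilon$ we have $d-a=\alpha$, whence $d-\alpha-a=0$. Therefore $R_{\vec q}(a)\leq v_n^{-\alpha}=:C$, independent of $a$ and $k$, which proves the homogeneous case. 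The restricted-type case is entirely analogous: for a restricted-type atom the support ball $B_{k,\vec a}$ has $k\geq 0$, the same computation gives $R_{\vec q}(a)\leq C$, and the extra size condition (vi) follows from $\|a\|_{L^{\vec q}}\leq|B_{k,\vec a}|^{-\alpha}\leq v_n^{-\alpha}$, using that $\alpha>0$ since $\alpha\geq 1-\frac1v\sum_{i=1}^n\frac{a_i}{q_i}>0$ under the hypothesis $1<\vec q<\infty$. I do not anticipate any real obstacle; the only thing that must be done carefully is the bookkeeping of the exponents so that the cancellation $d-\alpha-a=0$ is exhibited cleanly, since this single identity is the whole reason the molecule class is defined with these particular $a$ and $d$.
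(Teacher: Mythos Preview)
Your proposal is correct and follows essentially the same route as the paper: bound $\bigl\||\cdot|_{\vec a}^{vd}a\bigr\|_{L^{\vec q}}$ by $2^{kvd}\|a\|_{L^{\vec q}}$ via the support condition, plug in the atom size bound, and use the identity $d-a=\alpha$ to kill the $k$-dependent exponent. The paper's proof is in fact terser than yours---it only computes $R_{\vec q}(M)\le C$ and does not spell out the transfer of the vanishing-moment condition or the restricted-type case---so your write-up is, if anything, more complete; the one caveat is that your verification of (vi) gives $\|a\|_{L^{\vec q}}\le v_n^{-\alpha}$ rather than $\le 1$, which is only $\le 1$ when $v_n\ge 1$, but this is a harmless dimensional constant and the paper does not address (vi) at all.
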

\begin{proof}
We only need to show the case that $M$ is a $(\alpha,\vec q,s)$-atom with support on ball $B_{k, \vec{a}}$.
\begin{equation*}
\begin{split}
&\left\lVert M\right\rVert_{L^{\vec q}\left(\mathbb{R}^n\right)}^{a/d} \left\lVert \left|x\right|_{\vec a}^{vd}M(\cdot)\right\rVert_{L^{\vec q}\left(\mathbb{R}^n\right)}^{1-a/d}\\
&\le C\left|B_{k,\vec a}\right|^{-\alpha\cdot a/d}(2^{kvd})^{1-a/d} \left|B_{k,\vec a}\right|^{-\alpha(1-a/d)}\\
&\le C2^{kv\alpha (-a/d)} (2^{kvd})^{1-a/d} 2^{kv\alpha (a/d)} 2^{-kv\alpha}\\
&\le C2^{kv(d-a)} 2^{-kv\alpha}\le C.
\end{split}
\end{equation*}
This proof is completed.
\end{proof}

\begin{theorem}\label{molecular cha 1}
Let $0<p<\infty$, $1<\vec q<\infty$, $1-\frac{1}{v}\sum_{i=1}^n \frac{a_i}{q_i}\le\alpha<\infty$, and non-negative integer $s\ge \lfloor \frac{v}{a_-}(\alpha+\frac{1}{v}\sum_{i=1}^n \frac{a_i}{q_i}-1)  \rfloor$.
Set $\epsilon>max\{s,\frac{v}{a_-}(\alpha+\sum_{i=1}^n \frac{a_i}{q_i}-1)\}$, $a=(1-\frac{1}{v}\sum_{i=1}^n \frac{a_i}{q_i})-\alpha+\epsilon$,
and $d=(1-\frac{1}{v}\sum_{i=1}^n \frac{a_i}{q_i})+\epsilon$.Then,
\begin{enumerate}
	\item [{\rm(1)}] $f\in H\dot{K}_{\vec{q},\vec{a}}^{\alpha, p}(\mathbb{R}^n)$ if and only if $f$ can be represented as
	 $$f(x)=\sum_{k=-\infty}^{\infty}\lambda_kM_k(x),~~ \text{in the distributional sense,}$$
	  where each $M_k$ is a central $(\alpha,\vec q;s,\epsilon)_k$-molecule, and $\sum_{k=-\infty}^{\infty}\left|\lambda_k\right|^p<+\infty$. Moreover,$$\lVert f\rVert_{H\dot{K}_{\vec{q},\vec{a}}^{\alpha, p}\left(\mathbb{R}^n\right)}\sim\inf\left(\sum_{k=-\infty}^{\infty}\left|\lambda_k\right|^p\right)^{{1}/{p}},$$where the infimum is taken over all above decompositions of $f$.
	  \item [{\rm(2)}] $f\in HK_{\vec{q},\vec{a}}^{\alpha, p}(\mathbb{R}^n)$ if and only if $f$ can be represented as $$f(x)=\sum_{k=-\infty}^{\infty}\lambda_kM_k(x),~~ \text{in the distributional sense,}$$
    where each $M_k$ is a central $(\alpha,\vec q;s,\epsilon)_k$-molecule, and $\sum_{k=-\infty}^{\infty}\left|\lambda_k\right|^p<+\infty$. Moreover,$$\lVert f\rVert_{HK_{\vec{q},\vec{a}}^{\alpha, p}\left(\mathbb{R}^n\right)}\sim\inf\left(\sum_{k=-\infty}^{\infty}\left|\lambda_k\right|^p\right)^{{1}/{p}},$$where the infimum is taken over all above decompositions of $f$.
\end{enumerate}
\end{theorem}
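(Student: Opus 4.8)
The plan is to bypass any new maximal-function estimate and reduce the statement entirely to the atomic theory of Section~3. \emph{Necessity} is immediate: by the atomic characterization of $H\dot{K}_{\vec{q},\vec{a}}^{\alpha,p}(\mathbb{R}^{n})$ (Theorem~\ref{atom cha}; its higher-vanishing-moment version follows verbatim by subtracting from the test function its degree-$s$ Taylor polynomial at the relevant centre in place of $\phi(0)$) one writes $f=\sum_{k}\lambda_{k}a_{k}$ with each $a_{k}$ a central $(\alpha,\vec{q},s)$-atom and $\big(\sum_{k}|\lambda_{k}|^{p}\big)^{1/p}\lesssim\|f\|_{H\dot{K}_{\vec{q},\vec{a}}^{\alpha,p}}$, and Lemma~\ref{molecular rela} says each such $a_{k}$ is already a central $(\alpha,\vec{q};s,\epsilon)_{k}$-molecule. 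For \emph{sufficiency}, by Theorem~\ref{atom suffi} it suffices to prove the reduction: every central $(\alpha,\vec{q};s,\epsilon)_{k_{0}}$-molecule $M$ can be written $M=\sum_{j\ge0}\mu_{j}a_{j}$ in $\mathcal{S}'(\mathbb{R}^{n})$, where each $a_{j}$ is a central $(\alpha,\vec{q},s)$-atom supported in $B_{k_{0}+j,\vec{a}}$ and $\sum_{j\ge0}|\mu_{j}|^{p}\le C$ with $C$ independent of $M$ and $k_{0}$. Granting this, expanding $f=\sum_{k}\lambda_{k}M_{k}$ produces a countable atomic family with $\sum_{k,j}|\lambda_{k}\mu_{k,j}|^{p}\le C\sum_{k}|\lambda_{k}|^{p}<\infty$, and Theorem~\ref{atom suffi} together with the necessity estimate gives $f\in H\dot{K}_{\vec{q},\vec{a}}^{\alpha,p}$ and the norm equivalence $\|f\|_{H\dot{K}_{\vec{q},\vec{a}}^{\alpha,p}}\sim\inf\big(\sum_{k}|\lambda_{k}|^{p}\big)^{1/p}$. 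Case~(2) is identical with $\mathbb{Z}$ replaced by $\mathbb{N}$ and restricted-type objects throughout.

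To prove the reduction I would run the Taibleson--Weiss / Lu--Yang polynomial-correction scheme in the present dilation structure. Set $E_{0}:=B_{k_{0},\vec{a}}$ and $E_{j}:=B_{k_{0}+j,\vec{a}}\setminus B_{k_{0}+j-1,\vec{a}}$ for $j\ge1$, so $M=\sum_{j\ge0}M\chi_{E_{j}}$ with ${\rm supp}(M\chi_{E_{j}})\subset B_{k_{0}+j,\vec{a}}$. For each $j$ let $\pi_{j}$ be the unique polynomial of degree $\le s$ for which $M\chi_{E_{j}}-\pi_{j}\chi_{E_{j}}$ has all moments of order $\le s$ equal to zero (the $L^{2}(E_{j})$-projection of $M\chi_{E_{j}}$ onto polynomials of degree $\le s$); a routine estimate on $E_{j}$, using H\"older's inequality on mixed Lebesgue spaces (Remark~\ref{mixedpro}(iii)) and $|B_{k,\vec{a}}|=2^{kv}$, gives $\|\pi_{j}\chi_{E_{j}}\|_{L^{\vec{q}}}\lesssim\|M\chi_{E_{j}}\|_{L^{\vec{q}}}$. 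Since $\sum_{j}\pi_{j}\chi_{E_{j}}\neq0$, one absorbs the corrections into a telescoping series, using that all moments of $M$ of order $\le s$ vanish so that the ``tail'' corrections collapse; this produces blocks $a_{j}$, supported in $B_{k_{0}+j,\vec{a}}$, with vanishing moments up to order $s$ by construction, with $\sum_{j}\mu_{j}a_{j}=M$, and with $\|a_{j}\|_{L^{\vec{q}}}\le|B_{k_{0}+j,\vec{a}}|^{-\alpha}$ after normalization, the constant being $|\mu_{j}|\lesssim|B_{k_{0}+j,\vec{a}}|^{\alpha}\sum_{\ell\ge j}\|M\chi_{E_{\ell}}\|_{L^{\vec{q}}}$.

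The decay of $\|M\chi_{E_{j}}\|_{L^{\vec{q}}}$ in $j$ — which makes $\sum_{j}|\mu_{j}|^{p}$ finite and uniform — comes from the weighted size control on $M$ in Definition~\ref{molecular}: since $|x|_{\vec{a}}\gtrsim2^{k_{0}+j}$ on $E_{j}$,
\[
\|M\chi_{E_{j}}\|_{L^{\vec{q}}}\le2^{-(k_{0}+j)vd}\big\||\cdot|_{\vec{a}}^{vd}M\big\|_{L^{\vec{q}}}\lesssim2^{-(k_{0}+j)v\alpha}\,2^{-jva},
\]
where $d-a=\alpha$; the standing hypothesis $\epsilon>\max\{s,\frac{v}{a_{-}}(\alpha+\sum_{i=1}^{n}a_{i}/q_{i}-1)\}$ is precisely what forces $a=(1-\frac{1}{v}\sum_{i=1}^{n}a_{i}/q_{i})-\alpha+\epsilon>0$ in the whole range $1-\frac{1}{v}\sum_{i=1}^{n}a_{i}/q_{i}\le\alpha<\infty$. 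Summing the geometric tail gives $|\mu_{j}|\lesssim2^{-jva}$, hence $\sum_{j}|\mu_{j}|^{p}\le C$; and since each $a_{j}$ is supported in $B_{k_{0}+j,\vec{a}}$ with vanishing moments up to order $s$, it pairs against a fixed Schwartz function with a bound geometric in $j$ — exactly the estimates used in the necessity part of Theorem~\ref{atom cha} — so the partial sums of $\sum_{j}\mu_{j}a_{j}$ are Cauchy in $\mathcal{S}'$ and the decomposition holds there. \textbf{The main obstacle} is the reduction itself: arranging the telescoped blocks $a_{j}$ so that they are genuine atoms (correct support, the prescribed vanishing moments, and $L^{\vec{q}}$-norm comparable to $\|M\chi_{E_{j}}\|_{L^{\vec{q}}}$ after the polynomial corrections are absorbed) and confirming that the exponent $va$ produced by the interplay of the weight $|\cdot|_{\vec{a}}^{vd}$, the molecular normalization, and the atom normalization $|B_{k_{0}+j,\vec{a}}|^{-\alpha}$ is strictly positive throughout the admissible range of $\alpha$; the anisotropy itself enters only through $|B_{k,\vec{a}}|=2^{kv}$ and the quasi-norm estimates of Section~2 and creates no further difficulty beyond bookkeeping.
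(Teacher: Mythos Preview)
Your approach is essentially the one the paper uses. The paper likewise deduces Theorem~\ref{molecular cha 1} from the atomic theory via Lemma~\ref{molecular rela} for necessity, and for sufficiency it proves exactly the reduction you describe: slice the molecule along the annuli $E_j$, subtract a moment-correcting term on each $E_j$, telescope the corrections, and show the resulting blocks are atoms with geometrically decaying coefficients. Two remarks are worth making.

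First, the paper only writes out the mean-subtraction case (effectively $s=0$); your use of the $L^2(E_j)$-projection onto polynomials of degree $\le s$ is the correct general-$s$ version of the same scheme and is the right thing to do.

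Second, there is a small but genuine slip in your decay estimate. You take the base scale to be the dyadic index $k_0$ and assert
\[
\|M\chi_{E_j}\|_{L^{\vec q}}\le 2^{-(k_0+j)vd}\big\||\cdot|_{\vec a}^{vd}M\big\|_{L^{\vec q}}\lesssim 2^{-(k_0+j)v\alpha}\,2^{-jva}.
\]
The second inequality is equivalent to $\big\||\cdot|_{\vec a}^{vd}M\big\|_{L^{\vec q}}\lesssim 2^{k_0va}$, which via $R_{\vec q}(M)\le C$ amounts to $\|M\|_{L^{\vec q}}\gtrsim|B_{k_0,\vec a}|^{-\alpha}$; but the molecular condition only gives the \emph{upper} bound $\|M\|_{L^{\vec q}}\le|B_{k_0,\vec a}|^{-\alpha}$. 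The paper avoids this by choosing the base scale from the size of $M$ itself: set $r=\|M\|_{L^{\vec q}}^{-1/\alpha}$ and let $\sigma_r$ be the integer with $2^{(\sigma_r-1)v}<r\le 2^{\sigma_r v}$, then run the annular scheme from $\sigma_r$ rather than $k_0$. With that correction (and normalizing $R_{\vec q}(M)=1$, so that $\||\cdot|_{\vec a}^{vd}M\|_{L^{\vec q}}=r^{a}$), your decay estimate becomes $\|M\chi_{E_j}\|_{L^{\vec q}}\lesssim 2^{-jva}|B_{\sigma_r+j,\vec a}|^{-\alpha}$ and the rest of your argument goes through verbatim with $\sum_j|\mu_j|^p\le C$ uniform in $M$.
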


\begin{theorem} \label{molecular cha 2}
Let $0<p<\infty$, $1<\vec q<\infty$, $1-\frac{1}{v}\sum_{i=1}^n \frac{a_i}{q_i}\le\alpha<\infty$, and non-negative integer $s\ge \lfloor \frac{v}{a_-}(\alpha+\frac{1}{v}\sum_{i=1}^n \frac{a_i}{q_i}-1) \rfloor$.
Set $\epsilon>max\{s,\frac{v}{a_-}(\alpha+\sum_{i=1}^n \frac{a_i}{q_i}-1)\}$, $a=(1-\frac{1}{v}\sum_{i=1}^n \frac{a_i}{q_i})-\alpha+\epsilon$,
and $d=(1-\frac{1}{v}\sum_{i=1}^n \frac{a_i}{q_i})+\epsilon$. Then,
\begin{enumerate}
	\item [{\rm(1)}] $f\in H\dot{K}_{\vec{q},\vec{a}}^{\alpha, p}(\mathbb{R}^n)$ if and only if $f$ can be represented as
	 $$f(x)=\sum_{k=-\infty}^{\infty}\lambda_kM_k(x),~~ \text{in the distributional sense,}$$
	  where each $M_k$ is a central $(\alpha,\vec q;s,\epsilon)$-molecule, and $\sum_{k=-\infty}^{\infty}\left|\lambda_k\right|^p<+\infty$. Moreover,$$\lVert f\rVert_{H\dot{K}_{\vec{q},\vec{a}}^{\alpha, p}\left(\mathbb{R}^n\right)}\sim\inf\left(\sum_{k=-\infty}^{\infty}\left|\lambda_k\right|^p\right)^{{1}/{p}},$$where the infimum is taken over all above decompositions of $f$.
	  \item [{\rm(2)}] $f\in HK_{\vec{q},\vec{a}}^{\alpha, p}(\mathbb{R}^n)$ if and only if $f$ can be represented as
    $$f(x)=\sum_{k=-\infty}^{\infty}\lambda_kM_k(x),~~ \text{in the distributional sense,}$$ where each $M_k$ is a central $(\alpha,\vec q;s,\epsilon)$-molecule, and $\sum_{k=-\infty}^{\infty}\left|\lambda_k\right|^p<+\infty$. Moreover,$$\lVert f\rVert_{HK_{\vec{q},\vec{a}}^{\alpha, p}\left(\mathbb{R}^n\right)}\sim\inf\left(\sum_{k=-\infty}^{\infty}\left|\lambda_k\right|^p\right)^{{1}/{p}},$$where the infimum is taken over all above decompositions of $f$.
\end{enumerate}
\end{theorem}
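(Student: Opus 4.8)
The plan is to establish the two implications of Theorem~\ref{molecular cha 2} separately, spelling out only the homogeneous space $H\dot K_{\vec q,\vec a}^{\alpha,p}(\mathbb{R}^n)$, since the non-homogeneous assertion~(2) is proved verbatim with the index $k$ ranging over $\mathbb{N}$ and the global atom/molecule classes replaced by their restricted-type counterparts. The ``only if'' direction is short: if $f\in H\dot K_{\vec q,\vec a}^{\alpha,p}(\mathbb{R}^n)$, the atomic characterization Theorem~\ref{atom cha} gives $f=\sum_{k\in\mathbb{Z}}\lambda_k a_k$ in $\mathcal S'(\mathbb{R}^n)$ with each $a_k$ a central $(\alpha,\vec q,0)$-atom supported in $B_{k,\vec a}$ and $\big(\sum_k|\lambda_k|^p\big)^{1/p}\lesssim\|f\|_{H\dot K_{\vec q,\vec a}^{\alpha,p}}$; by Lemma~\ref{molecular rela} every such atom is in particular a central $(\alpha,\vec q;s,\epsilon)$-molecule with $R_{\vec q}(a_k)\le C$, so this series is already an admissible molecular decomposition, whence $\inf\big(\sum_k|\lambda_k|^p\big)^{1/p}\lesssim\|f\|_{H\dot K_{\vec q,\vec a}^{\alpha,p}}$.

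The substance lies in the ``if'' direction, where one must turn a molecular series back into an atomic one. Given $f=\sum_k\lambda_k M_k$ with $M_k$ a central $(\alpha,\vec q;s,\epsilon)$-molecule, I would, after a translation and dilation, regard $M_k$ as adapted to the ball $B_{k,\vec a}$ and split it over anisotropic annuli: with $S_0:=B_{k,\vec a}$ and $S_i:=B_{k+i,\vec a}\setminus B_{k+i-1,\vec a}$ for $i\ge1$ write $M_k=\sum_{i\ge0}M_k\chi_{S_i}$. Each slice $M_k\chi_{S_i}$ violates the vanishing-moment condition, so, following the classical Taibleson--Weiss/Lu--Yang correction, I subtract the unique polynomial of degree $\le s$ that restores its moments and transfer the defect to the next slice, producing $b_{k,i}$ supported in $B_{k+i,\vec a}$ with $\int b_{k,i}(x)x^\beta\,dx=0$ for $|\beta|\le s$ and $\sum_{i\ge0}b_{k,i}=M_k$. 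After normalisation $b_{k,i}=\mu_{k,i}a_{k,i}$ with $a_{k,i}$ a central $(\alpha,\vec q,s)$-atom on $B_{k+i,\vec a}$, and the coefficients are controlled by $\|M_k\chi_{S_i}\|_{L^{\vec q}(\mathbb{R}^n)}$ and, for the correction terms, by $\big\||\cdot|_{\vec a}^{vd}M_k\big\|_{L^{\vec q}(\mathbb{R}^n)}$, the latter via Hölder's inequality on mixed-norm Lebesgue spaces (Remark~\ref{mixedpro}(iii)) together with the elementary estimate $\|\chi_{B_{\vec a}(x,r)}\|_{L^{\vec p}(\mathbb{R}^n)}\le r^{\sum_i a_i/p_i}$. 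Exploiting the molecular conditions and the margin $\epsilon>\max\{s,\tfrac{v}{a_-}(\alpha+\sum_i\tfrac{a_i}{q_i}-1)\}$, these bounds make $\sum_{i\ge0}|\mu_{k,i}|^p$ a convergent geometric series dominated by an absolute constant. Hence $f=\sum_k\sum_{i\ge0}(\lambda_k\mu_{k,i})a_{k,i}$ is an atomic decomposition whose coefficients, after grouping atoms with a common support ball, are $p$-summable with total size $\lesssim\sum_k|\lambda_k|^p$; arguing as in the proof of Theorem~\ref{atom suffi} (splitting the inner sum into near and far ranges and using the geometric gain) yields $f\in H\dot K_{\vec q,\vec a}^{\alpha,p}(\mathbb{R}^n)$ with $\|f\|_{H\dot K_{\vec q,\vec a}^{\alpha,p}}\lesssim\big(\sum_k|\lambda_k|^p\big)^{1/p}$.

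Putting the two estimates together gives $\|f\|_{H\dot K_{\vec q,\vec a}^{\alpha,p}}\sim\inf\big(\sum_k|\lambda_k|^p\big)^{1/p}$ over all molecular decompositions, and statement~(2) follows by the identical scheme in the non-homogeneous setting. I expect the main obstacle to be precisely the coefficient bound in the molecule-to-atom splitting: one must track how the anisotropy — the homogeneous dimension $v$, the extremes $a_-,a_+$, and the weights $r^{\sum_i a_i/q_i}$ arising from $\|\chi_{B_{\vec a}(x,r)}\|_{L^{\vec q'}}$ — enters the decay in $i$ of the corrected pieces $b_{k,i}$, and then verify that the slack built into $\epsilon$ really dominates both the atom-normalisation loss $|B_{k+i,\vec a}|^{\alpha}$ and the order-$s$ Taylor remainder so that $\sum_i|\mu_{k,i}|^p<\infty$ uniformly in $k$. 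This bookkeeping is genuinely more delicate than in the isotropic, single-exponent case, because the mixed norm forces one to run the estimates coordinate by coordinate before the geometric summation in $i$ can be carried out.
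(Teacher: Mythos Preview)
Your proposal is correct and follows essentially the same route as the paper: the ``only if'' direction comes directly from Theorem~\ref{atom cha} together with Lemma~\ref{molecular rela}, and the ``if'' direction is reduced to showing that every single molecule has bounded $H\dot K_{\vec q,\vec a}^{\alpha,p}$-norm by chopping it over dyadic anisotropic annuli, correcting moments on each piece, and summing the resulting atomic coefficients geometrically using the margin in $\epsilon$. The paper determines the base scale of a molecule $M$ by setting $r=\|M\|_{L^{\vec q}}^{-1/\alpha}$ (rather than by ``translation and dilation''), corrects only the zeroth moment on each annulus, and handles the leftover means $\sum_k F_k$ by an Abel summation-by-parts; your ``transfer the defect to the next slice'' with general order-$s$ polynomial correction is the natural generalisation of this and would be needed for the full range of $\alpha$, but otherwise the architecture is the same.
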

By Theorem \ref{atom cha} and Lemma \ref{molecular rela}, we know that Theorem \ref{molecular cha 1} and Theorem \ref{molecular cha 2} can be deduce from the following lemma.
\begin{lemma}
Let $0<p<\infty$, $1<\vec q<\infty$, $1-\frac{1}{v}\sum_{i=1}^n \frac{a_i}{q_i}\le\alpha<\infty$, and non-negative integer $s\ge \lfloor \frac{v}{a_-}(\alpha+\frac{1}{v}\sum_{i=1}^n \frac{a_i}{q_i}-1) \rfloor$.
Set $\epsilon>max\{s,\frac{v}{a_-}(\alpha+\sum_{i=1}^n \frac{a_i}{q_i}-1)\}$, $a=(1-\frac{1}{v}\sum_{i=1}^n \frac{a_i}{q_i})-\alpha+\epsilon$,
and $d=(1-\frac{1}{v}\sum_{i=1}^n \frac{a_i}{q_i})+\epsilon$. Then,
\begin{enumerate}
	\item [{\rm(1)}] If $0<p\le1$, then there exists a constant $C$ such that for any central $(\alpha,\vec q;s,\epsilon)$-molecule (or $(\alpha,\vec q;s,\epsilon)$-molecule of restricted type) $M$.
	$$\lVert M\rVert_{H\dot{K}_{\vec{q},\vec{a}}^{\alpha, p}\left(\mathbb{R}^n\right)}\le C(or\ \lVert M\rVert_{HK_{\vec{q},\vec{a}}^{\alpha, p}\left(\mathbb{R}^n\right)}\le C).$$
	\item [{\rm(2)}] There exists a constant $C$ such that for any $l\in\mathbb{Z}$($l\in\mathbb{N}$) and dyadic central $(\alpha,\vec q;s,\epsilon)_l$-molecule (or dyadic central $(\alpha,\vec q;s,\epsilon)_l$-molecule of restricted type) $M_l)$.
	$$\lVert M\rVert_{H\dot{K}_{\vec{q},\vec{a}}^{\alpha, p}\left(\mathbb{R}^n\right)}\le C(or\ \lVert M\rVert_{HK_{\vec{q},\vec{a}}^{\alpha, p}\left(\mathbb{R}^n\right)}\le C).$$
\end{enumerate}
\end{lemma}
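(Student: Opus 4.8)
The plan is to reduce everything to the atomic characterization already in hand: by Theorem~\ref{atom suffi} it suffices to decompose an arbitrary central $(\alpha,\vec q;s,\epsilon)$-molecule $M$ (resp. a dyadic central $(\alpha,\vec q;s,\epsilon)_l$-molecule $M_l$) into a sum $\sum_{j\ge0}\lambda_j a_j$ of central $(\alpha,\vec q,s)$-atoms, convergent in $\mathcal S'(\mathbb R^n)$, with $\bigl(\sum_{j\ge0}|\lambda_j|^p\bigr)^{1/p}\lesssim R_{\vec q}(M)$ (resp. $\lesssim1$). All four assertions — homogeneous/non-homogeneous, molecule/restricted type, general/dyadic — will come from one construction: the non-homogeneous and restricted-type cases are obtained by restricting the summation index to $j\ge0$ and invoking the non-homogeneous atomic theorem, and the only substantive difference between the general and the dyadic case is which normalization is available, the dyadic molecule carrying in addition the size bound $\|M_l\|_{L^{\vec q}}\le|B_{l,\vec a}|^{-\alpha}$ of Definition~\ref{molecular}(i).

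I would first fix a "center" scale: for $M_l$ this is the prescribed $2^l$, while for a general molecule one selects $l\in\mathbb Z$ balancing $\|M\|_{L^{\vec q}(\mathbb R^n)}$ and $\||\cdot|_{\vec a}^{vd}M\|_{L^{\vec q}(\mathbb R^n)}$ against $|B_{l,\vec a}|$. Then decompose $\mathbb R^n=B_{l,\vec a}\cup\bigcup_{j\ge1}A_{l+j,\vec a}$, set $M_0:=M\chi_{B_{l,\vec a}}$ and $M_j:=M\chi_{A_{l+j,\vec a}}$ for $j\ge1$, so $M=\sum_{j\ge0}M_j$, each $M_j$ supported in $B_{l+j,\vec a}$ but without vanishing moments. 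Correct inductively: put $P_{-1}:=0$ and, given $P_{j-1}$, let $P_j$ be the unique polynomial of degree $\le s$ supported on $B_{l+j,\vec a}$ whose moments up to order $s$ match those of $M_j+P_{j-1}$; then $N_j:=M_j+P_{j-1}-P_j$ is supported on $B_{l+j,\vec a}$, has vanishing moments up to order $s$, and $\sum_{j\ge0}N_j=M$ provided $P_j\to0$ in $\mathcal S'$. Summing the defining relations yields the key identity $\int_{\mathbb R^n}P_j(x)x^\beta\,dx=\int_{B_{l+j,\vec a}}M(x)x^\beta\,dx=-\int_{(B_{l+j,\vec a})^c}M(x)x^\beta\,dx$ for $|\beta|\le s$, the last equality being the global moment condition Definition~\ref{molecular}(v)/(iii). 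Normalizing, $a_j:=N_j/\lambda_j$ with $\lambda_j:=C|B_{l+j,\vec a}|^{\alpha}\|N_j\|_{L^{\vec q}(\mathbb R^n)}$ is a central $(\alpha,\vec q,s)$-atom.

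The core is the bound $|\lambda_j|\lesssim 2^{-\delta j}R_{\vec q}(M)$ for some $\delta>0$. For the annular part, Hölder's inequality in $L^{\vec q}$ against $|\cdot|_{\vec a}^{-vd}$ gives $\|M_j\|_{L^{\vec q}}\lesssim 2^{-(l+j)vd}\||\cdot|_{\vec a}^{vd}M\|_{L^{\vec q}}$ for $j\ge1$, which combined with $\|M_j\|_{L^{\vec q}}\le\|M\|_{L^{\vec q}}$, the interpolation exponent $a/d$, and the identity $d-a=\alpha$ built into the choice of $a,d$, controls $|B_{l+j,\vec a}|^{\alpha}\|M_j\|_{L^{\vec q}}$; the \emph{strict} inequality $\epsilon>\tfrac{v}{a_-}\bigl(\alpha+\sum_i a_i/q_i-1\bigr)$ leaves a surplus that becomes the geometric factor $2^{-\delta j}$. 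For the correction terms, the standard estimate of a polynomial by its moments on $B_{l+j,\vec a}$, the key identity, and $\bigl|\int_{(B_{l+j,\vec a})^c}Mx^\beta\bigr|\le\||\cdot|_{\vec a}^{vd}M\|_{L^{\vec q}}\,\bigl\||\cdot|_{\vec a}^{|\beta|a_+-vd}\chi_{(B_{l+j,\vec a})^c}\bigr\|_{L^{\vec q'}}$ — the $L^{\vec q'}$ norm being finite precisely because $\epsilon>s$ forces $vd$ to dominate $s\,a_+$ plus the homogeneous dimension — give $|B_{l+j,\vec a}|^{\alpha}\|P_j\|_{L^{\vec q}}\lesssim 2^{-\delta' j}R_{\vec q}(M)$, and similarly for $\|P_{j-1}\|_{L^{\vec q}}$; this also shows $P_j\to0$, completing the $\mathcal S'$-convergence. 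Summing and applying Theorem~\ref{atom suffi} gives $\|M\|_{H\dot K_{\vec q,\vec a}^{\alpha,p}}\lesssim R_{\vec q}(M)$. The hypothesis $0<p\le1$ in part~(1) enters only at the very last step, where $\|\sum_jN_j\|_{H\dot K}^p\le\sum_j\|N_j\|_{H\dot K}^p$ reduces the bound to the per-atom estimates; for a dyadic molecule the extra size bound Definition~\ref{molecular}(i) instead lets one estimate each annulus two-sidedly and obtain the geometric decay of $\lambda_j$ for every $p\in(0,\infty)$. Part~(2) and the restricted-type/non-homogeneous variants follow with $\sum_{j\in\mathbb Z}$ replaced by $\sum_{j\ge0}$.

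The main obstacle I anticipate is the exponent bookkeeping in the decay estimate: tracking $vd$, $a/d$, $|\beta|a_+$, the homogeneous dimension $v$ and $a_-,a_+$ at once so that the two conditions on $\epsilon$ (namely $\epsilon>s$ and $\epsilon>\tfrac v{a_-}(\alpha+\sum_i a_i/q_i-1)$) turn out to be exactly what makes both the polynomial-correction integral converge and the series $\sum_j|\lambda_j|^p$ summable, and simultaneously verifying that $\sum_{j>J}N_j$ and $P_J$ tend to $0$ against Schwartz functions so that the decomposition genuinely holds in $\mathcal S'(\mathbb R^n)$.
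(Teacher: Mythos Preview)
Your approach is correct and follows the same annular-decomposition scheme as the paper: choose a scale from $\|M\|_{L^{\vec q}}$, cut $M$ into dyadic shells around that scale, correct the moments on each piece to manufacture atoms, show geometric decay of the coefficients from the molecular size condition, and invoke Theorem~\ref{atom suffi}. The paper differs only in the moment-correction mechanism: it treats just the zeroth moment, setting $M_k-F_k:=M\chi_{E_k}-|E_k|^{-1}\chi_{E_k}\int_{E_k}M$, and then handles the leftover tower of constants $\sum_kF_k$ by an Abel-summation trick, rewriting it as $\sum_k m_{k+1}(\psi_{k+1}-\psi_k)$ with $m_k=\sum_{i\ge k}\int M\chi_{E_i}$ and $\psi_k=|E_k|^{-1}\chi_{E_k}$, each term again a mean-zero atom with geometrically decaying coefficient. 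Your inductive polynomial correction $N_j=M_j+P_{j-1}-P_j$ is the standard extension of that argument to arbitrary $s$ and specializes exactly to the paper's two-step construction when $s=0$; the price is the heavier exponent bookkeeping you anticipate, while the paper's version is shorter but ties the proof to $s=0$.

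Two small points to tighten. First, a nonzero polynomial is not compactly supported, so your $P_j$ cannot literally be ``the unique polynomial of degree $\le s$ supported on $B_{l+j,\vec a}$''; what you want is a function of the form $(\text{polynomial of degree}\le s)\cdot\chi_{B_{l+j,\vec a}}$ (or any fixed compactly supported profile) whose moments up to order $s$ can be prescribed --- this is the object the paper uses when it takes $\chi_{E_k}/|E_k|$ in the $s=0$ case. Second, your explanation of where $0<p\le1$ enters is not quite right: Theorem~\ref{atom suffi} already delivers $\|M\|_{H\dot K_{\vec q,\vec a}^{\alpha,p}}\lesssim\bigl(\sum_j|\lambda_j|^p\bigr)^{1/p}$ for every $0<p<\infty$ once the atomic decomposition is in hand, so you do not need the $p$-subadditivity $\|\sum_jN_j\|^p\le\sum_j\|N_j\|^p$ at all.
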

\begin{proof}
we only prove (i) for the homogenous case, the proof for the non-homogeneous case and the proof of (2) are similar.\\
Suppose that $M$ is a central $(\alpha, \vec{q}; s, \varepsilon)$-molecule. Taking $r=\|M\|^{-{1}/{\alpha }} _{L^{\vec{q}}\left(\mathbb{R}^n\right)}$ and denote by $\sigma_r$ the unique integer satisfying $2^{(\sigma_r-1)v}<r\leq  2^{(\sigma_r)v}$. Denote $E_{0,\vec{a}}=B_{\sigma_r,\vec{a}}$ and $E_{k,\vec{a}}=B_{\sigma_r+k,\vec{a}}\setminus B_{\sigma_r+k-1,\vec{a}}$
for $k\in \mathbb{N}$. Set\\
$$M(x)\chi_{E_{k,\vec{a}}(x)}-\frac{\chi_{E_{k,\vec{a}}}(x)}{|E_{k,\vec{a}}|}\int_{\mathbb R^n}M(y)\chi_{E_{k,\vec{a}}}(x)dy=:M_k(x)-F_k(x).$$
Obviously
 $$M(x)=\sum_{k=0}^{\infty} \left(M_k(x)-F_k(x)\right)+\sum_{k=0}^{\infty}\frac{\chi_{E_{k,\vec{a}}}(x)}{|E_{k,\vec{a}}|}\int_{\mathbb R^n}M(y)\chi_{E_{k,\vec{a}}}(x)dy,$$\\
 then, ${\rm supp}~(M_k-F_k)\subset B_{\sigma_r+k,\vec{a}}$, and
 \begin{align*}
 	 \int_{\mathbb R^n}(M_k(x)-F_k(x))dx
 &=\int_{\mathbb R^n} M(x)\chi_{E_{k,\vec{a}}(x)}-\frac{\chi_{E_{k,\vec{a}}}(x)}{|E_{k,\vec{a}}|}\int_{\mathbb R^n}M(y)\chi_{E_{k,\vec{a}}}(x)dydx\\
 &=\int_{\mathbb R^n} M(x)\chi_{E_{k,\vec{a}}(x)}dx-\int_{\mathbb R^n}M(y)\chi_{E_{k,\vec{a}}}(x)dy=0.\\
\end{align*}
We will claim that\\
(a) there is a positive constant $C$ and a sequences of numbers $\lambda_k$ such  that $\sum_{k=0}^{\infty}|\lambda _k|^p<\infty$,
$$M_k(x)-F_k(x)={\lambda }_k a_k(x),~~~~\text{where each $a_k$ is  a $(\alpha,\vec{q},s)$-atorm.}$$
(b)$\sum_{k=0}^{\infty} F_k$ has a $(\alpha,\vec{q},s)$-atom decomposition.
\par 
We first show (a), without loss of generality, suppose that $R_{\vec{q}}(M)=1$ which implies that,\\ 
$$\left\||\cdot|_{\vec{a}}^{vd}M(\cdot)\right\|_{L^{\vec{q}}\left(\mathbb R^n\right)}=\|M\|^{\frac{-a}{d-a}}_{L^{\vec{q}}\left(\mathbb R^n\right)}=r^a.$$
	For $k=0$. The following inequalities can be directly obtained
\begin{align*}
	\left\|M_0-F_0\right\|_{L^{\vec{q}}\left(\mathbb R^n\right)} &\leq \|M_0\|_{L^{\vec{q}}\left(\mathbb R^n\right)}+\|F_0\|_{L^{\vec{q}}\left(\mathbb R^n\right)}\\
	&\leq \|M\|_{L^{\vec{q}}\left(\mathbb R^n\right)}+\left\|\frac{\chi_{E_{0,\vec{a}}}(x)}{|E_{0,\vec{a}}|}\int_{\mathbb R^n}M(y)\chi_{E_{0,\vec{a}}}(x)dy\right\|_{L^{\vec{q}}\left(\mathbb R^n\right)}\\
	&\leq\|M\|_{L^{\vec{q}}\left(\mathbb R^n\right)}+\int_{\mathbb{R}^n}|M(y){\chi_{B_{\sigma_r, \vec{a}}}(y)|dy}\frac{\|\chi_{B_{\sigma_r,{\vec{a}}}}\|_{L^{\vec{q}}\left(\mathbb R^n\right)}}{|B_{\sigma_r,{\vec{a}}}|}\\
	&\leq\|M\|_{L^{\vec{q}}\left(\mathbb R^n\right)}+\||M\|_{L^{\vec{q}}\left(\mathbb R^n\right)}{\|{\chi_{B_{\sigma_r,{\vec{a}}}}}}\|_{L^{\vec{q}^{\prime}}\left(\mathbb R^n\right)}{\|{\chi_{B_{\sigma_r,{\vec{a}}}}}}\|_{L^{\vec{q}}\left(\mathbb R^n\right)}|B_{\sigma_r,{\vec{a}}}|\\
	&\lesssim 2\|M\|_{L^{\vec{q}}\left(\mathbb R^n\right)}\leq 2|B_{\sigma_r,\vec{a}}|^{-\alpha},	
\end{align*}
and for $k \in \mathbb{N}$
\begin{align*}
\|M_k-F_k\|_{L^{{\vec{q}}}\left(\mathbb R^n\right)}&\leq\|M_k\|_{L^{{\vec{q}}}\left(\mathbb R^n\right)}+\|F_k\|_{L^{{\vec{q}}}\left(\mathbb R^n\right)}\\
&\leq C\|M_k\|_{L^{\vec{q}}(\mathbb R^n)}+\frac{C}{\left|E_{k,\vec{a}}\right|} \|M_k\|_{L^{\vec{q}}(\mathbb R^n)}\left\|\chi_{E_{k,\vec{a}}}\right\|_{L^{\vec{q}^{\prime}}\left(\mathbb R^n\right)}\left\|\chi_{E_{k,\vec{a}}}\right\|_{L^{\vec{q}}\left(\mathbb R^n\right)}\\
& \leq C\|M_k\|_{L^{\vec{q}}(\mathbb R^n)}\\
&\leq C\left\||\cdot|^{vd}_{\vec{a}}M(\cdot)\right\|_{L^{\vec{q}}\left(\mathbb R^n\right)} 2^{-{(\sigma_r+k-1)}vd}\\
&\leq C r^a 2^{-{(\sigma_r+k)}vd}\\
&\leq C 2^{\sigma_r va} 2^{-(\sigma_r+k)v(d-a)} 2^{-av(\sigma_r+k)}\\
&\leq C 2^{-vak}|B_{\sigma_r+k,\vec{a}}|^{-\alpha }.
\end{align*}
Thus, for any $k\in \{\mathbb{N}\cup (0)\}$, there is a constant $C$ independent of $k$ such that
$$\|M_k-F_k\|_{L^{\vec{q}}\left(\mathbb R^n\right)}\leq C 2^{-kva} |B_{(\sigma _r+k),\vec{a}}|^{-\alpha}.$$
If we denote\\
$$\sum_{k=0}^{\infty}M_k-F_k=C\sum_{k=0}^{\infty} 2^{-kva}\frac{M_k-F_k}{C 2^{-kva}}=\lambda _{1,k}a_{1,k}.$$
Therefore\\
$$\|a_{1,k}\|_{L^{\vec{q}}\left(\mathbb R^n\right)}=\frac{1}{C 2^{-kva}}\|M_k-F_k\|_{L^{\vec{q}}\left(\mathbb R^n\right)}
\leq|B_{\sigma_{r+k},\vec{a}}|^{-\alpha },$$
and 
$$\sum_{k=0}^{\infty}|\lambda _{1,k}|^p=\sum_{k=0}^{\infty}C 2^{-kavp}
\leq C,$$
where $C$ is independent of $M$.\\
Let
$$m_k=\sum_{i=k}^{\infty}\int_{\mathbb{R}^n}M(x)\chi_{E_i}(x)dx,~~~~\psi_k(x)=\frac{\chi_{E_k}(x)}{|E_k|}.$$
Noting that $m_0=0$. We have
\begin{align*}
	\sum_{k=0}^{\infty}F_k(x)=\sum_{k=0}^{\infty}(m_k-m_{k+1})\psi _k(x)=\sum_{k=0}^{\infty}m_{k+1}\left(\psi _{k+1}(x)-\psi_{k}(x)\right).
\end{align*}
Obviously,
$$\int_{\mathbb R^n}m_{k+1}\left(\psi_{k+1}(x)-\psi_k(x)\right)dx=0,~~~~{\rm supp}~\{m_{k+1}\left(\psi_{k+1}(x)-\psi_k(x)\right)\} \subset B_{\sigma_r+k+1,\vec{a}}.$$
Hence, via a simple calculation and splitting of the out part of the ball $B_{\sigma_r+k-1, \vec{a}}$, the following results can get
\begin{align*}
\left\|m_{k+1}\left(\psi_{k+1}-\psi_k \right)\right\|_{L^{\vec{q}}\left(\mathbb R^n \right)}&=\left\|\sum_{i=k+1}^{\infty}\int_{\mathbb R^n} M(x)\chi_{E_i}(x)dx  \left(\psi_{k+1}-\psi_k \right)\right\|_{L^{\vec{q}}\left(\mathbb R^n \right)}\\
	&=\left\|\psi_{k+1}-\psi_k \right\|_{L^{\vec{q}}\left(\mathbb R^n\right)}\sum_{i=k}^{\infty}\int_{\mathbb{R}^n}M(x)\chi_{E_i}(x)dx\\
	&\leq \left(\|\psi_{k+1}\|_{L^{\vec{q}}\left(\mathbb R^n\right)}+\|\psi_k\|_{L^{\vec{q}}\left(\mathbb R^n\right)}\right)\sum_{i=k}^{\infty}\int_{\mathbb{R}^n}M(x)\chi_{E_i}(x)dx\\
	&=\left(\frac{\left\|\chi_{E_{k+1}, \vec{a}}\right\|_{L^{\vec{q}}(\mathbb R^n)}}{\left|E_{{k+1}, \vec{a}}\right|}+\frac{\left\|\chi_{E_{k}, \vec{a}}\right\|_{L^{\vec{q}}\left(\mathbb R^n\right)}}{\left|E_{{k}, \vec{a}}\right|}\right)  \sum_{i=k}^{\infty} \int_{\mathbb{R}^{n}} M(x) X_{E_{i}}(x) d x\\
	&\lesssim \left(\frac{\|\chi_{E_{k,\vec{a}}}\|_{L^{\vec{q}}\left(\mathbb R^n\right)}}{|E_{k+1,\vec{a}}|}  \frac{\|\chi_{B_{\sigma_r+k+1,\vec{a}}}\|_{L^{\vec{q}}\left(\mathbb R^n\right)}}{\|\chi_{B_{\sigma_r+k,\vec{a}}}\|_{L^{\vec{q}}\left(\mathbb R^n\right)}}+ 2^v \frac{\|\chi_{E_{k,\vec{a}}}\|_{L^{\vec{q}}\left(\mathbb R^n\right)}}{|E_{k+1,\vec{a}}|}\right) \times \int_{B_{\sigma_r+k-1,\vec{a}}^{c}}|M(x)|dx\\
	&\lesssim \left(\frac{\|\chi_{E_{k,\vec{a}}}\|_{L^{\vec{q}}\left(\mathbb R^n\right)}}{|E_{k+1,\vec{a}}|}  \frac{\|\chi_{B_{\sigma_r+k+1,\vec{a}}}\|_{L^{\vec{q}}\left(\mathbb R^n\right)}}{\|\chi_{B_{\sigma_r+k,\vec{a}}}\|_{L^{\vec{q}}\left(\mathbb R^n\right)}}+ 2^v \frac{\|\chi_{E_{k,\vec{a}}}\|_{L^{\vec{q}}\left(\mathbb R^n\right)}}{|E_{k+1,\vec{a}}|}\right)\\
	&\quad \times \left \||\cdot|_{\vec{a}}^{v d} M \right\|_{L^{\vec{q}}(\mathbb R^n)} \left\|\sum_{\sigma_r+k}^{\infty} |\cdot|_{\vec{a}}^{-vd} \chi_{B_{{i+1,\vec{a}}}}\right\|_{L^{\vec{q}^{\prime}}(\mathbb R^n)}\\
	&\leq C 2^{-k v a}|B_{\sigma_r+k+1, \vec{a}}|^{-\alpha}.
\end{align*}
Let 
\begin{align*}
&\quad\sum_{k=0}^{\infty} \frac{\chi_{E_{k,\vec{a}}}(x)} {|E_{k,\vec{a}}|} \int_{\mathbb{R}^{n}} M(y)\chi_{E_{k,\vec{a}}}(y)dy\\
&=\sum_{k=0}^{\infty}C2^{-kva}\frac{m_{k+1}\left(\psi_{k+1}-\psi_k \right)}{C 2{-kva}}\\
&=\sum_{k=0}^{\infty}\lambda_{2,k} a_{2,k}.
\end{align*}
We can easily show that	
$$
\left\|\sum\limits_{k=0}^{\infty}\frac{\chi_{E_{k,\vec{a}}}}{|E_{k,\vec{a}}|}\int_{\mathbb{R}^n} M(y)\chi_{E_{k}}(y)\mathrm{d}y \right \|_{L^{\vec{q}}(\mathbb R^n)}\leq|B_{\sigma_r+k+1, \vec{a}}|^{-\alpha}.
$$
and $$\sum\limits_{k=0}^{\infty}|\lambda_{2,k}|^{p}\leq C2^{-vkap}<C,$$
where $C$ is independent of $M$.
\end{proof}

\subsection{Some Application}
The following condition is necessary for our discussion on the linear $T$ on anisotropic mixed-norm Herz-Hardy space.
\begin{equation}
	Tf=\sum_{i\in \mathbb N}\lambda_iTa_i~~\text{in the distribution sense, if }~~f=\sum_{i\in \mathbb N} \lambda_ia_i~~\text{in the distribution sense.} \label{Tf}
\end{equation}
\begin{theorem}
	Let $0<p<\infty,~1\leq \vec{q} <\infty,~1-\frac{1}{v}\sum_{i=1}^n \frac{a_{i}}{q_{i}}\leq\alpha<\frac{a_{+}-\sum_{i=1}^n \frac{a_{i}}{q_{i}}}{v}+1$.
If a linear operator T satisfies (\ref{Tf}) for every central atomic decomposition is bounded on $L^{\vec{q}}\left(\mathbb R^n\right)$, and for any 
$f\in L^{\vec{q}}\left(\mathbb R^n\right)$ with compact $B_{k,\vec{a}}$, and $\int_{\mathbb{R}^{n}}f(x)dx=0$. $T$ satisfies the following size condition:
$$\left|Tf(x)\right|\leq\frac{C2^{kv}\|f\|_{L^{1}\left(\mathbb R^n\right)}}{|x|^{2}_{\vec{a}}},~if ~~\inf\limits_{y\in {\rm supp} f}|x-y|_{\vec{a}}\geqslant\frac{|x|_{\vec{a}}}{2}.$$
Then, operator $T$ maps $H\dot{K}^{\alpha,p}_{\vec{q},\vec{a}}(\mathbb{R}^n)$ to $\dot{K}^{\alpha,p}_{\vec{q},\vec{a}}(\mathbb{R}^n)$
(or~operator $T$ maps $HK^{\alpha,p}_{\vec{q},\vec{a}}(\mathbb{R}^n)$ to $K^{\alpha,p}_{\vec{q},\vec{a}}(\mathbb{R}^n)).$
\end{theorem}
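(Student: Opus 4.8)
The strategy is to combine the atomic characterization of $H\dot K_{\vec q,\vec a}^{\alpha,p}(\mathbb R^n)$ from Theorem \ref{atom cha} with an annulus‑by‑annulus estimate of the image of a single atom. Given $f\in H\dot K_{\vec q,\vec a}^{\alpha,p}(\mathbb R^n)$, Theorem \ref{atom cha} lets us write $f=\sum_{k\in\mathbb Z}\lambda_k a_k$ in $\mathcal S'(\mathbb R^n)$, where each $a_k$ is a central $(\alpha,\vec q,0)$‑atom supported in $B_{k,\vec a}$ and $\bigl(\sum_k|\lambda_k|^p\bigr)^{1/p}\sim\|f\|_{H\dot K_{\vec q,\vec a}^{\alpha,p}(\mathbb R^n)}$. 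Hypothesis (\ref{Tf}) then gives $Tf=\sum_k\lambda_k Ta_k$ in $\mathcal S'(\mathbb R^n)$, so it remains to bound
$$\|Tf\|_{\dot K_{\vec q,\vec a}^{\alpha,p}(\mathbb R^n)}^{\,p}=\sum_{j\in\mathbb Z}\bigl|B_{j,\vec a}\bigr|^{\alpha p}\Bigl\|\Bigl(\sum_k\lambda_k Ta_k\Bigr)\chi_j\Bigr\|_{L^{\vec q}(\mathbb R^n)}^{\,p}$$
by $C\sum_k|\lambda_k|^p$.

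The heart of the matter is a two‑regime estimate of $\bigl|B_{j,\vec a}\bigr|^{\alpha}\|(Ta_k)\chi_j\|_{L^{\vec q}(\mathbb R^n)}$ for a fixed central $(\alpha,\vec q,0)$‑atom $a_k$. When $j\le k+1$ (annuli meeting or adjacent to $B_{k,\vec a}$) I would use the $L^{\vec q}$‑boundedness of $T$ together with the size estimate of the atom: $\|(Ta_k)\chi_j\|_{L^{\vec q}}\le\|Ta_k\|_{L^{\vec q}}\lesssim\|a_k\|_{L^{\vec q}}\le|B_{k,\vec a}|^{-\alpha}$, so that $\bigl|B_{j,\vec a}\bigr|^{\alpha}\|(Ta_k)\chi_j\|_{L^{\vec q}}\lesssim 2^{v\alpha(j-k)}$, which decays geometrically as $j\to-\infty$ because $\alpha\ge1-\tfrac1v\sum_i a_i/q_i\ge0$. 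When $j\ge k+2$ (the far annuli), the triangle inequality for $|\cdot|_{\vec a}$ shows every $x\in A_{j,\vec a}$ satisfies $\inf_{y\in\operatorname{supp} a_k}|x-y|_{\vec a}\ge|x|_{\vec a}/2$; since $\int_{\mathbb R^n}a_k=0$, the hypothesised size condition on $T$ applies and bounds $|Ta_k(x)|$ by $2^{kv}\|a_k\|_{L^1}$ times a negative power of $|x|_{\vec a}\sim2^{j}$. Estimating $\|a_k\|_{L^1}$ by H\"older's inequality on mixed Lebesgue spaces and the bound $\|\chi_{B_{k,\vec a}}\|_{L^{\vec q'}}\lesssim 2^{k\sum_i a_i/q_i'}$, and using $\|\chi_{A_{j,\vec a}}\|_{L^{\vec q}}\lesssim 2^{j\sum_i a_i/q_i}$, one arrives at $\bigl|B_{j,\vec a}\bigr|^{\alpha}\|(Ta_k)\chi_j\|_{L^{\vec q}}\lesssim 2^{-c(j-k)}$ for some $c>0$, positivity of $c$ being exactly where the upper restriction $\alpha<\tfrac{a_+-\sum_i a_i/q_i}{v}+1$ is consumed.

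With these estimates the rest is a familiar double‑sum manipulation, carried out exactly as in the proofs of Lemma \ref{boundedness HK} and Theorem \ref{4.1}. For $0<p\le1$ I would apply the elementary inequality (\ref{inequlity}) to pull the $p$‑th power inside the sum over $k$, interchange the order of summation, and sum the geometric factor $\sum_j 2^{-c|j-k|p}\lesssim1$, ending with $\|Tf\|_{\dot K_{\vec q,\vec a}^{\alpha,p}}^{p}\lesssim\sum_k|\lambda_k|^p$. For $1<p<\infty$ I would instead use the Cauchy–Schwarz/H\"older splitting of the inner sum employed in the necessity part of Theorem \ref{4.1}, again reducing to two convergent geometric series in $j-k$ and obtaining the same bound. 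Since $\sum_k|\lambda_k|^p\sim\|f\|_{H\dot K_{\vec q,\vec a}^{\alpha,p}}^p$, this yields $\|Tf\|_{\dot K_{\vec q,\vec a}^{\alpha,p}(\mathbb R^n)}\lesssim\|f\|_{H\dot K_{\vec q,\vec a}^{\alpha,p}(\mathbb R^n)}$; the non‑homogeneous assertion $HK_{\vec q,\vec a}^{\alpha,p}\to K_{\vec q,\vec a}^{\alpha,p}$ follows verbatim, restricting all indices to $k,j\ge0$ and using atoms of restricted type.

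I expect the far‑annulus regime $j\ge k+2$ to be the main obstacle: one must extract from the size condition of $T$ enough geometric decay in $j-k$ to beat simultaneously the growth of the Herz weight $|B_{j,\vec a}|^{\alpha}$ and of $\|\chi_{A_{j,\vec a}}\|_{L^{\vec q}}$, and for $p>1$ the H\"older splitting must be arranged so that both tails converge — this is precisely where the upper bound on $\alpha$ is essential. The remaining delicate points, namely the justification of the termwise action $Tf=\sum_k\lambda_k Ta_k$ in $\mathcal S'$ (supplied by hypothesis (\ref{Tf})) and the interchange of the summations, are routine and parallel the corresponding steps in Theorems \ref{atom suffi} and \ref{atom cha}.
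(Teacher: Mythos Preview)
Your proposal is correct and follows essentially the same approach as the paper: atomic decomposition via Theorem \ref{atom cha}, termwise action of $T$ through hypothesis (\ref{Tf}), splitting into near annuli (handled by the $L^{\vec q}$-boundedness of $T$) and far annuli (handled by the size condition together with H\"older's inequality on mixed Lebesgue spaces), followed by the standard $0<p\le1$ versus $1<p<\infty$ double-sum manipulation. One minor remark: the upper restriction $\alpha<\tfrac{a_+-\sum_i a_i/q_i}{v}+1$ is in fact already consumed when invoking Theorem \ref{atom cha}, while the far-annulus geometric series only needs the weaker condition $\alpha<2-\tfrac1v\sum_i a_i/q_i$.
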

\begin{proof}
	We only prove the homogeneous case. Suppose $f\in H\dot{K}^{\alpha,p}_{\vec{q},\vec{a}}(\mathbb{R}^n)$, then $f=\sum\limits_{j=-\infty}^{\infty}\lambda_{j}a_{j}$ in the distribution sense.
where  $a_{j}$  is a central  $(\alpha, \vec{q}, s) $-atom with support  contained in  $B_{j,\vec{a}}$ , and
$$\|f \| _{ H \dot{K}^{\alpha,p}_{\vec{q},  \vec{a}}\left(\mathbb R^n\right)} \sim \inf \left(\sum_{j=-\infty}^{\infty}\left|\lambda_{j}\right|^{p}\right)^{\frac{1}{p}}$$ 
where the infimum is taken over all above decomposition
of $f$. \\
By condition $(\ref{Tf})$. We write
\begin{align*}
	\|T f\|_{\dot{K}_{\vec{q}, \vec{a}}^{\alpha,p}(\mathbb R^n)}^{p} &=\sum_{k=-\infty}^{\infty}\left|B_{k , \vec{a}}\right|^{\alpha}\left\|(Tf) \chi_{k}\right\|_{L^{\vec{q}}\left(\mathbb R^n\right)}^{p} \\	
	&=\sum_{k=-\infty}^{\infty}\left|B_{k , \vec{a}}\right|^{\alpha}\left\|T\left(\sum_{j=-\infty}^{\infty} \lambda_{j} a_{j}\right) \chi_{k}\right\|_{L^{\vec{q}}\left(\mathbb R^n\right)}^p\\
	&\leq \sum_{k=-\infty}^{\infty}\left|B_{k , \vec{a}}\right|^{\alpha} \left(\sum_{j=-\infty}^{k-2} \mid \lambda_{j}|\|\left(Ta_{j}\right) \chi_{k} \| _{L^{\vec{q}}\left(\mathbb R^n\right)}\right)^{p}\\
	&+\sum_{k=-\infty}^{\infty}\left|B_{k , \vec{a}}\right|^{\alpha} \left(\sum_{j=k-1}^{\infty}\left|\lambda_{j}\right|\left\|\left(Ta_{j}\right) \chi_{k}\right\|_{L^{\vec{q}}\left(\mathbb R^n\right)}\right)^{p}\\
	&= C\left(I_{1}+I_{2}\right).
\end{align*}
Let us first estimate $I_1$. If $ j \leqslant k-2$, $x \in A_{k} $, and $ y \in B_{j,\vec{a}} $, then
$$
	|x-y|_{\vec{a} } \geqslant|x|_{\vec{a} }-|y|_{\vec{a} } \geqslant \frac{|x| _{\vec{a} }}{2} ,
$$
Furthermore, by size condition of $a_j$, one can get 
\begin{align*}
	\left|T a_{j}(x)\right| & \leq \frac{C 2^{j v}\left\|a_{j}\right\|_{L_1\left(\mathbb R^n\right)}}{\mid x|_{\vec{a}}^{2 v}} \leq C 2^{j v}  2^{-2(k-1) v}\left\|a_{j}\right\|_{L_ {\vec{q}}\left(\mathbb R^n\right)} \| \chi_{B_{j}} \|_{L_{\vec{q}'}\left(\mathbb R^n\right)}  \\
	& \leq C 2^{j v}  2^{-2 v(k-1)} \left|B_{j, \vec{a}}\right|^{-\alpha}  2^{j\sum\limits_{i=1}^n \left(1-\frac{1}{q_{i}}\right) a_{i}} \\
	&=C 2^{j v} 2^{-2 v(k-1)} 2^{-j v \alpha} 2^{j v} 2^{-j \sum\limits_{i=1}^n \frac{a_{i}}{q_{i}}}.
\end{align*}
Then
\begin{align*}
	\left\|\left(T a_{j}\right) \chi_{k} \right\|_{L^{\vec{q}}\left(\mathbb R^n\right)}
	&\leq C 2^{j v} 2^{-2 v(k-1)}  2^{-j v \alpha}  2^{j v} 2^{-j \sum\limits_{i=1}^n \frac{a_{i}}{q_i}} 2^{k \sum\limits_{i=1}^n \frac{a_{i}}{q_{i}}}\\
	&\leq 2^{v(2+j-k)} 2^{(j-k) v\left(1-\frac{1}{v} \sum\limits_{i=1}^n \frac{a_{i}}{q_{i}}\right)-j v \alpha }.
\end{align*}
Similarly to the estimation of $ I_{2} $. we consider two cases. 

When $0<p \leqslant 1 $.
\begin{align*}
	I_{1} &=\sum_{k=-\infty}^{\infty}\left|B_{k, \vec{a}}\right|^{\alpha} \left(\sum_{j=-\infty}^{k-2}\left|\lambda_{j}\right|\left\|\left(T a_{j}\right) \chi_{k}\right\|_{L^{\vec{q}}\left(\mathbb R^n\right)}\right)^{p} \\
	I_{1} &=\sum_{k=-\infty}^{\infty} 2^{k v \alpha}\left(\sum_{j=-\infty}^{k-2} |\lambda_{j} | \cdot 2^{v(2+j-v)}  2^{(j-k) v\left(1-\frac{1}{v} \sum\limits_{i=1}^n \frac{a_{i}}{q_{i}}\right)-j u \alpha}\right)^{p} \\
	& \leq \sum_{k=-\infty}^{\infty} 2^{k v \alpha} \sum_{j=-\infty}^{k-2}\left|\lambda_{j}\right|^{p} 2^{v p(2+j-v)}  2^{(j-k) vp\left(1-\frac{1}{v} \sum\limits_{i=1}^n \frac{a_{i}}{q_{i}}\right)-j u \alpha p} \\
	& \leq C \sum_{j=-\infty}^{\infty}\left|\lambda_{j}\right|^{p}\left(\sum_{k=j+2}^{\infty} 2^{(j-k) v p\left(1-\frac{1}{v} \sum\limits_{i=1}^n \frac{a_{i}}{q_{i}}-\alpha+1\right) p}\right)\\
	& \leq C \sum_{j=-\infty}^{\infty}\left|\lambda_{j}\right|^{p }.
\end{align*}

When $1<p<\infty$, by H\"older's inequality, it follows that
\begin{align*}
I_{1} &\leq C \sum_{k=-\infty}^{\infty} \left|B_{k, \vec{a}}\right|\left(\sum_{j=-\infty}^{k-2} |\lambda_{j} | 2^{v(2+j-v)} 2^{(j-k) v\left(1-\frac{1}{v} \sum\limits_{i=1}^n \frac{a_{i}}{q_{i}}\right)-j u \alpha}\right)^{p}  \\
&\leq C\sum_{k=-\infty}^{\infty} \left(\sum_{j=-\infty}^{k-2} |\lambda_{j} |  2^{v(j-k) \left(1-\frac{1}{v} \sum\limits_{i=1}^n \frac{a_{i}}{q_{i}} - \alpha +1\right)}\right)^{p}\\
&\leq C \sum_{j=-\infty}^{\infty}\left(\sum_{k=j+2}^{\infty}\left|\lambda_{j}\right|^{p} 2^{\left[(j-k)\left(1-\frac{1}{v} \sum\limits_{i=1}^n \frac{a_{i}}{q_{i}}-\alpha+1\right) p\right] / 2}\right)\left( \sum_{k=j+2}^{\infty} 2^{\left[(j-k)\left(1-\frac{1}{v} \sum\limits_{i=1}^n \frac{a_{i}}{q_{i}}-\alpha+1\right) p^{\prime}\right] / 2} \right)^{p / p^{\prime}}\\
&\leq C \sum_{j=-\infty}^{\infty}\left(\sum_{k=j+2}^{\infty}\left|\lambda_{j}\right|^{p} 2^{\left[(j-k)\left(1-\frac{1}{v} \sum\limits_{i=1}^n \frac{a_{i}}{q_{i}}-\alpha+1\right) p\right] / 2}\right) \leq C \sum_{j=-\infty}^{\infty}\left|\lambda_{j}\right|^{p}.
\end{align*}
Let us now estimate $I_{2}$, when $0<p \leq 1 $
\begin{align*}
	I_{2} &=\sum_{k=-\infty}^{\infty}\left|B_{k , \vec{a}}\right|^{\alpha} \left(\sum_{j=k-1}^{\infty}\left|\lambda_{j}\right|\left\|\left(Ta_{j}\right) \chi_{k}\right\|_{L^{\vec{q}}\left(\mathbb R^n\right)}\right)^{p} \\
	& \leq \sum_{k=-\infty}^{\infty}\left|B_{k , \vec{a}}\right|^{\alpha}\left(\sum_{j=k-1}^{\infty}\left|\lambda_{j}\right|^{p} \cdot\left\|\left(T a_{j}\right) \chi_{k}\right\|_{L^{\vec{q}}\left(\mathbb R^n\right)}^p\right) \\
	& \leq C \sum_{k=\infty}^{\infty}\left|B_{k , \vec{a}}\right|^{\alpha}\left(\sum_{j=k-1}^{\infty}\left|\lambda_{j}\right|^{p}\left\|a_{j}\right\|_{L^{\vec{q}}\left(\mathbb R^n\right)}^{p}\right) \\
	&\leq C \sum_{j=-\infty}^{\infty}\left|\lambda_{j}\right|^{p} \left( \sum_{k=-\infty}^{j=k+1} 2^{v(k-j) \alpha p} \right) \leq C \sum_{j=-\infty}^{\infty}\left|\lambda_{j}\right|^{p}.
\end{align*}

If  $1<p<\infty$, by H\"older's inequality and the boundedness of  $T$, 
\begin{align*}
I_{2} &\leq C \sum_{k=-\infty}^{\infty}\left|B_{k,  \vec{a}}\right|^{\alpha}\left(\sum_{j=k-1}^{\infty}\left|\lambda_{j}\right|^{p}\left\|a_{j}\right\|_{L^{\vec{q}}\left(\mathbb R^n\right)} ^{{p}/{2}}\right) \left(\sum_{j=k-1}^{\infty}\left\|a_{j}\right\|_{L^{\vec{q}}\left(\mathbb R^n\right)}^{p^{\prime} / 2}\right)^{{p}/{p^{\prime}}} \\
&\leq C \sum_{k=-\infty}^{\infty}\left|B_{k, \vec{a}}\right|^{\alpha}\left(\sum_{j=k-w}^{\infty}\left|\lambda_{j}\right|^{p} 2^{-j v \alpha p / 2}\right)\left(\sum_{j=k-1}^{\infty} 2^{-j v \alpha p^{\prime} / 2}\right)^{\frac{p}{p^{\prime}}} \\
&\leq C \sum_{k=-\infty}^{\infty} 2^{k v \alpha p / 2}\left(\sum_{j=k-1}^{\infty}\left|\lambda_{j}\right|^{p} 2^{-j v \alpha p / 2}\right) \\
&\leq C \sum_{j=-\infty}^{\infty}\left|\lambda_{j}\right|^{p} \left( \sum_{k=-\infty}^{j+1} 2^{v(k-j) \alpha p/ 2} \right) \leq C \sum_{j=-\infty}^{\infty}\left|\lambda_{j}\right|^{p}
\end{align*}
Thus, the result of this Theorem can be concluded
$$
\|T f\|  _{\dot{K}^{\alpha,p}_{\vec{q}, \vec{a}}\left(\mathbb R^n\right)} \leq C\|f\| _{ H \dot{K}^{\alpha,p}_{\vec{q},  \vec{a}}\left(\mathbb R^n\right)} .
$$
This finishes the proof.
\end{proof}


\hspace*{-0.6cm}\textbf{\bf Funding}\\
The research was supported by Natural Science Foundation of China (Grant No. 12061069).\\


\hspace*{-0.6cm}\textbf{\bf Acknowledgments}\\
The authors would like to express their thanks to the referees for valuable advice regarding the previous version of this paper.\\

\hspace*{-0.6cm}\textbf{\bf Authors detaials}\\
Yichun Zhao and Jiang Zhou*, zhaoyichun@stu.xju.edu.cn and zhoujiang@xju.edu.cn, College of Mathematics and System Science, Xinjiang University, Urumqi, 830046, P.R China.\\
\bigskip \medskip

\end{document}